%% file: main.tex
\documentclass[10pt,oneside,reqno]{amsart}
\usepackage[T1]{fontenc}
\usepackage[utf8]{inputenc}
\usepackage[british]{babel}
\usepackage{bm,blindtext,amsmath,amssymb,amsthm,mathtools,tikz-cd,mathrsfs,cancel,xcolor,wrapfig,comment,array,pbox,booktabs,slashed,graphicx,subfiles}
\usepackage{enumitem}
\graphicspath{ {./images/} }

\usepackage[colorlinks=true,allcolors=blue,hyperindex,breaklinks,hypertexnames=false]{hyperref}

\calclayout

\numberwithin{equation}{section}
\theoremstyle{plain}
\newtheorem{theorem}{Theorem}[section]
\newtheorem{lemma}[theorem]{Lemma}
\newtheorem{prop}[theorem]{Proposition}
\newtheorem{cor}[theorem]{Corollary}

\newcounter{step}[theorem]
\renewcommand{\thestep}{\arabic{step}}
\newenvironment{step}{
    \refstepcounter{step}
    \par\medskip
    \noindent\textbf{Step \thestep.}\ \itshape
}{
    \par\medskip
}

\theoremstyle{definition}
\newtheorem{defn}[theorem]{Definition}
\newtheorem{ex}[theorem]{Example}

\newtheorem{remark}[theorem]{Remark}

\newcommand{\N}{\mathcal{N}}
\newcommand{\Z}{\mathbb{Z}}

\newcommand{\A}{\mathcal{A}}

\newcommand{\M}{\mathcal{M}}

\newcommand{\D}{\mathcal{D}}
\newcommand{\U}{\mathcal{U}}
\newcommand{\HH}{\mathbb{H}}
\newcommand{\GG}{\mathcal{G}}
\newcommand{\R}{\mathbb{R}}
\newcommand{\C}{\mathbb{C}}
\renewcommand{\P}{\mathbb{P}}
\renewcommand{\L}{\mathcal{L}}
\newcommand{\ra}{\rightarrow}

\newcommand{\SW}{\textnormal{SW}}

\newcommand{\GL}{\textnormal{GL}}
\newcommand{\Diff}{\textnormal{Diff}}

\newcommand{\SL}{\textnormal{SL}}
\newcommand{\SU}{\textnormal{SU}}
\newcommand{\SO}{\textnormal{SO}}

\newcommand{\e}{\varepsilon}

\newcommand{\id}{\textnormal{id}}
\newcommand{\diag}{\textnormal{diag}}
\newcommand{\coker}{\textnormal{coker}}
\newcommand{\im}{\textnormal{im}}
\newcommand{\End}{\textnormal{End}}

\newcommand{\Met}{\textnormal{Met}}

\newcommand{\Fix}{\textnormal{Fix}}

\newcommand{\Spin}{\textnormal{Spin}}
\newcommand{\Sym}{\textnormal{Sym}}
\newcommand{\E}{\mathcal{E}}
\newcommand{\ind}{\textnormal{index}}

\newcommand{\G}{\mathcal{G}}
\renewcommand{\O}{\mathcal{O}}

\newcommand{\del}{\partial}
\newcommand{\delbar}{\overline{\partial}}

\title{Existence of multi-monopoles on mapping tori}
\author{Brad Wilson}

\begin{document}
\renewcommand\Re{\operatorname{Re}}
\renewcommand\Im{\operatorname{Im}}

\begin{abstract}
    While the Seiberg--Witten equations have been well-studied on 3-manifolds, their multiple spinor generalisation exhibits some unexpected behaviour. Most notably, the count of these ``multi-monopoles'' does not define a topological invariant. Instead, the count can jump as parameters of the equations cross between certain regions in the parameter space, known as chambers. This wall-crossing phenomenon is related to deep questions about multi-valued harmonic spinors and higher-dimensional gauge theory. However, concrete examples of this behaviour have not been studied, primarily because the existing constructions of multi-monopoles are not rich enough for wall-crossing to be observed. We address this by proving an adiabatic limit theorem, which constructs multi-monopoles for a wide range of parameters on mapping tori. These solutions are obtained by perturbing the fixed points of the monodromy map associated to a family of multi-vortex moduli spaces. We use our theorem to produce the first explicit constructions of multi-monopoles on non-product 3-manifolds in various chambers. 
\end{abstract}

\maketitle

\subfile{chapters/introduction}

\subfile{chapters/multi-monopoles_on_3-manifolds}

\subfile{chapters/multi-monopoles_on_mapping_tori}

\subfile{chapters/vortex_moduli_space}

\subfile{chapters/fixed_points_and_formal_adiabatic_limit}

\subfile{chapters/the_adiabatic_limit}

\subfile{chapters/counting_multi-monopoles}

\subfile{chapters/appendix}

\bibliographystyle{alpha}
\bibliography{ref}

\end{document}

%% file: chapters/introduction.tex
\section{Introduction}\label{s0}

This article provides the first explicit constructions of multi-monopoles on mapping tori. These are a broad class of 3-manifolds of the form
$$\Sigma_f=\mathbb{R}\times\Sigma/\sim,\hspace{4mm}(t+1,z)\sim(t,f(z)),$$
for a closed oriented surface $\Sigma$ and an orientation-preserving diffeomorphism $f$. We begin by outlining the role of multi-monopoles in gauge theory and the foundational challenges surrounding them, before stating our main results.

\subsection{Background}\label{s1.1}

The multi-monopole equations are a multiple spinor generalisation of the Seiberg--Witten equations. They were first investigated by Bryan and Wentworth on K\"{a}hler surfaces \cite{BryanWentworth1996}, but were only later studied on 3-manifolds by Haydys and Walpuski \cite{HaydysWalpuski2020}. They showed that for certain choices of parameters of the equations (a Riemannian metric, among other data), the moduli space of multi-monopoles can be non-compact. This is a fascinating new phenomenon in gauge theory, distinct from the bubbling that occurs for instantons and pseudo-holomorphic curves. Originally discovered by Taubes for flat $\textnormal{PSL}(2,\C)$-connections \cite{Taubes2013}, this behavior is also exhibited by other generalised Seiberg–Witten equations, such as the Kapustin–Witten and Vafa–Witten equations \cite{Taubes2013,Taubes2017,WalpuskiZhang2021}. The parameter space of the multi-monopole equations is believed to consist of disjoint chambers separated by codimension 1 walls where non-compactness occurs. As parameters vary across these walls, the count of multi-monopoles jumps, so it does not define a 3-manifold invariant. Rigorously establishing this wall and chamber picture is a difficult conjecture, although substantial progress has been made towards it \cite{parker2025,hmt2023}.

Understanding this jumping phenomenon is central to several fundamental problems in gauge theory. For instance, Doan and Walpuski used a wall-crossing formula for this situation to construct $\mathbb{Z}_2$-harmonic spinors on 3-manifolds, though their method is non-constructive \cite{Doan-Walpuski2021}. The jumping behavior is also expected to play an important role in defining Casson-like invariants of compact $G_2$-manifolds \cite{Donaldson-Segal2009}. The count of $G_2$ instantons can also jump due to bubbling, and Haydys and Walpuski conjectured that a correction term can be defined using multi-monopoles \cite{Haydys2012,Walpuski2017G2Instantons,DoanWalpuski_2019}. For these reasons, constructions of multi-monopoles exhibiting this behaviour would lead to new existence results for $\mathbb{Z}_2$-harmonic spinors and a path towards computing the conjectured $G_2$ invariants.

Currently, multi-monopoles have only been directly studied on product 3-manifolds of the form $S^1 \times \Sigma$ using complex geometry. For these spaces, the multi-monopole count is only known in the single chamber containing parameters pulled back from the surface $\Sigma$ \cite{Doan_2018,thakar2025modulispacemultimonopolesriemann}. Because this existing construction is restricted to a single chamber, wall-crossing cannot be observed and the broader behavior of these equations has been left largely unexplored.

\subsection{An adiabatic limit theorem}\label{s1.2}

The goal of this paper is to produce examples of multi-monopoles on mapping tori in a variety of chambers. To do this, we execute and extend a program initiated by Salamon to compute the classical Seiberg--Witten invariants of mapping tori \cite{Salamon1999}. A mapping torus $\Sigma_f$ is a surface bundle over the circle, so by choosing a family of metrics $g_t$ on the surface fibres satisfying the periodicity condition $g_{t+1}=f^*g_t$, one obtains a metric on the total space. This also determines a family of vortex moduli spaces, with a monodromy map induced by $f$. In the adiabatic limit, where the metric is scaled in the fibre direction by a small parameter $\e>0$, the Seiberg--Witten equations degenerate to the fixed point equation of this monodromy map. Salamon outlined the formal aspects of this idea, claiming that for sufficiently small $\e>0$, these fixed points can be perturbed to produce unique monopoles. However, a rigorous analytical proof of this claim was not completed.

Our main result (Theorem \ref{intromaintheorem} below) provides this missing analytical foundation and establishes the existence theorem in the more general setting of multi-monopoles. In addition to a spin$^c$ structure $\mathfrak{s}$ and Riemannian metric $g$, the multi-monopole equations on a 3-manifold $Y$ require an auxiliary connection $B$ on a rank $N$ complex vector bundle. This new feature of the equations is responsible for the non-compactness behaviour. After a generic perturbation of these parameters (and an additional perturbing 2-form, which we omit for now) the equations cut out a regular, zero-dimensional moduli space $\mathcal{M}_{Y}(\mathfrak{s},g,B)$ \cite{Doan_2018}. We will sometimes use the term $N$-monopole if the rank $N$ needs to be specified. 

On a mapping torus $\Sigma_f$, this auxiliary connection will be induced by a family of connections $B_t$ on a vector bundle over each surface fibre. This also needs to satisfy a periodicity condition, which we elaborate on in Section \ref{s2}. If $d$ is the degree of the spinor bundle restricted to a fibre, the data $\mathfrak{p}_t=(g_t,B_t)$ determines a multi-vortex moduli space $\M_\Sigma(d,\mathfrak{p}_t)$ for each $t$. For paths $\mathfrak{p}_t$ avoiding the locus where $\M_\Sigma$ is non-compact, these spaces form a compact fibre bundle over $S^1$ with monodromy map
$$\Upsilon_{d,\mathfrak{p}}:\M_\Sigma(d,\mathfrak{p}_0)\ra\M_\Sigma(d,\mathfrak{p}_0).$$
Our main theorem relates fixed points of $\Upsilon_{d,\mathfrak{p}}$ to multi-monopoles on $\Sigma_f$.

\begin{theorem}\label{intromaintheorem}
    Let $\mathfrak{s}$ be a spin$^c$ structure with fibre degree $d$. Suppose $\mathfrak{p}$ is a suitably generic family of parameters such that $\M_\Sigma(d,\mathfrak{p}_t)$ is compact for all $t$. This determines a metric $g_\e=dt^2+\e^2g_t$ for $\e>0$ and a connection $B$ on $\Sigma_f$. Then the fixed points of $\Upsilon_{d,\mathfrak{p}}$ are all non-degenerate. Moreover, for sufficiently small $\e>0$, the moduli space of multi-monopoles is regular and there is an injective map
    \begin{equation}\label{adiabaticlimitiso}
        \Fix_{\mathfrak{s}}(\Upsilon_{d,\mathfrak{p}})\hookrightarrow\M_{\Sigma_f}(\mathfrak{s},g_{\e},B).
    \end{equation}
    Here, $\Fix_{\mathfrak{s}}(\Upsilon_{d,\mathfrak{p}})$ is a subset of fixed points corresponding to $\mathfrak{s}$, see Definition \ref{fixedpointsubset} for a further explanation.
\end{theorem}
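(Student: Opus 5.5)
The plan is a Salamon-type adiabatic limit argument, made rigorous through a quantitative inverse function theorem.

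\textbf{Step 1: the equations in the adiabatic limit.} First write the multi-monopole equations on $(\Sigma_f,g_\e)$ in temporal gauge along the $t$-direction. A configuration $(A,\Psi)$ becomes a path $(A_t,\Psi_t)$ of configurations on $\Sigma$, together with a function $\Phi_t$ from the $dt$-component of the connection, subject to the twisted periodicity $(A_{t+1},\Psi_{t+1})=f^*(A_t,\Psi_t)$. After rescaling, the equations take the schematic form
$$\partial_t(A,\Psi)+d_{(A,\Psi)}\Phi+\e^{-1}\,\mathcal{F}_{\mathfrak{p}_t}(A_t,\Psi_t)=0,$$
where $\mathcal{F}_{\mathfrak{p}_t}$ collects the multi-vortex equations on $(\Sigma,g_t,B_t)$. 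The factor $\e^{-1}$ in front of $\mathcal{F}$ is exactly what $g_\e=dt^2+\e^2g_t$ produces once the spinor and curvature components are rescaled appropriately.

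\textbf{Step 2: the formal limit and non-degeneracy.} As $\e\to0$ the path must lie in the vortex moduli space $\M_\Sigma(d,\mathfrak{p}_t)$. The remaining equation is the projection of $\partial_t(A,\Psi)+d\Phi$ onto the tangent space of the moduli space, which says the path is horizontal for the natural connection on the bundle $\bigcup_t\M_\Sigma(d,\mathfrak{p}_t)\to S^1$. That connection is defined by $L^2$-orthogonal projection to the slice, and horizontal transport is by definition $\Upsilon_{d,\mathfrak{p}}$. So periodic horizontal paths correspond to fixed points. The choice of $\mathfrak{s}$ fixes the lift through $f$ together with the gauge twist, which picks out $\Fix_{\mathfrak{s}}$.

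Non-degeneracy of the fixed points should come from genericity of $\mathfrak{p}$. The fixed points are the intersection of the graph of $\Upsilon$ with the diagonal in a compact manifold, which is zero-dimensional since the index is zero. A parametric transversality argument, perturbing $B_t$ on a small $t$-interval, should make this intersection transverse. Equivalently, one shows that $\Upsilon$ can be perturbed by arbitrary Hamiltonian-type variations coming from variations of $B_t$.

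\textbf{Step 3: approximate solutions and the linearised operator.} For each fixed point, take the horizontal lift $u_t$ as an approximate solution. Its error lies only in the vertical (non-tangent) directions and is $O(1)$ uniformly, so it is small relative to the $\e^{-1}$-weighted norms.

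Next, study the linearisation $\mathcal{D}_\e$ at $u$. Split each deformation into a component tangent to $\M_\Sigma(d,\mathfrak{p}_t)$ and a component normal to it (including gauge directions).
\begin{itemize}
\item On the normal part, $\mathcal{D}_\e$ is dominated by $\e^{-1}$ times the vortex deformation operator. That operator is invertible on the complement of the kernel because $\M_\Sigma$ is regular and compact, with uniform bounds in $t$ since the path avoids the non-compactness locus.
\item On the tangent part, $\mathcal{D}_\e$ reduces to $\nabla_t$ for the induced connection on the pullback of $T\M_\Sigma$ along $u_t$. This operator is invertible precisely when $d\Upsilon-\id$ is invertible at the fixed point, which is the non-degeneracy from Step 2.
\end{itemize}
Combining these gives a uniform bound $\|\mathcal{D}_\e^{-1}\|\le C$ in suitable $\e$-weighted Sobolev norms.

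\textbf{Step 4: existence, uniqueness and regularity.} A quadratic estimate, with constants controlled in the same weighted norms, together with the Newton iteration/implicit function theorem then yields a unique genuine solution near $u$ for small $\e$. This gives the map in \eqref{adiabaticlimitiso}. Injectivity follows because distinct fixed points give gauge-inequivalent approximate solutions that are separated uniformly in $\e$, while the corrections are $O(\e)$. Regularity of the moduli space requires showing every solution lies near such an approximate solution. I would get this from a compactness argument: energy and curvature estimates force solutions into a neighbourhood of the vortex bundle as $\e\to0$, and the uniqueness part of the implicit function theorem then gives non-degeneracy. I would keep that compactness statement in a weak form, enough for regularity. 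Surjectivity is not claimed, in line with the statement giving only an injective map.

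The \textbf{main obstacle} is the uniform invertibility in Step 3. The tangent and normal blocks interact through off-diagonal terms from the $t$-variation of the slice, and one must show these are $O(1)$ and do not spoil the $\e^{-1}$ gap. I would first try the splitting argument with the choice of $\e$-weighted norms used by Dostoglou--Salamon for instantons on mapping tori, adapted to multi-spinors. The uniform compactness of the vortex spaces controls the spectral gap, and elliptic estimates on the fibres control the coupling terms.
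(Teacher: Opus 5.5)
There is a genuine gap. Your overall architecture matches the paper's: fixed points $\leftrightarrow$ parallel sections $\leftrightarrow$ adiabatic solutions, generic non-degeneracy, Dostoglou--Salamon weighted estimates, Newton iteration, and injectivity by taking limits of gauge transformations. But Steps~1 and~3 miss the structural point that makes the analysis work.

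\textbf{Steps 1 and 3: the Lagrange multipliers are missing.} I use the paper's notation: $\Phi$ and $\Psi$ are the $L$- and $(0,1)$-components of the spinor, and $b$ is the $dt$-component of the connection. After the asymmetric rescaling \eqref{rescalings}, only $\delbar_{A,\mathfrak{p}}\Phi$ and $\star_tF_A-\frac{i}{2}|\Phi|^2+i\tau$ carry the large factor $\e^{-2}$ in \eqref{eq:epsilonSW}. The fields $\Psi$ and $b$ survive in the $O(1)$ equations as Lagrange multipliers, fixed by \eqref{hodgeparalleltransport}; $\Psi$ solves \eqref{paralleltransportpsi} and is generically non-zero. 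So the whole multi-vortex system is not an order-$\e^{-1}$ term, as your schematic has it. The correct approximate solution is $\Xi_0=(A_0,\Phi_0,0,b_0,\Psi_0)$, not the horizontal path of multi-vortices, which has vanishing $(0,1)$-spinor. This matters in three places.

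First, the tangent block of the linearisation at $\Xi_0$ is $\D_0(X)=\pi_XN_{b,\Psi}$. The zeroth-order entries $-i\Im\langle\Psi,\cdot\rangle$ and $-\langle\Psi,(\cdot)^{0,1}\rangle$ come from differentiating $\pi_X$, and they are exactly what make this the linearised parallel-transport operator. At a configuration with $\Psi=0$ the tangent block is $\pi_X\diag(\star_t\del_t,i\nabla_t)$, whose invertibility has nothing to do with $d\Upsilon-\id$.

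Second, the off-diagonal coupling you flag as the main obstacle is controlled by the cancellations of Lemma~\ref{operatoridentities}, e.g.\ $N_YG_X+S_X^*L_Y=0$. These hold only when $-i\nabla_t\Phi+\delbar_{A,\mathfrak{p}}^*\Psi=0$ and $\star_t(\dot A-db-i\sigma)-i\Im\langle\Psi,\Phi\rangle=0$.

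Third, only at $\Xi_0$ is the error confined to the $\e$-weighted rows: $\SW_\e(\Xi_0)=(0,0,0,\frac{i}{2}|\Psi_0|^2,i\nabla_t\Psi_0)$. Lemma~\ref{modifiedellipticestimate2} gains an extra $\e$ off the harmonic part, which is more than a uniform bound $\|\D_\e^{-1}\|\le C$. With it you get $\|\xi_0\|_{1,p,\e}\le c\e^2$. That $\e^2$ is what beats the quadratic bound $\|\mathcal{Q}_\e\xi\|_{0,p,\e}\le c\e^{-1}\|\xi\|_{\infty,\e}\|\xi\|_{0,p,\e}$ combined with the Sobolev loss $\|\xi\|_{\infty,\e}\le c\e^{-1/p}\|\xi\|_{1,p,\e}$, giving a contraction factor $\e^{1-1/p}$. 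An $O(\e)$ first correction, which is what either your approximate solution or a plain uniform bound produces, gives the factor $\e^{-1/p}$, and the iteration does not close. So ``a quadratic estimate with constants controlled in the same weighted norms'' is exactly where the argument fails as written.

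\textbf{Step 2: transversality needs $\Psi\neq0$.} Varying $B$ in a direction $\beta$ contributes $\pi_X(0,\langle\Psi,\beta^{0,1}\rangle)$ to the linearised parallel-section equation, so the effect is proportional to the multiplier. Take pulled-back data on $S^1\times\Sigma$ with $\sigma=0$. Then $\Psi\equiv0$ and $\Upsilon=\id$, so every point is a degenerate fixed point. No first-order variation of $B$, localised in $t$ or not, moves $\Upsilon$ there. The universal fixed-point set is therefore not cut out transversally, and a parametric Sard--Smale argument cannot run over all parameters. The paper proceeds as follows:
\begin{enumerate}
\item It restricts to the open dense set of Lemma~\ref{preliminarydensity}, using closed perturbations of $\sigma$ that make the right-hand side of \eqref{paralleltransportpsi} non-zero for every $t$.
\item It uses unique continuation to get $\Psi(t)\neq0$ on a dense set.
\item It kills a cokernel element $(a',\phi')$ with the bump choice $\beta=\rho\langle\Psi,\phi'\rangle$, then gets $a'=0$ from $S_Xx'=0$.
\end{enumerate}
Your finite-dimensional graph/diagonal formulation is fine, and it sidesteps the infinite cokernel of the adiabatic equations, but it still needs this mechanism.

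\textbf{Step 4: regularity by compactness.} The paper does not prove regularity through a compactness argument; its construction concerns only the solutions $\mathcal{T}_\e(\Xi_0)$. The convergence you propose is also not available in a ``weak form''. To invoke local uniqueness, every $g_\e$-multi-monopole would have to converge modulo gauge to some $\Xi_0$ in the $\|\cdot\|_{1,p,\e}$ sense. That is essentially the bijectivity the paper defers to future work (Remark~\ref{mmremark}). In the multi-spinor setting it would also need uniform spinor bounds excluding the blow-up responsible for non-compactness.
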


The reader should refer to Theorem \ref{maintheorem} for a more precise statement of this result. Its proof is achieved in three parts. The first part is Proposition \ref{fixedpointsande=0}, which identifies the adiabatic moduli space (obtained by formally setting $\e=0$ in the $g_\e$ multi-monopole equations) with $\Fix_{\mathfrak{s}}(\Upsilon_{d,\mathfrak{p}})$. The key geometric insight is that solutions of the adiabatic limit equations correspond to parallel sections of the family of multi-vortex moduli spaces for a certain symplectic connection. These then correspond to fixed points.

The second part is Proposition \ref{transversality}, which constructs the subset of generic parameters for which all fixed points are non-degenerate. The standard Seiberg--Witten transversality argument fails here, since the linearisation of the adiabatic limit equations has an infinite-dimensional cokernel. To overcome this subtle issue, we relate perturbations of the symplectic connection to perturbations of the multi-monopole equations. This allows us to work in terms of parallel sections, which have an elliptic deformation theory compatible with the implicit function theorem.

The third and most difficult part of the proof is Theorem \ref{ezerotoesmall}, which uses Newton's method to iteratively construct a multi-monopole from a solution of the adiabatic limit equations. This requires the linearised operator to be uniformly invertible in certain $\e$-weighted norms, which relies on our compactness assumption and some weighted elliptic estimates. The proof of these estimates is rather complicated, requiring analytic techniques adapted from the work of Dostoglou and Salamon \cite{Dostoglou-Salamon1994,Salamon2000QuantumPF}. Interestingly, their arguments require certain terms in the square of the linearisation to vanish, or have particular dependencies. We prove the required identities in Appendix \ref{appendix:a}, which rely on some remarkable cancellations. These identities are used repeatedly in our proofs, so this framework can likely be a model for other adiabatic limit problems in gauge theory.

\subsection{Constructing multi-monopoles on mapping tori}

The main challenge in applying Theorem \ref{intromaintheorem} to construct new examples lies in understanding the monodromy map and its fixed points. In the classical theory, the vortex moduli space is $\Sym^d\Sigma$ and the monodromy map is $\Sym^df$ \cite{Salamon1999}. Our case is significantly more complicated for two reasons; the multi-vortex moduli spaces have no such explicit description, and the monodromy map depends on the path of parameters $\mathfrak{p}$. Despite these difficulties, for mapping tori with genus 1 fibres we are able to compute the monodromy map explicitly in Section \ref{s5}. We also obtain an explicit topological description of many chambers using complex geometry. In the $N=2$ case, these chambers can be understood in terms of braids, and the monodromy map is given by the permutation associated to the braid. This leads to the theorem below, which offers the first construction of multi-monopoles on non-product 3-manifolds in various chambers. 

\begin{theorem}\label{mmexistence}
    Let $\Sigma_f$ be a mapping torus with genus 1 fibres. Then the following statements hold:
    \begin{itemize}
        \item Let $\mathfrak{s}_1,\dots,\mathfrak{s}_m$ be distinct spin$^c$ structures on $\Sigma_f$, all with $d=0$. For $N\geq 2$, choose non-negative integers $k_1,\dots,k_m$ such that $k=\sum_i k_i$ satisfies $0\leq k\leq N$. Then there is a chamber such that for every $1\leq i\leq m$, there are (at least) $k_i$ $N$-monopoles on $\Sigma_f$ for the spin$^c$ structure $\mathfrak{s}_i$.
        \item Let $\mathfrak{s}$ be any spin$^c$ structure on $\Sigma_f$ with $d>0$. Then there is a chamber such that there are (at least) $Nd$ $N$-monopoles on $\Sigma_f$ for the spin$^c$ structure $\mathfrak{s}$.
    \end{itemize}
\end{theorem}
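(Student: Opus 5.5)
The plan is to deduce Theorem \ref{mmexistence} from Theorem \ref{intromaintheorem}. For each case we construct a generic path of parameters $\mathfrak{p}_t=(g_t,B_t)$ on the torus fibre $T^2$ such that:
\begin{itemize}
\item $\M_{T^2}(d,\mathfrak{p}_t)$ stays compact for all $t$;
\item the monodromy $\Upsilon_{d,\mathfrak{p}}$ has the required number of fixed points in the relevant subsets $\Fix_{\mathfrak{s}}$.
\end{itemize}
For $\e$ small, the injection \eqref{adiabaticlimitiso} then produces at least that many $N$-monopoles. The parameters $(g_\e,B)$ lie in a single chamber, since compactness along the path keeps them off the walls. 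For several spin$^c$ structures $\mathfrak{s}_i$ we use the same $\mathfrak{p}$, so all counts occur in one chamber.

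First we describe the genus 1 multi-vortex moduli spaces by complex geometry. With $g_t$ flat and $B_t$ holomorphic, a Hitchin--Kobayashi type correspondence identifies $\M_{T^2}(d,\mathfrak{p}_t)$ with pairs $(L,\Phi)$, where $L$ is a degree $d$ line bundle and $\Phi\in H^0(E_t\otimes L)$ with $E_t$ the rank $N$ holomorphic bundle of $B_t$, taken modulo $\C^*$. Non-compactness occurs exactly when this space fails to be compact, for instance when $H^0(E_t\otimes L)$ jumps in dimension.

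For $d=0$ we choose $E_t$ to be a sum of degree zero line bundles $\bigoplus_j M_j(t)$, each $M_j$ generic. A vortex exists only when $L^{-1}\cong M_j$ for some $j$, so the moduli space is a finite set of $N$ points, one for each $j$ with $M_j$ distinct. The monodromy is then the permutation of the points $M_j(0)$ induced by the loop $t\mapsto\{M_j(t)\}$ in $\Sym^N(\mathrm{Pic}^0T^2)$, combined with the action of $f^*$ on $\mathrm{Pic}^0$. This is the braid description for $N=2$. The fixed points correspond to the $j$ with $M_j(1)=f^*M_j(0)$, so $M_j$ closes up to a flat line bundle on $\Sigma_f$. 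We match this with the spin$^c$ structure as in Definition \ref{fixedpointsubset}: the torsion/holonomy data of the closed-up loop determines which $\mathfrak{s}_i$ the fixed point belongs to. We assign $k_i$ of the loops to close up according to $\mathfrak{s}_i$, and let the remaining $N-k$ summands move in nontrivial cycles so they contribute no fixed points. Since $\mathfrak{s}_i$ are distinct with $d=0$, distinct closing data are available.

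For $d>0$ we take $E_t$ to be a generic stable (or split) bundle of degree zero. Then $\M$ is a projective bundle over $\mathrm{Pic}^d$ with fibre $\P^{Nd-1}$, via Riemann--Roch $h^0=Nd$. We arrange the path so the monodromy is holomorphic and covers a map of $\mathrm{Pic}^d$ with fixed points. By the Lefschetz fixed point theorem on each fixed fibre $\P^{Nd-1}$, a generic projective linear map has $Nd$ non-degenerate fixed points, namely its eigenlines. Adjusting so that exactly one base fixed point corresponds to $\mathfrak{s}$ gives $Nd$ fixed points.

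The main obstacle is computing the monodromy. It is defined through the symplectic connection and not merely through holomorphic data. The argument therefore needs to show that along a path of holomorphic $(g_t,B_t)$ this monodromy agrees, up to isotopy, with the holomorphic transport. It also needs to handle the genericity hypothesis of Theorem \ref{intromaintheorem}, by checking that a small perturbation does not destroy the fixed points. Non-degeneracy and stability of Lefschetz counts take care of the latter.
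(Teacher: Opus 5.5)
\textbf{There are genuine gaps.} Your strategy is the paper's: split degree-zero bundles and the braid permutation of Proposition~\ref{coveringisomorphism} for $d=0$, and the projective bundle over the Jacobian with eigenlines for $d>0$. However, the two steps that actually produce the numbers in the statement are only asserted, and as you describe them they do not work.

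\textbf{The case $d=0$: matching fixed points to spin$^c$ structures.} The invariant that decides which $\mathfrak{s}_i$ a fixed point belongs to is not torsion or holonomy data, and the closed-up bundle is not flat in general. For $f=\id$, the $d=0$ spin$^c$ structures are indexed by $H^1(\Sigma;\Z)\cong\Z^2$, which are non-torsion classes in $H^2(\Sigma_f;\Z)$. What you need is the following chain:
\begin{enumerate}
\item Proposition~\ref{spincprop}: a fixed point lies in $\Fix_{\tilde f}$ if and only if $c_1(\mathscr{U}_\gamma)\cdot\mathfrak{s}_{can}=\mathfrak{s}_{d,\tilde f}$.
\item Proposition~\ref{chernclasscorollary}, proved with the relative see-saw lemma: $c_1(\mathscr{U}_\gamma)$ is the $c_1$ of the Poincar\'e bundle pulled back along the section $t\mapsto M_j(t)^{-1}$ of $J^0_{\underline\Sigma}$. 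So only the homotopy class of that section matters.
\item Proposition~\ref{sectiontochernclass}: $\pi_0\Gamma(J^0_{\underline\Sigma})\to\ker(H^2(\Sigma_f;\Z)\to H^2(\Sigma;\Z))$ is a bijection, via the Wang sequence and $\coker(1-f^*)$.
\end{enumerate}
Surjectivity in step 3 is exactly your unproved claim that ``distinct closing data are available''. Without it you cannot reach arbitrary distinct $\mathfrak{s}_1,\dots,\mathfrak{s}_m$.

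Three smaller points in this case. What you describe as $\M$ is really $\N$, which is always compact; the walls come from non-zero $\beta$ with $\alpha\beta=0$. For split $d=0$ bundles this happens exactly when two summands coincide. So the $M_j(t)$ must be pairwise distinct for every $t$, and homotopic representatives must be perturbed to be disjoint. Finally, if $N-k=1$ the leftover strand is forced to be a fixed point; this is harmless only because the statement says ``at least''.

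\textbf{The case $d>0$: the fixed-point count.} The map of $\mathrm{Pic}^d$ covered by the monodromy is not yours to arrange: up to isotopy it is $M=(f^{-1})^*$. Moreover, $\Upsilon$ is only isotopic to the holomorphic model $\P\phi$. This isotopy follows at once from the fibrewise Hitchin--Kobayashi isomorphism of bundles over $S^1$ (for $d=0$ the fibres are discrete and it is an equality), so it is not the real obstacle.

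The consequence is that fixed points of the model are not fixed points of $\Upsilon$. What transfers is the signed count in each homotopy class of sections, and to get ``at least $Nd$'' you need that count to be $\pm Nd$. The paper obtains this in Proposition~\ref{projectivebundlemm}:
\begin{itemize}
\item If $1-f^*$ is invertible, $\Fix(M)\xrightarrow{1-f^*}\coker(1-f^*)\cong\pi_0\Gamma(J_f)$ is a bijection. So each degree-$d$ spin$^c$ structure sees exactly one base fixed point.
\item The derivative $d\Upsilon'$ is block upper triangular with a complex-linear fibre block. Hence all $Nd$ eigenline fixed points have the same index $\mathrm{sign}\det(1-f^*)$.
\end{itemize}
Your appeal to ``stability of Lefschetz counts'' omits this uniform-sign computation. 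That computation is what makes the bound survive both the isotopy and the generic perturbation.

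If $\det(1-f^*)=0$, i.e.\ $\operatorname{tr}f^*=2$ (for example $T^3$, or Dehn-twist monodromy), the signed count in every class vanishes and the argument gives no lower bound. The paper's own route, Corollary~\ref{projcor}, assumes $1-f^*$ is invertible, so the second bullet is only established in that case.

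Finally, there are no stable bundles of rank $N\geq 2$ and degree $0$ on an elliptic curve. Use the semistable case of Proposition~\ref{projectivisation}, which applies since $d>0=2g-2$.
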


\begin{remark}\label{mmremark}
    We plan to improve Theorem \ref{intromaintheorem} to a much stronger result, namely that \eqref{adiabaticlimitiso} is a bijection. It can then be used to count multi-monopoles, and the qualification ``at least'' in Theorem \ref{mmexistence} can be improved to ``exactly''. After taking care of orientations, each of these points will be positively oriented, so by selecting an appropriate chamber the multi-monopole counting function $\Spin^c_{d=0}\ra\Z$ can be anything, as long as it is positive and the total count of multi-monopoles does not exceed $N$. In future work, we will also investigate the more subtle problem of counting multi-monopoles on mapping tori with higher genus fibres.
\end{remark}

\subsection{Wall-crossing for mapping tori}

This concludes our statement of results, but we make some further comments on how the wall-crossing phenomenon manifests for mapping tori. In the space of parameters $\mathcal{P}_\Sigma$ on a surface fibre, Doan proved that the locus $\mathcal{W}_\Sigma\subset\mathcal{P}_\Sigma$ where non-compactness occurs is codimension 2 \cite{Doan_2018}. This means that a homotopy (preserving the periodicity condition) between paths in $\mathcal{P}_\Sigma$ will typically pass through $\mathcal{W}_\Sigma$. Such a homotopy corresponds to a path between the induced parameters on $\Sigma_f$, and wall-crossing occurs as the homotopy passes over $\mathcal{W}_\Sigma$. This concept is visualised in Figure \ref{fig:mappingtoruswallcrossing}. Indeed, the set $\mathcal{W}_\Sigma$ seems to have an incredibly rich structure, so Theorem \ref{intromaintheorem} has the capacity to construct multi-monopoles in a large number of chambers.

\begin{figure}[h!]
    \centering
    \includegraphics[width=0.86\textwidth]{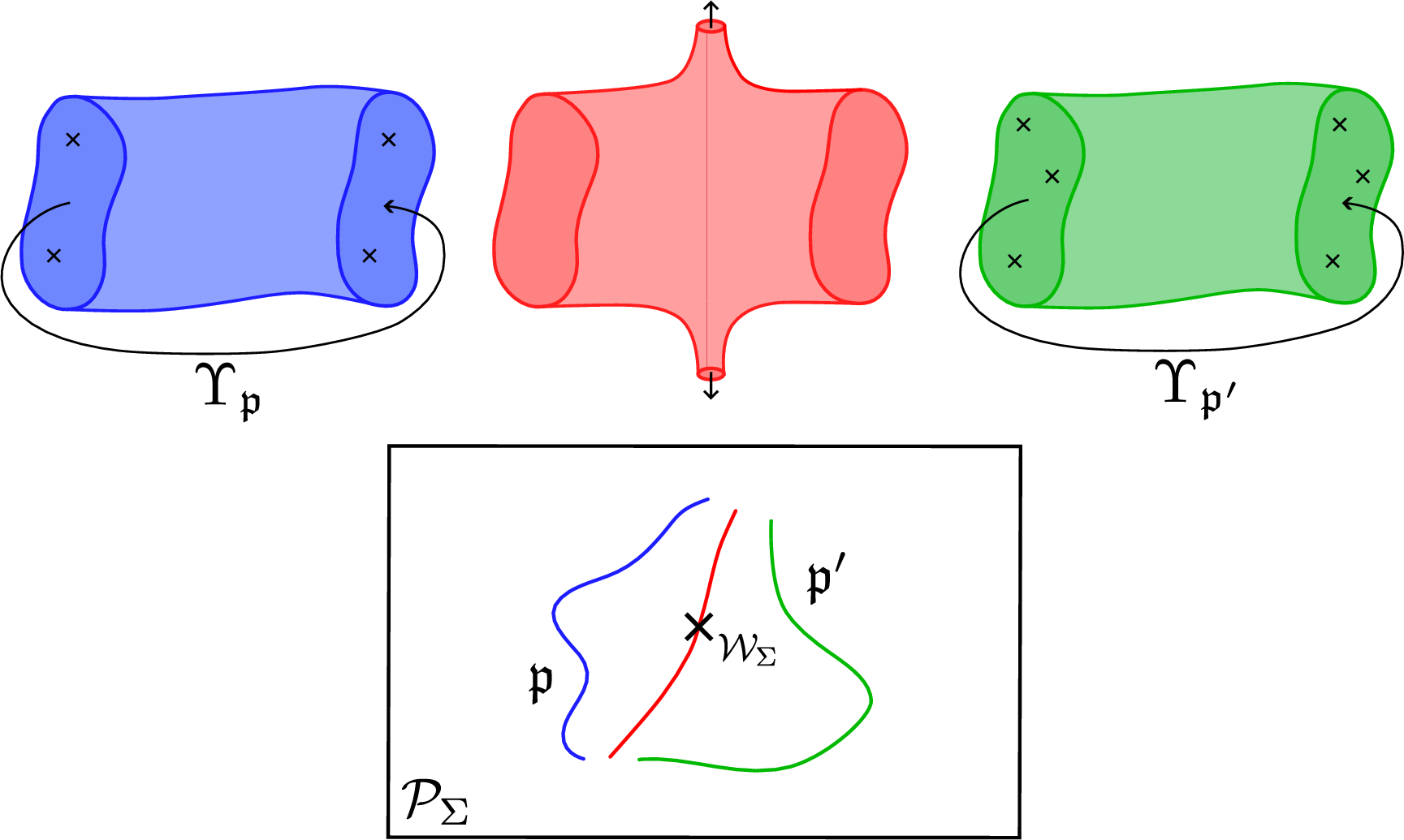}
    \caption{This cartoon illustrates the wall-crossing phenomenon for parameters on a mapping torus. The lower diagram shows three parameters on a mapping torus $\Sigma_f$, visualised as paths in the surface parameter space $\mathcal{P}_\Sigma$. The blue path $\mathfrak{p}$ is homotopic to the green path $\mathfrak{p}'$ (rel the periodicity condition) via the red path. The red path passes through the codimension 2 set $\mathcal{W}_\Sigma$ where $\M_\Sigma$ is non-compact. The pictures above are the families of multi-vortex moduli spaces associated to each of these paths. The red family has a non-compact fibre, which forces the monodromy maps $\Upsilon_{\mathfrak{p}}$ and $\Upsilon_{\mathfrak{p'}}$ to have a different number of fixed points. In the space of parameters on the mapping torus $\Sigma_f$, $\mathfrak{p}$ and $\mathfrak{p}'$ are in different chambers, separated by a wall where the red path lives.}
    \label{fig:mappingtoruswallcrossing}
\end{figure}

\subsection{Acknowledgements}

I am very grateful to my advisor Aleksander Doan for his insights and encouragement with this project. I would also like to thank Ollie Thakar and Calum Crossley for some helpful conversations, and Simon Donaldson for sharing his insights on adiabatic limits. This work was supported by the Engineering and Physical Sciences Research Council [EP/S021590/1]. The EPSRC Centre for Doctoral Training in Geometry and Number Theory (The London School of Geometry and Number Theory), University College London.

%% file: chapters/multi-monopoles_on_3-manifolds.tex
\section{The multi-monopole equations on 3-manifolds}\label{s1}

We briefly review the multi-monopole equations and their deformation theory. The discussion mirrors that of classical Seiberg--Witten theory, except for the non-compactness phenomenon that occurs in the multiple spinor case. For more details, the reader should refer to \cite{Doan_2018,DoanWalpuski2020}.

\subsection{The moduli space of multi-monopoles}

Fix the following data on a closed oriented 3-manifold $Y$:
\begin{itemize}
    \item A spin$^c$ structure $\mathfrak{s}$,
    \item A Riemannian metric $g$,
    \item A unitary connection $B$ on a rank $N$ Hermitian vector bundle $E\ra Y$,
    \item A closed 2-form $\eta\in\Omega^2(Y,i\R)$.
\end{itemize}

Let $S\ra Y$ be the spinor bundle associated to $\mathfrak{s}$, with determinant line bundle $L=\det(S)$. 

\begin{defn}
    With data as above, the multi-monopole equations (or $N$-monopole equations) for a pair $(A,\Psi)\in\A(L)\times\Gamma(S\otimes E)$ are
    \begin{equation}\label{eq:multimonopole}
        \begin{cases}
            \slashed{D}_{AB}\Psi = 0,\\
            \star(F_A + \eta) = \gamma^{-1}\mu(\Psi).
        \end{cases}
    \end{equation}
\end{defn}
Here, $\slashed{D}_{AB}$ is the twisted Dirac operator defined using $B$ and $A$, $\gamma:\Omega^1(i\R)\ra \mathfrak{su}(S)$ is Clifford multiplication, and $\mu(\Psi)$ is the trace-free part of the endomorphism $\Psi\Psi^*$. The gauge group $\G_L=C^\infty(Y,U(1))$ of automorphisms of $L$ acts on a configuration $(A,\Psi)\in\A(L)\times\Gamma(S\otimes E)$ by
$$g\cdot(A,\Psi) = (A - g^{-1}dg, g\Psi).$$
Equation \eqref{eq:multimonopole} is $\G_L$-invariant. A solution $(A,\Psi)$ is called irreducible if its stabiliser is trivial. Otherwise, it is called reducible, which is equivalent to $\Psi=0$. As written, equation \eqref{eq:multimonopole} is not elliptic, even modulo gauge. We must introduce an auxilliary variable $V\in\Omega^0(Y,i\R)$ to obtain an equation which is elliptic modulo gauge:
\begin{equation}\label{eq:modifiedmultimonopole}
    \begin{cases}
        \slashed{D}_{AB}\Psi - V\Psi = 0,\\
        \star(F_{A} + \eta) - dV - \gamma^{-1}\mu(\Psi) = 0.
    \end{cases}
\end{equation}
The gauge group acts trivially on $V$ and this modification introduces no new irreducible solutions. 
\begin{lemma}\cite{Doan_2018}\label{hzero}
    Suppose $(A,V,\Psi)$ is a solution to equation \eqref{eq:modifiedmultimonopole}. Then $V$ is constant and $(A,\Psi)$ is a solution to equation \eqref{eq:multimonopole}. Moreover, if $\Psi\neq 0$ then $V=0$. 
\end{lemma}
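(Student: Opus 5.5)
The plan is to apply the operator $d^*$ (equivalently $\star d\star$ after applying $\star$) to the second equation of \eqref{eq:modifiedmultimonopole}, integrate by parts, and then use the first equation to control the resulting spinor term.

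\textbf{Step 1: take the divergence of the curvature equation.} Since $\eta$ is closed and $F_A$ is closed by the Bianchi identity, the term $d^*\star(F_A+\eta)=\pm\star d(F_A+\eta)$ vanishes. Applying $d^*$ to the second equation therefore gives
\[
\Delta V = -d^*\bigl(\gamma^{-1}\mu(\Psi)\bigr).
\]

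\textbf{Step 2: pair with $V$.} Take the $L^2$ inner product with $V$ and integrate by parts:
\[
\|dV\|_{L^2}^2 = -\langle \gamma^{-1}\mu(\Psi), dV\rangle_{L^2}.
\]
Pointwise, $\langle \gamma^{-1}\mu(\Psi), a\rangle$ is, up to a universal constant, $\Re\langle \gamma(a)\Psi,\Psi\rangle$ for an imaginary 1-form $a$. Taking $a=dV$, this is the quantity that appears when we differentiate the Dirac equation.

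\textbf{Step 3: use the Dirac equation.} The twisted Dirac operator is formally self-adjoint. From $\slashed{D}_{AB}\Psi=V\Psi$, compute
\[
0=\langle \slashed D(V\Psi)-V\slashed D\Psi,\Psi\rangle_{L^2}
\]
in two ways. On one hand, $\slashed D(V\Psi)=\gamma(dV)\Psi+V\slashed D\Psi$, so the left-hand side equals $\langle\gamma(dV)\Psi,\Psi\rangle_{L^2}$. On the other hand, by self-adjointness and the fact that $V$ is imaginary,
\[
\langle\slashed D(V\Psi),\Psi\rangle=\langle V\Psi,V\Psi\rangle=-\langle V^2\Psi,\Psi\rangle,
\]
and similarly $\langle V\slashed D\Psi,\Psi\rangle=\langle V^2\Psi,\Psi\rangle$ up to sign, with the two contributions cancelling. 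Carrying the signs carefully, the conclusion should be $\Re\int\langle\gamma(dV)\Psi,\Psi\rangle=0$. Substituting into Step 2 gives $\|dV\|^2=0$, so $V$ is constant.

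\textbf{Step 4: conclude.} With $V$ constant, $dV=0$, and $(A,\Psi)$ solves the curvature equation of \eqref{eq:multimonopole}. It remains to show $V=0$ when $\Psi\neq0$. Pair the Dirac equation with $\Psi$:
\[
\langle\slashed D\Psi,\Psi\rangle = V\|\Psi\|^2.
\]
The left side is real by self-adjointness, while $V\in i\R$, so $V\|\Psi\|^2=0$. Hence $V=0$ if $\Psi\neq0$, and then $\slashed D_{AB}\Psi=0$.

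\textbf{Main obstacle.} The delicate point is Step 3: getting the signs and conventions right so that the cross term is exactly $\Re\langle\gamma(dV)\Psi,\Psi\rangle$, and checking that it vanishes. A cleaner route, which I would try first, avoids Steps 2 and 3 in that form. Compute $\|\slashed D\Psi-V\Psi\|^2=0$ and expand:
\[
\|\slashed D\Psi\|^2+\|V\Psi\|^2-2\Re\langle\slashed D\Psi,V\Psi\rangle.
\]
Combine this with the expansion of the second equation, $\|\star F-dV-\gamma^{-1}\mu\|^2=0$. The cross term $\langle \star F_A, dV\rangle$ vanishes after integration by parts, by the Bianchi identity. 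The remaining cross term, $\langle dV,\gamma^{-1}\mu\rangle$, cancels against $\Re\langle\slashed D\Psi,V\Psi\rangle$ via $\slashed D(V\Psi)=\gamma(dV)\Psi+V\slashed D\Psi$. Each piece should then vanish separately, which gives $dV=0$ and $V\Psi=0$.
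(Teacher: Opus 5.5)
Step 3 is wrong as stated, and it carries all the content of the argument. The equality $0=\langle\slashed D(V\Psi)-V\slashed D\Psi,\Psi\rangle_{L^2}$ has no justification. Moreover, the two contributions you compare add rather than cancel. On a solution, self-adjointness and $\bar V=-V$ give $\langle\slashed D_{AB}(V\Psi),\Psi\rangle_{L^2}=\langle V\Psi,V\Psi\rangle_{L^2}=\|V\Psi\|_{L^2}^2$, while $\langle V\slashed D_{AB}\Psi,\Psi\rangle_{L^2}=\int V^2|\Psi|^2=-\|V\Psi\|_{L^2}^2$. So computing that quantity ``in two ways'' yields $\int\langle\gamma(dV)\Psi,\Psi\rangle=2\|V\Psi\|_{L^2}^2$, which is not zero unless you already know $V\Psi=0$.

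Your alternative route says the same thing. There you correctly match $\langle dV,\gamma^{-1}\mu(\Psi)\rangle_{L^2}$ with $\Re\langle\slashed D_{AB}\Psi,V\Psi\rangle_{L^2}$, and on a solution the latter equals $\|V\Psi\|_{L^2}^2$. Consequently Step 2 does not give $\|dV\|_{L^2}^2=0$. It gives $\|dV\|_{L^2}^2=-\kappa\|V\Psi\|_{L^2}^2$ for a normalisation constant $\kappa$, and the whole argument depends on the sign of $\kappa$: with $\kappa<0$ nothing would follow. That sign is exactly what ``carrying the signs carefully'' has to establish, not make disappear.

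The way to pin it down is the one the paper uses for the adiabatic analogue, Lemma \ref{hequalszero}. Write $\mathfrak F(A,\Psi)=(\star(F_A+\eta)-\gamma^{-1}\mu(\Psi),\,0,\,-\slashed D_{AB}\Psi)$; then the modified equations say $\mathfrak F(A,\Psi)+\mathbb G_{A,\Psi}(V)=0$. Three facts give $\mathbb G_{A,\Psi}^*\mathfrak F(A,\Psi)=0$: the Bianchi identity, $d\eta=0$, and the divergence identity behind the self-adjointness of $\slashed D_{AB}$ (equivalently, gauge invariance of the Chern--Simons--Dirac functional). Applying $\mathbb G^*_{A,\Psi}$ therefore leaves $\mathbb G^*_{A,\Psi}\mathbb G_{A,\Psi}V=(\Delta+|\Psi|^2)V=0$, so $\kappa>0$ (in fact $\kappa=1$).

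Pairing with $V$ gives $\|dV\|_{L^2}^2+\|V\Psi\|_{L^2}^2=0$, so $V$ is constant and $V\Psi=0$ simultaneously. If $\Psi\not\equiv 0$ then $V=0$, and $(A,\Psi)$ solves \eqref{eq:multimonopole}. Your Step 4 is correct but becomes redundant.

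Your ``cleaner route'' is sound and is the same identity in integrated form. Since $\mathfrak F\perp\operatorname{im}\mathbb G_{A,\Psi}$, you get $\|\mathfrak F+\mathbb G_{A,\Psi}V\|^2=\|\mathfrak F\|^2+\|dV\|^2+\|V\Psi\|^2$. Lead with that argument and discard the claimed vanishing in Step 3.
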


In what follows, we implicitly assume that configurations $(A,V,\Psi)$ consist of $L_k^2$ objects and that gauge transformations are $L_{k+1}^2$, omitting the usual subscripts from our notation. 

\begin{defn}
    Let $Y$ and $(\mathfrak{s},g,B,\eta)$ be as above. Then the moduli space of multi-monopoles is the quotient
    $$\mathcal{M}_Y(\mathfrak{s},g,B,\eta) = \{(A,V,\Psi) : \textnormal{\eqref{eq:modifiedmultimonopole}}\}/\G_L.$$
    Similarly, the moduli space of irreducible multi-monopoles is
    $$\mathcal{M}_Y^*(\mathfrak{s},g,B,\eta) = \{(A,\Psi) : \Psi\neq 0,\textnormal{ \eqref{eq:multimonopole}}\}/\G_L.$$
    If any of the parameters are clear from context, they will be omitted from the notation.
\end{defn}

If $B$ and $\eta$ are smooth, all of the moduli spaces with different Sobolev indices are homeomorphic to the moduli space of smooth solutions \cite{Doan_2018}. The local structure of the moduli space near a point $[A,\Psi]\in\mathcal{M}_Y^*$ is controlled by the deformation complex
\begin{equation}\label{deformationcomplex}
    \Omega^0(i\R)\xrightarrow{\mathbb{G}_{A,\Psi}} \Omega^1(i\R)\oplus\Omega^0(i\R)\oplus\Omega^0(S\otimes E)\xrightarrow{\mathbb{S}_{A,\Psi}}\Omega^0(S\otimes E)\oplus\Omega^2(i\R).
\end{equation}
The operator $\mathbb{G}_{A,\Psi}$ is the linearisation of the $\G_{L}$-action at $(A,0,\Psi)$, which is given by
$$\mathbb{G}_{A,\Psi}(v) = (-dv,0,v\Psi).$$
The operator $\mathbb{S}_{A,\Psi}$ is the linearisation of equation \eqref{eq:modifiedmultimonopole} at $(A,0,\Psi)$, after negating the Dirac equation. This is the convention used in \cite{DoanWalpuski2020} and \cite{SalamonBook}, ensuring that the operator $\mathbb{G}^*\oplus \mathbb{S}$ is self-adjoint with respect to the natural inner product (note that \cite{Doan_2018} uses a different convention). Define the quantity
$$\mu(\Psi,\Phi)=\frac{1}{2}\big(\Psi\Phi^* + \Phi\Psi^* - \textnormal{Re}\langle\Psi,\Phi\rangle\big)$$
for spinors $\Psi,\Phi\in\Gamma(S\otimes E)$, which may be viewed as an imaginary-valued 1-form via $\gamma^{-1}$. Then the deformation complex \eqref{deformationcomplex} is equivalent to the one term complex
$$\Omega^1(i\R)\oplus\Omega^0(i\R)\oplus\Omega^0(S\otimes E)\xrightarrow{\mathbb{D}_{A,\Psi}}\Omega^1(i\R)\oplus\Omega^0(i\R)\oplus\Omega^0(S\otimes E),$$
\begin{equation}\label{3dlinearisation}
    \mathbb{D}_{A,\Psi} := \mathbb{G}_{A,\Psi}^*\oplus\mathbb{S}_{A,\Psi} = \begin{pmatrix}
        \star d & -d & - 2\gamma^{-1}\mu(\Psi,\cdot)\\
        -d^* & 0 & -i\textnormal{Im}\langle\Psi,\cdot\rangle\\
        -\gamma(\cdot)\Psi & (\cdot)\Psi & -\slashed{D}_{AB}
    \end{pmatrix}.
\end{equation}
Here, $\mathbb{G}^*$ is the formal $L^2$ adjoint of $\mathbb{G}$. The operator $\mathbb{D}$ is self-adjoint with respect to the $L^2$ inner product
\begin{equation}\label{3dinnerproduct}
    \langle(a,v,\phi),(a',v',\phi')\rangle_{L^2} = -\int_{Y}a\wedge \star a' - \int_{Y}vv'dV_g + \int_{Y}\Re\langle\phi,\phi'\rangle dV_g.
\end{equation}

When $(A,\Psi)$ is a solution of equation \eqref{eq:multimonopole}, the deformation complex \eqref{deformationcomplex} is elliptic of index zero. Associated to this complex are the cohomology groups $H_{A,\Psi}^i$ for $i=0,1,2$. An irreducible solution has $H_{A,\Psi}^0=0$, in which case $H_{A,\Psi}^1 = \ker(\mathbb{D}_{A,\Psi})$ and $H_{A,\Psi}^2 = \coker(\mathcal{D}_{A,\Psi})$ are naturally isomorphic because $\mathbb{D}$ is self-adjoint. 

\begin{defn}
    An irreducible solution $(A,\Psi)$ to equation \eqref{eq:multimonopole} is called \emph{regular} if $H_{A,\Psi}^1=H_{A,\Psi}^2=0$. The moduli space $\M_Y$ is regular if every point is irreducible and regular.
\end{defn}

Near non-regular points $(A,\Psi)$, one settles for constructing a Kuranishi chart. That is, a smooth map $\kappa: H^1_{A,\Psi}\ra H^2_{A,\Psi}$ with $\kappa(0)=0$ and a neighbourhood of $[A,\Psi]\in\M_Y$ modelled by $\kappa^{-1}(0)$. The moduli space $\M_Y$ is Zariski smooth if $\M_Y=\M_Y^*$ and there is a Kuranishi chart around each point with $\kappa=0$. In this case $\M_Y$ is still a smooth manifold, but may have dimension higher than zero (the expected dimension). See \cite{Doan_2018} for more details.

\subsection{Compactness, transversality, and the count of multi-monopoles}

In ordinary Seiberg--Witten theory, there is a codimension $b_1(Y)$ set in the space of closed 2-forms such that if $\eta$ belongs to this set, the moduli space contains reducible solutions. Whenever $b_1>1$, any two choices of metric $g$ and closed 2-form $\eta$ can be joined by a path, which induces a smooth cobordism between the associated moduli spaces. Stokes' theorem then shows that the count of solutions is independent of the parameters, producing a well-defined invariant.

This discussion about $\eta$ extends to the multiple spinor case, although we must now consider the connection $B$ too. The moduli space $\M_Y(\mathfrak{s},g,B,\eta)$ may be non-compact for some choices of $g$ and $B$, and such parameters conjecturally appear in codimension 1. This behaviour spoils the cobordism argument, so the count of solutions is only a chambered invariant. The non-compactness is due to sequences of multi-monopoles $(A_i,\Psi_i)$ where $\|\Psi_i\|_{L^2}$ diverges, which does not occur in the classical case since there is a uniform $L^\infty$ bound on $\Psi$. Haydys and Walpuski proved that in this case, a subsequence converges modulo gauge and rescaling to a \emph{Fueter section} \cite{HaydysWalpuski2020}. These are the ideal limits at the ends of the moduli space.

\begin{defn}
    Let $\mathcal{P}_Y = \Met(Y)\times\A(E)$ and $\mathcal{V}_Y=\ker(d|_{\Omega^2})$. Define the subsets
    $$\mathcal{P}_Y^{cpt} = \{(g,B)\in\mathcal{P}_Y : \M_Y(\mathfrak{s},g,B)\textnormal{ is compact}\},\hspace{4mm}\mathcal{W}_Y=\mathcal{P}_Y\setminus\mathcal{P}_Y^{cpt}.$$ 
\end{defn}

For a fixed $(g,B)\in\mathcal{P}_Y$, it is not enough to choose a generic perturbation $\eta\in\mathcal{V}_Y$ to obtain a regular moduli space. In order to achieve this, the connection $B$ also needs to be varied, although it can be varied an arbitrarily small amount. Recall that a subset of a topological space is called residual if it contains a countable intersection of open dense sets. In particular, residual subsets of complete metric spaces are dense by the Baire category theorem. Doan proved the following transversality result. 

\begin{prop}\cite{Doan_2018}
    Fix a Riemannian metric $g$ on $Y$. Then the subset 
    $$\mathcal{U}^{reg}_Y(g)=\{(B,\eta)\in\mathcal{P}_Y\times\mathcal{V}_Y : \M_Y(\mathfrak{s},g,B,\eta)\textnormal{ is regular}\}$$
    is residual in $\A(E)\times\mathcal{V}_Y$. The subset $\mathcal{U}_{Y}^{reg}=\{(g,B,\eta)\in\mathcal{P}_Y\times\mathcal{V}_Y :(B,\eta)\in\mathcal{U}^{reg}_{Y}(g)\}$ is also residual in $\mathcal{P}_Y\times\mathcal{V}_Y$.
\end{prop}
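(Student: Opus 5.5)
This is the standard Sard--Smale transversality argument via a universal moduli space, following the pattern of classical Seiberg--Witten theory.

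\textbf{Setup.} Fix the metric $g$ and work with $L^2_k$ configurations. Define the universal moduli space
$$\mathcal{M}^{univ}=\{(A,\Psi,B,\eta):\Psi\neq 0,\ \eqref{eq:multimonopole}\}/\G_L,$$
with $B$ ranging over a Banach (e.g.\ $C^\ell$) completion of $\A(E)$ and $\eta$ over closed $C^\ell$ 2-forms. It is cut out by a section of a Banach bundle. First I dispose of the reducibles. A reducible solution has $\Psi=0$, which forces $F_A+\eta=0$, so $[\eta]=-2\pi i c_1(L)$. Hence reducibles occur only on a closed affine subspace of codimension $b_1(Y)$ in $\mathcal{V}_Y$. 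When $b_1>0$ its complement is open and dense, and Lemma \ref{hzero} removes the auxiliary $V$ on irreducibles. I expect an extra argument here if $b_1=0$; the statement as quoted presumably sidesteps that case or uses perturbations of $\eta$ in a non-closed class, and I will not dwell on it.

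\textbf{Key step: surjectivity of the universal linearisation.} At an irreducible solution, the universal linearisation is $\mathbb{S}_{A,\Psi}$ plus the derivative in the parameters:
$$(b,\omega)\mapsto\big(-\gamma(b)\Psi,\ \star\omega\big),$$
acting on $b\in\Omega^1(\mathfrak{u}(E))$ and closed $\omega$. Here $\gamma(b)\Psi$ means Clifford multiplication tensored with the action of $b$ on $E$. Suppose $(\phi,\alpha)$ is $L^2$-orthogonal to the image.
\begin{itemize}
\item Since $\mathbb{S}_{A,\Psi}$ has closed range of finite codimension, $(\phi,\alpha)$ lies in the cokernel of $\mathbb{S}$. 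It is therefore smooth, and by self-adjointness of $\mathbb{D}$ it lies in $\ker\mathbb{D}_{A,\Psi}$.
\item Varying $\eta$ kills the component of $\alpha$ pairing nontrivially with exact forms. Combined with the equations, this forces $\alpha$ to be harmonic-type and decouples the spinor equation.
\item The main point is varying $B$. The condition becomes $\Re\langle\gamma(b)\Psi,\phi\rangle=0$ for all $b\in\Omega^1(\mathfrak{u}(E))$, that is, the pointwise tensor obtained by pairing $\phi\Psi^*$ against Clifford multiplication vanishes.
\end{itemize}
This step is the main obstacle: it requires a pointwise algebraic lemma. On a nonempty open set where $\Psi\neq0$, that lemma should show $\phi=0$ there, using that $\mathfrak{u}(E)\otimes T^*Y$ acting on $\Psi$ spans enough of $S\otimes E$ modulo directions already controlled by the other rows. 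The unique continuation property for the Dirac-type operator $\mathbb{D}$ then gives $\phi\equiv0$, and then $\alpha\equiv 0$ from the remaining equations.

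The approach assumes the algebraic lemma fails only when $\Psi$ has rank-one structure. In that case one uses genericity of $\Psi$ off its zero set, which is nowhere dense by unique continuation, together with the infinite-dimensional freedom in $b$. If the spanning fails at some points, I would treat those points with Aronszajn's theorem applied to the first-order system satisfied by $(\phi,\alpha)$.

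\textbf{Conclusion.} Once the universal linearisation is surjective, $\mathcal{M}^{univ}$ is a Banach manifold. The projection $\pi:\mathcal{M}^{univ}\to\A(E)\times\mathcal{V}_Y$ is Fredholm of index $0$, since the complex \eqref{deformationcomplex} has index zero. By Sard--Smale its regular values are residual in the $C^\ell$ topology. Regular values of $\pi$ are exactly the parameters for which every point of $\M_Y$ has $H^1=H^2=0$.

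To pass to $C^\infty$ parameters, I use the Taubes trick. Non-compactness means one cannot globally bound $\|\Psi\|$, so I intersect over $n$ the sets of parameters for which all solutions with $\|\Psi\|_{L^2}\le n$ are regular. Each such set is open, by compactness of bounded-norm solutions, and dense, by the $C^\ell$ result and approximation. The countable intersection is residual.

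For the second statement, run the same argument with $g$ also varying in $\mathcal{P}_Y$. Since the $(B,\eta)$ variations already give surjectivity, the larger universal space is again a Banach manifold, and Sard--Smale gives a residual set in $\mathcal{P}_Y\times\mathcal{V}_Y$.
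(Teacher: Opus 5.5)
The paper gives no proof of this statement; it quotes it from \cite{Doan_2018}. So I am judging your outline on its own terms. The architecture is the right one: universal moduli space, Sard--Smale, Taubes' trick with an $L^2$ cutoff on $\Psi$, then letting $g$ vary. But the step you yourself call the main obstacle is not resolved, and the route you sketch for it cannot work.

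\textbf{Why the sketched key step fails.} The $B$-variation $\gamma(b)$ of $\slashed{D}_{AB}$ is a self-adjoint endomorphism of $S\otimes E$. It has to be, since $\slashed{D}_{A,B+sb}$ is self-adjoint for every $s$. Hence $\Re\langle\gamma(b)\Psi,i\Psi\rangle=0$ for every $b\in\Omega^1(\mathfrak{u}(E))$. So no pointwise lemma can give $\phi=0$ on $\{\Psi\neq0\}$: the line $i\R\Psi$, which is exactly the infinitesimal gauge direction, is invisible to variations of $B$.

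What is true holds with no rank or genericity hypothesis. Your rank-one contingency is therefore a red herring, and there is no ``genericity of $\Psi$'' to exploit anyway, since $\Psi$ is a fixed solution. At each $x$ with $\Psi(x)\neq0$, the real orthogonal complement of $\{\gamma(b)\Psi(x)\}$ is precisely $i\R\Psi(x)$. To see this, note the condition says the Hermitian operator $\Psi\phi^*+\phi\Psi^*$ on $S_x\otimes E_x$ lies in $\mathrm{id}_S\otimes\mathrm{Herm}(E_x)$. Elements of that space of rank at most $2$ are $k\,\mathrm{id}_S\otimes vv^*$, so they are zero or have a repeated eigenvalue. By contrast, $\Psi\phi^*+\phi\Psi^*$ has signature $(1,1)$ if $\Psi,\phi$ are independent, and rank at most $1$ otherwise. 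So it vanishes, and $\phi\in i\R\Psi$. Your chain ``$\phi=0$ on an open set, then unique continuation'' thus breaks at its first link. The one place where irreducibility is genuinely needed never appears in your argument.

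\textbf{How to close the gap.} The residual direction must be removed using the other rows; this is where the $\eta$-variations and the auxiliary variable $V$ enter.
\begin{enumerate}
\item Orthogonality to $\star\omega$ for all closed $\omega$ means $\int_Y\omega\wedge\alpha=0$ for all closed $\omega$. So $d\alpha=0$ and $[\alpha]=0$, i.e.\ $\alpha=dh$ is \emph{exact}, not ``harmonic-type''.
\item On a component $U$ of $\{\Psi\neq0\}$, write $\phi=\lambda\Psi$ with $\lambda$ imaginary. The Dirac row $\gamma(\alpha)\Psi+\slashed{D}_{AB}\phi=0$ becomes $\gamma(d(h+\lambda))\Psi=0$. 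Hence $h+\lambda\equiv c$ on $U$.
\item Then $\chi=\phi+h\Psi$ satisfies $\slashed{D}_{AB}\chi=0$, and $\chi=c\Psi$ on $U$. Unique continuation for $\slashed{D}_{AB}$ gives $\phi=(c-h)\Psi$ on all of $Y$. That is, $(\alpha,\phi)=\mathbb{G}_{A,\Psi}(c-h)$.
\item Orthogonality to $\mathbb{S}_{A,\Psi}(0,v,0)=\mathbb{G}_{A,\Psi}(v)$, which is the $V$-column, says $\mathbb{G}^*_{A,\Psi}(\alpha,\phi)=0$. Hence $(\Delta+|\Psi|^2)(c-h)=0$.
\item Since $\Psi\not\equiv0$, this forces $h=c$, so $\alpha=0$ and $\phi=0$.
\end{enumerate}
With this inserted, the rest of your outline goes through.

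\textbf{On the case $b_1(Y)=0$.} Here the statement as written is false, rather than in need of an extra argument. Every closed $\eta$ is exact, so reducibles with $\Psi=0$, $F_A=-\eta$ always exist, and non-closed $\eta$ are not admissible. The result should be read for $b_1\geq1$, or for $\M_Y^*$.
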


To count multi-monopoles, the moduli space needs to be oriented. A natural orientation is constructed in \cite[Section 2.6]{Doan_2018}. When $\M_Y$ is regular, this amounts to attaching a sign $\nu(x)=\pm1$ to each point $x\in\M_Y$. 
\begin{defn}\label{countingmonopoles2}
    Suppose $b_1(Y)>1$. For $(g,B,\eta)\in(\mathcal{P}_Y^{cpt}\times\mathcal{V}_Y) \cap \mathcal{U}_Y^{reg}$, define
    $$\SW_Y(\mathfrak{s},g,B) = \sum_{x\in\M_Y(\mathfrak{s},g,B,\eta)}\nu(x)\in\Z.$$
    More generally, for a chamber $C$ (that is, a connected component of $\mathcal{P}_{Y}^{cpt}$) we define $\SW_Y(\mathfrak{s},C)$ by the same formula for any $(g,B,\eta)\in (C\times\mathcal{V}_Y)\cap\mathcal{U}_Y^{reg}$. If $b_1(Y)=1$, we obtain two numbers $\SW_Y^\pm$ for $\eta$ on each side of the codimension 1 wall in $\mathcal{V}_Y$. 
\end{defn}

This definition is well-defined (i.e. independent of the chosen regular point in the chamber $C$) by the cobordism argument outlined above (see Theorem 1.3 and Proposition 2.31 in \cite{Doan_2018} for a proof). It is easy to check that $\SW_Y$ only depends on the gauge equivalence class of $B$.

%% file: chapters/multi-monopoles_on_mapping_tori.tex
\section{The multi-monopole equations on mapping tori}\label{s2}

In \cite{Salamon1999}, Salamon derives an explicit form of the Seiberg--Witten equations on a mapping torus. In this section we adjust this to the multiple spinor case. We begin by reviewing spin$^c$ structures and spin$^c$ connections on mapping tori, before studying the multi-monopole equations and their linearisation in this setting.

\subsection{\texorpdfstring{Spin$^c$ structures on mapping tori}{Spinc structures on mapping tori}}\label{s3.1}

Fix a closed orientable surface $\Sigma$ with a volume form $\omega$. Let $\mathcal{J}(\Sigma,\omega)$ be the space of complex structures compatible with $\omega$ and choose $J\in\mathcal{J}(\Sigma,\omega)$, which equips $\Sigma$ with the structure of a Riemann surface. Then we can form the canonical line bundle $K=\Lambda^{1,0}_J T^*\Sigma$. Interpreting its dual $K^*$ as a principal $U(1)$-bundle, it is isomorphic to the unitary frame bundle of $\Sigma$ and we obtain the canonical spin$^c$ structure
$$K^*\times_{U(1)}\Spin^c(2)\ra\Sigma$$
induced by the map
$$U(1)\ra \Spin^c(2)=(U(1)\times U(1))/\Z_2$$
\begin{equation}\label{eq:spincrep}
    e^{i\theta}\mapsto [e^{i\theta/2},e^{i\theta/2}].
\end{equation}
Precomposing the spin$^c$ representation with this map from $U(1)$ yields
$$U(1)\ra \SO(\HH)$$
$$e^{i\theta}\mapsto (q\mapsto e^{i\theta/2}qe^{i\theta/2}).$$
Splitting $\HH$ as $\C\oplus\C j$, we see that this map fixes the $\C j$ factor and rotates $\C$ by $e^{i\theta}$. This means that the canonical spinor bundle on $\Sigma$ is
$$S_{can}=\underline{\C}\oplus K^*.$$
This construction can be extended to produce a canonical spin$^c$ structure on a mapping torus with fibre $\Sigma$. 
Let $\Diff(\Sigma,\omega)$ denote the space of diffeomorphisms of $\Sigma$ preserving $\omega$. Choose $f\in\Diff(\Sigma,\omega)$ and a smooth family of complex structures $J:\R\ra\mathcal{J}(\Sigma,\omega)$ such that $J_{t+1}=f^*J_t$. Consider the mapping torus
$$\Sigma_f = \R\times\Sigma/\sim,\hspace{4mm}(t+1,z)\sim (t,f(z)),$$
which is equipped with the Riemannian metric
\begin{equation}\label{metric}
    g=dt^2 + \omega(\cdot, J_t\cdot).
\end{equation}
The complex structures $J_t$ also determine a family of canonical line bundles $K_t=\Lambda^{1,0}_{J_t}T^*\Sigma_t$ on $\Sigma$. They satisfy $K_{t+1}=f^*K_t$ and their duals fit together to form the line bundle
$$K_f^*=\{(t,z,\psi) : t\in\R, z\in\Sigma, \psi\in K_t^*\}/\sim,\hspace{5mm}(t+1,z,\psi)\sim (t,f(z),\psi\circ d_z f^{-1})$$
over $\Sigma_f$. The canonical spin$^c$ structure on $\Sigma_f$ is then defined as
$$\mathfrak{s}_{can} = K_f^*\times_{U(1)}\Spin^c(3).$$
Here, the map $U(1)\ra\Spin^c(3)$ is obtained from the map \eqref{eq:spincrep} and the inclusion $\Spin^c(2)\subset \Spin^c(3)$. The associated spinor bundle on $\Sigma_f$ is
$$S_{can} = \underline{\C}\oplus K_f^*.$$
The family of Hermitian inner products on $T\Sigma$ given by
$$\langle\cdot,\cdot\rangle_t = \omega(\cdot,J_t\cdot) - i\omega(\cdot,\cdot)$$
induces a Hermitian inner product on $S_{can}$ which is $\C$-antilinear in the second argument\footnote{Note that the Hermitian inner product used by Salamon \cite{Salamon1999} is $\C$-antilinear in the first argument.}. See \cite{Salamon1999} for a formula defining Clifford multiplication. Other spin$^c$ structures can be obtained from $\mathfrak{s}_{can}$ as follows. Let $L\ra \Sigma$ be a Hermitian line bundle of degree $d$ and choose a unitary lift $\tilde{f}:L\ra L$ of $f:\Sigma\ra\Sigma$. This determines a Hermitian line bundle   
$$L_{\tilde{f}} = (\R\times L)/\sim,\hspace{4mm} (t+1,l)\sim (t,\tilde{f}(l))$$
on $\Sigma_f$. The canonical spin$^c$ structure can be twisted by $L_{\tilde{f}}$ to produce the spin$^c$ structure
$$\mathfrak{s}_{d,\tilde{f}} = (L_{\tilde{f}}\otimes K_f^*)\times_{U(1)}\Spin^c(3).$$
Its spinor bundle is
$$S_{d,\tilde{f}} = \mathfrak{s}_{d,\tilde{f}}\times_{\Spin^c(3)}\HH\simeq L_{\tilde{f}}\oplus (L_{\tilde{f}}\otimes K_f^*),$$
which is similarly equipped with a Hermitian inner product and Clifford multiplication. The determinant line bundle is
$$\det(S_{d,\tilde{f}}) = L_{\tilde{f}}^{\otimes 2}\otimes K_f^*.$$
The following proposition classifies spin$^c$ structures on $\Sigma_f$.

\begin{prop}\label{spincaction}
    Let $L\ra \Sigma$ be a Hermitian line bundle of degree $d$ and let $f\in\Diff(\Sigma,\omega)$. Then,
    \begin{enumerate}
        \item The diffeomorphism $f$ lifts to a unitary isomorphism $\tilde{f}:L\ra L$ and any two such lifts differ by a unitary gauge transformation $u\in\G_L$.
        \item Every spin$^c$ structure on $\Sigma_f$ is isomorphic to $\mathfrak{s}_{d,\tilde{f}}$ for a unitary lift $\tilde{f}:L\ra L$ of $f$. 
        \item Two such lifts $\tilde{f}$ and $\tilde{f}'$ determine isomorphic spin$^c$ structures if and only if $\tilde{f}'=u\tilde{f}$, for $[u]\in\ker(1-f^*)$. Here, $u\in\G_L$ and we are identifying $\pi_0(\G_L)$ with $H^1(\Sigma,\Z)$ via the isomorphism $[u]\mapsto [\frac{u^{-1}du}{2\pi i}]$.
    \end{enumerate}
\end{prop}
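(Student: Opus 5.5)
The plan is to prove the three parts in order, using the description of $\Sigma_f$ as a quotient of $\R\times\Sigma$ and the Wang exact sequence of the fibration $\Sigma\to\Sigma_f\to S^1$. Part (3) is where I expect trouble: the computation below naturally produces the condition $[u]\in\im(1-f^*)$, not the stated $[u]\in\ker(1-f^*)$, so the plan ends by checking which condition is actually correct.

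\textbf{Part (1).} Since $f$ preserves orientation, $f^*L$ has degree $d$. Hermitian line bundles on $\Sigma$ are classified by degree, so there is a unitary isomorphism $L\to f^*L$, which is the same thing as a unitary lift $\tilde f$. If $\tilde f,\tilde f'$ are two lifts, then $\tilde f'\circ\tilde f^{-1}$ covers the identity and is unitary. It is therefore a map $u\in C^\infty(\Sigma,U(1))=\G_L$ with $\tilde f'=u\tilde f$.

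\textbf{Part (2).} I would use the Wang sequence
$$H^1(\Sigma)\xrightarrow{1-f^*}H^1(\Sigma)\to H^2(\Sigma_f)\to H^2(\Sigma)\xrightarrow{1-f^*}H^2(\Sigma).$$
Spin$^c$ structures on $\Sigma_f$ form an $H^2(\Sigma_f;\Z)$-torsor, and $\mathfrak{s}_{d,\tilde f}=\mathfrak{s}_{can}\otimes L_{\tilde f}$. It therefore suffices to show that every line bundle on $\Sigma_f$ is isomorphic to some $L_{\tilde f}$. Given $\mathcal{L}\to\Sigma_f$, pull it back to $\R\times\Sigma$. There it is isomorphic to $\R\times L$ with $L=\mathcal{L}|_{\{0\}\times\Sigma}$, by parallel transport in $t$ for any unitary connection. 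The deck identification $(t+1,z)\sim(t,f(z))$ then becomes a unitary lift $\tilde f$, and $\mathcal{L}\cong L_{\tilde f}$.

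\textbf{Part (3).} I would write an isomorphism $L_{u\tilde f}\to L_{\tilde f}$ as a periodic family $v_t\in\G_L$ with $v_{t+1}\circ u\tilde f=\tilde f\circ v_t$. This is equivalent to $u=v_0^{-1}\,(f^{-1})^{*}v_1$, with $v_1$ homotopic to $v_0$. Taking classes in $\pi_0(\G_L)\cong H^1(\Sigma,\Z)$ gives $[u]=((f^{-1})^*-1)[v_0]$. Conversely, every such class is realised by choosing the path $v_t$. Replacing $[v_0]$ by $-f^*[v_0]$ puts this in the form $[u]\in\im(1-f^*)$. This agrees with the Wang sequence, which shows that the line bundles restricting to degree $d$ on the fibre form a torsor over $\coker(1-f^*)$.

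\textbf{Main obstacle.} The real difficulty is reconciling this with the wording ``$[u]\in\ker(1-f^*)$''. First I would try Poincaré duality on $\Sigma$. Since $f$ is symplectic, $\ker(1-f^*)$ is the annihilator of $\im(1-(f^{-1})^*)=\im(1-f^*)$ under the intersection pairing. This identifies $\ker(1-f^*)$ with a dual object, not with $\im(1-f^*)$ itself, so it does not recover the stated condition.

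I would then test the stated condition on $f=\id$. There $\ker(1-f^*)=H^1(\Sigma)$, so the statement would make all lifts of a given degree isomorphic. But $S^1\times\Sigma$ carries $H^1(\Sigma)$ many spin$^c$ structures with fibre degree $d$, distinguished by $c_1$ on the tori $S^1\times\gamma$. So the condition as worded fails in this case.

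My plan is therefore to prove (1), (2), and the ``if and only if'' with the image condition. I would record that the stated $\ker(1-f^*)$ cannot hold in general, using $f=\id$ as the counterexample, and that the correct criterion is $[u]\in\im(1-f^*)$.
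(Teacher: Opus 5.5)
Your proof is correct and follows the paper's argument. Parts (1) and (2) match, and for (3) the paper likewise encodes an isomorphism $L_{\tilde f}\to L_{\tilde f'}$ as a path $h_t$ in $\G_L$ with a twisted periodicity condition, arriving at $u\simeq h(h\circ f)^{-1}$, i.e.\ $[u]\in\im(1-f^*)$. So the paper's own proof establishes the image condition you derived, and the ``$\ker(1-f^*)$'' in the statement is a typo: your $f=\id$ example shows this, and the later use of $\coker(1-f^*)$ and the Wang sequence in Proposition~\ref{sectiontochernclass} confirms it.
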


\begin{proof}
    A unitary lift $\tilde{f}$ of $f$ is equivalent to a choice of unitary isomorphism $f^*L\simeq L$. These bundles have the same degree since $f^*\omega=\omega$, so such a lift exists. This isomorphism is not canonical, but any two choices differ by a unitary gauge transformation. 
      
    For the second part, recall that $\Spin^c(\Sigma_f)$ is a $H^2(\Sigma_f,\Z)$-torsor, so every spin$^c$ structure is $c\cdot \mathfrak{s}_f$ for some $c\in H^2(\Sigma_f,\Z)$. Every such $c$ is the first Chern class of a line bundle $L_c\ra \Sigma_f$. Over the complement of one surface fibre of $\Sigma_f$, $L_c$ is isomorphic to the pullback of a line bundle $L\ra\Sigma$ of degree $d=\langle c,[\Sigma]\rangle$. Then $L_c\simeq L_{\tilde{f}}$ for some $\tilde{f}:L\ra L$ and $c\cdot \mathfrak{s}_f \simeq \mathfrak{s}_{d,\tilde{f}}$.

    Finally, suppose $\tilde{f}'=u\tilde{f}$ for $u\in\G_L$ which is homotopic to $h(h\circ f)^{-1}$ for some $h\in\G_L$. By the proof of the second part, it suffices to show that $L_{\tilde{f}'}\simeq L_{\tilde{f}}$. To do this, choose a path $h_t\in\G_L$ with $h_0=h$ and $h_1=(h\circ f)u$. Then define an isomorphism $L_{\tilde{f}}\ra L_{\tilde{f}'}$ by $[t,l]\mapsto [t,h_t(l)]$, which is well-defined because
    $$(h\circ f)u\tilde{f}(l) = u\tilde{f}(h(l)) = \tilde{f}'(h(l)).$$
    Conversely, every bundle isomorphism $L_{\tilde{f}}\simeq L_{\tilde{f}'}$ is of this form and defines a path $h_t\in\G_L$ satisfying
    $$\tilde{f}'(h_0(l)) = h_1\tilde{f}(l) = h_1(h_0\circ f)^{-1}\tilde{f}(h(l)).$$
    Taking $u=h_1(h_0\circ f)^{-1}$ completes the proof.
\end{proof}

\subsection{\texorpdfstring{Spin$^c$ connections on mapping tori}{Spinc connections on mapping tori}}\label{canonicalspinc}

There is a canonical spin$^c$ connection on the canonical spinor bundle $S_{can}$. We have $K_t^*\simeq T\Sigma_t$ where $\Sigma_t$ is the fibre of $\Sigma_f$ lying over $t\in S^1$, so $K_f^*$ can be identified with the vertical tangent bundle $T^{V}\Sigma_f$ of the fibration $\Sigma_f\ra S^1$. This is invariant under the Levi-Civita connection $\nabla$ of the metric $g$ defined by equation \eqref{metric}. The canonical spin$^c$ connection on $S_{can}$ is
$$\nabla_{can} = d\oplus \nabla|_{T^{V}\Sigma_f},$$
where $d$ is the trivial connection on $\underline{\C}$. From another point of view, this is the spin$^c$ connection determined by $\nabla|_{T^{V}\Sigma_f}$ on $\det(S_{can})=K_f^*$. As for the twisted spin$^c$ structures, any connection on $\det(S_{d,\tilde{f}})$ is of the form $A_f^{\otimes 2}\otimes\nabla|_{T^{V}\Sigma_f}$ for $A_f\in\A_{\Sigma_f}(L_{\tilde{f}})$. We may write
$$A_f=A(t) + b(t)dt,$$
where $A(t)\in\A_\Sigma(L)$ and $b(t)\in C^{\infty}(\Sigma,i\R)$ satisfy
$$A(t+1)=\tilde{f}^*A(t),\hspace{4mm} b(t+1)=b(t)\circ f$$
for all $t\in\R$. The induced spin$^c$ connection on $S_{d,\tilde{f}}$ is $\nabla_{A_f} = A_f\otimes\nabla_{can}$. 



\subsection{The equations on mapping tori}
We now introduce the Seiberg--Witten equations with multiple spinors on mapping tori, which only requires a small modification to the equations found in \cite{Salamon1999}. The idea is that each of the two equations in \eqref{eq:multimonopole} splits into two more equations, roughly corresponding to the fibre and base directions of the mapping torus. Unlike in the product case, these four equations do not completely decouple and it not possible to choose a temporal gauge as in \cite{Doan_2018}. We continue to fix a closed surface $\Sigma$ with volume form $\omega$ and a diffeomorphism $f\in\Diff(\Sigma,\omega)$. The following definition introduces some notation for spaces of various periodic objects. 

\begin{defn}\label{periodicforms}
    Given a complex vector bundle $V\ra\Sigma$ and a lift $\theta:V\ra V$ of $f:\Sigma\ra\Sigma$, define the following spaces of periodic forms, connections, and complex structures:
    \begin{align*}
        \Omega_{f}^k(\Sigma) &= \{\alpha:\R\ra\Omega^k(\Sigma) : \alpha(t+1) = f^*\alpha(t)\},\\
        \Omega_{\theta}^k(\Sigma,V) &= \{\alpha:\R\ra\Omega^k(\Sigma,L) : \alpha(t+1)=\theta^*\alpha(t)\},\\
        \A_{\theta}(\Sigma,V) &= \{A:\R\ra\A(\Sigma,V) : A(t+1)=\theta^*A(t)\},\\
        \mathcal{J}_f(\Sigma,\omega) &= \{J:\R\ra\mathcal{J}(\Sigma,\omega) : J(t+1)=f^*J(t)\}
    \end{align*}
    Given $J\in\mathcal{J}_f(\Sigma,\omega)$, we also define
    $$\Omega_{\theta,J}^{p,q}(\Sigma,V) = \{\alpha:\R\ra\Omega^{p+q}(\Sigma,V): \alpha(t)\in \Omega^{p,q}_{J(t)}(\Sigma,V), \hspace{1mm}\alpha(t+1)=\theta^*\alpha(t)\}.$$
\end{defn}

\begin{defn}\label{swdata}
    Let $\Sigma$, $\omega$, $f$ be as above, and fix a rank $N$ Hermitian vector bundle $E\ra\Sigma$. For a unitary lift $\varphi:E\ra E$, define the parameter space
    $$\mathcal{P}_\varphi = \mathcal{J}_f(\Sigma,\omega)\times\A_{\varphi}(\Sigma,E).$$

\end{defn}



A parameter $(J,B)\in\mathcal{P}_\varphi$ induces a Riemannian metric by equation \eqref{metric}, as well as a unitary connection $B_\varphi$ on the Hermitian vector bundle 
$$E_\varphi = (\R\times E)/\sim,\hspace{3mm}(t+1,e)\sim(t,\varphi(e))$$
on $\Sigma_f$. Along with a lift $\tilde{f}:L\ra L$, this is the data required to write down the multi-monopole equations \eqref{eq:multimonopole} on $\Sigma_f$. The space of spinors on $\Sigma_f$ decomposes as
$$\Omega^0(E_{\varphi}\otimes S_{d,\tilde{f}}) = \Omega^0(E_{\varphi}\otimes L_{\tilde{f}})\oplus\Omega^0(E_{\varphi}\otimes L_{\tilde{f}}\otimes K_f^*).$$
We will often view such a spinor as a pair
\begin{equation}\label{spinordecomposition}
    \begin{matrix}
        (\Phi,\Psi)\in\Omega_{\varphi\otimes\tilde{f}}^0(\Sigma, E\otimes L)\oplus\Omega^{0,1}_{\varphi\otimes\tilde{f},J}(\Sigma, E\otimes L).
    \end{matrix}
\end{equation}
With this understood, Clifford multiplication $\gamma:\Gamma(T\Sigma_f)\ra \End(E_{\varphi}\otimes S_{d,\tilde{f}})$ is given by
$$\gamma(\xi + \tau\del_t)\begin{pmatrix}
    \Phi\\
    \Psi
\end{pmatrix} = \begin{pmatrix}
    -i\tau\Phi - \sqrt{2}\Psi(\xi)\\
    i\tau\Psi + \langle\cdot,\xi\rangle_t\Phi/\sqrt{2}
\end{pmatrix}$$
for $\xi\in T_z\Sigma$ and $\tau\in\R$. From this we obtain the Dirac operator, in the same way as \cite{Salamon1999}.

\begin{lemma}\label{diracoperator}
    Consider the spin$^c$ connection on $\det(S_{d,\tilde{f}})$ determined by $ A_f=A(t)+b(t)dt$ for $A\in\A_{\tilde{f}}(\Sigma,L)$ and $b\in\Omega^0_f(\Sigma,i\R)$, as described in Section \ref{canonicalspinc}. Along with $\mathfrak{p}=(J,B)\in\mathcal{P}_{\varphi}$, this determines the twisted Dirac operator $\slashed{D}_{ A_fB_\varphi}$ appearing in \eqref{eq:multimonopole}. Explicitly, it is given by
    $$\slashed{D}_{A_fB_\varphi}\begin{pmatrix}
        \Phi\\
        \Psi
    \end{pmatrix} = \begin{pmatrix}
        -i\nabla_t & \sqrt{2}\delbar_{A,\mathfrak{p}}^*\\
        \sqrt{2}\delbar_{A,\mathfrak{p}} & i\nabla_t
    \end{pmatrix}\begin{pmatrix}
        \Phi\\
        \Psi
    \end{pmatrix}.$$
    Here, $\delbar_{A,\mathfrak{p}}:=d_{B\otimes A}^{0,1}$ is the Dolbeault operator determined by the unitary connection $B\otimes A$ on $E\otimes L$ and the complex structure $J$, which implicitly depends on $t$. The operator $\nabla_t$ is defined by
    $$\nabla_t\Phi = \dot{\Phi} + b\Phi, \hspace{5mm} \nabla_t\Psi = \dot{\Psi} +b\Psi + \frac{1}{2}\Psi\circ J\dot{J}.$$
\end{lemma}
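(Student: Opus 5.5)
The plan is to compute the Dirac operator by writing $\slashed{D}_{A_fB_\varphi}=\sum_j \gamma(e_j)\nabla_{e_j}$ in a local orthonormal frame adapted to the fibration, and then to reduce to Salamon's single-spinor computation in \cite{Salamon1999}, tensoring with the auxiliary bundle. Concretely, take the frame $\{\partial_t, e_1, e_2=J_te_1\}$, where $e_1,e_2$ is a $g_t$-orthonormal frame of $T\Sigma$ extended in $t$. For the metric \eqref{metric}, $\partial_t$ is a unit vector orthogonal to the fibres. Since $B_\varphi$ has no $dt$-component (it is induced by the family $B(t)$ with trivial $t$-direction on $E_\varphi$), the twisted connection on $E_\varphi\otimes S_{d,\tilde f}$ is $\nabla = B_\varphi\otimes A_f\otimes\nabla_{can}$. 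It therefore splits as a $t$-direction part $\partial_t + b + (\nabla_{can})_{\partial_t}$ and a fibre part, which on each slice $\{t\}\times\Sigma$ is the surface spin$^c$ connection twisted by $B(t)\otimes A(t)$.

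Next treat the fibre directions. Clifford multiplication by $\xi\in T\Sigma$ exchanges the two summands via $\Psi\mapsto-\sqrt2\Psi(\xi)$ and $\Phi\mapsto\langle\cdot,\xi\rangle_t\Phi/\sqrt2$. I would check the standard fact that on a Kähler surface the spin$^c$ Dirac operator on $\Lambda^{0,0}\oplus\Lambda^{0,1}$ is $\sqrt2(\delbar+\delbar^*)$. Twisting by a unitary connection on $E\otimes L$ only replaces $\delbar$ by $\delbar_{A,\mathfrak p}=d^{0,1}_{B\otimes A}$. This is the same computation as in \cite{Salamon1999}, with $E$ carried along as a passive tensor factor: $\gamma$ acts trivially on $E$ and $B$ is unitary, so the formula persists verbatim. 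This gives the off-diagonal entries.

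For the $\partial_t$ direction, $\gamma(\partial_t)$ acts by $-i$ on $\Phi$ and by $+i$ on $\Psi$, which yields the diagonal entries $-i\nabla_t$ and $i\nabla_t$. It remains to identify $\nabla_{\partial_t}$. On the $\underline{\C}\otimes E\otimes L$ factor, $\nabla_{can}$ is trivial, so $\nabla_t\Phi=\dot\Phi+b\Phi$. On the $K_f^*$-twisted factor, identify $\Psi(t)$ with a $J_t$-antilinear 1-form. The Levi-Civita connection of $g$ restricted to $T^V\Sigma_f$ in the $\partial_t$-direction is the $g_t$-orthogonal, $J_t$-complex-linear projection of $\partial_t$ acting on vertical fields, which is $\mathcal L_{\partial_t}+\frac12 \dot g_t g_t^{-1}$-type symmetrisation. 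Using $g_t=\omega(\cdot,J_t\cdot)$, one gets $\dot g_t=\omega(\cdot,\dot J\cdot)$ and $J\dot J=-\dot JJ$. The induced derivative on $\Psi$ should then be $\dot\Psi+\frac12\Psi\circ J\dot J$, with the correction needed precisely so that $\nabla_t\Psi$ stays in $\Omega^{0,1}_{J_t}$. A quick sanity check confirms this: differentiating $\Psi\circ J=-i\Psi$ shows that $\dot\Psi\circ J+\Psi\circ\dot J=-i\dot\Psi$, so the non-$(0,1)$ part of $\dot\Psi$ is $-\frac12\Psi\circ J\dot J$ up to sign conventions, and the added term cancels it.

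The main obstacle is this last step: pinning down the time-derivative term of the Levi-Civita connection on the vertical bundle, with correct signs and factors under the paper's Hermitian conventions (antilinear in the second slot, unlike Salamon). I would first compute the Koszul formula for $g(\nabla_{\partial_t}X,Y)$ with vertical $X,Y$ extended $t$-independently, which gives $\frac12\dot g_t(X,Y)$. I would then transfer this to $K_f^*\simeq T^V\Sigma_f$ and to its action on $\Psi$ by duality, and finally verify the result against the type-preservation check above. A minor additional point is that the mean curvature of the fibres vanishes because $\omega$ is $t$-independent, so there is no extra zeroth-order term in $\gamma(\partial_t)\nabla_{\partial_t}$ beyond those listed.
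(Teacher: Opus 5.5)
Your proposal is correct and is essentially the paper's argument. The paper gives no computation beyond saying the formula follows from the Clifford multiplication ``in the same way as'' Salamon, with the auxiliary bundle carried along as a passive tensor factor, which is exactly your reduction. Your Koszul step gives $\nabla_{\partial_t}Y=\dot Y-\tfrac12 J\dot J Y$ on vertical fields, hence $\dot\Psi+\tfrac12\Psi\circ J\dot J=(\dot\Psi)^{0,1}$ dually on $(0,1)$-forms. Together with your remark that the fibres have vanishing mean curvature, this supplies the details the paper leaves to the citation.
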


\begin{defn}\label{configurationspaces}
    For $J\in\mathcal{J}_f(\Sigma,\omega)$, define the affine configuration space
    \begin{equation}
        \mathcal{C}_f = \A_{\tilde{f}}(\Sigma,L)\times\Omega^0_{\varphi\otimes\tilde{f}}(\Sigma,E\otimes L)\times\Omega^0_f(\Sigma,i\R)\times\Omega^0_f(\Sigma,i\R)\times\Omega^{0,1}_{\varphi\otimes\tilde{f},J}(\Sigma,E\otimes L),
    \end{equation}
    and its tangent space
    \begin{equation}
        W_f = \Omega^1_{f}(\Sigma,i\R)\oplus\Omega^0_{\varphi\otimes\tilde{f}}(\Sigma,E\otimes L)\oplus\Omega^0_{f}(\Sigma,i\R)\oplus\Omega^0_{f}(\Sigma,i\R)\oplus\Omega^{0,1}_{\varphi\otimes\tilde{f},J}(\Sigma,E\otimes L).
    \end{equation}
\end{defn}
Although $\mathcal{C}_f$ and $W_f$ depend on $J, \tilde{f}$, and $\varphi$, this data will be omitted from the notation. Note that we have swapped the order of $\Phi$ and $b$, despite the ordering suggested by section \ref{s2}. 

\begin{prop}\label{swequations}\cite{Salamon1999}
    Fix a spin$^c$ structure $\mathfrak{s}_{d,\tilde{f}}$ and let $\mathfrak{p}\in\mathcal{P}_\varphi$. Also let $\eta = \eta_2 - \eta_1\wedge dt$ for $\eta_1\in\Omega^1_f(\Sigma,i\R)$ and $\eta_2\in\Omega^2_f(\Sigma,i\R)$. A tuple
    $(A,\Phi,V,b,\Psi)\in\mathcal{C}_{f}$
    solves the modified Seiberg--Witten equation with multiple spinors \eqref{eq:modifiedmultimonopole} on $\Sigma_f$ if
    \begin{equation}\label{eq:SW1}
        \begin{cases}
            -i\nabla_t\Phi + \sqrt{2}\delbar_{A,\mathfrak{p}}^*\Psi - V\Phi = 0, \\
            i\nabla_t\Psi + \sqrt{2}\delbar_{A,\mathfrak{p}}\Phi - V\Psi = 0,\\
            \star_t(\dot{A}-db+\frac{\alpha_t}{2} + \eta_1) - dV - i\sqrt{2}\textnormal{Im}\langle\Psi,\Phi\rangle = 0,\\
            \star_t(F_{A} + \eta_2) + \frac{i\kappa_t}{2} - \dot{V} - \frac{i}{2}|\Phi|^2 + \frac{i}{2}|\Psi|^2 = 0.
        \end{cases}
    \end{equation}
    Here, $\star_t$ is the Hodge star operator for the metric $\omega(\cdot,J_t\cdot)$ on $\Sigma$ and $\kappa_t$ is its Gauss curvature. The 1-form $\alpha\in\Omega^1_f(\Sigma,i\R)$ is defined by $\alpha_t\circ J=\dot{\nabla}+\frac{1}{2}J\nabla\dot{J}$.
\end{prop}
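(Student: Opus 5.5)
The proof is a direct decomposition: substitute the splittings of Section \ref{s2} into the modified equations \eqref{eq:modifiedmultimonopole} and separate the result into fibre and base components. The setup is as follows. On the mapping torus with metric \eqref{metric} we write the connection as $A_f=A(t)+b(t)dt$, the spinor as the pair $(\Phi,\Psi)$ of \eqref{spinordecomposition}, and $\eta=\eta_2-\eta_1\wedge dt$. All objects are periodic in the sense of Definition \ref{periodicforms}, so every identity can be checked locally on $\R\times\Sigma$ and then descends to $\Sigma_f$.

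\textbf{The Dirac equation.} Lemma \ref{diracoperator} gives the twisted Dirac operator $\slashed{D}_{A_fB_\varphi}$ explicitly as a $2\times 2$ matrix. Subtracting $V(\Phi,\Psi)$ from it produces the first two lines of \eqref{eq:SW1} immediately. Nothing further is needed here beyond quoting the lemma.

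\textbf{The curvature equation.} We first record that the connection on $\det(S_{d,\tilde f})=L_{\tilde f}^{\otimes 2}\otimes K_f^*$ is $A_f^{\otimes 2}\otimes\nabla|_{T^V\Sigma_f}$. Following the convention implicit in \eqref{eq:SW1}, we identify $F_A$ with half the curvature of this determinant connection, since the factors of $1/2$ in front of $\alpha_t$ and $\kappa_t$ indicate this normalisation. The curvature of the $A_f$ part is
$$F_{A_f}=F_{A(t)}+dt\wedge(\dot A-db),$$
where $d$ denotes the fibrewise exterior derivative.

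The curvature of $\nabla|_{T^V}$ splits the same way. Its fibre component is $i\kappa_t\,\omega$. Its mixed component is $dt\wedge\alpha_t$, where $\alpha_t$ measures the $t$-derivative of the Levi-Civita connections of $\omega(\cdot,J_t\cdot)$ together with the correction $\tfrac12 J\nabla\dot J$ needed to keep $T^V$ complex-linear. This is exactly the definition $\alpha_t\circ J=\dot\nabla+\tfrac12 J\nabla\dot J$, and I would verify it by differentiating the connection on $K_t^*\simeq T\Sigma_t$ in $t$.

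Next we apply the three-dimensional Hodge star for $dt^2+g_t$:
\begin{itemize}
\item $\star(dt\wedge\beta)=\pm\star_t\beta$ for fibre 1-forms $\beta$, which gives the base component;
\item $\star\omega=dt$, which gives the $dt$ component.
\end{itemize}
We also split $dV=d_\Sigma V+\dot V\,dt$.

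\textbf{The quadratic term.} This is the main obstacle. We must compute $\gamma^{-1}\mu(\Phi,\Psi)$ from the stated Clifford multiplication formula. To do this, pair $\mu$ against $\gamma(\xi+\tau\partial_t)$ using the trace inner product on $\mathfrak{su}(S)$, tensored with the trace over $E$.
\begin{itemize}
\item The $\tau$-component comes from the diagonal entries $\mp i\tau$ and gives $\tfrac{i}{2}(|\Phi|^2-|\Psi|^2)$, up to the normalisation fixed by the factor $\tfrac12$ in the definition of $\mu$.
\item The $\xi$-component comes from the off-diagonal entries $-\sqrt2\Psi(\xi)$ and $\langle\cdot,\xi\rangle_t\Phi/\sqrt2$, and yields $i\sqrt2\,\mathrm{Im}\langle\Psi,\Phi\rangle$, read as a 1-form via the Hermitian metric.
\end{itemize}
The sign and constant bookkeeping in this step, together with the orientation convention that fixes the sign of $\star_t$ applied to the mixed terms, is where errors are likely. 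I would fix everything by checking a single case at a point, namely flat $\Sigma$ with constant $J$ and $N=1$, against Salamon's formulas \cite{Salamon1999}. Once constants match there, the general case follows from tensoriality in $E$ and naturality.

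\textbf{Conclusion.} Collecting the fibre and $dt$ components gives the last two lines of \eqref{eq:SW1}. Conversely, these components together with the Dirac lines are equivalent to \eqref{eq:modifiedmultimonopole}, which completes the proof.
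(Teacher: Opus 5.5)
Your outline is the computation the paper takes from \cite{Salamon1999} without reproducing it, so it is essentially the paper's approach. The Dirac lines are read off Lemma~\ref{diracoperator}; the curvature equation is split into $\Sigma$- and $dt$-components in the half-determinant normalisation that \eqref{eq:SW1} tacitly uses; and the $E$-twist enters only through $\delbar_{A,\mathfrak{p}}$ and additively (over a unitary frame of $E$) in $\mu$.

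One correction concerns the $\kappa$ term. By Gauss--Bonnet the fibre curvature of $K_f^*\simeq T^V\Sigma_f$ is $-i\kappa_t\omega$, not $+i\kappa_t\omega$; this also agrees with the formula $F_{\nabla_f}=-\tfrac{i\kappa}{2}\omega-\tfrac{\alpha}{2}\wedge dt$ that the paper quotes later. You chose the sign to match the target, and your flat calibration cannot test it because $\kappa=0$ there. If you actually carry out this step you get $\star_tF_A-\tfrac{i\kappa_t}{2}$ in the $dt$-component. That exposes a sign inconsistency in \eqref{eq:SW1} as printed, rather than a flaw in your method.
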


We will consider perturbations $\eta = \eta_2 - \eta_1\wedge dt$ of the form
\begin{equation}\label{perturbation}
    \eta_1 = -\frac{\alpha}{2}-i\sigma,\hspace{4mm}\eta_2 = i\Big(\frac{\kappa}{2} -\tau\Big)\omega
\end{equation}
for $\sigma\in\Omega^1_f(\Sigma)$ and $\tau\in\Omega^0_f(\Sigma)$. Indeed, every 2-form $\eta$ can be written in this way.

\begin{lemma}
    The perturbation $\eta$ defined by \eqref{perturbation} is closed if and only if $\dot{\tau} + \star_t d\sigma=0$.
\end{lemma}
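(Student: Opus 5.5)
The plan is a direct computation of $d\eta$ on $\Sigma_f$ in the product coordinates $(t,z)$ on $\R\times\Sigma$. Periodic objects descend to $\Sigma_f$, so closedness can be checked on the cover. Write the exterior derivative on $\R\times\Sigma$ as $d = d_\Sigma + dt\wedge\partial_t$. For $\eta=\eta_2-\eta_1\wedge dt$, the term $d_\Sigma\eta_2$ vanishes because $\eta_2$ has top degree on the surface, and $d(\eta_1\wedge dt)=d_\Sigma\eta_1\wedge dt$. Moving $dt$ to the front (a $2$-form commutes with a $1$-form) gives
\begin{equation*}
d\eta = dt\wedge\big(\partial_t\eta_2 - d_\Sigma\eta_1\big).
\end{equation*}
So $\eta$ is closed if and only if $\partial_t\eta_2=d_\Sigma\eta_1$ as $2$-forms on $\Sigma$ for every $t$. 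Since $\omega$ is $t$-independent, both sides are multiples of $\omega$.

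Next I substitute the specific form \eqref{perturbation}. This gives $\partial_t\eta_2 = i(\tfrac{\dot\kappa}{2}-\dot\tau)\omega$ and $d_\Sigma\eta_1 = -\tfrac12 d\alpha - i\,d\sigma$. To handle the geometric terms, I would split $\eta=\eta^{0}+\eta'$, where $\eta^{0}$ is the part with $\sigma=\tau=0$. The claim is that $\eta^{0}$ is closed, that is, $d\alpha_t = -i\dot\kappa_t\,\omega$ up to the sign conventions fixed in Proposition \ref{swequations}. The reason is that $\eta^0$ is, up to sign, exactly the term in \eqref{eq:SW1} coming from the curvature of the connection on $K_f^*$ induced by the Levi-Civita connection, that is, $\tfrac12 F_{\nabla|_{T^V\Sigma_f}}$. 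Decomposing that curvature as in Proposition \ref{swequations} produces $\tfrac{i\kappa_t}{2}\omega$ as the fibre component and $\tfrac{\alpha_t}{2}$ as the mixed $dt$ component. Closedness then follows from the Bianchi identity $dF=0$.

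Given that, $d\eta = d\eta'$ with $\eta' = -i\tau\omega + i\sigma\wedge dt$. By the first step, $d\eta'=0$ if and only if $-i\dot\tau\,\omega$ and $\pm i\,d\sigma=\pm i(\star_t d\sigma)\omega$ cancel. The Hodge star on top-degree forms satisfies $d\sigma=(\star_t d\sigma)\,\omega$ since $\omega$ is the area form of $\omega(\cdot,J_t\cdot)$. The cancellation therefore reduces to the scalar equation $\dot\tau+\star_t d\sigma=0$.

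The main obstacle is bookkeeping, not anything conceptual. The sign in front of $\star_t d\sigma$ depends on the orientation convention for $\Sigma_f$ ($dt\wedge\omega$ versus $\omega\wedge dt$), on the sign with which $\eta_1$ enters $\eta$, and on the normalisation of $\alpha$ and $\kappa$ in Proposition \ref{swequations}. A naive count with the stated convention $\eta=\eta_2-\eta_1\wedge dt$ gives the opposite sign, namely $\dot\tau-\star_t d\sigma=0$. I would therefore carry out the computation carefully in the conventions of \cite{Salamon1999} to confirm the sign as stated in the lemma. The same care is needed to verify the identity $d\alpha_t=-i\dot\kappa_t\omega$ directly from $\alpha_t\circ J=\dot\nabla+\tfrac12J\nabla\dot J$, if one prefers not to appeal to the Bianchi identity.
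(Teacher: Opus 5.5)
Your route is the same as the paper's. You write $d\eta=dt\wedge(\partial_t\eta_2-d_\Sigma\eta_1)$ and remove the $\kappa$ and $\alpha$ terms with the Bianchi identity. In the paper's normalisation the geometric part of $\eta$ is exactly $-F_{\nabla_f}$, where $F_{\nabla_f}=-\frac{i\kappa}{2}\omega-\frac{\alpha}{2}\wedge dt$. You then read off a scalar condition on $(\sigma,\tau)$.

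The one step you leave open is the sign, and there your ``naive count'' is correct. So that step cannot be closed the way you propose. With \eqref{perturbation} and $\eta=\eta_2-\eta_1\wedge dt$ taken literally, what remains is $-i\tau\omega+i\sigma\wedge dt$. Since a $2$-form commutes with a $1$-form and $\star_t\omega=1$, we have $d\sigma\wedge dt=(\star_t d\sigma)\,dt\wedge\omega$. Hence $d\eta=-i(\dot\tau-\star_t d\sigma)\,dt\wedge\omega$.

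The paper's proof reaches $-i(\dot\tau+\star_t d\sigma)\,dt\wedge\omega$ only because its final equality treats $d\sigma\wedge dt$ as $-dt\wedge d\sigma$. A local check confirms this. Take $\omega=dx\wedge dy$, $J$ constant (so $\kappa=\alpha=0$), $\sigma=x\,dy$ and $\tau=-t$. The stated condition holds, but $d\eta=2i\,dt\wedge\omega$. Whether $d\eta$ vanishes does not depend on orientation, so redoing the computation in the conventions of \cite{Salamon1999} will not ``confirm'' the stated sign.

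What reconciles things is the sign with which $\tau$ enters $\eta_2$. The condition $\dot\tau+\star_t d\sigma=0$ is exactly closedness of $i\tau\omega+i\sigma\wedge dt$. That is \eqref{perturbation} with $\eta_2=i(\frac{\kappa}{2}+\tau)\omega$, for which your computation gives $d\eta=i(\dot\tau+\star_t d\sigma)\,dt\wedge\omega$.

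This is also the convention the rest of the paper uses. \eqref{eq:epsilonSW} pairs $\star_t(\dot A-db-i\sigma)$ with $+i\tau$ in the curvature equation, whereas inserting \eqref{perturbation} into \eqref{eq:SW1} yields $-i\tau$. Lemma \ref{hequalszero} likewise applies $\dot\tau+\star_t d\sigma=0$ with $F_A=\frac{i}{2}|\Phi|^2\omega-i\tau\omega$.

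So commit to your computed sign. Your third paragraph asserts the reduction to $\dot\tau+\star_t d\sigma=0$ right after leaving the sign as $\pm$, which contradicts your own count. State explicitly which convention for $\tau$ makes the lemma true.
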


\begin{proof}
    The curvature of $\nabla_f$ is $F_{\nabla_f} = -\frac{i\kappa}{2}\omega - \frac{\alpha}{2}\wedge dt$ \cite{Salamon1999}, so the Bianchi identity implies
    $$0=dF_{\nabla_f} = -\frac{i\dot{\kappa}}{2}dt\wedge\omega - \frac{d\alpha}{2}\wedge dt.$$
    The exterior derivative of $\eta$ is
    $$d\eta = i\Big(\frac{\dot{\kappa}_t}{2} - \dot{\tau_t}\Big)dt\wedge\omega + \Big(\frac{d\alpha_t}{2} + id\sigma_t\Big)\wedge dt = -i(\dot{\tau} + \star_t d\sigma)dt\wedge\omega,$$
    which vanishes if and only if $\dot{\tau} + \star_t d\sigma=0$. 
\end{proof}

It will be easier to work with the variables $\sigma,\tau$ than $\eta_1,\eta_2$, so we define the following perturbation space.

\begin{defn}\label{perturbationspace}
    Then for $J\in\mathcal{J}_f(\Sigma,\omega)$, define the spaces
    $$\mathcal{V}_f = \{(\sigma,\tau)\in\Omega^1_f(\Sigma)\times\Omega^0_f(\Sigma) : \dot{\tau} + \star_td\sigma=0\},$$
    $$\mathcal{V}_f^{\pm} = \{(\sigma,\tau)\in\mathcal{V}_f : \pm(d-\bar{\tau})<0\},$$
    where $\bar{\tau} := \int_\Sigma\frac{\tau\omega}{2\pi}\in\R$ is necessarily constant. 
\end{defn}



Following Salamon \cite{Salamon1999}, we choose to rescale our variables to
\begin{equation}\label{rescalings}
    \omega^{new}=\frac{1}{2\e^2}\omega^{old},\hspace{4mm}\tau^{new}=2\e^{2}\tau,\hspace{4mm}\Phi^{new} = \sqrt{2}\e\Phi^{old},\hspace{4mm}\Psi^{new} = \e^{-1}\Psi^{old},
\end{equation}
for $\e>0$. This preserves the product $\tau\omega$ and thus the perturbation $\eta$. Geometrically, the metric $g=dt^2+\omega(\cdot,J_t\cdot)$ becomes $g_\e=dt^2 + 2\e^{2}\omega(\cdot,J_t\cdot)$, which degenerates in the fibre direction as $\e\ra 0$. After rescaling, equation \eqref{eq:SW1} becomes
\begin{equation}\label{eq:epsilonSW}
    \begin{cases}
        -i\nabla_t\Phi + \delbar_{A,\mathfrak{p}}^*\Psi - V\Phi=0,\\
        i\nabla_t\Psi + \e^{-2}\delbar_{A,\mathfrak{p}}\Phi - V\Psi=0,\\
        \star_t(\dot{A}-db - i\sigma) - dV - i\textnormal{Im}\langle\Psi,\Phi\rangle = 0,\\
        \e^{-2}(\star_tF_A -\frac{i}{2}|\Phi|^2 + i\tau)  - \dot{V} + \frac{i}{2}|\Psi|^2=0.
    \end{cases}
\end{equation}
Here, the Hodge star operator and Hermitian inner products are determined by the unscaled volume form $\omega$. 

\subsection{The linearisation}

Let $\Xi=(A,\Phi,0,b,\Psi)\in\mathcal{C}_f$ be an irreducible solution to the rescaled multi-monopole equations \eqref{eq:epsilonSW}. A short computation shows that the formal adjoint of the linearisation $\mathbb{G}_{\Xi}$ of the gauge action with respect to the $L^2$ inner product induced by $g_\e$ (with the appropriate rescalings) is
\begin{equation}
    \mathbb{G}_{\Xi}^{*_\e}(\xi) = \e^{-2}(-d^{*_t}a - i\textnormal{Im}\langle\Phi,\phi\rangle) + \dot{c} - i\textnormal{Im}\langle\Psi,\psi\rangle,
\end{equation}
for $\xi=(a,\phi,v,c,\psi)\in W_{f}$. Taking care with the signs, one can also linearise the rescaled multi-monopole equations \eqref{eq:epsilonSW} and write down the operator $\mathbb{G}^{*_\e}_\Xi\oplus\mathbb{S}_{\Xi}$ as in \eqref{3dlinearisation}:
$$\D_{\e}(\Xi):W_{f}\ra W_{f},$$
\begin{equation}\label{linearisation5x5}
    \mathcal{D}_{\e}(\Xi) = \begin{pmatrix}
        \star_t\del_t & -i\textnormal{Im}\langle\Psi,\cdot\rangle & -d & -\star_t d & -i\textnormal{Im}\langle\cdot,\Phi\rangle \\
        -\langle\Psi,(\cdot)^{0,1}\rangle & i\nabla_t & (\cdot)\Phi & i(\cdot)\Phi & -\delbar_{A,\mathfrak{p}}^*\\
        -\e^{-2}d^{*_t} & -\e^{-2}i\textnormal{Im}\langle\Phi,\cdot\rangle & 0 & \del_t & -i\textnormal{Im}\langle\Psi,\cdot\rangle \\
        \e^{-2}\star_t d & -\e^{-2}i\textnormal{Re}\langle\Phi,\cdot\rangle & -\del_t & 0 & i\textnormal{Re}\langle\Psi,\cdot\rangle \\
        -\e^{-2}(\cdot)^{0,1}\Phi & -\e^{-2}\delbar_{A,\mathfrak{p}} & (\cdot)\Psi & -i(\cdot)\Psi & -i\nabla_t
    \end{pmatrix}.
\end{equation}
Here, in order to view this as a map on $W_f$, we swapped the second and fourth rows and columns compared to equation \eqref{3dlinearisation}. 
The operator $\D_\e$ is elliptic and self-adjoint with respect to the $L^2$ metric \eqref{3dinnerproduct} determined by $g_\e$. Consider the block matrix decomposition
\begin{equation}\label{blocklinearisation}
    \D_{\e}(\Xi) = \begin{pmatrix}
        N_{b,\Psi} & G_{A,\Phi} & S_{A,\Phi}^*\\
        \e^{-2}G_{A,\Phi}^* & 0 & L_{\Psi}^*\\
        \e^{-2}S_{A,\Phi} & L_{\Psi} & M_{b,\Psi}
    \end{pmatrix},
\end{equation}
given by
$$G_{A,\Phi} = \begin{pmatrix}
    -d\\
    (\cdot)\Phi
\end{pmatrix},\hspace{5mm}S_{A,\Phi} = \begin{pmatrix}
    \star_t d & -i\textnormal{Re}\langle\Phi,\cdot\rangle\\
    -(\cdot)^{0,1}\Phi & -\delbar_{A,\mathfrak{p}}
\end{pmatrix},$$
$$N_{b,\Psi} = \begin{pmatrix}
    \star_t\del_t & -i\textnormal{Im}\langle\Psi,\cdot\rangle\\
    -\langle\Psi,(\cdot)^{0,1}\rangle & i\nabla_t
\end{pmatrix},\hspace{5mm} L_{\Psi} = \begin{pmatrix}-\del_t \\ (\cdot)\Psi\end{pmatrix},\hspace{5mm} M_{b,\Psi} = \begin{pmatrix} 0 & i\textnormal{Re}\langle\Psi,\cdot\rangle\\
-i(\cdot)\Psi & -i\nabla_t\end{pmatrix}.$$
The subscripts indicate which parts of $\Xi=(A,\Phi,0,b,\Psi)$ each operator depends on. Some useful identities between these operators are proven in Appendix \ref{appendix:a}.

%% file: chapters/vortex_moduli_space.tex
\section{The multi-vortex moduli space}\label{s3}

By considering multi-monopoles on $S^1\times\Sigma$ that are invariant in the $S^1$ direction, equation \eqref{eq:multimonopole} reduces to a system of equations on $\Sigma$ which we will call the multi-vortex equations. The classical vortex equations have been extensively studied (for example, see \cite{Bradlow1990,Garcia-Prada1994}). In that case, the moduli space can be identified with a symmetric product of $\Sigma$. In contrast, the multi-vortex moduli space lacks such an explicit description, although it is understood in some cases \cite{Doan_2018,thakar2025modulispacemultimonopolesriemann}. Throughout this section, fix a closed oriented surface $\Sigma$ with volume from $\omega$, a degree $d$ Hermitian line bundle $L\ra\Sigma$, and a rank $N$ Hermitian vector bundle $E\ra\Sigma$.

\subsection{A dimensional reduction}

Let $Y=S^1\times\Sigma$ and choose the data $(\mathfrak{s},g,B,\eta)$ of equation \eqref{eq:multimonopole} to be pulled back from $\Sigma$, as was considered in \cite{Doan_2018}. Viewing $S^1\times\Sigma$ as a mapping torus with $f=\id$, this is equivalent to choosing trivial lifts $\tilde{f}=\id_L$, $\varphi=\id_E$, a constant path $\mathfrak{p}=(J,B)\in\mathcal{P}_{\varphi}$, and a closed $S^1$-invariant 2-form $\eta$. For such choices of data, a solution $(A,b,\Phi,\Psi)$ to equation \eqref{eq:SW1} is said to be $S^1$-invariant if
$$\dot{A}=0,\hspace{2mm} b=0,\hspace{2mm} \dot{\Phi}=0,\hspace{2mm} \dot{\Psi}=0.$$
\begin{defn}
    Define the spaces
    $$\mathcal{C}_\Sigma = \A_\Sigma(L)\times\Gamma(\Sigma,E\otimes L)\times\Omega^{0,1}(\Sigma,E\otimes L),$$
    $$\mathcal{C}_\Sigma^* = \{(A,\Phi,\Psi)\in\mathcal{C}_\Sigma : (\Phi,\Psi)\neq(0,0)\},$$
    $$\mathcal{P}_\Sigma = \mathcal{J}(\Sigma,\omega)\times\A(E).$$
\end{defn}
\begin{defn}\label{multivortex}
    Let $\mathfrak{p}=(J,B)\in\mathcal{P}_\Sigma$ and $\tau\in\Omega^0(\Sigma)$. The multi-vortex equations on $\Sigma$ are the following system of partial differential equations for $(A,\Phi,\Psi)\in\mathcal{C}_\Sigma$:
    \begin{equation}\label{eq:vortex}
    \begin{cases}
        \delbar_{A,\mathfrak{p}}^*\Psi = 0,\\
        \delbar_{A,\mathfrak{p}}\Phi = 0,\\
        \langle\Psi,\Phi\rangle=0,\\
        \star_tF_A -\frac{i}{2}|\Phi|^2 + \frac{i}{2}|\Psi|^2 + i\tau=0.
    \end{cases}
    \end{equation}
\end{defn}
It is clear that an $S^1$-invariant multi-monopole $(A,0,\Phi,\Psi)$ solves the multi-vortex equations. Note that our convention differs from that of \cite{Doan_2018}, who writes the multi-vortex equations in terms of $\alpha=\Phi$ and $\beta=\Psi^*$. The gauge group $\G_L=C^\infty(\Sigma,U(1))$ of unitary automorphisms of $L$ acts on $\mathcal{C}_\Sigma$ by
$$g\cdot(A,\Phi,\Psi) = (A-g^{-1}dg,g\Phi,g\Psi).$$
Just like the multi-monopole equations, the multi-vortex equations are $\G_L$-invariant. Reducible and irreducible solutions are similarly defined. Consider the number
$$\bar{\tau} = \int_{\Sigma}\frac{\tau\omega}{2\pi}\in\R.$$
Integrating the last equation of \eqref{eq:vortex}, we see that
$$0 = \int_{\Sigma} (-F_A + i\tau\omega - \frac{i}{2}|\Phi|^2\omega +\frac{i}{2}|\Psi|^2\omega) = 2\pi i(d-\bar{\tau}) - \frac{i}{2}\|\Phi\|_{L^2}^2 + \frac{i}{2}\|\Psi\|_{L^2}^2,$$
so $d-\bar{\tau}<0$ implies $\Phi\neq 0$ and $d-\bar{\tau}>0$ implies $\Psi\neq 0$. In particular, there are no reducible solutions in both of these cases. When $d-\bar{\tau}=0$ we have $|\Phi|=|\Psi|$, which allows for the possibility of a reducible solution with $\Phi=\Psi=0$.


\begin{defn}
    Let $\mathfrak{p}\in\mathcal{P}_\Sigma$ and choose $\tau\in \Omega^0(\Sigma)$ with $d-\overline{\tau}<0$. Define the moduli space of multi-vortices $\M_\Sigma(d,\mathfrak{p},\tau)$ as the space of $\G_L$-equivalence classes of solutions to equation \eqref{multivortex}.
\end{defn}

The moduli space $\M_\Sigma$ has the structure of a real analytic space, given by the Kuranishi maps induced by the relevant deformation complex \cite{Doan_2018}. As we have seen, $S^1$-invariant multi-monopoles correspond to multi-vortices. In fact, for data pulled back from $\Sigma$, every multi-monopole on $S^1\times\Sigma$ becomes $S^1$-invariant after a gauge transformation and there is an isomorphism of the moduli spaces. 

\begin{theorem}{\cite{Doan_2018}}\label{S1invariantstuff}
    Let $\pi:S^1\times\Sigma\ra\Sigma$ be the projection to the second factor. Suppose the data $(\mathfrak{s},g,B,\eta)$ on $Y=S^1\times\Sigma$ is pulled back from $\Sigma$, with $g,B$ determined by $\mathfrak{p}\in\mathcal{P}_\Sigma$ and $\eta=-i\tau\omega$ for $\tau\in\Omega^0(\Sigma)$ with $d-\bar{\tau}\neq 0$ (c.f. equation \eqref{perturbation}). Then every multi-monopole differs from an $S^1$-invariant multi-monopole by a gauge transformation in $\G_{\pi^*L}$. Moreover, any two $S^1$-invariant multi-monopoles differ by a gauge transformation in $\G_L$ and there is an isomorphism of real analytic spaces
    $$\M^*_{Y}(\mathfrak{s},g,B,\eta)\simeq \M_{\Sigma}(d,\mathfrak{p},\tau).$$
\end{theorem}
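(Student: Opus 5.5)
The plan is to follow the standard reduction for $S^1$-invariant Seiberg--Witten solutions on product manifolds, adapted to the equations \eqref{eq:SW1} with $f=\id$, $\tilde f=\id_L$, $\varphi=\id_E$ and constant $\mathfrak{p}$. In this setting $\dot J=0$, so $\alpha=0$, and $\kappa$ is constant in $t$.

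Let $(A,\Phi,V,b,\Psi)$ solve \eqref{eq:modifiedmultimonopole} with $\Psi$-component (the full spinor) nonzero. By Lemma \ref{hzero} we have $V=0$. The first step is to pass to a temporal-type gauge. Since $H^1(S^1\times\Sigma)$ has a circle component, we cannot fully kill $b$. Instead we use a gauge transformation $g(t,z)$ solving $\dot g g^{-1}=b-\bar b(t)$ fibrewise together with a Hodge-theoretic correction, which reduces to $b$ being a constant $ic$, $c\in\R$. We then choose $A(t)$ in Coulomb gauge relative to $A(0)$ on each slice. The periodicity obstruction lies in $H^1(\Sigma,\Z)$, namely the holonomy class of $A(1)$ versus $A(0)$, which is trivial for a genuine bundle $\pi^*L$.

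The key step is an energy identity. I would take the $L^2$ norm of the left-hand sides of \eqref{eq:SW1} and expand, in the style of Weitzenböck/Bochner arguments. Integrating by parts in $t$ over $S^1$ and in $\Sigma$, the cross terms should pair $\nabla_t$-terms against $\delbar$-terms. These cross terms reduce, via the commutator $[\nabla_t,\delbar_A]=(\dot A-db)^{0,1}$ and the third equation, to a nonnegative expression. Schematically, the squared norm equals
\[\int\big(|\nabla_t\Phi|^2+|\nabla_t\Psi|^2+|\dot A-db|^2\big)+\int\Big(2|\delbar_A\Phi|^2+2|\delbar_A^*\Psi|^2+\big|\star F_A+\tfrac{i\kappa}{2}-\tfrac i2|\Phi|^2+\tfrac i2|\Psi|^2+\eta_2\big|^2+|\langle\Psi,\Phi\rangle|^2\Big)\]
up to boundary terms. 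The periodicity in $t$ kills the boundary terms. Hence each term vanishes: $\nabla_t\Phi=\nabla_t\Psi=0$ and $\dot A=db$.

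From these identities we obtain $S^1$-invariance. $\dot A=db$ with $b$ constant gives $\dot A=0$. Then $\dot\Phi=-ic\Phi$ and $\dot\Psi=-ic\Psi$ with periodicity force $c\in 2\pi\Z$, and the gauge transformation $e^{ict}$ removes this, giving $b=0$ and $\dot\Phi=\dot\Psi=0$. The vanishing of the remaining terms is exactly \eqref{eq:vortex}, with $\tau$ absorbing the $\kappa/2$ shift via \eqref{perturbation}.

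The main obstacle is getting the cross terms right. The sign of $\langle\Psi,\Phi\rangle$ coupling and the curvature commutator must combine into a perfect square, and the zeroth-order terms (the $i\,\mathrm{Im}\langle\Psi,\Phi\rangle$ against $\dot A-db$, and $|\Phi|^2-|\Psi|^2$ against $\nabla_t$) must cancel. I would first verify this on the $N=1$ computation in Salamon's setup, and then note that $E$ enters only through $\delbar_{A,\mathfrak{p}}$, whose $B$-part is $t$-independent.

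For the second claim, two $S^1$-invariant solutions related by $g\in\G_{\pi^*L}$ satisfy $g^{-1}\dot g=0$ from the $b$-component, so $g$ is pulled back from $\Sigma$. Irreducibility is used here, and $d-\bar\tau\neq0$ rules out reducibles. This gives a bijection of moduli spaces. To upgrade it to an isomorphism of real analytic spaces, I would compare deformation complexes. Fourier-decomposing the linearisation \eqref{3dlinearisation} in $t$, the nonzero modes are invertible by the same energy identity applied to the linearised operator. The zero mode is the vortex deformation complex, so the Kuranishi maps are identified.
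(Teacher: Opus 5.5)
The paper does not prove this statement itself; it is quoted from \cite{Doan_2018}. Your dimensional-reduction and energy-identity route is the standard way to prove it.

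The identity you were unsure about is correct. In fact the cross terms cancel exactly, rather than leaving a nonnegative remainder. With $V=0$, $\dot J=0$ and $\dot B=0$, integrating by parts in $t$ turns the two Dirac cross terms into a pairing of $\Psi$ with $[\nabla_t,\delbar_{A,\mathfrak{p}}]\Phi=(\dot A-db)^{0,1}\Phi$. This is exactly minus the cross term between $\star(\dot A-db)$ and $i\sqrt2\Im\langle\Psi,\Phi\rangle$ coming from the third equation of \eqref{eq:SW1}. It is just the $dt\wedge(\cdot)$-part of the usual Weitzenb\"ock cancellation. No boundary or topological term survives, so every term in your sum of squares vanishes.

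One step fails as written: the preliminary gauge fixing. You cannot gauge $b$ to a constant before using the equations. The quantity $\dot A-db$ is the $dt$-component of $F_{A_f}$ and hence gauge invariant. For example, $A_f=A_0+\beta\,dt$ with $\beta\in C^\infty(\Sigma,i\R)$ non-constant is not gauge equivalent to any periodic configuration with constant $b$. Any such configuration would need $\dot A'=-d\beta$, contradicting $A'(1)=A'(0)$. So no ``Hodge-theoretic correction'' of the kind you describe exists.

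Your deduction ``$\dot A=db$ with $b$ constant gives $\dot A=0$'' rests on this, so reorder the argument:
\begin{enumerate}
\item The energy identity is gauge invariant, so first obtain $\nabla_t\Phi=\nabla_t\Psi=0$ and $\dot A=db$.
\item Apply the periodic gauge transformation $g(t,z)=\exp\big(\int_0^t(b(s,z)-\beta(z))\,ds\big)$, with $\beta(z)=\int_0^1 b(s,z)\,ds$. This makes $b=\beta$ independent of $t$.
\item Now $A(1)=A(0)+d\beta$, and periodicity forces $d\beta=0$. So $b$ is constant, and your $e^{ict}$ argument finishes.
\end{enumerate}
The slice-wise Coulomb gauge is unnecessary.

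For the real-analytic isomorphism, note that the $t$-independent Fourier mode of \eqref{3dlinearisation} is not literally the vortex deformation complex. It still carries the $b$- and $V$-directions, and the $\Omega^1(\Sigma)$-component of the curvature equation. You must first show that these extra components of the kernel and cokernel vanish. This is a linearised version of the $V=0$, $b=\textnormal{const}$ argument: as in the proof of Proposition \ref{kernelidentification}, applying $G^*$ gives $(\Delta+|\Phi|^2+|\Psi|^2)v=0$ for the $V$-component, and the $b$-component is handled similarly. Only then can you invoke $S^1$-equivariance of the Kuranishi construction to identify the Kuranishi maps.
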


\subsection{Framed vortices and symplectic quotients}

Unless otherwise specified, we will always assume the convention $d-\bar{\tau}<0$. Every result has an analogous statement when $d-\bar{\tau}>0$, with the roles of $\Psi$ and $\Phi$ swapped. We ignore the case $d-\bar{\tau}=0$ because then $\M_\Sigma$ is either empty or contains reducibles. 

\begin{defn}
    Let $\mathfrak{p}\in\mathcal{P}_\Sigma$ and $\tau\in\Omega^0(\Sigma)$ with $d-\bar{\tau}<0$. The framed multi-vortex equations on $\Sigma$ are the following system of equations for $(A,\Phi)\in\A_\Sigma(L)\times\Gamma(\Sigma,E\otimes L)$:
    \begin{equation}\label{eq:framedvortex}
        \begin{cases}
            \delbar_{A,\mathfrak{p}}\Phi = 0 \textnormal{ and }\Phi\neq 0,\\
            \star F_A - \frac{i}{2}|\Phi|^2 +i\tau = 0.
        \end{cases}
    \end{equation}
    Also define the solution space and moduli space of the framed multi-vortex equations by
    $$\mathcal{Z}_\Sigma(d,\mathfrak{p},\tau) = \{(A,\Phi)\textnormal{ solving equation \eqref{eq:framedvortex}}\},$$
    $$\N_\Sigma(d,\mathfrak{p},\tau) = \mathcal{Z}_\Sigma(d,\mathfrak{p},\tau)/\G_L.$$
\end{defn}

The moduli space $\N_\Sigma$ is naturally identified with the subspace of $\M_\Sigma$ where $\Psi=0$. It is equipped with the structure of a real analytic space, induced by the elliptic deformation complex
\begin{equation}\label{framedvortexcomplex}
    \Omega^0(i\R)\xrightarrow{G_{A,\Phi}} \Omega^1(i\R)\oplus\Gamma(E\otimes L) \xrightarrow{S_{A,\Phi}} \Omega^0(i\R)\oplus\Omega^{0,1}(E\otimes L)
\end{equation}
at each solution $(A,\Phi)$ to equation \eqref{eq:framedvortex}. The operators $G_{A,\Phi}$ and $S_{A,\Phi}$ are the linearisation of the gauge action and equation \eqref{eq:framedvortex}, respectively. Explicitly, they are
$$G_{A,\Phi} = \begin{pmatrix}
    -d\\
    (\cdot)\Phi
\end{pmatrix},\hspace{6mm} S_{A,\Phi} = \begin{pmatrix}
    \star d & -i\textnormal{Re}\langle\Phi,\cdot\rangle\\
    -(\cdot)^{0,1}\Phi & -\delbar_{A,p}
\end{pmatrix}.$$
Note that these agree with the operators by the same name appearing in the block matrix \eqref{blocklinearisation}, evaluated at a particular $t$. Although $A(t),\Phi(t)$ may not solve equation \eqref{eq:framedvortex} in that case, we can see that the deformation theory of the 3-dimensional equations on $\Sigma_f$ has a contribution from the deformation theory the multi-vortex equations on each surface fibre. We will make this precise in section \ref{s3.5}.

Recall that the classical vortex moduli space can be realised as a symplectic quotient. Similarly, $\M_\Sigma$ and $\N_\Sigma$ can be realised as symplectic quotients, with the moment maps for the $\G_L$ action being the curvature equations in \eqref{eq:vortex} and \eqref{eq:framedvortex}, respectively. The complex gauge group $\G_L^{\C}=C^\infty(\Sigma,\C^*)$ acts on $\mathcal{C}_\Sigma$ by
$$g\cdot(A,\Phi,\Psi) = (A + \overline{g}^{-1}\del\overline{g} - g^{-1}\delbar g, g\Phi,g\Psi).$$
This action preserves the first three multi-vortex equations \eqref{multivortex}, although the moment map equation is only preserved by the real gauge group. It also restricts to an action on $\A_\Sigma(L)\times\Gamma(\Sigma,E\otimes L)$, where the first of the framed multi-vortex equations is preserved. This suggests a Hitchin--Kobayashi correspondence for these spaces, which was proven by Doan \cite{Doan_2018}.

\begin{defn}
    If $E\ra\Sigma$ is a Hermitian vector bundle and $J$ is a complex structure on $\Sigma$, a unitary connection $B\in\A(E)$ is equivalent to a holomorphic structure $\delbar_B$ on $E$. The resulting holomorphic bundle on $(\Sigma,J)$ will be denoted by $\E_B$. 
\end{defn}

\begin{theorem}{\cite{Doan_2018}}\label{vortexmodulispaces}
Fix $\mathfrak{p}=(J,B)\in\mathcal{P}_\Sigma$ and $\tau\in\Omega^0(\Sigma)$ with $d-\bar{\tau}<0$. Then up to diffeomorphism, the moduli spaces $\M_\Sigma(d,\mathfrak{p},\tau)$ and $\mathcal{N}_\Sigma(d,\mathfrak{p},\tau)$ only depend on $d\in\mathbb{Z}$, the complex structure $J\in\mathcal{J}(\Sigma,\omega)$, and the holomorphic bundle $\mathcal{E}_B = (E, \delbar_B)$. The moduli space of multi-vortices $\M_\Sigma(d,\mathfrak{p},\tau)$ admits the following descriptions:
\begin{enumerate}
    \item $\G_L$-orbits of solutions to equation \eqref{eq:vortex}.
    \item $\mathcal{G}_L^{\C}$-orbits of triples $(A, \Phi, \Psi)$ with $\Phi \neq 0$ solving $\bar{\partial}_{A,\mathfrak{p}} \Phi = 0$, $\delbar_{A,\mathfrak{p}}^*\Psi=0$, $\langle\Phi,\Psi\rangle=0$.
    \item Isomorphism classes of triples $(\mathcal{L}, \alpha,\beta)$, where $\mathcal{L} \to \Sigma$ is a holomorphic line bundle of degree $d$, $\alpha \in H^0(\Sigma, \mathcal{E}_B \otimes \mathcal{L})$ is non-zero and $\beta\in H^0(\Sigma,\E_B^*\otimes \L^*\otimes K)$ satisfies $\alpha\beta=0$. Two pairs $(\L,\alpha,\beta)$, $(\L',\alpha',\beta')$ are isomorphic if there is a holomorphic isomorphism $\L\ra\L'$ identifying the sections.
\end{enumerate}
Similarly, the moduli space of framed multi-vortices $\N_\Sigma(d,\mathfrak{p},\tau)$ can be described as:
\begin{enumerate}
    \item $\G_L$-orbits of solutions to equation \eqref{eq:framedvortex}.
    \item $\mathcal{G}_L^{\C}$-orbits of pairs $(A,\Phi)$ with $\Phi \neq 0$ solving $\bar{\partial}_{A,\mathfrak{p}} \Phi = 0$.
    \item Isomorphism classes of pairs $(\mathcal{L}, \alpha)$, where $\mathcal{L} \to \Sigma$ is a holomorphic line bundle of degree $d$ and $\alpha \in H^0(\Sigma, \mathcal{E}_B \otimes \mathcal{L})$ is non-zero.
\end{enumerate}
Moreover, $\N_\Sigma$ is always compact.
\end{theorem}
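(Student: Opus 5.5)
The plan is to prove the three descriptions of $\M_\Sigma$ (and of $\N_\Sigma$) equivalent in the order (1)$\Leftrightarrow$(2)$\Leftrightarrow$(3), read off the parameter dependence from (3), and prove compactness of $\N_\Sigma$ separately by analytic a priori bounds.

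\textbf{From (2) to (3).} This step is essentially dictionary work. On a Riemann surface, $\delbar_{A,\mathfrak{p}}$ is integrable, so a unitary connection $A$ on $L$ is the same as a holomorphic structure $\L$ on $L$. Then $\delbar_{A,\mathfrak{p}}\Phi=0$ says that $\alpha:=\Phi$ lies in $H^0(\Sigma,\E_B\otimes\L)$. Using the Hermitian metrics, the conjugate-dual of $\Psi\in\Omega^{0,1}(E\otimes L)$ is a section $\beta:=\Psi^*$ of $E^*\otimes L^*\otimes K$. The equation $\delbar^*_{A,\mathfrak{p}}\Psi=0$ is equivalent to $\beta$ being holomorphic, and the pointwise condition $\langle\Psi,\Phi\rangle=0$ becomes $\alpha\beta=0$ in $H^0(K)$. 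The complex gauge group acts on $(\L,\alpha,\beta)$ by holomorphic isomorphisms of line bundles (with $\beta$ transforming contragrediently, which is the correct form of the action on $\Psi$). Hence $\G^{\C}_L$-orbits correspond exactly to isomorphism classes of triples. The same argument with $\beta$ omitted handles $\N_\Sigma$.

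\textbf{From (1) to (2): the main step.} We must show that each $\G^\C_L$-orbit of triples with $\Phi\neq0$ contains a solution of the moment map equation, and that this solution is unique up to $\G_L$. Since $\G^\C_L=\G_L\cdot\exp(C^\infty(\Sigma,\R))$, it suffices to act by $g=e^{u}$ with $u$ real. Under this action $\star F_A$ changes by a multiple of $\Delta u$, $|\Phi|^2$ is multiplied by $e^{2u}$, and $|\Psi|^2$ by $e^{-2u}$. The moment map equation therefore becomes a Kazdan--Warner type equation
$$\Delta u+\tfrac12 e^{2u}|\Phi|^2-\tfrac12 e^{-2u}|\Psi|^2 = c_\tau,$$
where $c_\tau$ is determined by $\tau$ and $F_A$. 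The nonlinearity is strictly increasing in $u$. Uniqueness follows from the maximum principle applied to the difference of two solutions. For existence I would minimise the convex functional
$$\int_\Sigma\Big(\tfrac12|du|^2+\tfrac14e^{2u}|\Phi|^2+\tfrac14e^{-2u}|\Psi|^2-c_\tau u\Big)\omega.$$
Coercivity uses $\Phi\not\equiv0$ together with $d-\bar\tau<0$: as $u\to+\infty$ the $e^{2u}|\Phi|^2$ term dominates, while as $u\to-\infty$ the sign of $\int c_\tau$ (which is governed by $d-\bar\tau$) makes the linear term blow up. I expect this coercivity and sub/supersolution argument, with the extra $e^{-2u}|\Psi|^2$ term, to be the main obstacle.

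\textbf{Parameter dependence and compactness.} Since (3) involves only $d$, $J$ and $\E_B$, independence of $\tau$ (with $d-\bar\tau<0$), of $\omega$ and of the Hermitian metric follows. To upgrade this to an isomorphism of real analytic spaces, I would check that the Kuranishi models coming from the deformation complex \eqref{framedvortexcomplex} (and its analogue for $\M_\Sigma$) agree with the holomorphic deformation theory of the triples. For compactness of $\N_\Sigma$, I would argue analytically. Applying the maximum principle to the Weitzenb\"ock formula for $|\Phi|^2$ gives $|\Phi|^2\le 2\max\tau$, so $F_A$ is uniformly bounded. Uhlenbeck compactness in dimension two, combined with elliptic bootstrapping for $\delbar_A\Phi=0$, then yields convergence modulo gauge. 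The limit still satisfies $\Phi\neq0$ because integrating the curvature equation gives $\|\Phi\|_{L^2}^2=4\pi(\bar\tau-d)>0$, independent of the solution. Alternatively, $\N_\Sigma$ can be identified via (3) with a Quot scheme of rank one subsheaves $\L^*\hookrightarrow\E_B$ of degree $-d$, which is projective.
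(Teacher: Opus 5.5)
The paper contains no proof of this statement to compare against: it is quoted from \cite{Doan_2018}. Your outline is the standard abelian Hitchin--Kobayashi argument, and it is correct. It has three parts: the holomorphic dictionary, the reduction of the moment map equation along $\G_L^{\C}$-orbits to a Kazdan--Warner-type equation with monotone nonlinearity, and an a priori $C^0$ bound for compactness of $\N_\Sigma$.

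Your choice of the contragredient action $\Psi\mapsto\bar g^{-1}\Psi$ (equivalently $\beta\mapsto g^{-1}\beta$) is essential, not cosmetic. The action $g\Psi$ written in the paper does not preserve $\delbar_{A,\mathfrak{p}}^*\Psi=0$ unless $|g|$ is constant. With it, $|\Psi|^2$ would scale like $e^{2u}$, which destroys the monotonicity that both your uniqueness argument and your convex functional rely on.

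A few adjustments are needed.
\begin{enumerate}
\item The bound $|\Phi|^2\le 2\max\tau$ is the one for classical vortices. Here $\Phi$ is a section of $E\otimes L$ and $\delbar_{A,\mathfrak{p}}$ is built from $B\otimes A$, so the Weitzenb\"ock formula also contains $i\star F_B$. The maximum principle then gives $|\Phi|^2\le 2\sup_\Sigma(\tau+|F_B|)$. Since $B$ is fixed, the rest of your compactness argument is unchanged.
\item Coercivity has to be checked in all directions, not only for constant $u$. Write $u=s+w$ with $\int_\Sigma w\,\omega=0$. Jensen's inequality for the probability measure $|\Phi|^2\omega/\|\Phi\|_{L^2}^2$, combined with the Poincar\'e inequality, gives $\int_\Sigma e^{2u}|\Phi|^2\omega\ge \|\Phi\|_{L^2}^2e^{2s-c\|dw\|_{L^2}}$. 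This handles the mixed regime where $s\to+\infty$ and $\|dw\|_{L^2}\to\infty$ simultaneously. The linear term satisfies $|\int c_\tau w\,\omega|\le c\|dw\|_{L^2}$. Together these give coercivity.
\item When $\M_\Sigma$ is singular, ``up to diffeomorphism'' should be read as an isomorphism of analytic spaces, or at least a homeomorphism. The linearisation $\Delta+e^{2u}|\Phi|^2+e^{-2u}|\Psi|^2$ is invertible because $\Phi\not\equiv 0$, so $u$ depends smoothly on the data. This gives continuity of the inverse of the bijection, and together with your comparison of Kuranishi models it upgrades the set-theoretic correspondence as required.
\item The Quot-scheme alternative for compactness would additionally require checking that the bijection is a homeomorphism. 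Your analytic argument already suffices, so you can drop it.
\end{enumerate}
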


\begin{remark}
    The second and third interpretations of $\N_\Sigma$ and $\M_\Sigma$ in Theorem \ref{vortexmodulispaces} are independent of $\tau$, so long as $d-\bar{\tau}<0$. For this reason, we will omit $\tau$ from the notation when viewing the spaces in this way. When using the third interpretation, we will conflate $\Sigma$ with the Riemann surface $(\Sigma,J)$ and denote the moduli spaces by $\N_\Sigma(d,\E)$ and $\M_{\Sigma}(d,\E)$ for $\E=\E_B$. 
\end{remark}

Note that the $\Psi$ appearing in equation \eqref{eq:vortex} is an anti-holomorphic section of $\E_B^*\otimes \L_A^*\otimes K$, so to pass between the second and third interpretations of $\M_\Sigma$ we must set $\beta = \Psi^*$. By conjugating, we work purely in terms of holomorphic data on $\Sigma$, allowing us to use techniques from complex geometry to study $\M_\Sigma$. Indeed, the deformation complex consists of complex linear maps, equipping $\N_\Sigma$ and $\M_\Sigma$ with natural complex analytic structures \cite{Doan_2018}. The smooth locus of $\N_\Sigma$ is K\"{a}hler, and it has been shown that $\N_\Sigma$ is a projective variety \cite{thakar2025modulispacemultimonopolesriemann}.

\subsection{Generic behaviour}

Despite the compactness of $\N_\Sigma$, one should not expect $\M_\Sigma$ to be compact for every $\mathfrak{p}\in\mathcal{P}_\Sigma$. If there is a point $[A,\Phi,\Psi]\in\M_\Sigma$ with $\Psi\neq 0$, we see that $[A,\Phi,\lambda\Psi]\in\M_{\Sigma}$ for every $\lambda\in\C$. Such points are therefore the source of non-compactness in $\M_\Sigma$ and correspond to Fueter sections on $S^1\times\Sigma$ \cite{Doan_2018}. It follows that $\M_\Sigma$ is non-compact if and only if some fibre of the projection map 
$$\pi:\M_\Sigma(d,\mathfrak{p},\tau)\ra\N_\Sigma(d,\mathfrak{p},\tau),\hspace{4mm}\pi([A,\Phi,\Psi])=[A,\Phi]$$
is non-trivial. The generic behaviour of $\M_\Sigma$ and $\N_\Sigma$ is summarised by the following theorem.

\begin{theorem}{\cite{Doan_2018}}\label{smoothnesscompactness}
    For every complex structure $J\in\mathcal{J}(\Sigma,\omega)$, there is a residual subset $\A^{reg}_\Sigma(J)\subset\A_\Sigma(E)$ such that for every $B\in\A^{reg}_\Sigma(J)$ and $\mathfrak{p}=(J,B)$, we have:
    \begin{enumerate}
        \item $\mathcal{N}_\Sigma(d,\mathfrak{p},\tau)$ is a compact, smooth, K\"{a}hler manifold of the expected dimension 
        $$Nd - (N-1)(g-1),$$
        where $g$ is the genus of $\Sigma$ and $N$ is the rank of $E$. 
        \item $\mathcal{M}_{\Sigma}(d,\mathfrak{p},\tau)$ is compact, Zariski smooth and biholomorphic to $\mathcal{N}_\Sigma(d,\mathfrak{p},\tau)$. This is equivalent to there being no Fueter sections on $S^1\times\Sigma$ for the pulled back data. 
    \end{enumerate}
\end{theorem}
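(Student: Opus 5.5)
The plan is to prove the two parts in order. Part (1) follows from a Sard--Smale argument for a universal moduli space of framed multi-vortices, with parameter $B\in\A_\Sigma(E)$ and $J$ fixed. Part (2) follows from an algebraic identification: the fibres of $\pi:\M_\Sigma\ra\N_\Sigma$ are Serre dual to the obstruction spaces of $\N_\Sigma$. So once $\N_\Sigma$ is unobstructed, the fibres are trivial.

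For (1), work with Sobolev completions and the elliptic complex \eqref{framedvortexcomplex} at a solution $(A,\Phi)$ of \eqref{eq:framedvortex}.
\begin{itemize}
\item Since $\Phi\neq 0$, the operator $G_{A,\Phi}$ is injective, so $H^0=0$.
\item Consider the universal map $(A,\Phi,B)\mapsto(\star F_A-\tfrac{i}{2}|\Phi|^2+i\tau,\ \delbar_{A,\mathfrak{p}}\Phi)$, taken on a slice. Its linearisation is $S_{A,\Phi}$ plus the term $\beta\mapsto(0,\beta\Phi)$ for $\beta\in\Omega^{0,1}(\Sigma,\mathfrak{u}(E))$, the variation of $B$.
\item To show this is surjective, suppose $(v,\psi)$ is $L^2$-orthogonal to its image. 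Orthogonality to $S_{A,\Phi}$ gives an elliptic equation $S_{A,\Phi}^*(v,\psi)=0$. Orthogonality to all $\beta\Phi$ forces $\psi\Phi^*$ to have vanishing skew-Hermitian $(0,1)$ part, and hence $\psi=0$ on the open set $\{\Phi\neq 0\}$.
\item $\Phi$ is holomorphic and nonzero, so this set is dense. Unique continuation for $S^*_{A,\Phi}$ then gives $\psi\equiv 0$, and the remaining equation forces $v=0$.
\item The universal moduli space is therefore a Banach manifold. Sard--Smale applied to the projection to $\A_\Sigma(E)$ gives a residual set of regular values $B$; the Taubes trick passes from $C^\ell$ to smooth $B$.
\item The dimension is the index of the complex, computed by Riemann--Roch:
$$\chi(E\otimes\L)-\chi(\O)=Nd+N(1-g)-(1-g)=Nd-(N-1)(g-1).$$
Here we use that, via the Hitchin--Kobayashi description (Theorem \ref{vortexmodulispaces}), the complex is quasi-isomorphic to the hypercohomology of $\O\xrightarrow{\alpha}\E_B\otimes\L$.
\item Compactness is already in Theorem \ref{vortexmodulispaces}. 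The K\"ahler structure comes from the symplectic quotient description, since the complex structure is integrable on the smooth locus.
\end{itemize}

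For (2), use description (3) of Theorem \ref{vortexmodulispaces}.
\begin{itemize}
\item The fibre of $\pi$ over $[\L,\alpha]$ is the vector space $\{\beta\in H^0(\E_B^*\otimes\L^*\otimes K):\alpha\beta=0\}$.
\item The obstruction space of $\N_\Sigma$ at $(\L,\alpha)$ is $\mathbb{H}^2=\coker\big(H^1(\O)\xrightarrow{\alpha}H^1(\E_B\otimes\L)\big)$.
\item By Serre duality its dual is $\ker\big(H^0(\E_B^*\otimes\L^*\otimes K)\xrightarrow{\cdot\alpha}H^0(K)\big)$, which is exactly this fibre.
\item For $B$ regular as in (1), $\mathbb{H}^2=0$ everywhere, so every fibre is $\{0\}$. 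Hence $\M_\Sigma=\N_\Sigma$ as sets, and $\M_\Sigma$ is compact.
\item For Zariski smoothness, split the multi-vortex deformation complex at a point with $\Psi=0$. It decomposes as the framed complex plus a $\Psi$-block whose kernel is again this fibre; the cokernel is handled by the index count. So $H^1_{\M}=H^1_{\N}$, the Kuranishi map vanishes, and the inclusion $\N_\Sigma\hookrightarrow\M_\Sigma$ is a biholomorphism.
\item Since a Fueter section on $S^1\times\Sigma$ corresponds exactly to a nonzero $\Psi$ in a fibre of $\pi$, the equivalence with the absence of Fueter sections follows.
\end{itemize}

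I expect the main obstacle to be the surjectivity step in (1). Specifically, one must show that the constraint $\psi\perp\beta\Phi$ for all skew-Hermitian $\beta$ really forces $\psi=0$ pointwise off the zero set of $\Phi$, keeping track of the $\mathfrak{u}(E)$ versus $\End(E)$ reality condition. One must also check that unique continuation applies to the coupled first-order system $S^*_{A,\Phi}$. If the pointwise step fails for skew-Hermitian $\beta$, I would instead use the fact that $\delbar_B$ ranges over all of $\Omega^{0,1}(\End E)$, so that complex-linear variations suffice.
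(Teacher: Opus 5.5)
This is correct, and it follows the argument behind the cited result; the paper itself gives no proof and only points to \cite{Doan_2018}. The two ingredients are generic regularity of $\N_\Sigma$ by Sard--Smale in the $B$-direction, and the Serre-duality identification of the fibre of $\pi$ over $(\L,\alpha)$ with $(\mathbb{H}^2)^*$, which is exactly why the paper can say that $\M_\Sigma$ is non-compact precisely when some fibre of $\pi$ is non-trivial.

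Two small repairs are needed:
\begin{itemize}
\item The reality issue you flag does not arise. The map $\beta\mapsto\beta^{0,1}$ is a real isomorphism $\Omega^1(\mathfrak{u}(E))\ra\Omega^{0,1}(\End E)$, so the $B$-variations already sweep out all of $\Omega^{0,1}(\End E)$.
\item In the Zariski-smoothness step, the obstruction space of $\M_\Sigma$ at a point with $\Psi=0$ is not zero. The relevant elliptic complex, reduced from $S^1\times\Sigma$, is self-adjoint and there splits as the framed complex plus its adjoint, so its cokernel is $\cong H^1_{\N}$. Hence $\kappa=0$ must be deduced from $\M_\Sigma=\N_\Sigma$ being a manifold of dimension $\dim H^1_{\M}$, not from an index count.
\end{itemize}
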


\begin{defn}
    The subset of regular parameters is defined by
    $$\mathcal{P}_{\Sigma}^{reg} = \{(J,B)\in\mathcal{P}_\Sigma : B\in\A_\Sigma^{reg}(J)\}\subset \mathcal{P}_\Sigma.$$
\end{defn}

We briefly review the K\"{a}hler structure of $\mathcal{N}_\Sigma$. Firstly, consider the symplectic form
\begin{equation}\label{eq:symplecticform}
    \Omega((a,\phi), (a',\phi')) = -\int_\Sigma a\wedge a' - \int_\Sigma\textnormal{Im}\langle \phi,\phi'\rangle\omega
\end{equation}
on $\A_\Sigma(L)\times\Gamma(E\otimes L)$. This is compatible with the complex structure $(a,\phi)\mapsto (\star_J a,i\phi)$ and the standard $L^2$ metric, so in fact $\Omega$ is a K\"{a}hler form. For $\mathfrak{p}\in\mathcal{P}_\Sigma$, define the space
$$\mathcal{X}_{\mathfrak{p}} = \{(A,\Phi)\in\A(L)\times\Gamma(E\otimes L) : \Phi\neq 0,\hspace{1mm}\delbar_{A,\mathfrak{p}}\Phi = 0\}.$$
By definition, the regular parameters $\mathcal{P}_{\Sigma}^{reg}$ are those such that the linearisation of $\delbar_{A,\mathfrak{p}}\Phi=0$ is surjective, which is equivalent to the smoothness of $\mathcal{N}_\Sigma(d,\mathfrak{p},\tau)$ \cite[Corollary 7.2]{Doan_2018}. This linearisation is complex linear, so $\mathcal{X}_\mathfrak{p}$ is a smooth infinite-dimensional complex manifold for $\mathfrak{p}\in\mathcal{P}_\Sigma^{reg}$. The restriction $\Omega_\mathfrak{p}$ of $\Omega$ to the tangent space of $\mathcal{X}_\mathfrak{p}$ is non-degenerate, so $\mathcal{X}_\mathfrak{p}$ is a smooth infinite-dimensional K\"{a}hler manifold. This K\"{a}hler structure on $\mathcal{X}_{\mathfrak{p}}$ descends to the K\"{a}hler quotient $\mathcal{X}_{\mathfrak{p}}/\G_L^{\C}$, which is isomorphic to $\mathcal{N}_\Sigma(d,\mathfrak{p},\tau)$ by Proposition \ref{vortexmodulispaces}.

%% file: chapters/fixed_points_and_formal_adiabatic_limit.tex
\section{Fixed points and the adiabatic limit}\label{s3.5}

By formally setting $\e=0$ in the rescaled multi-monopole equation \eqref{eq:epsilonSW}, we obtain
\begin{equation}\label{eq:epsilonzeroSW1}
    \begin{cases}
        -i\nabla_t\Phi + \delbar_{A,\mathfrak{p}}^*\Psi - V\Phi=0,\\
        \delbar_{A,\mathfrak{p}}\Phi=0,\\
        \star_t(\dot{A}-db - i\sigma) - dV - i\textnormal{Im}\langle\Psi,\Phi\rangle = 0,\\
        \star_tF_A -\frac{i}{2}|\Phi|^2 + i\tau=0,
    \end{cases}
\end{equation}
for $(A,\Phi,V,b,\Psi)\in\mathcal{C}_{f}$ and $\mathfrak{p}\in\mathcal{P}_\varphi$. We will call this the \emph{adiabatic limit equation}. Geometrically, it is obtained as the metric on $\Sigma_f$ shrinks in the fibre direction, or equivalently, as the base is stretched out. The gauge action on $\mathcal{C}_f$ preserves the adiabatic limit equation, so we can define its moduli space of solutions.
\begin{defn}
    Let $\mathfrak{s}_{d,\tilde{f}}$ be a spin$^c$ structure on $\Sigma_f$ and choose $\mathfrak{p}\in\mathcal{P}_{\varphi}$ and $\eta\in\mathcal{V}_f^+$. The solution space of the adiabatic limit equation is
    $$\A_0(\mathfrak{s}_{d,\tilde{f}},\mathfrak{p},\eta) = \{\Xi\in\mathcal{C}_f : \eqref{eq:epsilonzeroSW1}\}.$$
    The periodic gauge group is
    $$\G_f = \{g:\R\ra \G_L : g(t+1)=g(t)\circ f\}.$$
    The adiabatic moduli space is the quotient
    $$\M_0(\mathfrak{s}_{d,\tilde{f}},\mathfrak{p},\eta) = \A_0(\mathfrak{s}_{d,\tilde{f}},\mathfrak{p},\eta)/\G_{f}.$$
\end{defn}
This section explores several different geometric interpretations of equation \eqref{eq:epsilonzeroSW1} and its linearisation. In particular, we show that points in $\M_0(\mathfrak{s}_{d,\tilde{f}},\mathfrak{p},\eta)$ correspond to fixed points of a symplectomorphism of a multi-vortex moduli space.

\subsection{The geometry of the adiabatic limit equation}

Note that the second and fourth equations of \eqref{eq:epsilonzeroSW1} are just the framed multi-vortex equations \eqref{eq:framedvortex} for $A(t),\Phi(t)$. One way to understand the other two equations is to rewrite them in the form
\begin{equation}\label{hodgeparalleltransport}
    G_{A,\Phi}(V) + S_{A,\Phi}^*\begin{pmatrix}b\\\Psi\end{pmatrix} + \begin{pmatrix}\star(\dot{A}-i\sigma)\\i\dot{\Phi}\end{pmatrix}=0.
\end{equation}
By the Hodge decomposition theorem for the elliptic complex \eqref{framedvortexcomplex}, the variables $V,b,$ and $\Psi$ are uniquely determined by $A,\Phi,$ and $\sigma$ for every $t$. The following analogue of Lemma \ref{hzero} shows that $V=0$.

\begin{lemma}\label{hequalszero}
    If $(A,\Phi,V,b,\Psi)\in\A_0(\mathfrak{s}_{d,\tilde{f}},\mathfrak{p},\eta)$ for $\mathfrak{p}\in\mathcal{P}_\varphi$ and $\eta=(\sigma,\tau)\in \mathcal{V}_f^+$, then $V=0$.
\end{lemma}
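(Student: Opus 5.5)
The plan is to exploit the fact that, at each fixed $t$, the adiabatic limit equation \eqref{eq:epsilonzeroSW1} is built out of the elliptic complex \eqref{framedvortexcomplex}. I will apply $G_{A,\Phi}^*$ to the rewritten equation \eqref{hodgeparalleltransport} and show that everything except $G^*_{A,\Phi}G_{A,\Phi}V$ drops out. This leaves an equation of the form $d^{*}dV + |\Phi|^2V = 0$ on each fibre $\Sigma\times\{t\}$, and an integration by parts then forces $V=0$. This mirrors Lemma \ref{hzero}, with the complex on the fibre playing the role of the 3-dimensional deformation complex.

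\textbf{Step 1: the $b,\Psi$ term vanishes.} Fix $t$. By the second and fourth equations of \eqref{eq:epsilonzeroSW1}, the pair $(A(t),\Phi(t))$ solves \eqref{eq:framedvortex}. I will check that $S_{A,\Phi}\circ G_{A,\Phi}=0$ at such a solution, which gives $G^*_{A,\Phi}S^*_{A,\Phi}=0$.
\begin{itemize}
\item First component of $S_{A,\Phi}G_{A,\Phi}v$: it is $-\star d d v - i\Re\langle\Phi,v\Phi\rangle$. Here $dd=0$, and $\langle\Phi,v\Phi\rangle = \bar{v}|\Phi|^2$ is imaginary because $v\in\Omega^0(i\R)$, so its real part vanishes.
\item Second component: it is $(dv)^{0,1}\Phi - \delbar_{A,\mathfrak{p}}(v\Phi) = -v\,\delbar_{A,\mathfrak{p}}\Phi$, which vanishes by the framed vortex equation.
\end{itemize}

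\textbf{Step 2: the inhomogeneous term vanishes.} With $G^*_{A,\Phi}(a,\phi) = -d^{*}a - i\Im\langle\Phi,\phi\rangle$ (the $\e^{-2}$ part of $\mathbb{G}^{*_\e}$), I will compute
\[
G^*_{A,\Phi}\bigl(\star(\dot{A}-i\sigma),\, i\dot{\Phi}\bigr).
\]
\begin{itemize}
\item Up to a sign, $d^{*}\star a = \star d a$ on a surface. So the first part reduces to $\star d\dot{A} - i\star d\sigma$, with sign to be fixed.
\item Differentiate the fourth equation of \eqref{eq:epsilonzeroSW1} in $t$. The Hodge star on $2$-forms is division by the fixed form $\omega$, so it is independent of $t$. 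This gives $\star d\dot{A} = \tfrac{i}{2}\partial_t|\Phi|^2 - i\dot{\tau}$.
\item The closedness condition $\dot\tau + \star_t d\sigma = 0$ from Definition \ref{perturbationspace} converts $-i\star d\sigma$ into $i\dot{\tau}$, which cancels the $\dot\tau$ term.
\item Finally, $\Im\langle\Phi,i\dot{\Phi}\rangle = \mp\Re\langle\Phi,\dot{\Phi}\rangle = \mp\tfrac12\partial_t|\Phi|^2$ should cancel the remaining $\tfrac{i}{2}\partial_t|\Phi|^2$.
\end{itemize}
This sign bookkeeping, using the conventions of \eqref{eq:epsilonzeroSW1} and the antilinear-in-second-slot inner product, is the step I expect to be the main obstacle. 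It is where the exact form of the perturbation \eqref{perturbation} is needed. If the signs fail to cancel, I would instead recompute $G^*$ directly from the third equation, taking its $d^*$ and differentiating the fourth equation.

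\textbf{Step 3: conclude $V=0$.} Steps 1 and 2 leave
\[
0 = G^*_{A,\Phi}G_{A,\Phi}V = d^{*}dV + |\Phi|^2V
\]
for each $t$, using again that $\Im\langle\Phi,V\Phi\rangle$ equals $|\Phi|^2$ times the imaginary function $V$, up to sign.
\begin{itemize}
\item Pairing with $V$ in $L^2(\Sigma)$ gives $\|dV(t)\|^2 + \|\,|\Phi(t)|V(t)\|^2 = 0$. Hence $V(t)$ is constant on $\Sigma$ and $|\Phi(t)|\,V(t)\equiv 0$.
\item Since $\eta\in\mathcal{V}_f^+$ we have $d-\bar\tau<0$. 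Integrating the curvature equation as in Section \ref{s3} then gives $\|\Phi(t)\|_{L^2}^2 = 4\pi(\bar\tau - d) > 0$, so $\Phi(t)\neq 0$.
\item A constant $V(t)$ with $|\Phi(t)|V(t)\equiv 0$ and $\Phi(t)\neq 0$ must vanish, so $V(t)=0$ for every $t$.
\end{itemize}
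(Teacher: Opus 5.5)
Your proposal is correct and is essentially the paper's proof: apply $G^*_{A,\Phi}$ to \eqref{hodgeparalleltransport}, use $\partial_t F_A = d\dot{A}$ with the differentiated curvature equation and $\dot{\tau}+\star_t d\sigma=0$ to kill the inhomogeneous term, and conclude from $(\Delta+|\Phi|^2)V=0$ together with $\Phi(t)\neq 0$. Your explicit check that $S_{A,\Phi}G_{A,\Phi}=0$ and your integration by parts only spell out what the paper uses implicitly, and the signs you were worried about do cancel: with $G^*_{A,\Phi}(a,\phi)=-d^*a-i\Im\langle\Phi,\phi\rangle$ and $d^*\star=\star d$ on $1$-forms, the term becomes $-\tfrac{i}{2}\partial_t|\Phi|^2+\tfrac{i}{2}\partial_t|\Phi|^2=0$.
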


\begin{proof}
    Note that $d\dot{A} = \del_t F_A = i\Re\langle\dot{\Phi},\Phi\rangle\omega - i\dot{\tau}\omega$, where the first equality follows from the Bianchi identity on $\Sigma_f$ and the second from differentiating $F_A=\frac{i}{2}|\Phi|^2\omega - i\tau\omega$. Applying $G_{A,\Phi}^*$ to equation \eqref{hodgeparalleltransport} and using $\dot{\tau} + \star d\sigma=0$, we obtain
    \begin{align*}
        0 &= G_{A,\Phi}^*G_{A,\Phi}(V) + G_{A,\Phi}^*\begin{pmatrix}
            \star(\dot{A} - i\sigma)\\
            i\dot{\Phi}
        \end{pmatrix}\\
        &= d^*dV -i\Im\langle\Phi,h\Phi\rangle + \star d\dot{A} -i\star d\sigma + i\Im\langle\Phi,i\dot{\Phi}\rangle\\
        &= (\Delta + |\Phi|^2)V +i\Re\langle\dot{\Phi},\Phi\rangle - i\dot{\tau} - i\star d\sigma - i\Re\langle\Phi,\dot{\Phi}\rangle\\
        &= (\Delta + |\Phi|^2)V
    \end{align*}
    for each $t$. Since $(A(t),\Phi(t))$ solves the framed multi-vortex equations \eqref{eq:framedvortex} and $d-\overline{\tau}<0$, we have $\Phi(t)\neq 0$ for all $t$. Then $V(t)=0$ for all $t$ because $\Delta_t + |\Phi(t)|^2$ is invertible. 
\end{proof}

From now on, we will need to assume that $\M_\Sigma(d,\mathfrak{p}_t,\tau_t)$ is compact for all $t$, so we define the following families of regular parameters.

\begin{defn}
    For a family of complex structures $J\in\mathcal{J}_f(\Sigma,\omega)$ and a unitary lift $\varphi:E\ra E$ of $f\in\Diff(\Sigma,\omega)$, define
    $$\A_\varphi^{reg}(J)=\{B\in\A_\varphi(E) : B_t\in\A_\Sigma^{reg}(J_t) \textnormal{ for all }t\}\subset \A_\varphi(E),$$
    $$\mathcal{P}_\varphi^{reg} = \{(J,B)\in\mathcal{P}_{\varphi} : (J_t,B_t)\in\mathcal{P}_\Sigma^{reg}\textnormal{ for all }t\}\subset\mathcal{P}_\varphi.$$
\end{defn}

By Theorem \ref{smoothnesscompactness}, choosing $\mathfrak{p}\in\mathcal{P}_\varphi^{reg}$ ensures that $\M_\Sigma(d,\mathfrak{p}_t,\tau_t)$ is compact and biholomorphic to the smooth K\"{a}hler manifold $\N_\Sigma(d,\mathfrak{p}_t,\tau_t)$ for every $t$. In this case, the multi-vortex moduli spaces fit together to form a smooth fibre bundle.

\begin{defn}\label{familymoduli}
    Choose unitary lifts $\tilde{f}:L\ra L$ and $\varphi:E\ra E$ of $f$. Let $\mathfrak{p}\in\mathcal{P}_\varphi^{reg}$ and $\eta=(\sigma,\tau)\in\mathcal{V}_f^+$. Define the map 
    $$F_{\tilde{f}}:\mathcal{Z}_\Sigma(d,\mathfrak{p}_0,\tau_0)\ra\mathcal{Z}_\Sigma(d,\mathfrak{p}_1,\tau_1),\hspace{4mm}(A,\Phi) \mapsto (\tilde{f}^*A,(\tilde{f}\otimes\varphi)^*\Phi),$$
    which descends to
    $$F:\N_\Sigma(d,\mathfrak{p}_0,\tau_0)\ra\N_\Sigma(d,\mathfrak{p}_1,\tau_1)$$
    at the level of moduli spaces, which is independent of $\tilde{f}$. Also define the fibre bundles
    $$\mathcal{Z}_{\tilde{f}}=\mathcal{Z}_{\tilde{f}}(d,\mathfrak{p},\tau) := \frac{\{(t,z) : z\in\mathcal{Z}_\Sigma(d,\mathfrak{p}_t,\tau_t)\}}{(1,F_{\tilde{f}}(z))\sim(0,z)}\ra S^1,$$
    $$\N_f=\N_f(d,\mathfrak{p},\tau):= \frac{\{(t,x) : x\in\N_\Sigma(d,\mathfrak{p}_t,\tau_t)\}}{(1,F(x))\sim(0,x)}\ra S^1.$$
\end{defn}

We will often view $\mathcal{Z}_{\tilde{f}}\ra\N_f$ as a principal $\G_L$-bundle, where the projection map is the quotient by the fibre-wise $\G_L$-action. Since two of the adiabatic limit equations are the multi-vortex equations, for $\mathfrak{p}\in\mathcal{P}_\varphi^{reg}$ and $\eta\in\mathcal{V}_f^+$ there is a well-defined map
\begin{equation}\label{downstairsadiabaticcorrespondence}
    \M_0(\mathfrak{s}_{d,\tilde{f}},\mathfrak{p},\eta)\ra\Gamma(\N_f(d,\mathfrak{p},\tau))
\end{equation}
given by $[A,\Phi,0,b,\Psi]\mapsto [A,\Phi]$. For the classical Seiberg--Witten equations, Salamon proved that there is a symplectic connection on $\N_f$ for which sections in the image of \eqref{downstairsadiabaticcorrespondence} are parallel \cite[Theorem 5.1]{Salamon1999}. As we now explain, this generalises to the multi-monopole setting for regular paths of parameters. 

\begin{theorem}{\cite{Salamon1999}}\label{paralleltransporttheorem}
    Suppose $\mathfrak{p}:[0,1]\ra \mathcal{P}_\Sigma^{reg}$ is given by $\mathfrak{p}=(J,B)$ and that $\sigma:[0,1]\ra\Omega^1(\Sigma)$, $\tau:[0,1]\ra\Omega^0(\Sigma)$ solve $\dot{\tau}_t + \star_t d\sigma_t=0$. Then for each $t\in[0,1]$ there is a symplectomorphism
    $$P_t:\N_\Sigma(d,\mathfrak{p}_0,\tau_0)\ra\N_\Sigma(d,\mathfrak{p}_t,\tau_t)$$
    defined by $[A(0),\Phi(0)]\mapsto[A(t),\Phi(t)]$, where
    \begin{equation}\label{paralleltransportequations}
        i\dot{A} = \Re\langle\Psi,\Phi\rangle - \sigma,\hspace{3mm}i\dot{\Phi} = \delbar_{A,\mathfrak{p}}^*\Psi,
    \end{equation}
    and $\Psi(t)\in\Omega^{0,1}_{J_t}(\Sigma,E\otimes L)$ is the unique solution to the elliptic equation
    \begin{equation}\label{paralleltransportpsi}
        \delbar_{A,\mathfrak{p}}\delbar_{A,\mathfrak{p}}^*\Psi + \frac{1}{2}\langle\Psi,\Phi\rangle\Phi = \frac{1}{2}(\del_{A,\mathfrak{p}}\Phi)\circ\dot{J} + (\sigma + \dot{B})^{0,1}\Phi.
    \end{equation}
    If $\mathfrak{p}_0=\mathfrak{p}_0$, $\tau(0)=\tau(1)$, and $\int_0^1\sigma(s)ds=0$ then $P_1$ is Hamiltonian. 
\end{theorem}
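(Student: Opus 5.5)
The plan is to run Salamon's argument for the classical vortex case with the framed multi-vortex space $\N_\Sigma(d,\mathfrak{p}_t,\tau_t)$ in place of $\Sym^d\Sigma$. By Theorem \ref{vortexmodulispaces} and Theorem \ref{smoothnesscompactness} this space is a smooth compact K\"ahler quotient $\mathcal{X}_{\mathfrak{p}_t}/\!\!/\G_L$ for each $t$. The proof has four steps.

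\emph{Step 1: well-posedness of \eqref{paralleltransportpsi}.} Fix $t$ and a solution $(A,\Phi)$ of \eqref{eq:framedvortex}, and write $T=\delbar_{A,\mathfrak{p}}\delbar^*_{A,\mathfrak{p}}+\frac12\langle\cdot,\Phi\rangle\Phi$. This operator is self-adjoint and nonnegative, since
$$\langle T\Psi,\Psi\rangle_{L^2}=\|\delbar^*\Psi\|^2+\tfrac12\|\langle\Psi,\Phi\rangle\|^2 .$$
So $\ker T$ consists of $\Psi$ with $\delbar^*\Psi=0$ and $\langle\Psi,\Phi\rangle=0$. Such $(\Phi,\Psi)$ with $\Psi\ne0$ would give a nontrivial fibre of $\M_\Sigma\to\N_\Sigma$, which compactness (part (2) of Theorem \ref{smoothnesscompactness}) rules out. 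Hence $T$ is invertible, and $\Psi(t)$ exists, is unique, and depends smoothly on $(A,\Phi,t)$.

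\emph{Step 2: the flow preserves the solution space and is gauge equivariant.} Let $(A(t),\Phi(t))$ solve the ODE \eqref{paralleltransportequations}, which is smooth in Banach spaces by Step 1. We check that the framed vortex equations are preserved. Differentiating $\star_tF_A-\frac i2|\Phi|^2+i\tau$ gives
$$\star d\dot A-i\Re\langle\dot\Phi,\Phi\rangle+i\dot\tau .$$
The $\dot\tau$ term cancels against $\star d\sigma$ by the closedness condition, and the $\Psi$ terms cancel because $\Re\langle\delbar^*\Psi,i\Phi\rangle$ pairs with $d^*\Re\langle\Psi,\Phi\rangle$; this is the computation in Lemma \ref{hequalszero} run backwards. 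Differentiating $\delbar_{A,\mathfrak{p}}\Phi$ in $t$ produces $\dot J$, $\dot B$, $\dot A^{0,1}\Phi$ and $\delbar\dot\Phi$ terms. Equation \eqref{paralleltransportpsi} is chosen exactly so that these sum to zero. The sign and normalisation bookkeeping here is where I expect errors, so it should be checked carefully. Gauge equivariance holds because $\Psi$ transforms covariantly. Therefore $P_t$ descends to $\N_\Sigma$.

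\emph{Step 3: $P_t$ is a symplectomorphism.} The velocity $(\dot A,\dot\Phi)=(-i\Re\langle\Psi,\Phi\rangle+i\sigma,\,-i\delbar^*\Psi)$ is, up to the $\sigma$ term, $I\circ S^*_{A,\Phi}(0,\Psi)$. Here $I$ is the complex structure, and $\mathcal{X}_{\mathfrak{p}}$ varies with $\mathfrak{p}$. The velocity is the horizontal lift of $\del_t$ for the symplectic connection on the family of K\"ahler quotients, i.e. the $\Omega$-orthogonal complement of the vertical directions, which is in line with Lemma \ref{hequalszero}. Because the ambient form $\Omega$ in \eqref{eq:symplecticform} is independent of $t$ and the moment map changes only through $\tau$, differentiating $P_t^*\Omega_t$ reduces to $d\iota_X\Omega$ on the quotient. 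The $\Psi$ part of $\iota_X\Omega$ is exact, since it is $\Omega$-dual to a variation of the constraint, which vanishes on $T\mathcal{X}$. The $\sigma$ part gives $-\int_\Sigma i\sigma\wedge a$, which is closed because $\sigma$ is independent of $(A,\Phi)$. This is the main obstacle, and I would follow Salamon's \cite[Thm 5.1]{Salamon1999} computation.

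\emph{Step 4: Hamiltonian case.} The flux of the isotopy is the time integral of $[\iota_{X_t}\Omega]$. The exact parts contribute nothing. The $\sigma$ part is the class $a\mapsto-\int_\Sigma\int_0^1 i\sigma\,ds\wedge a$, which vanishes when $\int_0^1\sigma=0$. With equal endpoints, this shows $P_1$ is Hamiltonian.
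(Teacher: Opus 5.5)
Your Steps 1--3 are correct and follow the paper's sketch of Salamon's argument. In Step 1, $\delbar_{A,\mathfrak{p}}\delbar_{A,\mathfrak{p}}^*+\frac12\langle\cdot,\Phi\rangle\Phi$ is invertible: its kernel is the obstruction space of the linearised equation $\delbar_{A,\mathfrak{p}}\Phi=0$, which vanishes for regular $\mathfrak{p}$, as you argue via compactness. The velocity is the $\Omega$-orthogonal horizontal lift $IS^*_{A,\Phi}(0,\Psi)$ plus the translation $(i\sigma,0)$. The moment map is preserved because $G^*_{A,\Phi}S^*_{A,\Phi}=0$ and $\dot\tau+\star d\sigma=0$. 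Finally, $\Omega$ is preserved because $\iota_{X_t}\Omega$ restricted to $T\mathcal{X}_{\mathfrak{p}_t}$ is the constant, hence exact, form $a\mapsto-\int_\Sigma i\sigma\wedge a$. The paper instead removes $\sigma$ by the translation $A'=A-i\sigma'$ with $\sigma'(t)=\int_0^t\sigma$, which also freezes the moment-map level; this difference is cosmetic.

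Step 4 has a genuine gap. For $0<t<1$ the maps $P_t$ go between different manifolds, so $\{P_t\}$ is not a symplectic isotopy of $\N_\Sigma(d,\mathfrak{p}_0,\tau_0)$, and there is no flux to compute. Moreover, in your formulation the $1$-forms $\iota_{X_t}\Omega$ restricted to the level sets $\mathcal{Z}_\Sigma(d,\mathfrak{p}_t,\tau_t)$ are not basic when $d\sigma_t\neq0$, since $\Omega((i\sigma,0),G_{A,\Phi}v)=i\int_\Sigma v\,d\sigma$. So they do not even define classes on the quotient.

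What is actually needed is an isotopy from the identity to $P_1$ through holonomies. That means a null-homotopy of the loop $(\mathfrak{p}_t,\sigma'_t)$ in the base of the universal bundle $\mathcal{X}$, combined with the fact that the holonomy of a connection given by a closed coupling form around a contractible loop is Hamiltonian. In Salamon's setting the base $\mathcal{J}(\Sigma,\omega)\times\Omega^1(\Sigma)$ is contractible, so this comes for free, and $\int_0^1\sigma=0$ is exactly what closes the loop in the $\sigma'$-direction.

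Here $\mathcal{P}^{reg}_\Sigma$ is not simply connected, and the conclusion fails in general. Take the paper's genus-one example: a loop of $\SU(2)$-connections with $B_0=B_1$, $J$ and $\tau$ constant, and $\sigma=0$, whose holomorphic type winds an odd number of times around the branch points of the pillowcase. Then $P_1$ swaps the two points of $\N_\Sigma(0,\E_0)$, so it is not even isotopic to the identity, let alone Hamiltonian. The last sentence of the statement (where $\mathfrak{p}_0=\mathfrak{p}_0$ should read $\mathfrak{p}_0=\mathfrak{p}_1$) therefore needs the extra hypothesis that the loop $\mathfrak{p}$ be contractible in $\mathcal{P}^{reg}_\Sigma$. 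The paper's own sketch does not address this part either; it simply defers to Salamon.
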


The proof of Theorem \ref{paralleltransporttheorem} is essentially the same as \cite[Theorem 5.1]{Salamon1999}, so we only outline the differences. The main idea is to construct a universal symplectic connection on a universal bundle. In our case this is the fibre bundle $\mathcal{X}\ra\mathcal{P}_\Sigma^{reg}\times\Omega^1(\Sigma)$ with fibre
$$\mathcal{X}_{\mathfrak{p},\sigma}=\{(A,\Phi)\in\A_\Sigma(L)\times\Gamma(E\otimes L) : \delbar_{A+i\sigma,\mathfrak{p}}\Phi=0,\hspace{1mm}\Phi\neq 0\}$$
over $(\mathfrak{p},\sigma)$. The assumption $\mathfrak{p}\in\mathcal{P}_\Sigma^{reg}$ guarantees that $\mathcal{X}_{\mathfrak{p},\sigma}$ is a complex Banach manifold. This is because the linearisation of $\delbar_{A+i\sigma,\mathfrak{p}}\Phi=0$ is surjective if and only if $\N_\Sigma(d,\mathfrak{p},\tau)$ is regular (use \cite[Corollary 7.2]{Doan_2018} and that $\sigma$ only adds lower order terms). In the classical case this holds without any extra assumptions, see \cite[Lemma 5.2]{Salamon1999}. 

Equation \eqref{eq:symplecticform} defines a closed symplectic connection 2-form on the univeral fibre bundle $\mathcal{X}$. The horizontal subspace at $(A,\Phi,\mathfrak{p},\sigma)$ is the $\Omega$-complement of the vertical tangent space $T_{(A,\Phi)}\mathcal{X}_{\mathfrak{p},\sigma}$. Salamon derives equations for the horizontal lift of a path $(\mathfrak{p},\sigma)$ in the base, although in our case differentiating $\delbar_{A+i\sigma,\mathfrak{p}}$ produces an extra term involving $\dot{B}\in\Omega^1(\End(E))$. Importantly, the moment map equation is preserved along such lifts. Equations \eqref{paralleltransportequations} and \eqref{paralleltransportpsi} are recovered by the substitutions
$$A'(t)=A(t)-i\sigma'(t),\hspace{4mm}\sigma'(t)=\int_0^t\sigma(s)ds.$$
This identifies $\mathcal{X}_\mathfrak{p}$ and $\mathcal{X}_{\mathfrak{p},\sigma}$ as symplectic manifolds, so we obtain a family of $\G_L$-equivariant symplectomorphisms $P_t:\mathcal{X}_{\mathfrak{p}_0}\ra \mathcal{X}_{\mathfrak{p}_t}$. Since the moment map equation is preserved, $P_t$ descends to a symplectomorphism at the level of moduli spaces, as stated in Theorem \ref{paralleltransporttheorem}.

Now suppose $\mathfrak{p}\in\mathcal{P}_\varphi^{reg}$ and $(\sigma,\tau)\in\mathcal{V}_f^+$. Then Theorem \ref{paralleltransporttheorem} equips the symplectic fibre bundle $\N_f(d,\mathfrak{p},\tau)\ra S^1$ with a flat symplectic connection. 

\begin{defn}
    Define the symplectomorphism
    \begin{equation}\label{monodromymap}
        \Upsilon_{d,\mathfrak{p},\eta}:=P_1^{-1}\circ F: \N_\Sigma(d,\mathfrak{p}_0,\tau_0)\ra\N_\Sigma(d,\mathfrak{p}_0,\tau_0),
    \end{equation}
    which is the monodromy map of the flat symplectic connection on $\N_f$. 
\end{defn}

It is a standard fact that evaluation at $t=0$ induces a bijective correspondence between parallel sections of $\N_f(d,\mathfrak{p},\tau)$ and fixed points of $\Upsilon_{d,\mathfrak{p},\eta}$, which can be further partitioned as follows.

\begin{defn}\label{fixedpointsubset}
    Let $\mathfrak{p}$ and $\eta$ be as above. Define the subset $\Fix_{\tilde{f}}(\Upsilon_{d,\mathfrak{p},\eta})\subset\Fix(\Upsilon_{d,\mathfrak{p},\eta})$ to consist of those fixed points for which the corresponding section of $\N_f(d,\mathfrak{p},\tau)$ lifts to a section of $\mathcal{Z}_{\tilde{f}}(d,\mathfrak{p},\tau)$.
\end{defn}

These fixed point subsets can then be identified with the adiabatic limit moduli spaces.

\begin{lemma}\label{correspondences}
    Let $\tilde{f}:L\ra L$ and $\varphi:E\ra E$ be unitary lifts of $f$. Let $\mathfrak{p}\in\mathcal{P}^{reg}_{\varphi}$ and $\eta\in\mathcal{V}_f^+$. Then there is a bijective correspondence
    $$\M_0(\mathfrak{s}_{d,\tilde{f}},\mathfrak{p},\eta)\simeq \Fix_{\tilde{f}}(\Upsilon_{d,\mathfrak{p},\eta})$$
    given by $[A,\Phi,0,b,\Psi]\mapsto [A(0),\Phi(0)]$.
\end{lemma}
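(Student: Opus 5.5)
The plan is to build the map in two stages: first identify solutions of \eqref{eq:epsilonzeroSW1} with lifts of parallel sections of $\N_f$ to $\mathcal{Z}_{\tilde{f}}$, and then use the standard correspondence between parallel sections and fixed points of the monodromy $\Upsilon_{d,\mathfrak{p},\eta}$.

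\emph{Well-definedness.} Let $\Xi=(A,\Phi,V,b,\Psi)\in\A_0(\mathfrak{s}_{d,\tilde{f}},\mathfrak{p},\eta)$.
\begin{enumerate}
    \item By Lemma \ref{hequalszero} we have $V=0$.
    \item The second and fourth equations say that $(A(t),\Phi(t))\in\mathcal{Z}_\Sigma(d,\mathfrak{p}_t,\tau_t)$ for every $t$. Here $\Phi(t)\neq0$ because $d-\bar\tau<0$.
    \item The periodicity conditions $A(t+1)=\tilde f^*A(t)$ and $\Phi(t+1)=(\varphi\otimes\tilde f)^*\Phi(t)$ say precisely that $t\mapsto(A(t),\Phi(t))$ defines a section of $\mathcal{Z}_{\tilde{f}}$.
\end{enumerate}
Next I show that its projection $x(t)=[A(t),\Phi(t)]$ to $\N_f$ is parallel. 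The gauge transformation $g(t)=\exp(-\int_0^t b)$ is not periodic, but that does not matter for this local statement; it puts $b=0$. The remaining first and third equations then become $i\dot\Phi=\delbar^*_{A,\mathfrak{p}}\Psi$ and $i\dot A=\Re\langle\Psi,\Phi\rangle-\sigma$. The latter follows by applying $\star$ and using that $\star$ on 1-forms intertwines $\Im\langle\Psi,\Phi\rangle$ with $\Re\langle\Psi,\Phi\rangle$ up to the sign conventions of the Clifford module; this sign bookkeeping has to be checked.

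It remains to see that $\Psi$ solves \eqref{paralleltransportpsi}. Differentiate $\delbar_{A,\mathfrak{p}}\Phi=0$ in $t$. This gives $\dot A^{0,1}\Phi+\delbar_{A,\mathfrak p}\dot\Phi$ plus the terms coming from $\dot J$ and $\dot B$. Substituting the two evolution equations recovers exactly \eqref{paralleltransportpsi}. Hence $(A,\Phi)$ is a horizontal lift in the sense of Theorem \ref{paralleltransporttheorem}, and $x(t)=P_t(x(0))$. Periodicity of $x$ in $\N_f$ then gives $F(x(1))=x(0)$ in the appropriate sense, so $x(0)=P_1^{-1}F(x(0))$ is a fixed point. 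Because $x$ lifts to $\mathcal{Z}_{\tilde f}$, this fixed point lies in $\Fix_{\tilde f}$. Finally, gauge invariance under $\G_f$ is clear, since elements of $\G_f$ act fibrewise by $\G_L$ and so preserve $[A(0),\Phi(0)]$.

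\emph{Surjectivity.} Given $x_0\in\Fix_{\tilde f}$, the associated parallel section $x$ admits, by definition, a lift $(A',\Phi')$ to $\mathcal{Z}_{\tilde f}$. Any lift differs from a horizontal one by a fibrewise gauge transformation, and the horizontal lift is in general not periodic. So I work with the periodic lift $(A',\Phi')$ and solve \eqref{hodgeparalleltransport} for $(V,b,\Psi)$ pointwise in $t$. By the Hodge decomposition for the complex \eqref{framedvortexcomplex}, such a solution exists exactly when the vector $(\star(\dot A'-i\sigma),i\dot\Phi')$ is orthogonal to $\ker G^*\cap\ker S$, the harmonic part. That harmonic part is the tangent space $T_x\N_\Sigma$, and the orthogonality is precisely the statement that $x$ is parallel, since the horizontal distribution is the $\Omega$-complement of the vertical space.

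The solution $(V,b,\Psi)$ is unique and therefore periodic, and by Lemma \ref{hequalszero} we get $V=0$. This shows the constructed tuple solves the adiabatic limit equation and maps to $x_0$.

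\emph{Injectivity.} Suppose two solutions give the same fixed point. Then they give the same parallel section, so their $(A,\Phi)$ components differ by a fibrewise gauge transformation $g(t)$. Since $\G_L$ acts freely on $\mathcal Z_\Sigma$ (as $\Phi\neq0$), this $g(t)$ is unique, and periodicity of both lifts forces $g\in\G_f$. Uniqueness in the Hodge decomposition then forces the $(b,\Psi)$ components to agree after applying $g$, because the gauge action shifts $b$ by $-g^{-1}\dot g$, compatibly with $G_{A,\Phi}$.

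The main obstacle is the identification of the harmonic component with the parallel-transport condition, namely checking that \eqref{hodgeparalleltransport} with $V=0$ is equivalent to \eqref{paralleltransportequations}–\eqref{paralleltransportpsi} modulo the gauge term $G_{A,\Phi}(b)$. I would prove this by differentiating the constraint $\delbar_{A,\mathfrak p}\Phi=0$ and comparing it against $S_{A,\Phi}$ applied to $(b,\Psi)$, taking care with the $\dot J$ and $\dot B$ terms.
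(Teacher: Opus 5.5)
Your proposal is correct and follows essentially the paper's route: pass to the gauge $b=0$, use Lemma \ref{hequalszero} to reduce the adiabatic equations to the parallel transport equations, recover the $\Psi$-equation by differentiating the vortex constraint (the paper phrases this as applying $S_{A,\Phi}$), and in the converse use uniqueness in the Hodge decomposition to get periodicity. You spell out surjectivity via the periodic lift, and injectivity via freeness of the $\G_L$-action, more carefully than the paper does. Two trivial slips: with the convention $b\mapsto b-g^{-1}\dot g$, the gauge transformation killing $b$ is $g(t)=\exp(\int_0^t b)$, not $\exp(-\int_0^t b)$; and the closing condition reads $x(1)=F(x(0))$, not $F(x(1))=x(0)$, though your resulting fixed-point equation $x(0)=P_1^{-1}F(x(0))$ is right.
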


\begin{proof}
    The proof goes via the correspondence between fixed points and parallel sections mentioned above. Suppose $(A,\Phi,0,b,\Psi)\in\A_0(\mathfrak{s}_{d,\tilde{f}},\mathfrak{p},\eta)$ and choose a gauge in which $b=0$ (by solving the equation $b-g^{-1}\dot{g}=0$). Lemma \ref{hequalszero} shows that the adiabatic limit equation \eqref{hodgeparalleltransport} is equivalent to \eqref{paralleltransportequations}. The second order equation \eqref{paralleltransportpsi} is recovered by applying $S_{A,\Phi}$, so $[A,\Phi]$ is a parallel section of $\N_f(d,\mathfrak{p},\tau)$. The periodicity condition on the lift $(A,\Phi)$ shows that it is a section of $\mathcal{Z}_{\tilde{f}}(d,\mathfrak{p},\tau)$. Conversely, uniqueness of $\Psi$ shows that it satisfies the required periodicity condition.
\end{proof}

\begin{remark}
    Two lifts $\tilde{f},\tilde{f}'$ of $f$ to $L\ra\Sigma$ determine the same subsets $\Fix_{\tilde{f}}(\Upsilon)=\Fix_{\tilde{f}'}(\Upsilon)$ if and only if the spin$^c$ structures $\mathfrak{s}_{d,\tilde{f}}$ and $\mathfrak{s}_{d,\tilde{f}'}$ are isomorphic. The proof of this is the same as that of \cite[Lemma 7.1]{Salamon1999}. Refer also to Lemma \ref{spincaction}.
\end{remark}

\subsection{Equivalence of deformations}

In addition to the correspondence of Lemma \ref{correspondences}, we also need to compare the deformations of these objects. We begin by understanding deformations of parallel sections. For $\mathfrak{p}\in\mathcal{P}_\varphi^{reg}$ and $\eta=(\sigma,\tau)\in\mathcal{V}_f^+$, a section $(A,\Phi)$ lies in the image of the map
$$\A_0(\mathfrak{s}_{d,\tilde{f}},\mathfrak{p},\eta)\ra \Gamma(\mathcal{Z}_{\tilde{f}}(d,\mathfrak{p},\tau))$$
if and only if
\begin{equation}\label{adiabaticlimitequations}
    \widetilde{\mathcal{F}}(A,\Phi):=\pi_{A,\Phi}\begin{pmatrix}
        \star(\dot{A}-i\sigma)\\
        i\dot{\Phi}
    \end{pmatrix} = 0.
\end{equation}
Here, $\pi_{A,\Phi}$ is the projection to $H_{A,\Phi}^1=\ker(G_{A,\Phi}^*\oplus S_{A,\Phi})$ for each $t$. The Hodge decomposition is unique, so \eqref{adiabaticlimitequations} is equivalent to equation \eqref{hodgeparalleltransport}. For a section $X$ of $\mathcal{Z}_{\tilde{f}}$, let $\mathcal{H}_X\ra S^1$ be the vector bundle with fibre $H_{X(t)}^1$ over $t\in S^1$. Then $\widetilde{\mathcal{F}}$ is an equivariant section of the $\G_f$-equivariant vector bundle with fibre $\Gamma(\mathcal{H}_X)$ over $X\in\Gamma(\mathcal{Z}_{\tilde{f}})$. This descends to a section $\mathcal{F}$ of the bundle with fibre $\Gamma(s^*\mathcal{H})$ over $s\in\Gamma(\N_f)$, where $\mathcal{H}$ is the vertical tangent bundle of $\N_f\ra S^1$. A section of $\N_f$ is parallel if and only if $\mathcal{F}(s)=0$. For each lift $X\in\Gamma(\mathcal{Z}_{\tilde{f}})$ of $s=[X]\in\Gamma(\N_f)$, there is an isomorphism $\mathcal{H}_X\simeq s^*\mathcal{H}$. The linearisation of $\mathcal{F}$ can then be viewed as a map $\Gamma(\mathcal{H}_X)\ra\Gamma(\mathcal{H}_X)$, as is done in the following lemma. 

\begin{lemma}
    Suppose $\mathfrak{p}\in\mathcal{P}_\varphi^{reg}$ and $\eta=(\sigma,\tau)\in\mathcal{V}_f^+$. Then the linearisation of $\mathcal{F}$ at $[X]\in\mathcal{F}^{-1}(0)$ is the operator
    \begin{equation}\label{adiabaticlinearisation}
        \D_0(X) := \pi_{X}N_{Y}|_{\Gamma(\mathcal{H}_{X})} : \Gamma(\mathcal{H}_{X})\ra\Gamma(\mathcal{H}_{X}).
    \end{equation}
    Here, $Y$ is the solution of equation \eqref{hodgeparalleltransport} uniquely determined by $X$.
\end{lemma}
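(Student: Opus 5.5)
We compute the derivative of $\widetilde{\mathcal{F}}$ at a zero $X=(A,\Phi)$ in the direction of a variation $\xi=(a,\phi)\in\Gamma(\mathcal{H}_X)$, and then use the fact that $\widetilde{\mathcal{F}}(X)=0$ to kill the term in which the projection itself is differentiated. Along a path $X_s$ with $X_0=X$ and $\partial_sX_s|_{s=0}=\xi$, write $\widetilde{\mathcal{F}}(X_s)=\pi_{X_s}w_s$, where $w_s=(\star(\dot A_s-i\sigma),i\dot\Phi_s)$. Differentiating at $s=0$ gives
$$\frac{d}{ds}\Big|_{s=0}\widetilde{\mathcal{F}}(X_s)=(\partial_s\pi_{X_s})w_0+\pi_X\begin{pmatrix}\star\dot a\\ i\dot\phi\end{pmatrix}.$$

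By \eqref{hodgeparalleltransport}, $w_0=-G_{A,\Phi}(V)-S^*_{A,\Phi}(b,\Psi)$, with $V=0$ by Lemma \ref{hequalszero}. So $w_0=-S^*_{A,\Phi}(b,\Psi)$, and it lies in the image of $S^*$.

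The first term is handled by the standard derivative of an orthogonal projection. We have $\pi_{X_s}=1-P_{X_s}$, where $P$ is the projection onto $\im G\oplus\im S^*$. For $w_0=S_X^*y$ one gets
$$(\partial_s\pi)w_0=\partial_s\big(\pi_{X_s}S^*_{X_s}y\big)-\pi_X(\partial_sS^*)y=-\pi_X(\partial_s S^*_{X_s})y,$$
since $\pi_{X_s}S^*_{X_s}=0$ for all $s$. Here $y=-(b,\Psi)$, and $S^*_{A,\Phi}$ depends linearly on $(A,\Phi)$ through the zeroth order terms $-i\Re\langle\Phi,\cdot\rangle$, $-(\cdot)^{0,1}\Phi$ and through $\delbar_A^*$. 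Hence $\partial_sS^*\cdot(b,\Psi)$ is an algebraic bilinear expression in $(a,\phi)$ and $(b,\Psi)$. Concretely we expect it to be $\pi_X$ applied to
$$\big(-i\Im\langle\Psi,\phi\rangle,\;-\langle\Psi,a^{0,1}\rangle+b\phi\big),$$
up to a possible contribution from the fact that $\Psi\in\Omega^{0,1}$ is paired against $a$. Together with the second term, this assembles exactly into $\pi_X N_{b,\Psi}(a,\phi)$. To see this, recall that
$$N_{b,\Psi}=\begin{pmatrix}\star\partial_t & -i\Im\langle\Psi,\cdot\rangle\\ -\langle\Psi,(\cdot)^{0,1}\rangle & i\nabla_t\end{pmatrix},$$
where $i\nabla_t\phi=i\dot\phi+ib\phi$. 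We must check that the $b$ term has the right normalisation, and we can sidestep this by working in the gauge $b=0$ used in Lemma \ref{correspondences}, then appealing to gauge equivariance of both sides.

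Two bookkeeping points remain. First, $\widetilde{\mathcal{F}}$ is only $\G_f$-equivariant, so the linearisation of the descended section $\mathcal{F}$ is obtained by restricting to the slice $\Gamma(\mathcal{H}_X)=\ker(G^*\oplus S)$. This restriction is legitimate because variations tangent to $\Gamma(\mathcal{Z}_{\tilde f})$ satisfy $S\xi=0$ (linearised vortex equations at each $t$), and gauge directions $G(v)$ are sent to zero at a zero of an equivariant section. Second, the identification $\mathcal{H}_X\simeq s^*\mathcal{H}$ is compatible with the map, since both are the horizontal quotient. The only real obstacle is the sign and conjugation bookkeeping in computing $\partial_sS^*$, namely matching $-\langle\Psi,a^{0,1}\rangle$ and $-i\Im\langle\Psi,\phi\rangle$ with the entries of $N_{b,\Psi}$. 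We would verify this directly using the $L^2$ inner product \eqref{3dinnerproduct}, by differentiating $\langle S^*_{X_s}y,\zeta\rangle=\langle y,S_{X_s}\zeta\rangle$ in $s$.
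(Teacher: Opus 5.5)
Your argument is correct and is essentially the paper's proof. The paper differentiates the decomposition $S^*_{X_s}Y_s+w_s$ (modulo harmonic and gauge parts) and applies $\pi_X$; your product rule with $\pi_{X_s}S^*_{X_s}=0$ is the same computation, and both give $\pi_X\big[(\star\dot a,\,i\dot\phi)+(\partial_sS^*_{X_s})(b,\Psi)\big]$.

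The one slip is that the $b$-term is $ib\phi$, not $b\phi$: it comes from the entry $i(\cdot)\Phi$ of $S^*_X$. Also, $-\langle\Psi,a^{0,1}\rangle$ is exactly the variation of $-\delbar^*_{A}$, with no extra contribution. So the sum is literally $N_{b,\Psi}(a,\phi)$, and the gauge-fixing detour is unnecessary.
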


\begin{proof}
    Let $X=(A,\Phi)$, $Y=(b,\Psi)$, and $\Xi=(X,0,Y)$. Suppose $(a,\phi)\in\Gamma(\mathcal{H}_{X})$ and choose $\Xi_s=(A_s,\Phi_s,0,b_s,\Psi_s)$ solving equation \eqref{hodgeparalleltransport} for all $s$ and satisfying
    $$\Xi_0=\Xi,\hspace{3mm}\del_sA_s=a,\hspace{3mm}\del_s\Phi_s=\phi,$$
    Then up to terms of order $s^2$, we have
    \begin{align*}
        0 &= S_{A_s,\Phi_s}^*\begin{pmatrix}
            b_s\\
            \Psi_s
        \end{pmatrix} + \begin{pmatrix}
            \star(\dot{A}_s - i\sigma)\\
            i\dot{\Phi}_s
        \end{pmatrix}\\
        &= S_{A,\Phi}^*\begin{pmatrix}
            b\\
            \Psi
        \end{pmatrix} + \begin{pmatrix}
            \star(\dot{A} + s\dot{a} - i\sigma) - i\Im\langle\Psi_s,s\phi\rangle\\
            i\dot{\Phi} + is\dot{\phi} + ib_s s\phi - \langle\Psi_s,sa^{0,1}\rangle
        \end{pmatrix} + O(s^2).
    \end{align*}
    This can be used to compute the derivative of the first equation of \eqref{adiabaticlimitequations}:
    \begin{align*}
        \D_0(A,\Phi)\begin{pmatrix}
            a\\
            \phi
        \end{pmatrix} &= \pi_{A,\Phi}\frac{d}{ds}\bigg|_{s=0}\begin{pmatrix}
            \star(\dot{A} + s\dot{a} - i\sigma) - i\Im\langle\Psi_s,s\phi\rangle\\
            i\dot{\Phi} + is\dot{\phi} + ib_s s\phi - \langle\Psi_s,sa^{0,1}\rangle
        \end{pmatrix}\\
        &= \pi_{A,\Phi}\begin{pmatrix}
            \star\dot{a} - i\Im\langle\Psi,\phi\rangle\\
            i\nabla_t\phi - \langle\Psi,a^{0,1}\rangle
        \end{pmatrix}\\
        &= \pi_{A,\Phi}N_{b,\Psi}\begin{pmatrix}
            a\\
            \phi
        \end{pmatrix}.
    \end{align*}
\end{proof}

Now consider the operator
\begin{equation}
    \D_0(\Xi) = \begin{pmatrix}
        N_{Y} & G_{X} & S_{X}^*\\
        G_{X}^* & 0 & L_Y^*\\
        S_{X} & 0 & 0
    \end{pmatrix}:W_f\ra W_f.
\end{equation}
This is the linearisation of the adiabatic limit equations \eqref{eq:epsilonzeroSW1} at $\Xi=(X,0,Y)\in\A_0(\mathfrak{s}_{d,\tilde{f}},\mathfrak{p},\eta)$, modulo gauge. The tangent space at $[\Xi]$ of $\M_0(\mathfrak{s}_{d,\tilde{f}},\mathfrak{p},\eta)$ is $\ker\D_0(\Xi)$. The tangent space at $[X]$ to the image of \eqref{downstairsadiabaticcorrespondence} (the space of parallel sections) is $\ker\D_0(X)$.

\begin{prop}\label{kernelidentification}
    Suppose $\mathfrak{p}\in\mathcal{P}_\varphi^{reg}$ and $\eta\in\mathcal{V}_f^+$. Then for all $\Xi=(X,0,Y)\in\mathcal{A}_0(\mathfrak{s}_{d,\tilde{f}},\mathfrak{p},\eta)$, the projection $\pi_X(x,v,y)=\pi_X(x)$ induces an isomorphism
    $$\pi_X:\ker\D_0(\Xi)\ra\ker\D_0(X),$$
    which is the derivative at $[\Xi]$ of the map \eqref{downstairsadiabaticcorrespondence}. 
\end{prop}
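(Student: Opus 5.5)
Write $\xi=(x,v,y)\in W_f$ with $x=(a,\phi)$, $v$ the $V$-component and $y=(c,\psi)$. The equation $\D_0(\Xi)\xi=0$ then splits into three rows:
\begin{align*}
N_Y x + G_X v + S_X^* y &= 0,\\
G_X^* x + L_Y^* y &= 0,\\
S_X x &= 0.
\end{align*}
The plan is to solve these rows fibrewise in $t$ with the Hodge decomposition of the elliptic complex \eqref{framedvortexcomplex}. Since $\mathfrak{p}_t$ is regular, $S_X$ is surjective and $G_X$ injective (as $\Phi(t)\neq0$), so each fibre splits as
$$\Omega^1(i\R)\oplus\Gamma(E\otimes L)=\im G_X\oplus H^1_X\oplus\im S_X^*.$$
Row three forces $x\in\im G_X\oplus H^1_X$. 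Write $x=G_Xu+h$ with $h\in\Gamma(\mathcal{H}_X)$; $u$ is determined by $G_X^*G_X u = G_X^*x$, and $G_X^*G_X=\Delta+|\Phi|^2$ is invertible. Projecting row one onto $H^1_X$ kills the $G_Xv$ and $S_X^*y$ terms and leaves $\pi_X N_Y(G_Xu+h)=0$.

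The key step is to show that $\pi_X N_Y G_X u=0$ for every $u$, so that row one projected becomes exactly $\D_0(X)h=0$. This should follow from gauge equivariance: differentiating \eqref{hodgeparalleltransport} along the gauge orbit $e^{su}\cdot\Xi$ gives an identity
$$N_Y G_X u + G_X(\cdot) + S_X^*(\cdot) = 0,$$
where the unspecified terms come from $\dot u$ entering $b$ and from $u\Psi$. Alternatively, it should appear among the identities of Appendix~\ref{appendix:a}. With this in hand, $\pi_X$ maps $\ker\D_0(\Xi)$ into $\ker\D_0(X)$.

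For injectivity, suppose $h=0$. The remaining components of rows one and two, namely those in $\im G_X$ and $\im S_X^*$, form a system for $(u,v,y)$. Applying $S_X$ to row one gives $S_XS_X^*y = -S_XN_YG_Xu$. Applying $G_X^*$ to row one and combining it with row two (which reads $G_X^*G_Xu + L_Y^*y=0$) determines $v$ and $u$. The aim is to show this homogeneous system has only the zero solution, probably by an energy argument: pair with $(u,v,y)$ and integrate over $t$, using $\D_0$ self-adjoint-type structure. I expect the system to reduce to a gauge-fixing ODE in $u$ of the form $(\Delta+|\Phi|^2)u + \partial_t(\ldots)=0$ with invertible principal part, giving $u=0$ and hence $v=0$, $y=0$.

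For surjectivity, given $h\in\ker\D_0(X)$, take $u=0$ and solve for $v,y$. The $H^1_X$ component of row one already vanishes. The $\im S_X^*$ component determines $y$ via $S_XS_X^*y=-S_XN_Yh$, with $S_XS_X^*$ invertible. The $\im G_X$ component together with row two then determine $v$ and any correction to $u$. Both steps are again fibrewise invertible elliptic problems; if row two over-constrains, use a small $u$ correction.

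That $\pi_X$ is the derivative of \eqref{downstairsadiabaticcorrespondence} is immediate, since that map forgets $(V,b,\Psi)$ and projects modulo gauge onto $H^1_X$. I expect the main obstacle to be the algebraic identity $\pi_XN_YG_X=0$, together with verifying that the coupled system in $(u,v,y)$ is uniquely solvable. This is where the commutation identities between $N,G,S,L$ are needed.
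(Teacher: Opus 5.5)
\textbf{Verdict: genuine gap in the bijectivity step.} Your well-definedness step is fine and matches the paper. The identity you are looking for is \eqref{identity1}, $N_YG_X+S_X^*L_Y=0$, which immediately gives $\pi_XN_YG_X=0$.

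The gap is that you use only this projected consequence, and then leave the coupled $(u,v,y)$ system to an unspecified energy argument. Several of the concrete claims you make there are wrong.
\begin{itemize}
\item Taking $u=0$ for surjectivity fails. Row two would then require $L_Y^*y'=0$ for the fibrewise solution $y'$ of $N_Yh+S_X^*y'=0$, and there is no reason for this to hold.
\item The needed correction to $u$ is not small. Changing $u$ forces $y$ to change by $L_Yu$, and this coupling is exactly what produces an $L_Y^*L_Y$ term.
\item The $u$-equation is not fibrewise, because $L_Y$ contains $\partial_t$.
\item An ``invertible principal part'' does not give injectivity on a closed manifold; what is needed is positivity.
\end{itemize}
Without the full identity, eliminating $y$ leaves the operator $G_X^*G_X-L_Y^*(S_XS_X^*)^{-1}S_XN_YG_X$ acting on $u$, and you have no control over its sign.

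The missing idea is to use \eqref{identity1} in full, together with its adjoint $G_X^*N_Y=-L_Y^*S_X$ (recall $N_Y$ is formally self-adjoint). The argument then runs as follows.
\begin{itemize}
\item Applying $G_X^*$ to row one gives $G_X^*G_Xv=L_Y^*S_Xx=0$, so $v=0$.
\item Writing $x=h+G_Xu$, row one becomes $N_Yh+S_X^*(y-L_Yu)=0$.
\item Since $\pi_XN_Yh=0$ and $G_X^*N_Yh=-L_Y^*S_Xh=0$, we have $N_Yh\in\im S_X^*$. Hence there is a fibrewise $y'$ with $N_Yh+S_X^*y'=0$, unique because $S_X^*$ is injective by regularity. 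It follows that $y=y'+L_Yu$.
\item Row two then becomes $(G_X^*G_X+L_Y^*L_Y)u=-L_Y^*y'$.
\item Here $G_X^*G_X+L_Y^*L_Y=\Delta_{\Sigma_f}+|\Phi|^2+|\Psi|^2$. This is a non-negative elliptic operator on the closed 3-manifold $\Sigma_f$ with trivial kernel, since $\Phi(t)\neq 0$ for all $t$, so it is invertible.
\end{itemize}
This gives injectivity: $h=0$ forces $y'=0$, then $u=0$, then $y=0$. It also gives surjectivity, since every $h\in\ker\D_0(X)$ produces a solution. This is precisely the paper's argument.
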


\begin{proof}
    To see that $\pi_{X}:\ker\D_0(\Xi)\ra\ker\D_0(X)$ is well-defined, suppose $\xi=(x,v,y)\in\ker\D_0(\Xi)$. We omit subscripts from the notation, writing $\pi$ instead of $\pi_X$, $G$ instead of $G_X$, and so on. First note that
    $$0 = G^*Nx + G^*Gv = -L^*Sx + G^*Gv = G^*Gv$$
    by Lemma \ref{operatoridentities}, which implies $v=0$ because $G^*G$ is invertible for all $t$ (see also Lemma \ref{hequalszero}). Since $Sx=0$ there is $w$ such that $x=x_0+Gw$, where $x_0=\pi(x)$. By Lemma \ref{operatoridentities}, we have
    $$0 = Nx + S^*y = Nx_0 - S^*(Lw + y).$$
    Applying $\pi$ to this shows that $x_0\in\ker\D_0(X)$. To see that $\pi$ is an isomorphism, let $x_0\in\ker\D_0(X)$. Then $x = x_0 + Gw$ satisfies $Sx=0$, so we need to solve the equations
    \begin{align*}
        Nx_0 + S^*(Lw + y) &= 0,\\
        G^*Gw + L^*y &= 0,
    \end{align*}
    for unique $w,y$ (as before, $v=0$). Since $\pi(Nx_0)=0$, there is a solution $y'$ to $Nx_0 + S^*y'=0$ for each $t$. This solution is unique because $S$ is surjective for all $t$ (since $\mathfrak{p}\in\mathcal{P}_\varphi^{reg}$). It remains to solve the equation
    $$G^*Gw + L^*(y' + Lw) = 0.$$
    A short computation shows that $G^*G + L^*L = \Delta_{\Sigma_f} + |\Phi|^2 + |\Psi|^2$, which is invertible for all $t$. Then there is a unique solution $w$ to the above equation and we define $y = y' + Lw$. 
\end{proof}

\begin{remark}\label{infinitecoker}
    We will tend to continue working with $\D_0(X)$ rather than $\D_0(\Xi)$. The reason is that $\D_0(X)$ is self-adjoint and elliptic, while $\D_0(\Xi)$ is neither (some derivatives disappeared in the adiabatic limit). Indeed, the proof of Proposition \ref{kernelidentification} fails for the adjoint operator $\D_0(\Xi)^*$. This has to do with the non-vanishing remainder term $R_{\Xi}$ in the identity \eqref{identity2}. 
\end{remark}

\begin{lemma}\label{fixedpointsande=0}
    Let $\tilde{f}:L\ra L$ and $\varphi:E\ra E$ be unitary lifts of $f$. Let $\mathfrak{p}\in\mathcal{P}^{reg}_{\varphi}$ and $\eta\in\mathcal{V}_f^+$. Then non-degenerate solutions to the adiabatic limit equations correspond to non-degenerate fixed points via the isomorphism of Lemma \ref{correspondences}.
\end{lemma}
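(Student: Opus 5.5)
Non-degeneracy on the adiabatic side means $\ker\D_0(\Xi)=0$. On the fixed-point side it means $\ker(1-d\Upsilon_{d,\mathfrak{p},\eta})=0$ at the corresponding fixed point. Proposition \ref{kernelidentification} already identifies $\ker\D_0(\Xi)$ with $\ker\D_0(X)$. It therefore suffices to show that $\ker\D_0(X)$, the space of infinitesimal deformations of the parallel section $[X]$ of $\N_f$, is isomorphic to $\ker(1-d\Upsilon)$ at $[X(0)]$. The isomorphism should be given by evaluation at $t=0$.

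\textbf{Step 1: $\D_0(X)$ is the linearised parallel-transport operator.} I would identify $\D_0(X)=\pi_XN_Y|_{\Gamma(\mathcal{H}_X)}$ as the covariant derivative along the circle, induced by the flat symplectic connection of Theorem \ref{paralleltransporttheorem} on the pulled-back vertical bundle $s^*\mathcal{H}\simeq\mathcal{H}_X$, composed with the fibrewise complex structure $(a,\phi)\mapsto(\star a,i\phi)$. The preceding lemma shows that $\D_0(X)$ is the linearisation of $\mathcal{F}$, and $\mathcal{F}(s)=0$ is exactly the equation for parallel sections. Hence $\D_0(X)\xi=0$ is precisely the linearised parallel-section equation. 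Concretely, let $s_\lambda$ be a family of sections with $s_0=[X]$ and derivative $\xi$. Differentiating the equation $\mathcal{F}(s_\lambda)=0$ shows that $\xi$ is a section of $s^*\mathcal{H}$ that is invariant under the linearised flow of the connection. Equivalently, $\xi(t)=dP_t(\xi(0))$ for $t\in[0,1]$, where $dP_t$ is the derivative of the parallel-transport map at $[X(0)]$.

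\textbf{Step 2: periodicity and the matching of kernels.} Periodic sections of $\mathcal{H}_X$ over the mapping torus must satisfy $\xi(1)=dF(\xi(0))$. This follows from the periodicity built into $W_f$ via $\tilde f$ and $\varphi$, since $F$ acts on $H^1$ by pullback. Combining this with Step 1 gives
\[
dP_1\,\xi(0)=dF\,\xi(0),\qquad\text{that is,}\qquad d\Upsilon\,\xi(0)=\xi(0)
\]
with $\Upsilon=P_1^{-1}\circ F$. Conversely, any vector $v\in\ker(1-d\Upsilon)$ defines a solution $\xi(t)=dP_t(v)$, and this solution is periodic. Evaluation at $t=0$ is injective, by uniqueness of solutions to the linear ODE along $t$, so it gives an isomorphism $\ker\D_0(X)\cong\ker(1-d\Upsilon)$. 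Both sides are finite-dimensional, and $\D_0(X)$ is self-adjoint elliptic of index $0$, so kernel zero on one side is equivalent to kernel zero on the other. Composing with Proposition \ref{kernelidentification} then gives the claim: $[\Xi]$ is non-degenerate exactly when the corresponding fixed point from Lemma \ref{correspondences} is non-degenerate.

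\textbf{Main obstacle.} The subtle point is Step 1. One must check that $\pi_XN_Y$ really computes the linearised horizontal lift, rather than differing from it by a zeroth-order term, and that the $\Psi$-dependent terms $-i\Im\langle\Psi,\phi\rangle$ and $-\langle\Psi,a^{0,1}\rangle$ are exactly the derivative of the horizontal-lift equations \eqref{paralleltransportequations} in the direction $(a,\phi)$. This includes the derivative of the term $\Re\langle\Psi,\Phi\rangle$ through $\Psi$; that contribution should be killed by $\pi_X$, being in the image of $S^*_X$. I would verify this by differentiating the family $\Xi_s$ of solutions to \eqref{hodgeparalleltransport}, as in the preceding proof. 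I would then use the Hodge decomposition, together with the surjectivity of $S_X$ coming from $\mathfrak{p}\in\mathcal{P}_\varphi^{reg}$, to see that the projected terms are the vertical part of the derivative of $P_t$.
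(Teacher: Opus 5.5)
Your proposal is correct and follows the paper's route. You identify $\ker\D_0(X)$ with $\ker(d\Upsilon-1)$ by evaluating at $t=0$, then use Proposition \ref{kernelidentification} to pass to $\ker\D_0(\Xi)$. Your linear-ODE formulation, with $\xi(t)=dP_t\,\xi(0)$ and the periodicity $\xi(1)=dF\,\xi(0)$, is a more explicit version of the paper's step of differentiating $\Upsilon s_\e(0)=s_\e(0)$ along a family of parallel sections: it gives both directions of the isomorphism without needing every kernel element to come from an actual family of fixed points.
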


\begin{proof}
    If $s_\e$ is a smooth family of parallel sections passing through $s_0$, the derivative $\delta s_0=\del_\e s_\e|_{\e=0}$ is a section of $s^*\mathcal{H}\ra S^1$, where $\mathcal{H}\ra\N_f$ is the vertical tangent bundle. The fixed points $s_\e(0)$ solve $\Upsilon s_\e(0)=s_\e(0)$, which can be differentiated to obtain $(\delta s_0)(0)\in\ker(d_{s_0(0)}\Upsilon - 1)$. This shows that there is an isomorphism $\ker\D_0(X)\simeq \ker(d_{[X]}\Upsilon_{d,\mathfrak{p},\eta} - 1)$. Lemma \ref{kernelidentification} finishes the proof. 
\end{proof}

\subsection{Transversality in the adiabatic limit}
It remains to prove that for a generic perturbation, every fixed point of the monodromy map $\Upsilon$ is non-degenerate. We choose to work in the setting of parallel sections, since the standard Sard--Smale theorem cannot be applied to the adiabatic limit equations themselves. This is because there is an infinite-dimensional cokernel (see Remark \ref{infinitecoker}), which is not the case for deformations of parallel sections. We begin by proving the following technical lemma. 

\begin{lemma}\label{preliminarydensity}
    Fix $J\in\mathcal{J}_f(\Sigma,\omega)$ and define $\mathcal{U}_J\subset\A_\varphi^{reg}(J)\times\mathcal{V}_f^+$ as the set of $(B,\eta)$ such that
    $$\frac{1}{2}(\del_{A,\mathfrak{p}}\Phi)\circ\dot{J} + (\sigma+\dot{B})^{0,1}\Phi\neq 0$$
    for all $t$ and all $[A,\Phi]\in\Gamma(\mathcal{N}_{f}(d,\mathfrak{p},\tau))$. Here, $\mathfrak{p}=(J,B)$ and $\eta=(\sigma,\tau)$. Then $\mathcal{U}_J$ is an open dense subset of $\A_\varphi^{reg}(J)\times\mathcal{V}_f^+$ in the $C^\infty$ topology. 
\end{lemma}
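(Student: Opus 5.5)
The plan is to prove openness by a compactness argument and density by a finite-dimensional parametric transversality argument, perturbing only $\sigma$ through exact forms so that $(\sigma,\tau)$ stays in $\mathcal{V}_f^+$. Throughout, write
$$\mathcal{E}_{B,\eta}(t,A,\Phi) := \tfrac{1}{2}(\del_{A,\mathfrak{p}}\Phi)\circ\dot{J} + (\sigma+\dot{B})^{0,1}\Phi \in \Omega^{0,1}_{J_t}(\Sigma,E\otimes L).$$
This expression is $\G_L$-equivariant and satisfies the periodicity of Definition \ref{periodicforms}. Its vanishing is therefore a well-defined condition on points $(t,[A,\Phi])$ of the total space $\N_f(d,\mathfrak{p},\tau)$. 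Since a section meets every point of the total space, the condition in the lemma amounts to saying that $\mathcal{E}_{B,\eta}$ has no zero anywhere on $\N_f$.

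\emph{Openness.} Suppose $(B_k,\eta_k)\to(B,\eta)$ in $C^\infty$, and each $(B_k,\eta_k)\notin\mathcal{U}_J$. Then there are $t_k\in[0,1]$ and framed vortices $(A_k,\Phi_k)\in\mathcal{Z}_\Sigma(d,\mathfrak{p}^k_{t_k},\tau^k_{t_k})$ with $\mathcal{E}_{B_k,\eta_k}(t_k,A_k,\Phi_k)=0$. The standard vortex estimates give a uniform $L^\infty$ bound on $\Phi_k$ via the maximum principle applied to $\star F_A-\frac{i}{2}|\Phi|^2+i\tau=0$. Elliptic bootstrapping in a suitable (Coulomb-type) gauge then gives uniform $C^\ell$ bounds, which is the same compactness that makes $\N_\Sigma$ compact in Theorem \ref{vortexmodulispaces}. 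Passing to a subsequence and using gauge invariance, we obtain a limit $t_k\to t$ and $(A_k,\Phi_k)\to(A,\Phi)$ in $C^\infty$, where $(A,\Phi)$ is a framed vortex for the limiting data. The limit has $\Phi\neq0$ because $\|\Phi\|^2_{L^2}=4\pi(\bar\tau-d)>0$. Since $\mathcal{E}$ is continuous in all variables, $\mathcal{E}_{B,\eta}(t,A,\Phi)=0$, so the complement of $\mathcal{U}_J$ is closed. The set $\A^{reg}_\varphi(J)$ is itself open by the same compactness, so $\mathcal{U}_J$ is open in $\A_\varphi^{reg}(J)\times\mathcal{V}_f^+$.

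\emph{Density.} Fix $(B,\eta)$ and perturb $\sigma\mapsto\sigma+\sum_j s_j\,dh_j$. Here the $h_j\in\Omega^0_f(\Sigma)$ are periodic functions and $s=(s_j)\in\R^K$ is a small parameter. Because each $dh_j$ is exact, $\star_t d(dh_j)=0$, so $\tau$ is unchanged, the constraint $\dot\tau+\star_td\sigma=0$ persists, and we stay in $\mathcal{V}_f^+$. Consider the finite-dimensional manifold $\N_f$, of real dimension $1+2n$ with $n=\dim_\C\N_\Sigma$. On the product $\N_f\times\R^K$, consider the map
$$\mathrm{ev}(t,x,s)=\big(\mathcal{E}_{B,\eta_s}(t,A,\Phi)(z_1),\dots,\mathcal{E}_{B,\eta_s}(t,A,\Phi)(z_m)\big),$$
where $z_1,\dots,z_m$ are finitely many points of $\Sigma$. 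Near a given $(t_0,x_0)$, choose the points $z_i$ with $\Phi(z_i)\neq0$; this is possible since a nonzero holomorphic section vanishes only at isolated points. Choose $h_j$ supported near the $z_i$ so that the values $dh_j(z_i)$ span $T^*_{z_i}\Sigma$. Since $\Phi(z_i)\neq0$, the map $\sigma(z_i)\mapsto\sigma^{0,1}(z_i)\Phi(z_i)$ is a real-linear injection from $\R^2$ onto a complex line $\ell_i\subset(E\otimes L)_{z_i}$.

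The zero set of $\mathrm{ev}$ is therefore governed by the projection of the $z_i$-components onto the lines $\ell_i$. The derivative of these projected components in $s$ is surjective onto $\prod_i\ell_i\cong\R^{2m}$. The projected map is then a submersion near the zero set, so its zero locus is a submanifold of codimension $2m$. By Sard's theorem, for generic $s$ the slice over $s$ is a manifold of dimension $1+2n-2m$, which is empty once $2m>1+2n$. Covering the compact space $\N_f$ by finitely many such neighbourhoods and taking a common generic small $s$ gives $(B,\eta_s)\in\mathcal{U}_J$ arbitrarily close to $(B,\eta)$.

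The main obstacle is making this local argument uniform as $x$ varies. The points $z_i$ must remain away from the zeros of $\Phi$ over an entire neighbourhood in $\N_f$, and the lines $\ell_i$ move with $\Phi(z_i)$. I would handle this by working with the projection onto $\ell_i$, which depends smoothly on $(t,x)$, and by shrinking neighbourhoods so that $|\Phi(z_i)|$ stays bounded below. Perturbations that are small in $C^\infty$ keep $\A^{reg}_\varphi(J)$ unaffected.
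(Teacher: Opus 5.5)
Your proof is correct. It shares the paper's skeleton: openness comes from compactness of $\mathcal{N}_f$, and density comes from perturbing only $\sigma$ by closed forms, which leaves $\tau$, and hence $\mathcal{N}_f(d,\mathfrak{p},\tau)$, unchanged and just adds $\zeta^{0,1}\Phi$ to $\theta$. The density step, however, is carried out differently.

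\textbf{The paper's density argument.} It works in infinite dimensions. If $\theta_t(A,\Phi)+\zeta\Phi=0$, then $\zeta=-\langle\theta,\Phi\rangle/|\Phi|^2$ is forced pointwise. So the ``bad'' perturbations at time $t$ form the image of the finite-dimensional space $\mathcal{N}_\Sigma$. The paper then asserts that this image has dense complement, and that a generic periodic path of closed forms avoids the whole family.

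\textbf{Your density argument.} You restrict to a finite-dimensional family of exact perturbations $\sum_j s_j\,dh_j$, which automatically preserves $\dot\tau+\star_t d\sigma=0$ and periodicity. You evaluate only at $m>n+\tfrac12$ points where $\Phi\neq 0$, and apply parametric transversality and Sard on $\mathcal{N}_f\times\R^K$. The coordinate along your line $\ell_i$ is exactly the paper's quantity $\langle\theta,\Phi\rangle/|\Phi|^2$ at $z_i$, so the underlying observation is the same.

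\textbf{What your route gains.} The paper never checks that the map $[A,\Phi]\mapsto -\langle\theta,\Phi\rangle/|\Phi|^2$ is regular enough for ``finite-dimensional image has dense complement'' to hold. Nor does it check that this image meets the closed forms in a negligible set. Your dimension count $1+2n-2m<0$ on a finite-dimensional manifold is airtight. Likewise, your openness argument with convergent $(B_k,\eta_k)$ supplies the uniform vortex compactness that the paper folds into the bare assertion that $\Theta$ is continuous.

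\textbf{What it costs.} You need the covering argument. Make explicit that the $h_j$ from all finitely many charts are combined into a single parameter space $\R^{K_1+\cdots+K_r}$ before choosing $s$. The submersion property on each chart survives adding parameters, and a finite union of null sets is null, so one generic small $s$ works everywhere.

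\textbf{A minor inaccuracy.} Your claim that ``a section meets every point of the total space'' fails when the fibres of $\mathcal{N}_f\to S^1$ are disconnected, for example in genus one with $d=0$ and nontrivial permutation monodromy. This costs nothing: the pointwise condition on all of $\mathcal{N}_f$ is what the paper's own openness argument uses, and it implies the version stated over sections.
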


\begin{proof}
    For $(B,\eta)\in\A_\varphi^{reg}(J)\times\mathcal{V}_f^+$, $t\in[0,1]$, and $(A,\Phi)\in\mathcal{Z}_{\Sigma}(d,\mathfrak{p}_t,\tau_t)$, define the quantity
    \begin{equation}\label{righthandside}
        \theta_t^{B,\eta}(A,\Phi) := \frac{1}{2}(\del_{A,\mathfrak{p}_t}\Phi)\circ\dot{J}_t + (\sigma_t+\dot{B}_t)^{0,1}\Phi,
    \end{equation}
    where $\mathfrak{p}=(J,B)$ and $\eta=(\sigma,\tau)$. To see that $\mathcal{U}_J$ is open, define
    $$\Theta(B,\eta) = \inf_{(t,[A,\Phi])\in\N_f(d,\mathfrak{p},\tau)}\big\|\theta_t^{B,\eta}(A,\Phi)\big\|_{L^2(\Sigma,J_t)}.$$
    Since $B\in\A_\varphi^{reg}(J)$, $\N_f$ is compact so this function is continuous and the infimum is achieved. Then $\mathcal{U}_J=\Theta^{-1}(0,\infty)$ is open. To show that $\mathcal{U}_J$ is dense, suppose $(B,\eta)\in\A_\varphi^{reg}(J)\times\mathcal{V}_f^+$. Note that if there is a smooth $\zeta\in\Omega^{0,1}(\Sigma)$ such that
    \begin{equation}\label{badperturbation}
        \theta_t^{B,\sigma+\zeta,\tau}(A,\Phi) = \theta_t^{B,\eta}(A,\Phi) + \zeta\Phi=0,
    \end{equation}
    it is determined pointwise by the equation
    $$\zeta = -\frac{\langle\theta_t^{B,\eta}(A,\Phi),\Phi\rangle}{|\Phi|^2}$$
    on the set $\{\Phi\neq 0\}$. Since $\Phi$ is a holomorphic section, this set is all but a finite number of points, so $\zeta$ is determined everywhere by continuity. Then for each $t$ there is a map
    $$\N_\Sigma(d,\mathfrak{p}_t,\tau_t) \dashrightarrow \Omega^{0,1}(\Sigma),\hspace{5mm}[A,\Phi] \mapsto -\frac{\langle\theta_t^{B,\eta}(A,\Phi),\Phi\rangle}{|\Phi|^2},$$
    defined for any $[A,\Phi]$ for which there is $\zeta$ solving equation \eqref{badperturbation}. Let $\mathcal{K}_t$ be the complement of the image of this map, which is dense in $\Omega^{0,1}(\Sigma)$ because $\N_\Sigma$ is finite-dimensional. Since $(\sigma+\zeta,\tau)$ must solve the equation $\dot{\tau} + \star d(\sigma + \zeta)=0$, we require $d\zeta=0$. The intersection $\mathcal{K}_{t,cl} = \mathcal{K}_t\cap\Omega^{0,1}_{cl}(\Sigma)$ is still a dense subset of the closed $0,1$-forms $\Omega^{0,1}_{cl}(\Sigma)$. Now define
    $$\mathcal{S} = \{\zeta\in\Omega^{0,1}_f(\Sigma) : \zeta(t)\in\Omega^{0,1}_{cl}(\Sigma)\setminus\mathcal{K}_{t,cl} \textnormal{ for all }t\}\subset\Omega^{0,1}_{f,cl}(\Sigma),$$
    which is a dense subset of $\Omega^{0,1}_f(\Sigma)$ because any path $\zeta$ can be perturbed to avoid the family of finite-dimensional subsets $\mathcal{K}_{t,cl}$. Then for $\zeta\in\mathcal{S}$ arbitrarily close to $0$ (in the $C^\infty$ topology), the point $(B,\sigma+\zeta,\tau)\in\mathcal{U}_J$ is arbitrarily close to $(B,\sigma,\tau)$. This shows that $\mathcal{U}_J$ is dense in $\A_\varphi(E)\times\mathcal{V}_f^+$.

\end{proof}

\begin{prop}\label{transversality}
    Fix $J\in\mathcal{J}_f(\Sigma,\omega)$ and let $\mathfrak{s}_{d,\tilde{f}}$ be a spin$^c$ structure on $\Sigma_f$. Then there exists a dense subset
    $$\mathcal{U}_{\varphi}^{reg}(J)\subset\A_\varphi(E)\times\mathcal{V}_f^+$$
    such that for all $(B,\eta)\in \mathcal{U}_\varphi^{reg}(J)$, the following statements hold for $\mathfrak{p}=(J,B)$ and $\eta=(\sigma,\tau)$:
    \begin{enumerate}
        \item $\mathfrak{p}\in\mathcal{P}_\varphi^{reg}$, so that $\N_\Sigma(d,\mathfrak{p}_t,\tau_t)$ is regular for all $t$.
        \item Every fixed point in $\Fix_{\tilde{f}}(\Upsilon_{d,\mathfrak{p},\eta})$ is non-degenerate.
    \end{enumerate}
    Moreover, $\mathcal{U}_{\varphi}^{reg}:=\{(J,B,\eta):(B,\eta)\in\mathcal{U}_\varphi^{reg}(J)\}$ is a dense subset of $\mathcal{P}_{\varphi}\times\mathcal{V}_f^+$. 
\end{prop}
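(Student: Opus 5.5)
Part (1) comes first. For fixed $J\in\mathcal{J}_f(\Sigma,\omega)$, I will argue that $\A_\varphi^{reg}(J)$ is dense (indeed residual) in $\A_\varphi(E)$. Theorem \ref{smoothnesscompactness} gives, for each $t$, a residual set $\A^{reg}_\Sigma(J_t)$. What I need instead is a one-parameter family statement. The failure of regularity for $\N_\Sigma(d,\mathfrak{p}_t,\tau_t)$, and of compactness of $\M_\Sigma$ (the Fueter locus $\mathcal{W}_\Sigma$), occurs in codimension at least $2$ in $\A_\Sigma(E)$; this is the codimension $2$ statement of Doan quoted in the introduction.

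To use this, I will run a parametric Sard--Smale argument on the universal moduli space over $[0,1]\times\A(E)$. The set of paths $B:[0,1]\ra\A(E)$ that hit the bad locus is then a countable union of nowhere dense sets. Since the path is compact, it is in fact closed with empty interior. The periodicity $B_{t+1}=\varphi^*B_t$ is handled by perturbing on $[0,1]$ with perturbations supported away from $t=0,1$, after first arranging $B_0$ regular. This gives an open dense set of admissible $B$.

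Part (2) uses the deformation theory of parallel sections from the previous subsection. Fix $\mathfrak{p}\in\mathcal{P}_\varphi^{reg}$ and take $\eta$ ranging over $\mathcal{V}_f^+$. Consider the universal section $(s,\sigma')\mapsto\mathcal{F}_{\sigma+\sigma'}(s)$ of the bundle with fibre $\Gamma(s^*\mathcal{H})$ over $\Gamma(\N_f)\times\{\text{closed perturbations }\sigma'\}$, suitably completed in Sobolev norms. Its derivative in $s$ is $\D_0(X)$, which is elliptic and self-adjoint on $\Gamma(\mathcal{H}_X)$, hence Fredholm of index zero. Its derivative in $\sigma'$ is $\zeta\mapsto-\pi_X(\star\zeta,0)$, up to a term coming from the induced change in $\Psi$.

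To show the universal linearisation is surjective, I will take $x\in\Gamma(\mathcal{H}_X)$ that is $L^2$-orthogonal to the image. By self-adjointness, $x\in\ker\D_0(X)$, and $x$ is also orthogonal to $\pi_X(\star\zeta,0)$ for all admissible $\zeta$. These $\zeta$ are closed time-dependent $1$-forms with $\dot\tau=-\star d\sigma$, so exact pieces can be adjusted together with $\tau$. Since $x$ is parallel, it is determined by its value at one time. Letting $\zeta$ be concentrated near a single time $t_0$, the orthogonality forces the $a$-component of $x(t_0)$ to be orthogonal to all closed $1$-forms modulo the image of $G$. Lemma \ref{preliminarydensity} supplies the nondegeneracy needed to extend this: the right-hand side of \eqref{paralleltransportpsi} is nonzero, so varying $\sigma$ genuinely moves the parallel transport. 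Combining these, one can take $\zeta^{0,1}\Phi$ to hit every direction in $H^1_{X(t_0)}$. Then $x(t_0)=0$, and hence $x=0$.

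Sard--Smale then shows that for $\eta$ in a residual set $\mathcal{V}^{reg}$, $\D_0(X)$ is injective at every zero. By Lemma \ref{fixedpointsande=0} this is exactly non-degeneracy of the fixed points in $\Fix_{\tilde{f}}(\Upsilon_{d,\mathfrak{p},\eta})$. Compactness of $\N_f$ makes the fixed point set compact, so the condition is also open. I take $\mathcal{U}^{reg}_\varphi(J)$ to be the intersection of $\mathcal{U}_J$ with these sets; it is dense because each piece is open dense or residual in a Baire space. For the final claim I vary $J$ as well: density in $\mathcal{P}_\varphi\times\mathcal{V}_f^+$ follows because for every $J$ the slice $\mathcal{U}^{reg}_\varphi(J)$ is dense.

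I expect the main obstacle to be the surjectivity step. Closed $1$-forms are only a finite-dimensional-per-time family modulo exact forms, and exact perturbations can be gauged away. So I must show that closed $\zeta$ concentrated in time, together with the constraint $\dot\tau=-\star d\sigma$, already span $H^1_X$ at each time. The non-vanishing from Lemma \ref{preliminarydensity} is what I would lean on here. If this fails, I would fall back on also perturbing $B$ (through $\dot B$ in \eqref{paralleltransportpsi}), which gives $\End(E)$-valued perturbations that certainly span.
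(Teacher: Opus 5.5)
There is a genuine gap in the surjectivity step of Part (2), which is the heart of the proposition.

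With $\mathfrak{p}$ fixed, $\sigma$ enters the linearised section only as $\pi_X(-i\star\zeta,0)$. This term does not involve $\Psi$, so Lemma~\ref{preliminarydensity} gives you nothing here. Your claim that ``$\zeta^{0,1}\Phi$ hits every direction in $H^1_{X(t_0)}$'' is unsupported: $\zeta^{0,1}\Phi$ is pointwise a multiple of $\Phi$, and it is not even an element of $H^1_X$.

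If you carry out the computation, orthogonality of $x'=(a',\phi')$ to $\pi_X(\star\zeta,0)$, for all closed $\zeta$ localised in time, only forces $a'(t)=dh$ to be exact. The conditions $G_X^*x'=0$ and $S_Xx'=0$ then say that $x'=G_X(-h)+(0,\chi)$, with $\chi=\phi'+h\Phi$ holomorphic and $\Re\langle\Phi,\chi\rangle\equiv0$. In other words, $x'(t)$ is tangent to the fibre of $[A,\Phi]\mapsto[A]$.

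Contrary to your remark, exact $\zeta=df$ are not gauge-trivial. They act through the Hamiltonian $\int_\Sigma f|\Phi|^2\omega$, which by the vortex equation only sees $F_A$. Harmonic $\zeta$ only see the holonomy of $A$. So together the $\sigma$-perturbations only see $[A]$.

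For $N=1$ such $\chi$ lie in $i\R\Phi$, because a meromorphic function with vanishing real part is constant. That is why closed perturbations suffice in the classical case. For $N\geq2$ they need not. Any holomorphic $\chi$ pointwise orthogonal to $\Phi$ (for example, $\Phi$ and $\chi$ in different summands of an orthogonal holomorphic splitting of $E$) gives a nonzero $(0,\chi)\in H^1_X$ that is invisible to every $\sigma$-perturbation. Your argument cannot exclude a parallel $x'$ taking values in these directions. This is the same phenomenon behind the remark in Section~\ref{s1} that perturbing $\eta$ alone does not give regularity and $B$ must be varied.

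The paper's proof therefore runs Sard--Smale over $(B,\eta)\in\mathcal{U}_{J,k}$ jointly. A variation $\beta\in\Omega^1_\varphi(\End(E))$ contributes $\pi_X(0,\langle\Psi,\beta^{0,1}\rangle)$ to the linearisation.

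Your fallback claim that such perturbations ``certainly span'' is not automatic either. The term vanishes wherever $\Psi$ does, and $\Psi\equiv0$ really happens, for example for constant data. The actual role of Lemma~\ref{preliminarydensity} is to make the right-hand side of \eqref{paralleltransportpsi} nonzero, so that $\Psi(t)\not\equiv0$. The paper's argument then runs as follows:
\begin{enumerate}
\item Unique continuation gives $\Psi\neq0$ on a dense set.
\item The bump-function choice $\beta=\rho\langle\Psi,\phi'\rangle$ forces $\phi'=0$.
\item With $\phi'=0$, the condition $S_Xx'=0$ reduces to $(a')^{0,1}\Phi=0$.
\item Hence $a'=0$, since $\Phi$ has isolated zeros.
\end{enumerate}

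Your Part (1), density of $\A^{reg}_\varphi(J)$ from the codimension-two bad locus, is a reasonable supplement to something the paper leaves implicit. It does not repair Part (2).
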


\begin{proof}
    With $J$ fixed and $k\geq 1$, define the spaces
    $$\widetilde{\bm{\mathcal{B}}}_k = \{(B,\eta,X) : (B,\eta)\in\mathcal{U}_{J,k},\hspace{1mm} X\in\Gamma_{k+1}(\mathcal{Z}_{\tilde{f}}(d,\mathfrak{p},\tau)),\hspace{1mm}\mathfrak{p}=(J,B)\},$$
    $$\widetilde{\bm{\mathcal{H}}}_k = \{(B,\eta,X,x) : (B,\eta,X)\in\bm{\mathcal{B}}_k,\hspace{1mm}x\in\Gamma_{k}(\mathcal{H}_X)\}.$$
    Here, additional subscripts $k$ indicate the $L_k^2$ completion of a space, and we are defining
    $$\mathcal{U}_{J,k}=\mathcal{U}_J\cap\A_{\varphi,k}^{reg}(J)\times\mathcal{V}_{f,k}^+,$$
    which remains an open dense subset by Lemma \ref{preliminarydensity}. The projection $\widetilde{\bm{\mathcal{H}}}_k\ra\widetilde{\bm{\mathcal{B}}}_k$ is a $\G_{f,k+2}$-equivariant Hilbert bundle. For $X=(A,\Phi)$, define a smooth equivariant section of this bundle by
    $$\widetilde{\bm{\mathcal{F}}}(B,\sigma,\tau,X) = \pi_{X}\begin{pmatrix}
        \star(\dot{A}-i\sigma)\\
        i\dot{\Phi}
    \end{pmatrix}.$$
    This is a version of the section $\widetilde{\mathcal{F}}$ of equation \eqref{adiabaticlimitequations} where the parameter $B$ and perturbation are allowed to vary. As in the fixed parameter case, it descends to a section $\bm{\mathcal{F}}$ of the bundle $\bm{\mathcal{H}}_k\ra\bm{\mathcal{B}}_k$, where
    $$\bm{\mathcal{B}}_k = \{(B,\eta,[X]) : (B,\eta)\in\mathcal{U}_{J,k},\hspace{1mm} [X]\in\Gamma_{k+1}(\mathcal{N}_{f}(d,\mathfrak{p},\tau)),\hspace{1mm}\mathfrak{p}=(J,B)\},$$
    $$\bm{\mathcal{H}}_k = \{(B,\eta,[X],x) : (B,\eta,X)\in\bm{\mathcal{B}}_k,\hspace{1mm}x\in\Gamma_{k}([X]^*\mathcal{H})\},$$
    and $\mathcal{H}\ra \N_f$ is the vertical tangent bundle. The tangent space to $\bm{\mathcal{B}}_k$ at $(B,\sigma,\tau,[X])$ is the space of all tuples
    $$(\beta,\zeta,\delta,x)\in\Omega^{1}_{\varphi,k}(\End(E))\oplus\mathcal{V}_{f,k}^+\oplus\Omega^1_{f,k}(i\R)\oplus\Gamma_{\varphi,k}(E\otimes L)$$
    such that
    $$S_Xx + (i\delta,\beta^{0,1}\Phi)=0,\hspace{3mm}G_X^*x=0.$$
    Define the linear map $\nu_X:\Omega^0_{f,k}(\Sigma)\oplus \Omega_{\varphi,k}^1(\End(E))\ra\Omega^1_{f,k}(i\R)\oplus\Gamma_{\varphi,k}(E\otimes L)$ as the unique solution to the equation
    $$S_X\nu_X\begin{pmatrix}
        \delta\\
        \beta
    \end{pmatrix}  + \begin{pmatrix}
        i\delta\\
        \beta^{0,1}\Phi
    \end{pmatrix}=0$$
    which is $L^2$ orthogonal to $\ker(S_{X})$. When $B,\tau$ are varied in the direction $(\beta,\delta)$, this is the normal component of the infinitessimal deformation of $\Gamma(\N_f(d,\mathfrak{p},\tau))$ at $[X]$. Subtracting this produces the isomorphism
    $$T_{(B,\sigma,\tau,[X])}\bm{\mathcal{B}}_k\simeq\Omega^{1}_{\varphi,k}(\End(E))\oplus\mathcal{V}_{f,k}^+\oplus\Omega^1_{f,k}(i\R)\oplus\Gamma_{k}(\mathcal{H}_X)$$
    $$(\beta,\zeta,\delta,x)\mapsto (\beta,\zeta,\delta,x - \nu_X(\delta,\beta))$$
    With this identification made, the linearisation of $\bm{\mathcal{F}}$ at a zero $(B,\sigma,\tau,[X])$ is
    $$d\bm{\mathcal{F}}_{(B,\sigma,\tau,[X])}:\Omega^{1}_{\varphi,k}(\End(E))\oplus\mathcal{V}_{f,k}^+\oplus\Omega^1_{f,k}(i\R)\oplus\Gamma_k(\mathcal{H}_X)\ra\Gamma_{k}(\mathcal{H}_X),$$
    $$(\beta,\zeta,\delta,x) \mapsto \D_0(X)x + \pi_X\begin{pmatrix}
        -i\star\zeta\\
        \langle\Psi,\beta^{0,1}\rangle
    \end{pmatrix}.$$
    Now suppose $x'=(a',\phi')\in\Gamma_k(\mathcal{H}_X)$ is $L^2$ orthogonal to the image of $d\bm{\mathcal{F}}$. This means that for all $(\beta,\zeta,\delta,a,\phi)$ we have
    \begin{equation}\label{orthogonaltoimage}
        \int_0^1\int_\Sigma\bigg\langle N_{b,\Psi}\begin{pmatrix}
            a\\
            \phi
        \end{pmatrix} + \begin{pmatrix}
            -i\star\zeta\\
            \langle\Psi,\beta^{0,1}\rangle
        \end{pmatrix}, \begin{pmatrix}
            a'\\
            \phi'
        \end{pmatrix}\bigg\rangle\omega\wedge dt=0,
    \end{equation}
    $$G_X^*x'=0,\hspace{3mm}S_Xx'=0.$$

    Suppose for contradiction that there is $t\in[0,1]$ and $z_0\in\Sigma$ such that $\phi'(t,z_0)\neq 0$. In the gauge such that $b=0$, $\Psi(t)$ solves equation \eqref{paralleltransportpsi}. Since $(B,\eta)\in\mathcal{U}_{J,k}$, the right hand side of \eqref{paralleltransportpsi} is non-zero, so $\Psi(t)$ is not identically zero for all $t$. Moreover, $\{\Psi(t)=0\}\subset\Sigma$ is dense by the unique continuation principle for nonlinear elliptic PDE (for example, see \cite{Aronszajn1957}). Then by the continuity of $\phi'$, we can find $z$ arbitrarily close to $z_0$ such that $\Psi(t,z)\neq 0$ and $\phi'(t,z)\neq 0$. Now vary only $\beta$ in equation \eqref{orthogonaltoimage} to get
    \begin{equation}\label{varyingbeta}
        \int_{0}^{1}\int_{\Sigma}\big\langle\langle\Psi,\beta^{0,1}\rangle,\phi'\big\rangle\omega\wedge dt = 0.
    \end{equation}
    On the other hand, let $U$ be a small neighbourhood of $(t,z)$ in $\Sigma_f$ where both $\Psi$ and $\phi'$ are non-zero. Let $\rho:\Sigma_f\ra[0,1]$ be a smooth bump function supported in a slightly smaller neighbourhood $U'$. Then for $\beta = \rho\langle\Psi,\phi'\rangle$, the left hand side of \eqref{varyingbeta} is
    $$\int_{U'}\rho|\Psi|^2|\phi'|^2\omega\wedge dt > 0,$$
    which is a contradiction. Now that $\phi'=0$, the equation $S_Xx'=0$ reduces to $(a')^{0,1}\Phi = 0$. This implies $a'=0$ because $\delbar_{A,\mathfrak{p}}\Phi=0$ means that $\Phi$ only vanishes at discrete points. This proves that $\bm{\mathcal{F}}$ is a submersion, so $\bm{\mathcal{F}}^{-1}(0)$ is a smooth submanifold of $\bm{\mathcal{B}}_k$. The derivative at $X$ of the projection $\bm{\mathcal{B}}_k\ra \mathcal{U}_{J,k}$ is Fredholm, and there is a natural identification of its cokernel with $\coker(\D_0(X))$. By the Sard--Smale theorem, for all $k$ the set of regular values is residual in $\mathcal{U}_{J,k}$ and thus dense in $\A_{\varphi,k}^{reg}(J)\times\mathcal{V}_{f,k}^+$ by Lemma \ref{preliminarydensity}. The argument used in \cite[Proposition 2.19]{Doan_2018} shows that this statement also holds in the $C^\infty$ topology, which defines the dense subset $\mathcal{U}_\varphi^{reg}(J)$. The preimage of a regular value $(B,\eta)\in\mathcal{U}_\varphi^{reg}(J)$ under the projection is the space of parallel sections of $\N_f(d,\mathfrak{p},\tau)$, which is zero-dimensional and regular because the obstructions vanish. The multi-vortex moduli spaces $\N_\Sigma(d,\mathfrak{p}_t,\tau_t)$ are regular by construction, and all fixed points of $\Upsilon_{d,\mathfrak{p},\eta}$ are non-degenerate by Proposition \ref{fixedpointsande=0}. The density of $\mathcal{U}_\varphi^{reg}(J)$ for a fixed $J$ also implies the density of $\mathcal{U}_\varphi^{reg}$.
\end{proof}

\begin{remark}
    Perturbing the family of parameters $\mathfrak{p}$ is certainly necessary. For example, $\Upsilon$ fixes every point in $\N_\Sigma$ for constant data pulled back from $\Sigma$ (this is the content of Theorem \ref{S1invariantstuff}). However, this is inconsequential because a perturbation $\mathfrak{p}'$ of $\mathfrak{p}$ can be made so small that $\mathfrak{p}'$ still lies in the same chamber as $\mathfrak{p}$. 
\end{remark}

%% file: chapters/the_adiabatic_limit.tex
\section{Approximating the adiabatic limit}\label{s4}

We have seen that fixed points of the monodromy map $\Upsilon$ correspond to solutions of the adiabatic limit equation \eqref{eq:epsilonzeroSW1}. Moreover, these fixed points become non-degenerate after an arbitrarily small perturbation. In this section we prove Theorem \ref{intromaintheorem}, which perturbs a fixed point to a unique multi-monopole for a metric sufficiently small in the fibre direction. This result is restated below with more precise notation.

\begin{theorem}\label{maintheorem}
    Let $d\in\Z$ and choose lifts $\tilde{f}$ and $\varphi$ of $f$ as in the previous sections. Then there is a dense subset $\U^{reg}_{\varphi}\subset \mathcal{P}_{\varphi}^{reg}\times\mathcal{V}_f^+$ such that for $(\mathfrak{p},\eta)\in\U^{reg}_{\varphi}$, the fixed points of $\Upsilon_{d,\mathfrak{p},\eta}$ are all non-degenerate. Moreover, for $(\mathfrak{p},\eta)\in\U_\varphi^{reg}$ there is $\e_0>0$ such that for all $\e\in(0,\e_0)$, the moduli space $\M_{\Sigma_f}(\mathfrak{s}_{d,\tilde{f}},g_\e,B_\varphi,\eta)$ is regular and there is an injective map
    \begin{equation}\label{adiabaticlimitiso}
        \Fix_{\tilde{f}}(\Upsilon_{d,\mathfrak{p},\eta})\hookrightarrow\M_{\Sigma_f}(\mathfrak{s}_{d,\tilde{f}},g_\e,B_\varphi,\eta).
    \end{equation}
\end{theorem}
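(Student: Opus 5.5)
The first claim follows from results already established. I take $\U^{reg}_\varphi$ to be the dense set produced by Proposition \ref{transversality}, intersected over the finitely many spin$^c$ structures with fibre degree $d$ if necessary. Alternatively, note that $\Fix(\Upsilon_{d,\mathfrak{p},\eta})$ is the disjoint union of the subsets $\Fix_{\tilde{f}}$ over lifts $\tilde{f}$ modulo the equivalence of Proposition \ref{spincaction}. By Proposition \ref{transversality} and Lemma \ref{fixedpointsande=0}, every fixed point is non-degenerate. Since $\N_\Sigma(d,\mathfrak{p}_0,\tau_0)$ is compact and the fixed points are non-degenerate, they are isolated and hence finite. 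By Lemma \ref{correspondences} and Proposition \ref{kernelidentification}, $\M_0(\mathfrak{s}_{d,\tilde{f}},\mathfrak{p},\eta)$ is therefore a finite set of points $[\Xi_0]$, each with $\ker\D_0(X_0)=0$. Here $\D_0(X_0)$ is self-adjoint and elliptic on $\Gamma(\mathcal{H}_{X_0})$, so it is invertible.

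For each $\Xi_0=(X_0,0,Y_0)$ I will construct a genuine solution of \eqref{eq:epsilonSW} by Newton iteration. The first step is an approximate solution. I take $\Xi_0$ itself, or correct it by $O(\e^2)$ if needed. The $\e^{-2}$ terms of \eqref{eq:epsilonSW} vanish at $\Xi_0$ because $X_0$ solves the framed vortex equations. The only remaining error is $i\nabla_t\Psi_0$ and $\frac{i}{2}|\Psi_0|^2$, which is $O(1)$ in unweighted norms. To get a small error, I add a correction $\e^2\xi_1$ solving $S_{X_0}(\cdot)=$ (the $O(1)$ error terms), which is possible because $S_{X_0}$ is surjective. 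The resulting error in $\mathcal{F}_\e(\Xi_0+\e^2\xi_1)$ is $O(\e^2)$ in $\e$-weighted norms. I will use the norms of Dostoglou--Salamon:
$$\|\xi\|_{0,\e}^2=\|x\|_{L^2}^2+\e^2\|y\|_{L^2}^2+\ldots,\qquad \|\xi\|_{1,\e}=\|\xi\|_{0,\e}+\|\nabla_t\xi\|_{0,\e}+\e\|\nabla_\Sigma\xi\|_{0,\e},$$
together with $L^p$ variants for the quadratic estimates.

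The heart of the proof is a uniform estimate. There should be constants $c,\e_0$ such that for $\e<\e_0$ and all $\xi$,
$$\|\xi\|_{1,\e}\le c\,\|\D_\e(\Xi)\xi\|_{0,\e}$$
holds for $\Xi$ near $\Xi_0$. To prove it, I split $\xi=(x,v,y)$ into three parts: the component $\pi_{X}x\in\Gamma(\mathcal{H}_X)$, the gauge/normal component $(x-\pi_Xx,v)$, and $y$. On the normal part, I use the block form \eqref{blocklinearisation}. The operators $\e^{-2}G^*$ and $\e^{-2}S$ dominate there, and $G^*G+SS^*$ is uniformly invertible fibrewise by regularity and compactness of $\N_f$. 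On the tangential part, I use invertibility of $\D_0(X)$, which controls $\pi_XN\pi_X$. The cross terms are handled by computing $\D_\e^2$ and using the identities of Appendix \ref{appendix:a}. These identities express $G^*N+L^*S$ and $SN+\ldots$ via the remainder $R_\Xi$, and this is where the cancellations are needed so that the cross terms are lower order in $\e$. I expect this step to be the main obstacle. In particular, one must show that the remainder terms are absorbed with a factor $\e$ rather than $\e^{-1}$, and handle the $\nabla_t$ derivative of the projection $\pi_X$. I would first attempt a proof by contradiction. Suppose there are $\e_i\to0$ and $\xi_i$ with $\|\xi_i\|_{1,\e_i}=1$ and $\D_{\e_i}\xi_i\to0$. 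The normal parts then tend to zero, and the tangential parts converge weakly to an element of $\ker\D_0(X_0)=0$. An elliptic bootstrap with the weighted estimates contradicts the normalisation. Self-adjointness of $\D_\e$ then gives surjectivity, hence uniform invertibility.

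Given the uniform estimate and a quadratic estimate
$$\|\mathcal{F}_\e(\Xi+\xi)-\mathcal{F}_\e(\Xi)-\D_\e(\Xi)\xi\|_{0,\e}\le c\,\e^{-1}\|\xi\|_{1,\e}^2,$$
which follows from a weighted Sobolev embedding, Newton's method converges whenever $\e^2\ll\e$. This produces a unique solution $\Xi_\e$ in a ball of radius $O(\e^2)$ in a local slice, and $\Xi_\e$ is regular because $\D_\e(\Xi_\e)$ is invertible. Injectivity of \eqref{adiabaticlimitiso} holds because distinct $\Xi_0$ are separated by a fixed distance modulo $\G_f$, while the corrections are $O(\e^2)$. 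Regularity of the whole moduli space $\M_{\Sigma_f}$ for small $\e$ is the remaining point, since it requires controlling solutions that are not near any $\Xi_0$. I would approach it by a compactness argument. Any sequence of $\e_i$-monopoles with $\e_i\to0$ should converge, after gauge, to a solution of the adiabatic limit equations, given compactness of $\M_\Sigma(d,\mathfrak{p}_t)$, which rules out Fueter blow-up. Such a sequence therefore eventually lies in the Newton neighbourhood, where it is the regular solution just constructed.
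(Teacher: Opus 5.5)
Your outline follows the paper's architecture:
\begin{itemize}
\item $\U^{reg}_\varphi$ comes from Proposition \ref{transversality}.
\item Lemma \ref{correspondences}, Proposition \ref{kernelidentification} and Lemma \ref{fixedpointsande=0} reduce the problem to finitely many non-degenerate adiabatic solutions.
\item A uniform weighted estimate for $\D_\e$ rests on Lemma \ref{operatoridentities}.
\item Newton's method produces the solutions.
\item Injectivity follows from separation of gauge orbits, which is Proposition \ref{injectivity} in another form.
\end{itemize}

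Your pre-correction $\Xi_0+\e^2(x_1,0,0)$ is a legitimate alternative to the paper's first step. Choosing $x_1\in\im S_{X_0}^*$ gives $G_{X_0}^*x_1=0$, which keeps the Coulomb condition \eqref{gaugefixingcondition}. The paper instead starts from $\Xi_0$ itself. It gets $\|\xi_0\|_{1,p,\e}\le c\e\|\SW_\e(\Xi_0)\|_{0,p,\e}\le c\e^2$ from Lemma \ref{modifiedellipticestimate2}, because $\SW_\e(\Xi_0)$ has no $x$-component and so $\pi_X\SW_\e(\Xi_0)=0$.

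Your norm bookkeeping is off, though. Your $\|\cdot\|_{1,\e}$ weights $\nabla_t$ by $1$ and $\nabla_\Sigma$ by $\e$. That is the reverse of the paper's $\|\cdot\|_{1,p,\e}$, where fibre derivatives of $x$ carry weight $1$, $t$-derivatives carry weight $\e$, and $y$ gets one extra factor of $\e$ throughout. Consequently your quadratic estimate with constant $\e^{-1}$ is false. In the paper's norms the constant is $\e^{-1-1/p}$ (via Lemma \ref{Linfinityestimate}), which is why both $p>3$ and an $O(\e^2)$ first step are needed. With your $L^2$ weights, test functions concentrated at fibre scale $\e$ show the constant is at least $\e^{-2}$, so the iteration would not contract at scale $\e^2$.

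There are two genuine gaps.

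\textbf{The uniform estimate is assumed, not proved.} Your contradiction argument takes two things as given: that the normal parts of $\xi_i$ tend to zero, and that $\D_0\pi_X\xi_i\to 0$. These are exactly \eqref{elliptic2} and Lemma \ref{D0vsDepsilon}, i.e.\ the entire analytic content. The paper proves them in three steps:
\begin{itemize}
\item an $L^p$ estimate on the stretched cylinder with $t$-dependent projection (Proposition \ref{Yzerocase}, adapted from Salamon's Appendix C);
\item a perturbation to $Y\neq 0$ (Proposition \ref{technicallemma});
\item the identities \eqref{identity1}--\eqref{identity2}, which show that $N$ preserves $\im G\oplus\im S^*$ up to $R_\Xi$, and $R_\Xi$ involves only fibre derivatives of $y_0$.
\end{itemize}
With these in hand, applying invertibility of $\D_0$ to $\pi_X\xi$ gives Lemma \ref{modifiedellipticestimate2} directly, and no contradiction argument is needed.

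\textbf{Regularity of all of $\M_{\Sigma_f}(\mathfrak{s}_{d,\tilde f},g_\e,B_\varphi,\eta)$ needs an adiabatic compactness theorem you do not have.} This is the more serious gap. Multi-monopoles have no a priori $L^\infty$ bound on the spinor. Showing that arbitrary $\e_i$-solutions converge to adiabatic solutions therefore amounts to an adiabatic analogue of Haydys--Walpuski compactness, in which compactness of every $\M_\Sigma(d,\mathfrak{p}_t)$ rules out blow-up. Even convergence would not place them in the region where your contraction argument gives uniqueness. That needs a rate, e.g.\ $\|X_i-X_0\|_{L^\infty}\le\delta\e_i$ in a Coulomb slice, because the nonlinearity carries a factor $\e^{-1}$.

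The paper's proof does not supply this either. It constructs $\mathcal{T}_\e$ (Theorem \ref{ezerotoesmall}) and proves injectivity (Proposition \ref{injectivity}). That yields regularity only at the points $\mathcal{T}_\e(\Xi_0)$, via Corollary \ref{uniforminvertibility} plus the same $O(\e^{1-1/p})$ perturbation bound used in the iteration. The surjectivity your step amounts to is explicitly deferred in Remark \ref{mmremark}.
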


Having already constructed the set $\mathcal{U}^{reg}_{\varphi}$, the proof will be completed by Theorem \ref{ezerotoesmall}. The main analytic difficulty is in proving weighted $L^p$ elliptic estimates for $\D_\e$. A calculation analogous to \cite[Lemma 4.2]{Dostoglou-Salamon1994} suffices for $p=2$, but significantly more care is required for $p>2$. In that case, we adapt the techniques used in the appendix of \cite{Salamon2000QuantumPF} to our situation. 

\subsection{Sobolev norms and moduli spaces}

We begin by defining spaces of $W^{1,p}$ solutions, along with weighted Sobolev norms which are adapted to the geometry of our situation. 

\begin{defn}
    Let $W^{1,p}_f$ be the completion of $W_{f}$ with respect to the $W^{1,p}$ Sobolev norm, and define $\mathcal{C}_f^{1,p} = \Xi_0 + W_f^{1,p}$ for some smooth reference configuration $\Xi_0\in\mathcal{C}_f$. Let $\G_f^{2,p}$ be the space of $W^{2,p}$ gauge transformations on $\Sigma_f$. For $(\mathfrak{p},\eta)\in\mathcal{U}_\varphi^{reg}$ and $\e>0$, define the solution spaces
    $$\A_{0}^{1,p}(\mathfrak{s}_{d,\tilde{f}},\mathfrak{p},\eta) = \{\Xi\in \mathcal{C}^{1,p}_{f} : \eqref{eq:epsilonzeroSW1}\},$$
    $$\A_{\e}^{1,p}(\mathfrak{s}_{d,\tilde{f}},\mathfrak{p},\eta) = \{\Xi\in \mathcal{C}^{1,p}_{f} : \eqref{eq:epsilonSW}\},$$
    and the moduli spaces
    $$\M_0(\mathfrak{s}_{d,\tilde{f}},\mathfrak{p},\eta) = \A^{1,p}_0(\mathfrak{s}_{d,\tilde{f}},\mathfrak{p},\eta)/\GG_f^{2,p},$$
    $$\M_{\e}(\mathfrak{s}_{d,\tilde{f}},\mathfrak{p},\eta) = \A^{1,p}_{\e}(\mathfrak{s}_{d,\tilde{f}},\mathfrak{p},\eta)/\GG_f^{2,p}.$$
\end{defn}

\begin{defn}
    Fix $\Xi = (X,0,Y)\in\A_0^{1,p}(\mathfrak{s}_{d,\tilde{f}},\mathfrak{p},\eta)$ and let $\xi = (x,v,y)\in W_f^{1,p}$. Then for $\e>0$, define the following $\Xi$-dependent norms:
    \begin{equation*}
        \|\xi\|_{0,p,\e}^p = \int_0^1\Big(\|x\|_{L^p(\Sigma_t)}^p + \e^p\|v\|_{L^p(\Sigma_t)}^p + \e^p\|y\|_{L^p(\Sigma_t)}^p\Big)dt,
    \end{equation*}

    \begin{equation*}
        \|\xi\|_{\infty,\e} = \|x\|_{L^\infty(\Sigma_f)} + \e\|v\|_{L^\infty(\Sigma_f)} + \e\|y\|_{L^\infty(\Sigma_f)},
    \end{equation*}
    
    \begin{align*}
        \|\xi\|_{1,p,\e}^p &= \int_{0}^{1}\Big(\|x\|_{L^p(\Sigma_t)}^p + \|G^*_{X}x\|_{L^p(\Sigma_t)}^p + \|S_{X}x\|_{L^p(\Sigma_t)}^p + \e^p\|N_{Y}x\|_{L^p(\Sigma_t)}^p\\
        &\hspace{3mm}+ \e^p\|G_Xv\|_{L^p(\Sigma_t)}^p + \e^{2p}\|L_Yv\|_{L^p(\Sigma_t)}^p\\
        &\hspace{3mm}+ \e^p\|S_{X}^*y\|_{L^p(\Sigma_t)}^p + \e^{2p}\|L_Y^*y\|_{L^p(\Sigma_t)}^p + \e^{2p}\|M_Yy\|_{L^p(\Sigma_t)}^p\Big)dt.
    \end{align*}
    Here, the norm $L^p(\Sigma_t)$ is the $L^p$ norm on $\Sigma$ with respect to the metric $g_t=\omega(\cdot,J_t\cdot)$. 
\end{defn}

If $\mathfrak{p}$ and $\eta$ are smooth and $\Xi$ is a $W^{1,p}$ solution to either the adiabatic limit equations or the $\e$-dependent multi-monopole equations, there is a gauge transformation $g\in\G_f^{2,p}$ such that $g^*\Xi$ is smooth. This follows from the same arguments used for the classical Seiberg--Witten moduli space, see for example \cite{MorganSW}. So although we will work with $W^{1,p}$ solutions, modulo gauge we recover the moduli space of smooth solutions. Also note that if $\mathfrak{p}=(J,B)$, we have
$$\M_\e(\mathfrak{s}_{d,\tilde{f}},\mathfrak{p},\eta) = \M_{\Sigma_f}(\mathfrak{s}_{d,\tilde{f}},g_\e,B_\varphi,\eta),$$
where $g_\e = dt^2 + \e^2\omega(\cdot,J_t\cdot)$ is the rescaled metric and $B_\varphi$ is the connection induced by $B$. 

\subsection{Preliminary estimates}

In this section, we prove several technical lemmas in preparation for the proof of the weighted elliptic estimate for $\D_\e$ (Proposition \ref{modifiedellipticestimate}). Our first estimates take place in a single fibre $\Sigma$ of the mapping torus. These are used to obtain local estimates for various components of $\D_{\e}$, which are combined into a global estimate in Proposition \ref{technicallemma}.

\begin{defn}
    Let $\mathfrak{p}=(J,B)\in\mathcal{P}_\Sigma$ and define
    $$W_{\Sigma} = \Omega^1(\Sigma,i\R)\oplus\Gamma(\Sigma,E\otimes L)\oplus\Omega^0(\Sigma,i\R)\oplus\Omega^0(\Sigma,i\R)\oplus\Omega^{0,1}_J(\Sigma,E\otimes L).$$
    Let $X\in\A(\Sigma,L)\times\Gamma(\Sigma,E\otimes L)$ and define the operators
    $$P_{\mathfrak{p},X}: W_{\Sigma}\ra W_{\Sigma},\hspace{4mm}I_J: W_{\Sigma}\ra  W_{\Sigma},$$
    $$P_{\mathfrak{p},X} = \begin{pmatrix}
        0 & G_X & S_X^*\\
        G_X^* & 0 & 0\\
        S_X & 0 & 0
    \end{pmatrix},\hspace{4mm} I_J = \begin{pmatrix}
        \star_J & 0 & 0 & 0 & 0\\
        0 & i & 0 & 0 & 0\\
        0 & 0 & 0 & 1 & 0\\
        0 & 0 & -1 & 0 & 0\\
        0 & 0 & 0 & 0 & -i
    \end{pmatrix}.$$
\end{defn}

From now on, fix a choice of smooth reference parameters 
$$\mathfrak{p}_0=(J_0,B_0)\in\mathcal{P}_\Sigma,\hspace{4mm}X_0=(A_0,\Phi_0)\in\mathcal{A}(\Sigma,L)\times \Gamma(\Sigma,E\otimes L).$$ 

\begin{lemma}\label{fibreellipticestimate}
    Fix $p\geq 2$. Then there exists a constant $c>0$ such that if $\mathfrak{p}=(J,B)\in\mathcal{P}_\Sigma^{reg}$ and $X\in\mathcal{Z}_{\Sigma}(d,\mathfrak{p},\tau)$ satisfy
    \begin{equation}\label{fibreuniformbound}
        \|J\|_{C^1(\Sigma,J_0)} + \|B-B_0\|_{L^\infty(\Sigma,J_0)} + \|X-X_0\|_{L^\infty(\Sigma,J_0)}\leq C,
    \end{equation}
    for some $C>0$, then for every $\xi=(x,v,y)\in W_{\Sigma}$ we have
    $$\|v\|_{W^{1,p}(\Sigma,J)}^p + \|y\|_{W^{1,p}(\Sigma,J)}^p\leq c\|G_Xv+S_X^*y\|_{L^p(\Sigma,J)}^p,$$
    $$\|x\|_{W^{1,p}(\Sigma,J)}^p \leq c\Big(\|G_X^*x\|_{L^p(\Sigma,J)}^p + \|S_Xx\|_{L^p(\Sigma,J)}^p + \|\pi_X(x)\|_{L^p(\Sigma,J)}^p\Big),$$
    $$\|\xi\|_{W^{1,p}(\Sigma,J)}^p \leq c\Big(\|P_{\mathfrak{p},X}\xi\|^p_{L^p(\Sigma,J)} + \|\pi_X\xi\|_{L^p(\Sigma,J)}^p\Big).$$
    Moreover, $c$ only depends on $C$, $p$, $\mathfrak{p}_0$, and $X_0$.
\end{lemma}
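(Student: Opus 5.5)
Each of the three estimates is an elliptic estimate for a first-order operator on the fixed surface $\Sigma$. The plan is to first prove each one for a single pair $(\mathfrak{p},X)$, and then make the constant uniform by a compactness-and-contradiction argument over the class of data allowed by \eqref{fibreuniformbound}.

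\textbf{Fixed data.} Fix $\mathfrak{p}\in\mathcal{P}_\Sigma^{reg}$ and $X\in\mathcal{Z}_\Sigma(d,\mathfrak{p},\tau)$.
\begin{itemize}
\item The operator $P_{\mathfrak{p},X}$ is, up to zeroth-order terms involving $\Phi$, the operator $d+d^*$ on $\Omega^0\oplus\Omega^1\oplus\Omega^2$ coupled to $\delbar_{A,\mathfrak{p}}+\delbar_{A,\mathfrak{p}}^*$. Hence it is a first-order elliptic self-adjoint operator, and the standard $L^p$ estimate holds:
$$\|\xi\|_{W^{1,p}}\le c(\|P\xi\|_{L^p}+\|\xi\|_{L^p}).$$
\item Its kernel consists of the following pieces. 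The $x$-part of the kernel is $\ker G^*_X\cap\ker S_X=H^1_X$. The $(v,y)$-part of the kernel is $\ker(G_X)\cap\ker(S_X^*)$, after noting $G_Xv\perp S_X^*y$ because $S_XG_X=0$ (complex property, valid since $X$ solves $\delbar_{A,\mathfrak{p}}\Phi=0$).
\item We have $\ker G_X=0$ since $\Phi\neq0$: indeed $G_X^*G_X=\Delta+|\Phi|^2$, as in Lemma \ref{hequalszero}.
\item We have $\ker S_X^*=0$ because $\mathfrak{p}$ is regular, so $S_X$ is surjective.
\end{itemize}
The standard argument (the inclusion $W^{1,p}\hookrightarrow L^p$ is compact, and the kernel is removed by adding $\pi_X$) then upgrades the estimate to the three stated inequalities for fixed data. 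Concretely:
\begin{itemize}
\item For the first inequality, the $L^2$-orthogonality splits $\|G_Xv+S_X^*y\|$ appropriately, and the operator $(G_X\ S_X^*)$ is injective with closed range.
\item For the second inequality, $(G_X^*,S_X)$ restricted to $x$ has kernel $H^1_X$, which is controlled by $\pi_X$.
\end{itemize}

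\textbf{Uniformity.} This is the main obstacle, and I would argue by contradiction. Suppose there are sequences $\mathfrak{p}_n,X_n,\xi_n$ satisfying \eqref{fibreuniformbound} with $\|\xi_n\|_{W^{1,p}}=1$ while the right-hand side tends to $0$.

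First, the coefficients are bounded: $J_n$ is bounded in $C^1$, and $B_n-B_0$, $X_n-X_0$ are bounded in $L^\infty$. Therefore the standard elliptic estimate holds with a uniform constant. The reason is that the principal symbol depends only on $J$, which is $C^1$-controlled, and all other terms are zeroth order and $L^\infty$-bounded. It is this uniform estimate that lets us control the $W^{1,p}$-norm of $\xi_n$ by its $L^p$-norm plus a small error.

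Second, we pass to limits:
\begin{itemize}
\item By Arzel\`a--Ascoli, $J_n\to J_\infty$ in $C^0$.
\item The equations \eqref{eq:framedvortex} give elliptic bootstrapping for $X_n$ in a Coulomb gauge relative to $X_0$. Combined with compactness of the vortex moduli spaces, this gives $X_n\to X_\infty$ after gauge (gauge is harmless since all norms are gauge-invariant).
\item Weak-$*$ compactness gives $B_n\rightharpoonup B_\infty$.
\item $\xi_n\to\xi_\infty$ weakly in $W^{1,p}$ and strongly in $L^p$.
\end{itemize}
In the limit, $\xi_\infty$ satisfies $P\xi_\infty=0$ and $\pi\xi_\infty=0$. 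Because of the uniform estimate, $\|\xi_n\|_{L^p}$ stays bounded below, so $\xi_\infty\ne0$.

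\textbf{The delicate point.} The limit parameter must still satisfy the hypotheses used in the fixed-data step: it must be regular (so that $\ker S_{X_\infty}^*=0$), and the projections $\pi_{X_n}$ must converge to $\pi_{X_\infty}$. Regularity is an open condition, but it need not survive limits in the weak topology given by an $L^\infty$ bound on $B$. I would therefore first try to use the explicit structure of the estimate:
\begin{itemize}
\item The injectivity of $S_X^*$ follows from the vanishing of $H^2$ of the deformation complex. By Theorem \ref{smoothnesscompactness}, this corresponds to the vanishing of $H^1(\E_B\otimes\L)$-type obstructions.
\item If that is not enough, I would interpret the constant as also depending on the compact family of data actually used. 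In the application this family is a compact path in $\mathcal{P}_\Sigma^{reg}$, for which the contradiction argument closes without difficulty, since $\pi_{X_n}\to\pi_{X_\infty}$ by continuity of the finite-rank projections in the smooth regular family.
\end{itemize}
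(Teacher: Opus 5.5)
Your approach is the paper's: a fixed-data elliptic estimate ($\ker G_X=0$ because $\Phi\neq0$, and $\ker S_X^*=0$ because $\mathfrak{p}$ is regular), made uniform by compactness. The paper covers the admissible data by finitely many $\delta$-neighbourhoods on which a fixed-data estimate persists; your sequential contradiction argument is the equivalent formulation.

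The point you call delicate is, however, not a technicality to be argued around. It makes the lemma false as literally stated, and it has nothing to do with the weakness of the $L^\infty$ topology on $B$. Here is a counterexample with smoothly convergent data.
Take $\Sigma$ of genus $1$, $N=2$, $d=0$ and $J=J_0$ fixed. Take flat unitary $B_n\to B_\infty$ smoothly, with $\E_{B_n}=A_n\oplus A_n^{-1}$, $A_n^2\neq\O$, and $\E_{B_\infty}=A\oplus A$, $A^2=\O$.
By Proposition~\ref{directsumvortexmoduli} each $(J,B_n)$ is regular, since $\N_\Sigma$ is two regular points. By contrast, $\N_\Sigma(0,\E_{B_\infty})\simeq\P^1$ exceeds the expected dimension $0$. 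The deformation complex has index zero and $H^0=0$, so every point there has $H^1_{X_\infty}\cong H^2_{X_\infty}\neq0$.
The vortices $X_n$ converge after gauge to such an $X_\infty$, so \eqref{fibreuniformbound} holds with a fixed $C$ (take $B_0=B_\infty$, $X_0=X_\infty$). Yet:
\begin{itemize}
\item for $0\neq y\in\ker S^*_{X_\infty}$ one has $\|S^*_{X_n}y\|_{L^p}\to0$;
\item for $0\neq x\in H^1_{X_\infty}$ one has $\|G^*_{X_n}x\|_{L^p}+\|S_{X_n}x\|_{L^p}\to0$, while $\pi_{X_n}x=0$ because $H^1_{X_n}=0$.
\end{itemize}
So no constant depending only on $C,p,\mathfrak{p}_0,X_0$ exists.

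Hence your first remedy cannot succeed: pointwise vanishing of $H^2$ gives no uniform lower bound for $S_X^*$ near the non-regular locus. Your second remedy is the right one, but it must be written into the statement: $c$ depends on a compact set $K$ (in $C^1$, modulo gauge) of pairs $(\mathfrak{p},X)$ with $\mathfrak{p}\in\mathcal{P}^{reg}_\Sigma$. That is all that is used downstream. In Propositions~\ref{Yzerocase} and~\ref{technicallemma} the data are $(\mathfrak{p}(t),X(t))$, $t\in[0,1]$, with $\mathfrak{p}\in\mathcal{P}^{reg}_\varphi$.

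With this hypothesis your contradiction argument closes:
\begin{itemize}
\item the limit data are regular;
\item $\Phi_\infty\neq0$, since the integrated curvature equation fixes $\|\Phi\|_{L^2}^2$ as a positive multiple of $\bar\tau-d$;
\item $\dim H^1_X$ equals the expected dimension throughout $K$, because $H^0=H^2=0$;
\item $L^2$-orthonormal bases of $H^1_{X_n}$ are precompact by your uniform estimate, so they converge to a basis of $H^1_{X_\infty}$;
\item this gives $\pi_{X_n}\xi_n\to\pi_{X_\infty}\xi_\infty$, and hence the contradiction.
\end{itemize}

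The paper's covering argument tacitly needs the same hypothesis: a finite cover by neighbourhoods of regular data with uniform constants exists only if the closure of the admissible set stays in the regular locus. Its appeal to Arzel\`a--Ascoli also needs the $C^1$ control of $B$ and $X$ from \eqref{uniformbound1}--\eqref{uniformbound2}, not merely the $L^\infty$ bound in \eqref{fibreuniformbound}. In Salamon's setting of flat connections with $w_2\neq0$, irreducibility and unobstructedness are automatic, which is why the issue does not arise there.
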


\begin{lemma}\label{LpL2exchange}
    Fix $p\geq 2$. Then for every $\delta>0$ sufficiently small, there exists a constant $c_\delta>0$ such that if $\mathfrak{p}=(J,B)\in\mathcal{P}_\Sigma^{reg}$ and $X\in\A(\Sigma,L)\times\Gamma(\Sigma,E\otimes L)$ satisfy \eqref{fibreuniformbound} for some $C>0$, we have
    \begin{equation}
        \|\xi\|_{L^p(\Sigma,J)}^p\leq \delta\|P_{\mathfrak{p},X}\xi\|_{L^p(\Sigma,J)}^p + c_\delta\|\xi\|_{L^2(\Sigma,J)}^p
    \end{equation}
    for every $\xi\in W_{\Sigma}$. Moreover, $c_\delta$ only depends on $\delta$, $C$, $p$, $\mathfrak{p}_0$, and $X_0$.
\end{lemma}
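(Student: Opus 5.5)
The estimate will follow from an interpolation inequality on $\Sigma$ combined with the uniform elliptic control of $P_{\mathfrak{p},X}$. Unlike Lemma \ref{fibreellipticestimate}, we do not assume that $X$ solves the framed vortex equations, so we cannot use the projection $\pi_X$. Instead we use the crude Gårding-type inequality
\[
\|\xi\|_{W^{1,p}(\Sigma,J)}\leq c\big(\|P_{\mathfrak{p},X}\xi\|_{L^p(\Sigma,J)}+\|\xi\|_{L^p(\Sigma,J)}\big),
\]
which holds for the first-order elliptic operator $P_{\mathfrak{p},X}$ with a constant $c$ depending only on $C,p,\mathfrak{p}_0,X_0$. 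To see this, write $P_{\mathfrak{p},X}=P_{\mathfrak{p},X_0}+K$. Here the leading part is built from $d,d^*,\star_J d,\delbar_{A,\mathfrak{p}},\delbar^*_{A,\mathfrak{p}}$, which is essentially $d+d^*$ on forms coupled with the Dolbeault operator and its adjoint. The zeroth-order term $K$ is multiplication by $A-A_0$, $B-B_0$ and $\Phi$, and its $L^\infty$ norm is controlled by \eqref{fibreuniformbound}. The $C^1$ bound on $J$ gives uniform ellipticity and uniform coefficient bounds, and these feed into the standard $L^p$ Calderón--Zygmund estimate.

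Next we interpolate. On a compact surface (dimension $2$), Gagliardo--Nirenberg with respect to the metric of $J$ gives, for $p\geq 2$,
\[
\|\xi\|_{L^p}\leq c\,\|\xi\|_{W^{1,p}}^{\theta}\|\xi\|_{L^2}^{1-\theta}
\]
for some $\theta\in(0,1)$, since $W^{1,p}\hookrightarrow L^\infty$ for $p>2$. For $p=2$ the lemma is trivial. The constants stay uniform because all metrics $\omega(\cdot,J\cdot)$ with $\|J\|_{C^1}\le C$ are uniformly equivalent to the $J_0$ metric. Raising to the $p$-th power and applying Young's inequality $ab\leq \delta' a^{1/\theta}+c_{\delta'}b^{1/(1-\theta)}$ yields
\[
\|\xi\|_{L^p}^p\leq \delta'\|\xi\|^p_{W^{1,p}}+c_{\delta'}\|\xi\|_{L^2}^p.
\]
Inserting the elliptic estimate gives
\[
\|\xi\|_{L^p}^p\leq \delta' c\,\|P\xi\|^p_{L^p}+\delta' c\,\|\xi\|^p_{L^p}+c_{\delta'}\|\xi\|^p_{L^2}.
\]
For $\delta'$ small we absorb the $\|\xi\|_{L^p}^p$ term into the left-hand side, and we choose $\delta'$ so that $2\delta' c=\delta$. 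This is why the statement only requires $\delta$ sufficiently small.

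The main obstacle is verifying that the elliptic constant is genuinely uniform over the whole class \eqref{fibreuniformbound}. The coefficients of the zeroth-order terms are only $L^\infty$-bounded, and the Hodge star and $(0,1)$ projection vary with $J$. We handle this by fixing the reference $J_0$ and conjugating by a bundle isomorphism $\Omega^{0,1}_J\simeq\Omega^{0,1}_{J_0}$ whose $C^1$ norm is bounded by $\|J\|_{C^1}$. After this conjugation, the difference of principal symbols is a first-order term whose coefficient is controlled in $C^0$ and whose derivative is controlled in $C^0$. The standard Calderón--Zygmund constant is stable under such bounded perturbations, with the error absorbed into the $\|\xi\|_{L^p}$ term, possibly after a finite partition of unity argument.
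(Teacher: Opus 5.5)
Your proposal is correct and follows the paper's route. The paper, citing Salamon's Lemma B.2, combines Morrey's inequality in the form $\|\xi\|_{L^p}^p\le\|\xi\|_{L^\infty}^{p-2}\|\xi\|_{L^2}^2\le c\|\xi\|_{W^{1,p}}^{p-2}\|\xi\|_{L^2}^2$ with exactly your uniform G\r{a}rding-type estimate $\|\xi\|_{W^{1,p}}\le c(\|P_{\mathfrak{p},X}\xi\|_{L^p}+\|\xi\|_{L^p})$ (the ``weaker version'' of Lemma \ref{fibreellipticestimate}), followed by Young's inequality and absorption; your Gagliardo--Nirenberg step is a sharper-exponent version of that Morrey--H\"older step, so the two arguments coincide.
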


The inequalities in Lemma \ref{fibreellipticestimate} are just the standard elliptic estimates for $G_X^*\oplus S_X$ and its adjoint $G_X+S_X^*$ (which is injective because $\mathfrak{p}$ is regular). The main content of the lemma is that $c$ only has the stated dependencies. The basic idea is to start with the elliptic estimate for a fixed $\mathfrak{p}_1,X_1$, then prove that the estimate still holds for $\mathfrak{p},X$ in a $\delta$-neighbourhood with a uniform constant. Then the Arzela--Ascoli theorem implies that the set of $\mathfrak{p},X$ satisfying the bound \eqref{fibreuniformbound} is totally bounded, so it can be covered by finitely many such $\delta$-neighbourhoods. For a more detailed explanation, refer to Appendix B and Lemma C.1 in \cite{Salamon2000QuantumPF}. The proof of Lemma \ref{LpL2exchange} relies on a version of Morrey's inequality and a weaker version of Lemma \ref{fibreellipticestimate} (see \cite[Lemma B.2]{Salamon2000QuantumPF} for more details). 

\begin{prop}\label{Yzerocase}
    Fix $p\geq 2$. Suppose $\mathfrak{p}=(J,B):\R\ra \mathcal{P}_\Sigma^{reg}$, $\tau:\R\ra \Omega^{0}(\Sigma)$ and $X:\R\ra \A(\Sigma,L)\times\Gamma(\Sigma,E\otimes L)$ be continuously differentiable functions such that $X(t)\in\mathcal{Z}_\Sigma(d,\mathfrak{p}_t,\tau_t)$ and $\bar{\tau}_t = \frac{1}{2\pi}\int_\Sigma\tau_t\omega$ is constant with $d-\bar{\tau}<0$. Also suppose there is $C>0$ such that
    \begin{equation}\label{uniformbound1}
        \sup_{t\in\R}\big(\|J(t)\|_{C^2(\Sigma)} + \|\del_t J\|_{C^1(\Sigma)} + \|B(t)-B_0\|_{C^1(\Sigma)} + \|\del_t B\|_{L^\infty(\Sigma)}\big)<C
    \end{equation}
    \begin{equation}\label{uniformbound2}
        \sup_{t\in\R}\big(\|X(t)-X_0\|_{C^1(\Sigma)} + \|\del_t X\|_{L^\infty(\Sigma)}\big)<C.
    \end{equation}
    Then there exist constants $\e_0>0$ and $c>0$ such that for every $\xi=\xi(s)\in C_0^\infty(\R,W_\Sigma)$ and every $\e\in (0,\e_0)$, we have
    \begin{equation}\label{Yzeroinequality}
        \int_{\R}\Big(\|\del_{s}\xi\|_{L^p(\Sigma_{\e s})}^p + \|P_{\e s}\xi\|_{L^p(\Sigma_{\e s})}^p\Big)ds\leq c\int_{\R}\Big(\|D_{\e s}\xi\|_{L^p(\Sigma_{\e s})}^p + \e^p\|\pi_{\e s}\xi\|_{L^p(\Sigma_{\e s})}^p\Big)ds.
    \end{equation}
    Here, $D_{\e s} := P_{\e s} + I_{\e s}\del_s$ for $P_{\e s} = P_{\mathfrak{p}(\e s),X(\e s)}$ and $I_{\e s} = I_{J(\e s)}$. The space $C_0^\infty(\R,W_\Sigma)$ consists of smooth paths $\xi:\R\ra W_\Sigma$ with compact support. The operator $\pi_{\e s}=\pi_{X(\e s)}$ is $L^2$ orthogonal projection to $H^1_{X(\e s)}$ with respect to the metric $g_{\e s}=\omega(\cdot,J_{\e s}\cdot)$, and $\|\cdot\|_{L^p(\Sigma_{\e s})}$ denotes the $L^p$-norm with respect to $g_{\e s}$. Moreover, $c$ only depends on $p$, $C$, and the reference parameters $\mathfrak{p}_0$ and $X_0$.
\end{prop}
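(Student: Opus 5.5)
The idea is to treat $D_{\e s}=P_{\e s}+I_{\e s}\del_s$ as a slowly varying family in $s$, following the scheme of \cite[Lemma C.2]{Salamon2000QuantumPF}. For frozen $t$, the operator $P_{\mathfrak{p},X}$ is self-adjoint. The operator $I_J$ is skew-adjoint with $I_J^2=-1$, and it anticommutes with $P_{\mathfrak{p},X}$ up to zeroth-order terms. Hence $I\del_s+P$ is a Cauchy--Riemann type operator on $\R\times\Sigma$ whose only degenerate directions are $\ker P=H^1_X$, which is finite-dimensional by regularity.

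The proof would proceed in three steps.

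\textbf{Step 1: the $L^2$ identity.} I would first prove the case $p=2$ directly by expanding $\|D_{\e s}\xi\|_{L^2}^2$:
$$\int\|D\xi\|^2=\int\big(\|\del_s\xi\|^2+\|P\xi\|^2\big)+2\int\langle P\xi,I\del_s\xi\rangle.$$
Integrating the cross term by parts in $s$ uses the anticommutation $PI+IP=$ (zeroth order in $X$). The derivatives $\del_s P_{\e s}$ and $\del_s I_{\e s}$ each carry a factor $\e$, and by \eqref{uniformbound1}--\eqref{uniformbound2} they are bounded by $c\e$ in $L^\infty$. The cross term is therefore bounded by the zeroth-order commutator contribution plus $c\e\int\|\xi\|\,\|\del_s\xi\|$.

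This is where the identities of Appendix \ref{appendix:a} enter, and I expect it to be the main obstacle. One needs the anticommutator $\{P,I\}$ either to vanish or to pair only with $\pi\xi$, so that no uncontrolled $\|\xi\|^2$ term survives. If this works, write $\xi=\pi\xi+(1-\pi)\xi$. Lemma \ref{fibreellipticestimate} gives $\|(1-\pi)\xi\|\le c\|P\xi\|$, so the remaining error terms are bounded by $c\e(\|\del_s\xi\|^2+\|P\xi\|^2+\|\pi\xi\|^2)$.

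The term $\|\pi\xi\|$ still has to be handled. For it I would use
$$\del_s(\pi\xi)=\pi\del_s\xi+O(\e)\xi,$$
together with $\pi I\del_s\xi=\pi D\xi-\pi P\xi=\pi D\xi$, since $P$ is self-adjoint with kernel $H^1$. This controls $\|\del_s\pi\xi\|$ by $\|D\xi\|+c\e\|\xi\|$, and so the $\e^p\|\pi\xi\|^p$ term on the right of \eqref{Yzeroinequality} is sufficient. Choosing $\e_0$ small then absorbs all the $O(\e)$ terms.

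\textbf{Step 2: passing to $p>2$.} Here I would follow the appendix of \cite{Salamon2000QuantumPF}. Cover $\R$ by unit intervals $[k,k+1]$. On each interval, apply the interior $L^p$ elliptic estimate for $D_{\e s}$ on the cylinder $[k-1,k+2]\times\Sigma$. Its constant is uniform by \eqref{uniformbound1}--\eqref{uniformbound2} and the covering argument behind Lemma \ref{fibreellipticestimate}, and it bounds $\|\del_s\xi\|_{L^p}+\|P\xi\|_{L^p}$ by $\|D\xi\|_{L^p}+\|\xi\|_{L^p}$ on the enlarged interval.

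\textbf{Step 3: removing the lower-order term.} The lower-order $\|\xi\|_{L^p}$ term is split as $(1-\pi)\xi$ plus $\pi\xi$. The first piece is controlled via Lemma \ref{fibreellipticestimate}. For the $L^p$ norm in the fibre, I would use Lemma \ref{LpL2exchange} to trade it for an $L^2$ norm at the cost of $\delta\|P\xi\|_{L^p}$, which is then absorbed. The $L^2$ pieces are controlled through a local version of the $p=2$ argument combined with Hölder in $s$ on unit intervals. Summing over $k$ and absorbing terms with small $\delta$ and $\e$ gives \eqref{Yzeroinequality}.

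Throughout, every constant depends only on $C$, $p$, $\mathfrak{p}_0$ and $X_0$, which yields the stated dependence of $c$.
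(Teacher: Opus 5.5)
\paragraph{Gap: the harmonic part of the lower-order term.} Steps 2--3 do not work as written. In Step 2 you apply the interior $L^p$ estimate on $[k-1,k+2]\times\Sigma$ to all of $\xi$. This leaves a lower-order term $c\int\|\xi\|_{L^p}^p\,ds$ with an $O(1)$ constant, and in Step 3 you only explain how to remove its $(1-\pi)\xi$ part. The $\pi\xi$ part cannot be removed. Whenever $H^1_X\neq 0$, take $\mathfrak{p},\tau,X$ constant and $\xi(s)=\rho(s/R)h$ with $h\in H^1_X$. Then $P\xi=0$, and both $\int\|D\xi\|_{L^p}^p\,ds$ and $\int\|\del_s\xi\|_{L^p}^p\,ds$ are of order $R^{1-p}$, while $\int\|\pi\xi\|_{L^p}^p\,ds$ is of order $R$. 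So $\int\|\pi\xi\|^p$ cannot be bounded by $\int\|D\xi\|^p+\e^p\int\|\pi\xi\|^p$ plus a small multiple of the left side of \eqref{Yzeroinequality}. Your $p=2$ estimate does not help here, since it controls only $\del_s\xi$ and $P\xi$, not $\pi\xi$ itself.

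The missing idea, which is how the paper proceeds in Lemmas \ref{step2}--\ref{step4}, is to split $\xi=\xi_0+\xi_1$ with $\xi_0=\pi_{\e s}\xi$ \emph{before} invoking any elliptic estimate.
\begin{itemize}
\item For $\xi_0$ one has $D\xi_0=I\del_s\xi_0$, so $\|\del_s\xi_0\|_{L^p}\le c\|D\xi_0\|_{L^p}$ with no lower-order term.
\item The interior estimate is applied only to $\xi_1$, where the lower-order term can be removed because $\pi\xi_1=0$.
\item The two pieces are recoupled through $D\xi_0-\pi D\xi=I(\del_s\pi_{\e s})\xi=O(\e)\xi$. This uses $\pi P=0$, $\pi I=I\pi$ and \eqref{projectionLp}.
\end{itemize}
This commutator is the only source of the $\e^p\|\xi\|^p\le c\e^p(\|P\xi\|^p+\|\pi\xi\|^p)$ term, which is why the final bound carries $\e^p\|\pi\xi\|^p$ rather than an $O(1)$ multiple. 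You already wrote down the key identity $\pi I\del_s\xi=\pi D\xi$ in Step 1, but you never use it in the $L^p$ argument.

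\paragraph{Further points.} Even for $\xi_1$, your last step does not close as stated. To remove $c_\delta\int\|\xi_1\|_{L^2(\Sigma)}^p\,ds$ you need an estimate in the mixed norm $L^p_sL^2_\Sigma$. H\"older on unit intervals gives the wrong direction for $p>2$: it bounds $(\int\|\cdot\|^2)^{p/2}$ by $\int\|\cdot\|^p$, not the reverse. Localising the $L^2$ estimate with cut-offs also reintroduces an $O(1)$ lower-order term. The paper instead uses a genuine $L^p$-in-$s$ estimate, Lemma \ref{step1}, which comes from \cite[Proposition A.2]{Salamon2000QuantumPF}.

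The obstacle you anticipate in Step 1 is not actually there. $IP+PI=0$ holds exactly, as a direct check on the five components shows, so the identities of Appendix \ref{appendix:a} play no role in this proposition; they are needed only for the $Y\neq 0$ error terms in Proposition \ref{technicallemma}. What does need care in the $L^2$ computation is the $s$-dependence of the fibre metric, which the paper handles with $Q$. You also need the fact that the $\pi\xi$--$\pi\xi$ part of the error vanishes because $P\pi\xi=0$. Only mixed terms then remain, and Young's inequality turns them into $\e^2\|\pi\xi\|^2$ rather than $\e\|\pi\xi\|^2$.
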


The proof of Proposition \ref{Yzerocase} depends on the following four lemmas. Their proofs are modelled on those of Lemmas C.3, C.4, C.5, and C.6 in Appendix C of \cite{Salamon2000QuantumPF}. For this reason we will not include fully detailed explanations of some inequalities, instead emphasising how the arguments differ for our equations. We will use the letter $c$ for a positive constant that can vary from line to line, depending only on $p$, the reference parameters $\mathfrak{p}_0$,$X_0$, and the upper bound $C$ of \eqref{uniformbound1} and \eqref{uniformbound2}.

\begin{lemma}\label{step1}
    Fix $p\geq 2$. Suppose $\mathfrak{p}$, $\tau$, and $X$ are as in the statement of Proposition \ref{Yzerocase}. Then there exist constants $\e_0>0$ and $c>0$ such that, for every $\xi\in C_0^\infty(\R,W_\Sigma)$ and $\e\in(0,\e_0)$, we have
    \begin{equation}
        \int_{\R}\|\xi\|_{L^2(\Sigma_{\e s})}^pds \leq c\int_{\R}\Big(\|D_{\e s}\xi\|_{L^2(\Sigma_{\e s})}^p + \|\pi_{\e s}\xi\|_{L^2(\Sigma_{\e s})}^p\Big)ds.
    \end{equation}
\end{lemma}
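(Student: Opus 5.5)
The plan is to mirror Lemma C.3 of \cite{Salamon2000QuantumPF}: first prove a pointwise-in-$s$ $L^2$ estimate on each fibre, then control the $\partial_s$ term by an energy identity. Since $p\geq 2$ enters only through raising fibrewise $L^2$ norms to the $p$-th power, it suffices to prove the $L^2$-in-fibre inequality for each $s$ up to an error that can be absorbed after integration.

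\textbf{Step 1 (fibrewise decomposition).} For fixed $s$, split $\xi=(x,v,y)$ as $x=\pi_{\e s}x+x^\perp$. By Lemma \ref{fibreellipticestimate} with $p=2$ (which applies uniformly because of \eqref{uniformbound1}, \eqref{uniformbound2}), we have
\[
\|x^\perp\|_{L^2}+\|v\|_{L^2}+\|y\|_{L^2}\leq c\|P_{\e s}\xi\|_{L^2}.
\]
The component $\pi_{\e s}\xi$ is controlled directly by the right-hand side. It therefore suffices to bound $\int\|P_{\e s}\xi\|_{L^2}^p\,ds$ by $\int(\|D_{\e s}\xi\|_{L^2}^p+\|\pi_{\e s}\xi\|_{L^2}^p)\,ds$.

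\textbf{Step 2 (squaring $D$).} Compute
\[
\|D_{\e s}\xi\|_{L^2}^2=\|P\xi\|^2+\|I\partial_s\xi\|^2+2\langle P\xi,I\partial_s\xi\rangle .
\]
The key structural fact is that $I_J$ is an isometry with $I^2=-1$ that anti-commutes with $P_{\mathfrak p,X}$ up to zeroth order terms. Concretely, $G,S$ are complex linear in the appropriate sense, and $(G^*\oplus S)$ intertwines $I$. Consequently $P I=-I P$ modulo terms of order zero, and $PI$ is skew-adjoint modulo lower order terms, so $\langle P\xi,I\partial_s\xi\rangle=\tfrac12\partial_s\langle \xi,IP\xi\rangle$ up to error terms. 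The errors involve $\partial_s$ of $P_{\e s}$ and $I_{\e s}$, which carry a factor $\e$ by the chain rule and are bounded by $c\e\|\xi\|_{L^2}^2$ via \eqref{uniformbound1}, \eqref{uniformbound2}. I would verify the anticommutation with the explicit $5\times5$ forms of $P$ and $I_J$. Since $\xi$ has compact support, this reasoning only gives the cancellation cleanly for $p=2$. For $p>2$ a derivative of $\|\xi\|^{p-2}$ appears, so I would instead work pointwise in $s$.

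\textbf{Step 3 (handling general $p$).} This is the main obstacle: turning the $L^2$ identity into an estimate with $p$-th powers in $s$. Following \cite{Salamon2000QuantumPF}, I would project. Apply $1-\pi_{\e s}$ to $D\xi=\eta$. Since $P$ is invertible on the orthogonal complement of $\ker P$ (uniformly, by Lemma \ref{fibreellipticestimate}), the equation becomes
\[
P\xi^\perp=(1-\pi)\eta-(1-\pi)I\partial_s\xi .
\]
Here $I$ maps $H^1$ into itself up to $O(\e)$ commutators. This is because $I$ acting on $x$ is the complex structure on the vortex moduli space, and the derivative of $\pi_{\e s}$ in $s$ is $O(\e)$. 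The term $(1-\pi)I\partial_s(\pi\xi)$ is therefore $O(\e)\|\xi\|$. This leaves a first-order ODE in $s$ for $\xi^\perp$ of the form
\[
\partial_s\xi^\perp=-I P\xi^\perp+\dots
\]
where the operator $-IP$, restricted to $(\ker P)^\perp$, is self-adjoint (by Step 2), invertible and uniformly bounded below. It is thus hyperbolic with spectrum bounded away from zero, which gives $L^p(\R,L^2)$ maximal regularity with a uniform constant: $\|\xi^\perp\|\le c\|\text{RHS}\|$ in $L^p_sL^2$. I would prove this uniform bound by an exponential dichotomy argument using the spectral projections of the self-adjoint operator, valid for all $1<p<\infty$.

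\textbf{Step 4 (absorbing errors).} Combine the pieces:
\[
\int\|\xi\|_{L^2}^p\leq c\int(\|\eta\|_{L^2}^p+\|\pi\xi\|_{L^2}^p)+c\e^p\int\|\xi\|_{L^2}^p .
\]
Choosing $\e_0$ with $c\e_0^p<\tfrac12$ absorbs the last term into the left-hand side and finishes the proof.
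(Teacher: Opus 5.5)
Your route differs from the paper's. The paper does no hands-on analysis here. It moves everything to the fixed Hilbert space $L^2(\Sigma,J_0)$ using $Q=\diag(-\star_0\star_{J(\e s)},1,1,1,1)$. It then checks the three identities $IP+PI=0$, $QI+I^*Q=0$, $QP-P^*Q=0$: these say that $I$ and $P$ anticommute exactly, $I$ is skew and $P$ is symmetric for the $s$-dependent inner product. Finally it applies a one-dimensional version of the abstract estimate \cite[Proposition A.2]{Salamon2000QuantumPF}, together with \eqref{uniformbound1}, \eqref{uniformbound2}. You use the same algebraic structure but try to prove the estimate directly: an energy identity for $p=2$, and for $p>2$ you split off $\pi_{\e s}\xi$ and treat $\xi^\perp$ as a solution of a uniformly hyperbolic evolution equation. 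That strategy is sound and makes the mechanism explicit. Since $(IP)^2=P^2$, the spectral gap of $IP$ on $(\ker P_{\e s})^\perp$ is exactly Lemma~\ref{fibreellipticestimate}, and $I$ preserves $\ker P_{\e s}$ exactly. The paper's citation buys precisely the analytic step you leave as a promise.

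Several points need fixing.

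First, $PI$ is not skew-adjoint. Since $IP+PI=0$ holds exactly and $I^*=-I$, $P^*=P$, the operator $IP$ (hence $PI$) is self-adjoint. That is what makes $\langle P\xi,I\partial_s\xi\rangle=-\tfrac12\partial_s\langle\xi,IP\xi\rangle+\tfrac12\langle\xi,\partial_s(IP)\xi\rangle$ a total derivative up to error. If $PI$ were skew, the cross term would not integrate away. Your Step 3 uses the correct statement, so Steps 2 and 3 contradict each other as written.

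Second, the error $\langle\xi,\partial_s(I_{\e s}P_{\e s})\xi\rangle$ is not $O(\e)\|\xi\|_{L^2}^2$. The operator $\partial_s P_{\e s}$ contains the first-order term $\e\dot\star\, d$, so the error is only $O(\e)\|\xi\|_{L^2}\|\xi\|_{W^{1,2}}$. It can still be absorbed, using $\|P\xi\|^2+\|\pi\xi\|^2$. You must also track the $s$-dependence of the inner product on $L^2(\Sigma_{\e s})$, which is exactly the role of $Q$.

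Third, and most substantially, the uniform $L^p_sL^2$ bound for $\partial_s\xi^\perp=A(s)\xi^\perp+F$ is the real content of the lemma. Here $A(s)=I_{\e s}P_{\e s}$ acts on the moving subspaces $(\ker P_{\e s})^\perp$, and the bound does not follow from the constant-coefficient dichotomy. Building a dichotomy from the spectral projections of $A(s)$ requires showing they vary at rate $O(\e)$ in operator norm. That is not a routine resolvent estimate, because $\partial_sA(s)$ is an unbounded first-order operator.

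Freezing coefficients on intervals of length $r$ with $1\ll r\ll\e^{-1}$ does work. However, $A(s)-A(s_0)$ is then first order with $O(\e r)$ coefficients, so you genuinely need a maximal-regularity bound for $\|P\xi^\perp\|_{L^p_sL^2}$ in the frozen problem to absorb it. This is available for Hilbert-space-valued problems with $1<p<\infty$, and the cutoff errors are $O(1/r)$.

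A lighter standard alternative is to run your $p=2$ identity with exponential weights $e^{\pm\delta s}$. The conjugated operator differs from $D_{\e s}$ only by the bounded term $\mp\delta I$, which gives exponential off-diagonal decay. You would then sum over unit intervals using the discrete Young inequality.
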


\begin{proof}
    Extend the operators $P,I,$ and $Q:=\diag(-\star_0\star_{J(\e s)},1,1,1,1)$ to maps from the $W^{1,2}$ version of $W_\Sigma$ to the $L^2$ version. Here, $\star_0$ is the Hodge star operator determined by the reference complex structure $J_0$. These operators satisfy the commutator identities
    $$IP+PI=0,\hspace{3mm}QI+I^*Q=0,\hspace{3mm}QP-P^*Q=0,$$
    so the inequality follows from a simpler version of \cite[Proposition A.2]{Salamon2000QuantumPF} with 1-dimensional domain $\R$ instead of $\R^2$, and the uniform bounds \eqref{uniformbound1} and \eqref{uniformbound2}.
\end{proof}

\begin{lemma}\label{step2}
    Fix $p\geq 2$. Suppose that $\tau\in\Omega^0(\Sigma)$ satisfies $d-\bar{\tau}<0$ and the bound \eqref{fibreuniformbound} holds for $\mathfrak{p}=(J,B)\in\mathcal{P}_\Sigma^{reg}$ and $X=(A,\Phi)\in\mathcal{Z}_{\Sigma}(d,\mathfrak{p},\tau)$. Then there exists a constant $c>0$ such that for every $\xi\in C_0^\infty(\R,W_\Sigma)$, we have
    \begin{equation}\label{lemma6.8inequality}
        \int_\R\|\del_s\xi\|_{L^p(\Sigma,J)}^pds\leq c\int_\R\|D\xi\|_{L^p(\Sigma,J)}^pds
    \end{equation}
    for $D = I\del_s + P$ with $I=I_J$ and $P=P_{\mathfrak{p},X}$. 
\end{lemma}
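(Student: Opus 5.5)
Here $P$, $I$ do not depend on $s$ (everything is frozen at a single fibre), so $D=I\del_s+P$ is a translation-invariant operator on $\R\times\Sigma$. The plan is to reduce the estimate to an $L^p$ multiplier theorem on $\R$ with values in $L^p(\Sigma)$. This follows the proof of \cite[Lemma C.4]{Salamon2000QuantumPF}.

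\emph{Step 1: spectral structure.} First I will check that $I^2=-1$ and that $I$ is an isometry for the $L^2(\Sigma,J)$ inner product. The anticommutation $IP+PI=0$ (used in Lemma \ref{step1}) together with self-adjointness of $P$ gives
\[
D^*D=-\del_s^2+P^2 .
\]
Since $\mathfrak{p}$ is regular, $G_X+S_X^*$ is injective. Hence $\ker P=H^1_X=\ker(G_X^*\oplus S_X)$, which is finite dimensional, and $P$ is self-adjoint elliptic with discrete spectrum. Let $\pi=\pi_X$ and split $\xi=\pi\xi+(1-\pi)\xi$. Because $P\pi=\pi P=0$ and $I$ anticommutes with $P$, $I$ preserves $\ker P$. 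So $\pi$ commutes with $I$, and also with $D$. The two pieces can therefore be treated separately:
\[
\pi D\xi=I\del_s\pi\xi,\qquad (1-\pi)D\xi=D(1-\pi)\xi .
\]
On the kernel part, $\|\del_s\pi\xi\|_{L^p}=\|\pi D\xi\|_{L^p}\le c\|D\xi\|_{L^p}$. This uses that $\pi$ is bounded on $L^p(\Sigma)$: $H^1_X$ is finite dimensional and spanned by smooth sections, with bounds uniform under \eqref{fibreuniformbound}.

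\emph{Step 2: the complement.} On $(1-\pi)$, $P$ has a spectral gap $|\lambda|\ge\lambda_0>0$. We have $\del_s(1-\pi)\xi=T\,D(1-\pi)\xi$, where
\[
T=\del_s D^{-1}=\del_s(-\del_s^2+P^2)^{-1}(P-I\del_s)
\]
is defined on the complement of the kernel via Fourier transform in $s$. Concretely, $T$ is the operator-valued Fourier multiplier
\[
m(\sigma)=i\sigma(\sigma^2+P^2)^{-1}(P-i\sigma I).
\]
The multiplier is uniformly bounded on $L^2(\Sigma)$ thanks to the spectral gap, and the same holds for $\sigma m'(\sigma)$. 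This gives the $p=2$ case directly via Plancherel.

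\emph{Step 3 (main obstacle): $p>2$.} Here scalar Mikhlin is not enough, because the multiplier takes values in operators on $L^p(\Sigma)$. I would try two routes.

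The first route is to treat $D$ as a first-order elliptic operator on $\R\times\Sigma$. The standard interior $L^p$ (Calderón--Zygmund) estimate, applied on unit cylinders $[k,k+1]\times\Sigma$ and summed over $k$, gives
\[
\int\|\del_s\xi\|^p\le c\int\big(\|D\xi\|^p+\|\xi\|^p_{L^p}\big).
\]
The lower-order term $\int\|(1-\pi)\xi\|_{L^p}^p$ then has to be removed. For that I would use Lemma \ref{LpL2exchange} together with a one-dimensional estimate in $s$: the resolvent kernel of $(1-\pi)D^{-1}$ decays exponentially, like $e^{-\lambda_0|s-s'|}$, as an operator $L^2(\Sigma)\to L^2(\Sigma)$. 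Young's inequality in $s$ then bounds $\int\|(1-\pi)\xi\|_{L^2}^p$ by $\int\|D\xi\|_{L^2}^p\le c\int\|D\xi\|_{L^p}^p$, using $p\ge2$ and compact $\Sigma$. Lemma \ref{LpL2exchange} upgrades $L^2$ to $L^p$ at the cost of $\delta\|P\xi\|_{L^p}^p$. Finally, $\|P\xi\|_{L^p}\le\|D\xi\|_{L^p}+\|\del_s\xi\|_{L^p}$, so choosing $\delta$ small lets this term be absorbed.

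The second route, if the first stalls, is the exponential-decay (transfer) argument of \cite[Lemma C.4]{Salamon2000QuantumPF} itself.

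In both routes the constants depend only on $C$, $p$, $\mathfrak{p}_0$ and $X_0$. This requires uniformity of the spectral gap $\lambda_0$ over the compact (Arzelà--Ascoli) family satisfying \eqref{fibreuniformbound}, and that uniformity follows from Lemma \ref{fibreellipticestimate}.
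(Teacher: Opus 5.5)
Your first route in Step 3, combined with the splitting in Step 1, is essentially the paper's proof. The paper also splits $\xi=\pi_X\xi+(1-\pi_X)\xi$ using $D\pi_X=\pi_X D$, handles $\pi_X\xi$ through $D\pi_X\xi=I\del_s\pi_X\xi$, patches local elliptic estimates over cylinders, and removes the lower-order $L^p$ term with Lemma \ref{LpL2exchange} and an $L^2$-in-the-fibre bound before absorbing $\delta\|P\xi\|_{L^p}^p$. The only difference is that the paper gets that $L^2$ bound by citing Lemma \ref{step1}, whereas your Green's-kernel exponential decay plus Young's inequality proves it directly in this $s$-independent setting, so Step 2 is not needed; also note a harmless sign slip: $I^*=-I$ makes $D$ formally self-adjoint, so $D^{-1}=(-\del_s^2+P^2)^{-1}(P+I\del_s)$.
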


\begin{proof}
    Decompose $\xi=(x,v,y)$ as $\xi=\xi_0+\xi_1$, where $\xi_0=\pi_X\xi=(x_0,0,0)$ and $\xi_1=(x-x_0,v,y)$. Then if $x_0=(a_0,\phi_0)$ we have
    $$D\xi_0=I\del_s\xi_0=\begin{pmatrix}
        \star_J\del_s a_0\\
        i\del_s\phi_0
    \end{pmatrix},$$
    which implies
    \begin{equation}
        \int_{\R}\|\del_s\xi_0\|_{L^p(\Sigma,J)}^p ds\leq c\int_{\R}\|D\xi_0\|_{L^p(\Sigma,J)}^p ds. 
    \end{equation}
    for a constant $c$ depending continuously on $J$. By \eqref{fibreuniformbound}, this can be replaced by a constant depending on $C$ and the reference complex structure $J_0$. To obtain a similar estimate for $\xi_1$, patch together the local elliptic estimates for $D$ over $[k-1,k+1]\times\Sigma$, $k\in\Z$, to get the global estimate
    \begin{equation}\label{cylinderinequality}
        \|\xi_1\|_{W^{1,p}(\R\times\Sigma)}^p \leq c\big(\|D\xi_1\|_{L^p(\R\times\Sigma)}^p + \|\xi_1\|_{L^p(\R\times\Sigma)}^p\big).
    \end{equation}
    Since $\pi_X\xi_1=0$, it follows from Lemmas \ref{LpL2exchange} and \ref{step1} that
    $$\|\xi_1\|_{L^p(\R\times\Sigma)}^p \leq c\int_\R\big(\delta\|D\xi_1\|_{L^p(\Sigma,J)}^p + \delta\|\del_s\xi_1\|_{L^p(\Sigma,J)}^p + c_\delta\|D\xi_1\|_{L^p(\Sigma,J)}^p\big)ds.$$
    Then for $\delta$ small enough, the $\del_s\xi_1$ term can be absorbed into the left hand side of \eqref{cylinderinequality}, proving \eqref{lemma6.8inequality} for $\xi_1$. Now note that there is a constant $c>0$, only depending on $p,\mathfrak{p}_0,X_0,$ and $C$, such that 
    \begin{equation}\label{projectionLp}
        \|\pi_X\xi\|_{L^p(\Sigma,J)}\leq c\|\xi\|_{L^p(\Sigma,J)}.
    \end{equation}
    To see this, write $x=x_0+Gv_0+S^*y_0$, then apply Lemma \ref{fibreellipticestimate} to get
    $$\|x-x_0\|_{L^p(\Sigma,J)}\leq \|Gv_0\|_{L^p(\Sigma,J)} + \|S^*y_0\|_{L^p(\Sigma,J)}\leq c\|x\|_{L^p(\Sigma,J)},$$
    followed by the reverse triangle inequality. Since $X$ is independent of $s$ and $\pi_X$ is $I$-linear, we have
    \begin{equation}\label{lemma6.8identities}
        D\xi_0 = \pi_XD\xi,\hspace{4mm} D\xi_1 = (1-\pi_X)D\xi.
    \end{equation}
    Finally, \eqref{lemma6.8inequality} can be obtained by bounding the $\del_s\xi_0$ and $\del_s\xi_1$ terms separately, followed by using the identities \eqref{lemma6.8identities} and \eqref{projectionLp} to bound $\|D\xi_0\|_{L^p}$ and $\|D\xi_1\|_{L^p}$ by $\|D\xi\|_{L^p}$.
\end{proof}

\begin{lemma}\label{step3}
    Fix $p\geq 2$ and suppose $\mathfrak{p}=\mathfrak{p}(\e s)$, $\tau=\tau(\e s)$, and $X=X(\e s)$ are as in the statement of Proposition \ref{Yzerocase}. Then there exist constants $\e_0>0$ and $c>0$ such that for every $\e\in(0,\e_0)$ and $\xi\in C_0^\infty(\R,W_\Sigma)$ satisfying
    \begin{equation}\label{vanishingprojection}
        \pi_{\e s}\xi(s)=0,
    \end{equation}
    for all $s\in\R$, we have
    \begin{equation}\label{step3inequality}
        \int_{\R}\|\del_{s}\xi\|_{L^p(\Sigma_{\e s})}^p ds\leq c\int_{\R}\|D_{\e s}\xi\|_{L^p(\Sigma_{\e s})}^p ds.
    \end{equation}
\end{lemma}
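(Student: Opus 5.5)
This is the $s$-dependent analogue of the second half of the proof of Lemma \ref{step2}, modelled on \cite[Lemma C.5]{Salamon2000QuantumPF}: since $\pi_{\e s}\xi=0$, only the "$\xi_1$" part of that argument is needed. The extra work is to control the errors from the slowly varying coefficients. The plan is a local-to-global patching, with the $L^p$ norm traded for $L^2$ and the $L^2$ norm controlled by Lemma \ref{step1}.

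\textbf{Step 1 (local elliptic estimate).} On each cylinder $[k-1,k+1]\times\Sigma$, $k\in\Z$, the operator $D_{\e s}=I_{\e s}\del_s+P_{\e s}$ is a first-order elliptic operator. By \eqref{uniformbound1} and \eqref{uniformbound2}, its coefficients have $C^1$-norms bounded uniformly in $k$ and in $\e\le 1$, and the $s$-derivatives of the coefficients are even $O(\e)$. So the standard interior $L^p$ estimate holds with a uniform constant. Summing over $k$ with a partition of unity gives
$$\int_\R\Big(\|\del_s\xi\|^p_{L^p(\Sigma_{\e s})}+\|\xi\|^p_{W^{1,p}(\Sigma_{\e s})}\Big)ds\le c\int_\R\Big(\|D_{\e s}\xi\|^p_{L^p(\Sigma_{\e s})}+\|\xi\|^p_{L^p(\Sigma_{\e s})}\Big)ds.$$
The comparison between the norms $L^p(\Sigma_{\e s})$ and a fixed reference metric is uniform by \eqref{uniformbound1}.

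\textbf{Step 2 (absorbing the zeroth-order term).} Apply Lemma \ref{LpL2exchange} fibrewise to bound
$$\|\xi\|^p_{L^p}\le\delta\|P_{\e s}\xi\|^p_{L^p}+c_\delta\|\xi\|^p_{L^2}.$$
Then write $P_{\e s}\xi=D_{\e s}\xi-I_{\e s}\del_s\xi$, so that the $\delta\|\del_s\xi\|^p$ contribution is absorbed into the left-hand side for small $\delta$. It remains to bound $\int\|\xi\|^p_{L^2}ds$. By Lemma \ref{step1} and the hypothesis \eqref{vanishingprojection}, this is at most $c\int\|D_{\e s}\xi\|^p_{L^2}ds$. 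Since $p\ge 2$ and $\Sigma$ has uniformly bounded volume, this is in turn at most $c\int\|D_{\e s}\xi\|^p_{L^p}ds$. Combining gives \eqref{step3inequality}.

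\textbf{Main obstacle.} I expect the difficulty to lie in Step 1 and in the use of Lemma \ref{step1}, specifically in making sure that the $s$-dependence contributes only errors of size $O(\e)$ that can be absorbed for $\e<\e_0$. Lemma \ref{step1} is stated with the projection term on the right-hand side, and our vanishing-projection hypothesis kills it, so it should apply directly. If one instead reduces to the frozen-coefficient estimate of Lemma \ref{step2} on intervals of length $\sim 1$, the plan would be to bound the commutator terms as follows:
\begin{itemize}
\item $\del_s$ falling on $I_{\e s}$, $P_{\e s}$, or the metric produces $\e\,\|\xi\|_{W^{1,p}}$-type errors, using \eqref{uniformbound1} and \eqref{uniformbound2};
\item the deviation of the projection from the frozen one, $\pi_{\e s}-\pi_{\e s_0}=O(\e)$ on such intervals, gives $\|\pi_{\e s_0}\xi\|\le c\e\|\xi\|$.
\end{itemize}
Both errors are absorbed once $\e_0$ is small. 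The continuity of $\pi_X$ in $X$ needed for the second bound follows from the uniform Hodge estimates of Lemma \ref{fibreellipticestimate}, together with the regularity of $\mathfrak{p}$, which keeps $H^1_X$ of constant dimension.
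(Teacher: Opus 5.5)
Your argument is correct, but it is organised differently from the paper's proof.

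\textbf{The paper's route.} The paper follows \cite[Lemma C.5]{Salamon2000QuantumPF} closely and proceeds in three stages.
\begin{enumerate}
\item It proves the difference bound \eqref{differencebound} for $D_{\e s}-D_{\e s_1}$.
\item It combines this with the frozen-coefficient estimate of Lemma~\ref{step2} and the fibrewise bound $\|\xi\|_{W^{1,p}}\le c(\|\del_s\xi\|_{L^p}+\|D_{\e s}\xi\|_{L^p})$, which holds under \eqref{vanishingprojection} by Lemma~\ref{fibreellipticestimate}. This gives an estimate with no lower-order term for $\xi$ supported in intervals of radius $r<\e_0/\e$.
\item It patches these local estimates with cut-offs. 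The cut-offs reintroduce a term $\int\|\xi\|^p_{L^p}\,ds$, which is removed exactly as in your Step~2, using Lemma~\ref{LpL2exchange} and then Lemma~\ref{step1} together with \eqref{vanishingprojection}.
\end{enumerate}

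\textbf{Your route.} You observe that the frozen-coefficient detour is not needed. After rescaling, the coefficients of $D_{\e s}$ are uniformly controlled on unit-length cylinders by \eqref{uniformbound1}--\eqref{uniformbound2}. So the standard interior $L^p$ estimate, summed over $[k-1,k+1]\times\Sigma$ as in \eqref{cylinderinequality}, already gives the patched inequality with the $\|\xi\|_{L^p}$ error. From there your argument coincides with the paper's. In effect, you have noticed that Lemma~\ref{step3} is just the ``$\xi_1$'' half of the proof of Lemma~\ref{step2}, carried out with $s$-dependent coefficients.

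\textbf{Comparison.} Your proof is shorter and uses neither Lemma~\ref{step2} nor \eqref{differencebound}. The paper's route invokes elliptic regularity only for $s$-independent operators and treats the $s$-dependence perturbatively, which is why it mirrors Salamon's structure. In the end it still needs the same $L^p$-to-$L^2$ exchange and Lemma~\ref{step1}, so here that extra structure buys nothing essential.

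Your closing aside about $\pi_{\e s}-\pi_{\e s_0}=O(\e)$ and the frozen-coefficient reduction is not used in your main argument and can be dropped.
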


\begin{proof}
    One first proves an error bound of the form
    \begin{equation}\label{differencebound}
        \|(D_{\e s} - D_{\e s_1})\xi\|_{L^p(\Sigma_{\e s})}^p \leq c|s-s_1|^p\e^p\Big(\|\xi\|_{W^{1,p}(\Sigma,J_0)}^p + \|\del_s\xi\|_{L^p(\Sigma,J_0)}^p\Big),
    \end{equation}
    for all $\xi\in C_0^\infty(\R, W_\Sigma)$, $s,s_1\in\R$, and $\e>0$. This follows directly from writing out the difference terms on the left hand side and applying the $C^1$ bounds \eqref{uniformbound1} and \eqref{uniformbound2}. Lemma \ref{fibreellipticestimate} and the condition \eqref{vanishingprojection} show that 
    \begin{align*}
        \|\xi\|_{W^{1,p}(\Sigma,J_0)}^p &\leq c\Big(\|\del_s\xi\|_{L^p(\Sigma_{\e s})}^p + \|D_{\e s}\xi\|_{L^p(\Sigma_{\e s})}^p\Big)
    \end{align*}
    for all $s\in\R$ and $\e>0$. Combining this with Lemma \ref{step2} and \eqref{differencebound}, we find that any $\xi\in C_0^\infty(\R,W_\Sigma)$ supported in a ball of radius $r$ centered at $s_1$ satisfies
    $$\int_{\R}\|\del_s\xi\|_{L^p(\Sigma_{\e s})}^pds \leq c(1+(r\e)^p)\int_{\R}\|D_{\e s}\xi\|_{L^p(\Sigma_{\e s})}^p + c(r\e)^p\int_{\R}\|\del_s\xi\|_{L^p(\Sigma_{\e s})}^p.$$
    Taking $\e_0$ sufficiently small and $r\e<\e_0$, the rightmost term can be absorbed into the left hand side. Then \eqref{step3inequality} holds for any $\xi\in C_0^\infty(\R,W_\Sigma)$ satisfying \eqref{vanishingprojection} and supported in a ball of radius $r<\e_0/\e$. To prove inequality \eqref{step3inequality} in general, smooth bump functions can be used to patch these local estimates together to get
    $$\int_{\R}\|\del_s\xi\|_{L^p(\Sigma_{\e s})}^p ds \leq c\int_{\R}\Big(\|D_{\e s}\xi\|_{L^p(\Sigma_{\e s})}^p + \|\xi\|_{L^p(\Sigma_{\e s})}^p\Big)ds.$$
    The $\|\xi\|_{L^p}$ term is a result of factoring out derivatives of the bump functions. This can be removed by applying Lemma \ref{LpL2exchange}:
    $$\int_{\R}\|\del_s\xi\|_{L^p(\Sigma_{\e s})}^p ds \leq c(1+2^p\delta)\int_{\R}\|D_{\e s}\xi\|_{L^p(\Sigma_{\e s})}^p ds + c_{\delta}\int_{\R}\|\xi\|_{L^2(\Sigma_{\e s})}^pds + c\delta\int_{\R}\|\del_s\xi\|_{L^p(\Sigma_{\e s})}^p ds.$$
    Choosing $\delta>0$ sufficiently small, the last term can be absorbed into the left hand side. The $\|\xi\|_{L^2}$ term can also be absorbed by applying Lemma \ref{step1} and \eqref{vanishingprojection}.
\end{proof}

\begin{lemma}\label{step4}
    Fix $p\geq 2$. Suppose $\mathfrak{p}=\mathfrak{p}(\e s)$, $\tau=\tau(\e s)$, and $X=X(\e s)$ are as in the statement of Proposition \ref{Yzerocase}. Then there exist constants $\e_0>0$ and $c>0$ such that for every $\xi\in C_0^\infty(\R,W_\Sigma)$ and $\e\in(0,\e_0)$, we have
    \begin{equation}\label{c4inequality}
        \int_{\R}\|\del_s\xi\|_{L^p(\Sigma_{\e s})}^p ds \leq c\int_{\R}\Big(\|D_{\e s}\xi\|_{L^p(\Sigma_{\e s})}^p + \e^p\|\xi\|_{L^p(\Sigma_{\e s})}^p\Big)ds.
    \end{equation}
\end{lemma}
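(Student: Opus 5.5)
The plan is to split $\xi$ into its harmonic part and a complementary part, and handle the two pieces with different tools. Set $\xi_0(s):=\pi_{\e s}\xi(s)$ and $\xi_1:=\xi-\xi_0$. Then $\xi_1$ satisfies the hypothesis \eqref{vanishingprojection}, so Lemma \ref{step3} gives
$$\int_\R\|\del_s\xi_1\|^p_{L^p(\Sigma_{\e s})}ds\leq c\int_\R\|D_{\e s}\xi_1\|^p_{L^p(\Sigma_{\e s})}ds.$$
For $\xi_0=(x_0,0,0)$ we have $P_{\e s}\xi_0=0$, so $D_{\e s}\xi_0=I_{\e s}\del_s\xi_0$. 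Since $I$ is an isometry up to constants controlled by \eqref{uniformbound1}, this yields $\|\del_s\xi_0\|_{L^p}\leq c\|D_{\e s}\xi_0\|_{L^p}$ pointwise in $s$.

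It therefore suffices to bound $\|D_{\e s}\xi_0\|$ and $\|D_{\e s}\xi_1\|$ by $\|D_{\e s}\xi\|+\e\|\xi\|$ in $L^p$. In the $s$-independent case (Lemma \ref{step2}) this came from the identities \eqref{lemma6.8identities}: $D\xi_0=\pi D\xi$ and $D\xi_1=(1-\pi)D\xi$. Here these identities acquire commutator errors, because $\pi_{\e s}$ now depends on $s$. Write
$$D_{\e s}\xi_0=I_{\e s}\del_s(\pi_{\e s}\xi)=I_{\e s}\pi_{\e s}\del_s\xi+\e\, I_{\e s}(\pi')_{\e s}\xi.$$
Using $P\pi=0$, $\pi P=0$ (the range of $P$ is $L^2$-orthogonal to $H^1_X$ by self-adjointness) and the $I$-linearity of $\pi$ at fixed $s$, the main term equals $\pi_{\e s}D_{\e s}\xi$. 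One also has to commute $I_{\e s}$ with $\pi_{\e s}$; these agree exactly fibrewise, since $H^1_X$ is complex for the structure $I_J$. Hence
$$\|D_{\e s}\xi_0\|_{L^p}\leq c\|D_{\e s}\xi\|_{L^p}+c\e\|\xi\|_{L^p},$$
using \eqref{projectionLp} for $\pi_{\e s}$ and a bound $\|(\del_t\pi_X)\xi\|_{L^p}\leq c\|\xi\|_{L^p}$. The error term for $\xi_1$ then follows from $D_{\e s}\xi_1=D_{\e s}\xi-D_{\e s}\xi_0$. Summing the two estimates together with $\del_s\xi=\del_s\xi_0+\del_s\xi_1$ gives \eqref{c4inequality}.

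The main obstacle is the $L^p$ bound on the derivative of the projection, $\|(\del_t\pi_{X(t)})\xi\|_{L^p}\leq c\|\xi\|_{L^p}$, with $c$ depending only on $p,C,\mathfrak{p}_0,X_0$. My first attempt is to write
$$\pi_X=1-(G\;S^*)\big((G\;S^*)^*(G\;S^*)\big)^{-1}(G\;S^*)^*,$$
noting that $G^*S^*=0$ makes the middle operator block diagonal, $\diag(G^*G,SS^*)$. Differentiating in $t$ produces terms involving $\del_tG$ and $\del_tS$, which are zeroth-order operators whose coefficients are bounded by $\|\del_tX\|_{L^\infty}$, $\|\del_tB\|_{L^\infty}$ and $\|\del_tJ\|_{C^1}$ via \eqref{uniformbound1}–\eqref{uniformbound2}. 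These are composed with $(G^*G)^{-1}G^*$-type operators, which are bounded $L^p\to W^{1,p}$ by the uniform estimate of Lemma \ref{fibreellipticestimate}. Concretely, if $x=x_0+Gv_0+S^*y_0$, then $\|v_0\|_{W^{1,p}}+\|y_0\|_{W^{1,p}}\leq c\|x\|_{L^p}$. The one extra point to check is the $t$-dependence of the metric $g_t$ in the inner product defining $\pi$, but this contributes only $\del_tJ$ terms, which are bounded in the same way.
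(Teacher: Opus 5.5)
Your argument is correct and is essentially the paper's proof. The paper uses the same splitting $\xi=\pi_{\e s}\xi+(1-\pi_{\e s})\xi$, applies Lemma~\ref{step3} to the second piece and $D_{\e s}\xi_0=I_{\e s}\del_s\xi_0$ to the first, and its two error bounds \eqref{x0inequality} and \eqref{x1inequality} are exactly bounds on the off-diagonal blocks $(1-\pi)\dot\pi\pi$ and $\pi\dot\pi(1-\pi)$ of your commutator term $\dot\pi$ (the diagonal blocks vanish because $\pi^2=\pi$). One caution about your justification of $\|\dot\pi\|_{L^p\to L^p}\leq c$: when $J_t$ varies, $\del_t S_X$ and $\del_t G_X^*$ are first-order operators (e.g.\ the term $\star_t d\,\dot\star_t$ and the $\dot J$-term in $\delbar_{A,\mathfrak{p}_t}$), not zeroth-order. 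So instead of differentiating the explicit formula for $\pi_X$ on arbitrary $L^p$ sections, let the $t$-derivative fall only on the harmonic part (where $\|x_0\|_{W^{1,p}}\leq c\|x_0\|_{L^p}$) or on the potentials $v_0,y_0$ (where $\|v_0\|_{W^{1,p}}+\|y_0\|_{W^{1,p}}\leq c\|x\|_{L^p}$); this is precisely what the paper's computation of $G^*_{\e s}\tilde{x}_0$, $S_{\e s}\tilde{x}_0$ and $\pi_{\e s}(\tilde{x}_1)$ does.
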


\begin{proof}
    Decompose $\xi=(x,v,y)$ as
    $$\xi=\xi_0 + \xi_1,\hspace{3mm}\xi_0 = (x_0,0,0),\hspace{3mm}\xi_1 = (x_1,v,y),$$
    where $x_0:=\pi_{\e s}(x)$ and $x_1:=x-x_0$. Let $x=(a,\phi)$ and $x_i=(a_i,\phi_i)$ for $i=0,1$. The required bound for $\xi_1$ follows immediately from Lemma \ref{step3}, so the first step is to prove the lemma for $\xi_0$. Since $P_{\e s}\xi_0=0$, we have $D_{\e s}\xi_0=(\tilde{x}_0,0,0)$ for
    $$\tilde{x}_0 = \begin{pmatrix}
        \star_{\e s}\del_s a_0\\
        i\del_s\phi_0
    \end{pmatrix}.$$
    The bound \eqref{uniformbound1} shows that
    \begin{equation}\label{1dcz}
        \int_{\R}\|\del_s x_0\|_{L^p(\Sigma_{\e s})}^pds \leq c\int_{\R}\|\tilde{x}_0\|_{L^p(\Sigma_{\e s})}^pds,
    \end{equation}
    which proves a stronger version of \eqref{c4inequality} for $\xi_0$. Let $G_{\e s}=G_{X(\e s)}$ and $S_{\e s} = S_{X(\e s)}$. Then using $\del_sG_{\e s}^*x_0=0$ and $\del_sS_{\e s}x_0=0$, we obtain the following identities:
    \begin{align*}
        G_{\e s}^*\tilde{x}_0 &= \e\big(\dot{\star}_{\e s}da_0 - i\Re\langle\dot{\Phi}(\e s),\phi_0\rangle\big),\\
        S_{\e s}\tilde{x}_0 &= \e\begin{pmatrix}
            -\dot{\star}_{\e s}d\star_{\e s}a_0 - \star_{\e s}d\dot{\star}_{\e s}a_0 + i\Im\langle\dot{\Phi}(\e s),\phi_0\rangle\\
            -ia_0^{0,1}\dot{\Phi}(\e s) - \frac{1}{2}(\dot{\star}_{\e s}a_0)\Phi + i\dot{A}(\e s)\phi_0 + i\dot{B}(\e s)\phi_0 + \star_{\e s}\dot{\star}_{\e s}d_{AB}\phi_0
        \end{pmatrix}.
    \end{align*}
    Combining these with two applications of Lemma \ref{fibreellipticestimate}, we can bound the non-harmonic part of $\tilde{x}_0$:
    \begin{align}
        \nonumber\|\tilde{x}_0 - \pi_{\e s}(\tilde{x}_0)\|_{L^p(\Sigma_{\e s})}^p &\leq c\Big(\|G_{\e s}^*\tilde{x}_0\|_{L^p(\Sigma_{\e s})}^p + \|S_{\e s}\tilde{x}_0\|_{L^p(\Sigma_{\e s})}^p\Big)\\
        \nonumber&\leq c\e^p\|x_0\|_{W^{1,p}(\Sigma_{\e s})}^{p}\\
        \label{x0inequality}&\leq c\e^p\|x_0\|_{L^p(\Sigma_{\e s})}^p.
    \end{align}
    Now let $v_1$ and $y_1=(c_1,\psi_1)$ be such that
    $$x_1 = x - x_0 = G_{\e s}v_1 + S_{\e s}^*y_1.$$
    Then if $\tilde{x}_1$ is the first component of $D_{\e s}\xi_1$, we have
    $$\pi_{\e s}(\tilde{x}_1) = \e\pi_{\e s}\begin{pmatrix}
        \dot{\star}_{\e s}dc_1 - i\Im\langle\psi_1,\dot{\Phi}(\e s)\rangle\\
        \dot{\Phi}(\e s)(v_1 + ic_1) - \langle\dot{A}(\e s)^{0,1},\psi_1\rangle
    \end{pmatrix}.$$
    Then apply Lemma \ref{fibreellipticestimate} to get
    \begin{align}
        \nonumber\|\pi_{\e s}(\tilde{x}_1)\|_{L^p(\Sigma_{\e s})}^p &\leq c\e^p\Big(\|v_1\|_{W^{1,p}(\Sigma_{\e s})} + \|y_1\|_{W^{1,p}(\Sigma_{\e s})}\Big)\\
        \nonumber&\leq c\e^p\|G_{\e s}v_1 + S_{\e s}^*y_1\|_{L^p(\Sigma_{\e s})}\\
        \label{x1inequality}&=c\e^p\|x_1\|_{L^p(\Sigma_{\e s})}^p.
    \end{align}
    Letting $\tilde{x} = \tilde{x}_0 + \tilde{x}_1$ be the first component of $D_{\e s}\xi$, we can write
    $$\tilde{x}_0 = \pi_{\e s}(\tilde{x}) - \pi_{\e s}(\tilde{x}_1) + (\tilde{x}_0 - \pi_{\e s}(\tilde{x}_0)),$$
    $$\tilde{x}_1 = (\tilde{x} - \pi_{\e s}(\tilde{x})) - (\tilde{x}_0 - \pi_{\e s}(\tilde{x}_0)) + \pi_{\e s}(\tilde{x}_1).$$
    As in the proof of \cite[Lemma C.6]{Salamon2000QuantumPF}, each of these terms can be bounded using the inequalities \eqref{1dcz}, \eqref{x0inequality}, and \eqref{x1inequality}, resulting in
    $$\|\tilde{x}_0\|^p_{L^p(\Sigma_{\e s})} + \|\tilde{x}_1\|^p_{L^p(\Sigma_{\e s})} \leq c\Big(\|\tilde{x}\|^p_{L^p(\Sigma_{\e s})} + \e^p\|x\|^p_{L^p(\Sigma_{\e s})}\Big).$$
    Then we have
    $$\|D_{\e s}\xi_0\|_{L^p(\Sigma_{\e s})}^p + \|D_{\e s}\xi_1\|_{L^p(\Sigma_{\e s})}^p \leq c\Big(\|D_{\e s}\xi\|_{L^p(\Sigma_{\e s})}^p + \e^p\|\xi\|_{L^p(\Sigma_{\e s})}^p\Big),$$
    so \eqref{c4inequality} follows from \eqref{1dcz} and Lemma \ref{step3} applied to $\xi_1$. 
\end{proof}

\begin{proof}[Proof of Proposition \ref{Yzerocase}]
    It follows from Lemma \ref{step4} that
    $$\int_{\R}\|P_{\e s}\xi\|_{L^p(\Sigma_{\e s})}^pds \leq c\int_{\R}\Big(\|D_{\e s}\xi\|_{L^p(\Sigma_{\e s})}^p + \e^p\|\xi\|_{L^p(\Sigma_{\e s})}^p\Big)ds.$$
    This can be combined with Lemmas \ref{step4} and \ref{fibreellipticestimate} to get
    \begin{align*}
        \int_{\R}&\Big(\|\del_s\xi\|_{L^p(\Sigma_{\e s})}^p + \|P_{\e s}\xi\|_{L^p(\Sigma,\tilde{J})}^p\Big)ds\\
        &\leq c\int_{\R}\Big(\|D_{\e s}\xi\|_{L^p(\Sigma_{\e s})}^p + \e^p\|\xi\|_{L^p(\Sigma_{\e s})}^p\Big)ds\\
        &\leq c\int_{\R}\Big(\|D_{\e s}\xi\|_{L^p(\Sigma_{\e s})}^p + \e^p\|P_{\e s}\xi\|_{L^p(\Sigma_{\e s})}^p + \e^p\|\pi_{\e s}\xi\|_{L^p(\Sigma_{\e s})}^p\Big)ds.
    \end{align*}
    For $\e>0$ sufficiently small, we obtain the desired inequality \eqref{Yzeroinequality}.
\end{proof}

\begin{prop}\label{technicallemma}
    Fix $p\geq 2$ and let $\Xi=(X,0,Y)\in\A_0(\mathfrak{s}_{d,\tilde{f}},\mathfrak{p},\eta)$. Then there exist constants $\e_0>0$ and $c>0$ such that
    \begin{align}\label{keyestimate}
        \nonumber\int_{0}^{1}&\Big(\|G^*_{X}x\|_{L^p(\Sigma_t)}^p + \|S_{X}x\|_{L^p(\Sigma_t)}^p + \e^p\|N_{Y}x\|_{L^p(\Sigma_t)}^p + \e^p\|G_Xv\|_{L^p(\Sigma_t)}^p\\
        &\nonumber\hspace{3mm}+ \e^{2p}\|L_Yv\|_{L^p(\Sigma_t)}^p + \e^p\|S_{X}^*y\|_{L^p(\Sigma_t)}^p + \e^{2p}\|L_Y^*y\|_{L^p(\Sigma_t)}^p + \e^{2p}\|M_Yy\|_{L^p(\Sigma_t)}^p\Big)dt\\
        &\hspace{8mm}\leq c\e^p\|\D_\e\xi\|_{0,p,\e}^p + c\e^p\int_{0}^{1}\|\pi_X(x)\|_{L^p(\Sigma_t)}^pdt
    \end{align}
    for all $\xi\in W^{1,p}_{f}$ and $\e\in(0,\e_0)$. The constant $c$ can be chosen independently of $\e$. 
\end{prop}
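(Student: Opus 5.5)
The plan is to reduce the global estimate on $\Sigma_f$ to the cylinder estimate of Proposition \ref{Yzerocase} by rescaling time. Substituting $t=\e s$ turns $\D_\e(\Xi)$, multiplied by $\e$ and suitably conjugated by the weights $\diag(1,\e,\e)$ that appear in $\|\cdot\|_{0,p,\e}$, into an operator of the form $D_{\e s}+\e R_{\e s}$ acting on $\tilde\xi(s)=(x,\e v,\e y)(\e s)$. Here $D_{\e s}=P_{\e s}+I_{\e s}\del_s$ is the leading part, and $R$ collects the zeroth-order terms built from $Y=(b,\Psi)$ together with the $\nabla_t$ correction $\tfrac12\Psi\circ J\dot J$. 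Concretely, the $\e^{-2}G^*$, $\e^{-2}S$ entries become $G^*,S$ after multiplying the second and third rows by $\e^2$ and then rescaling. The entries $\e^{-1}$-weighted $\del_t$ become $\del_s$. Finally, the blocks $N_Y$, $L_Y$, $M_Y$ (minus their $\del_t$ parts) acquire a factor $\e$.

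The argument then runs in four steps.

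\textbf{Step 1: check the hypotheses of Proposition \ref{Yzerocase}.} Since $\Xi$ is a fixed smooth solution of the adiabatic equations and $\mathfrak{p}\in\mathcal{P}^{reg}_\varphi$, the uniform bounds \eqref{uniformbound1} and \eqref{uniformbound2} hold. Each $X(t)$ lies in $\mathcal{Z}_\Sigma(d,\mathfrak{p}_t,\tau_t)$ with $d-\bar\tau<0$. Periodicity is handled by working in the gauge $b=0$ on a fundamental domain.

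\textbf{Step 2: localise.} Choose a partition of unity $\{\rho_k\}$ on $S^1$ and lift each piece to $\R$, obtaining compactly supported paths $\rho_k\tilde\xi\in C_0^\infty(\R,W_\Sigma)$. Apply \eqref{Yzeroinequality} to each piece. The commutators $[D,\rho_k]$ produce terms $\e\rho_k'\tilde\xi$. Summing and changing variables back to $t$, the Jacobian contributes a factor $\e$ on both sides, so it cancels.

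\textbf{Step 3: identify the left-hand side.} The left-hand side of \eqref{Yzeroinequality} is $\|\del_s\tilde\xi\|^p+\|P\tilde\xi\|^p$. Since $P\tilde\xi=(G\e v+S^*\e y,\,G^*x,\,Sx)$, we have
$$\|G^*x\|^p+\|Sx\|^p+\e^p\|Gv\|^p+\e^p\|S^*y\|^p\lesssim\|P\tilde\xi\|^p,$$
where the first bound uses Lemma \ref{fibreellipticestimate} applied to $G v+S^*y$ to split the two terms. The remaining terms $\e^p\|N_Yx\|^p$, $\e^{2p}\|L_Yv\|^p$, $\e^{2p}\|L_Y^*y\|^p$ and $\e^{2p}\|M_Yy\|^p$ consist of a $\del_t=\e^{-1}\del_s$ derivative plus a zeroth-order term in $Y$. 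They are bounded by $\|\del_s\tilde\xi\|^p$ plus $\e^p\|\tilde\xi\|^p_{L^p}$, using the $L^\infty$ bound on $Y$.

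\textbf{Step 4: absorb the errors.} All leftover terms have the form $\e^p\|\tilde\xi\|_{L^p}^p$. They come from the perturbation $\e R$, the bump-function commutators, and the zeroth-order pieces in Step 3. Use Lemma \ref{fibreellipticestimate}:
$$\|\tilde\xi\|_{L^p}\lesssim\|P\tilde\xi\|_{L^p}+\|\pi_X x\|_{L^p}.$$
For $\e<\e_0$ the $\e^p\|P\tilde\xi\|^p$ part is absorbed into the left-hand side, leaving exactly $c\e^p\int_0^1\|\pi_X x\|^p\,dt$. The remaining right-hand side is $\int\|(D+\e R)\tilde\xi\|^p\,ds$, which is $c\e^p\|\D_\e\xi\|_{0,p,\e}^p$ after undoing the scaling. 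The powers of $\e$ should be checked by matching row weights.

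\textbf{Main obstacle.} The hardest part is the bookkeeping that shows every off-diagonal zeroth-order term really carries at least one factor of $\e$ relative to $D$ in these weights. The potentially dangerous terms are $-i\Im\langle\Psi,\cdot\rangle$ and $(\cdot)\Psi$ in rows $1$ and $2$ acting on $\e y$ and $\e v$, and $(\cdot)\Psi$ in the last row acting on $x$. If one of them appears at order $\e^0$, the plan is to use the identities of Appendix \ref{appendix:a} (Lemma \ref{operatoridentities}), such as $G^*N=-L^*S$, to show that its harmonic projection vanishes or that it factors through $G^*x$ or $Sx$, which are already controlled on the left-hand side.
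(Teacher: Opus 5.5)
Your proposal is correct and follows essentially the same route as the paper. The paper also rescales $t=\e s$ with $(x,\e v,\e y)$ and $Y'=\e Y$, applies Proposition \ref{Yzerocase} to the $Y'=0$ operator $D_{\e s}$, and treats the $Y'$-dependent zeroth-order terms as $O(\e)|\xi'|$ errors, converted via Lemma \ref{fibreellipticestimate} into $\e^p(\|P\xi'\|^p+\|\pi_X x'\|^p)$ and then absorbed. Your partition-of-unity localisation and the splitting of $\|Gv'+S^*y'\|$ are details the paper leaves implicit. The obstacle you anticipate does not arise: every $\Psi$-entry of $\D_\e$ lies in $N_Y$, $L_Y$, $L_Y^*$ or $M_Y$, while $G$, $S$ and their adjoints involve $\Phi$ but not $\Psi$, so each such entry already carries a factor $\e$ and Lemma \ref{operatoridentities} is not needed.
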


\begin{proof}
    For $s\in[0,1/\e]$, define the following rescaled quantities
    \begin{align*}
        x'(s) &= x(\e s), & X'(s) &= X(\e s),\\
        v'(s) &= \e v(\e s), & V'(s) &= \e V(\e s),\\
        y'(s) &= \e y(\e s), & Y'(s) &= \e Y(\e s),\\
        \xi' &= (x',v',y'), & \Xi' &= (X',V',Y'),
    \end{align*}
    $$\D'_{\e}(\Xi') = \begin{pmatrix}
        N_{Y'} & G_{X'} & S_{X'}^*\\
        G_{X'}^* & 0 & L_{Y'}^*\\
        S_{X'} & L_{Y'} & M_{Y'}
        \end{pmatrix}.$$

    These objects can be viewed as living on a stretched mapping torus of length $1/\e$, and satisfy the uniform bounds \eqref{uniformbound1} and \eqref{uniformbound2} with $C$ independent of $\e$. By substituting $t=\e s$, \eqref{keyestimate} is equivalent to
    \begin{align}
        \nonumber\int_{0}^{1/\e}&\Big(\|G^*_{X'}x'\|_{L^p(\Sigma_{\e s})}^p + \|S_{X'}x'\|_{L^p(\Sigma_{\e s})}^p + \|N_{Y'}x'\|_{L^p(\Sigma_{\e s})}^p + \|G_{X'}v'\|_{L^p(\Sigma_{\e s})}^p + \|L_{Y'}v'\|_{L^p(\Sigma_{\e s})}^p\\
        \label{rescaledinequality}&\hspace{5mm}+ \|S_{X'}^*y'\|_{L^p(\Sigma_{\e s})}^p + \|L_{Y'}^*y'\|_{L^p(\Sigma_{\e s})}^p + \|M_{Y'}y'\|_{L^p(\Sigma_{\e s})}^p\Big)ds\\
        &\leq c\int_{0}^{1/\e}\Big(\|\D'_\e\xi'\|_{L^p(\Sigma_{\e s})}^p + \e^p\|\pi_{X'}(x')\|_{L^p(\Sigma_{\e s})}^p\Big)ds.
    \end{align}
    Lemma \ref{Yzerocase} proves this for $Y=Y'=0$, since $\D'_{\e} = D_{\e s}$ in that case. Then we just need to control the error terms involving $N$, $L$, $L^*$, and $M$. For example, it follows from the definition of $N$ that we have a pointwise inequality
    $$|N_{Y'}x' - N_{0}x'| \leq c\e|x'|.$$
    Integrating and using the elliptic estimate \eqref{fibreellipticestimate}, we obtain
    \begin{align*}
        \int_{0}^{1/\e}&\|N_{Y'}x'-N_0x'\|_{L^p(\Sigma_{\e s})}^pds\\
        &\leq c\e^p\int_{0}^{1/\e}\Big(\|G_{X'}^*\tilde{x}\|_{L^p(\Sigma_{\e s})}^p + \|S_{X'}x'\|_{L^p(\Sigma_{\e s})}^p + \|\pi_{X'}(x')\|_{L^p(\Sigma_{\e s})}^p\Big)ds.
    \end{align*}
    Similar error estimates hold for $L$, $L^*$, and $M$. After bounding the left hand side of \eqref{rescaledinequality} by Lemma \ref{Yzerocase} and the error estimates, we can choose $\e>0$ small enough to absorb the the terms involving $G$ and $S$ into the left hand side. This proves the proposition.
\end{proof}

\subsection{Weighted elliptic estimates}

Having obtained our preliminary estimates, we can now prove the weighted elliptic estimates for $\D_\e$. Throughout, fix $p\geq 2$, $(\mathfrak{p},\eta)\in\mathcal{U}_{\varphi}^{reg}$, and let $\Xi=(X,0,Y)\in\A_0(\mathfrak{s}_{d,\tilde{f}},\mathfrak{p},\eta)$. 

\begin{prop}\label{modifiedellipticestimate}
    There exist constants $\e_0>0$ and $c>0$ such that
    \begin{equation}\label{elliptic1}
        \|\xi\|_{1,p,\e} \leq c\big(\e\|\D_\e\xi\|_{0,p,\e} + \|\pi_X\xi\|_{L^p}\big),
    \end{equation}
    \begin{equation}\label{elliptic2}
        \|\xi - \pi_X\xi\|_{1,p,\e} \leq c\e\big(\|\D_\e\xi\|_{0,p,\e} + \|\pi_X\xi\|_{L^p}\big),
    \end{equation}
    for all $\xi\in W^{1,p}_{f}$ and $0<\e\leq \e_0$. The constant $c$ can be chosen independently of $\e$.
\end{prop}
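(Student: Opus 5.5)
The plan is to derive both estimates from Proposition \ref{technicallemma}, after splitting $\xi=(x,v,y)$ as $\xi=\pi_X\xi+(\xi-\pi_X\xi)$. Here $\pi_X\xi=(x_0,0,0)$ with $x_0=\pi_X(x)$, and $\xi-\pi_X\xi=(x_1,v,y)$ with $x_1=x-x_0$. Throughout I abbreviate $\int_0^1\|\cdot\|^p_{L^p(\Sigma_t)}dt$ as $\|\cdot\|^p_{L^p}$.

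\textbf{First estimate \eqref{elliptic1}.} The norm $\|\xi\|_{1,p,\e}^p$ is $\int_0^1\|x\|^p_{L^p(\Sigma_t)}dt$ plus exactly the left-hand side of \eqref{keyestimate}. Proposition \ref{technicallemma} bounds that left-hand side by $c\e^p\|\D_\e\xi\|_{0,p,\e}^p+c\e^p\|\pi_X x\|_{L^p}^p$. For the remaining term I write $x=x_0+x_1$. Since $G_X^*x_0=0$ and $S_Xx_0=0$, the second inequality of Lemma \ref{fibreellipticestimate}, applied fibrewise to $x_1$ (whose projection vanishes), gives
$$\|x_1\|_{L^p(\Sigma_t)}^p\le c\big(\|G_X^*x\|_{L^p(\Sigma_t)}^p+\|S_Xx\|_{L^p(\Sigma_t)}^p\big).$$
This is again controlled by \eqref{keyestimate}. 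The uniformity of $c$ in $t$ comes from compactness of $\Sigma_f$ and the smoothness of $\Xi$, which give the bound \eqref{fibreuniformbound} uniformly in $t$. Adding $\|x_0\|_{L^p}^p$ and taking $p$-th roots yields \eqref{elliptic1}.

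\textbf{Second estimate \eqref{elliptic2}.} Most terms of $\|(x_1,v,y)\|_{1,p,\e}^p$ are already handled, because $G_X^*x_1=G_X^*x$, $S_Xx_1=S_Xx$, and the $v$- and $y$-terms are unchanged. These are bounded by $c\e^p(\|\D_\e\xi\|^p_{0,p,\e}+\|x_0\|^p_{L^p})$ by Proposition \ref{technicallemma}, and $\|x_1\|_{L^p}$ is bounded by the same quantity as above. The only new term is $\e^p\|N_Yx_1\|^p_{L^p}$. Writing $N_Yx_1=N_Yx-N_Yx_0$, the piece $\e\|N_Yx\|_{L^p}$ is fine by \eqref{keyestimate}. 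So it remains to show
$$\|N_Yx_0\|_{L^p}\le c\big(\|\D_\e\xi\|_{0,p,\e}+\|x_0\|_{L^p}+\|x_1\|_{L^p}\big).$$
Once this holds, multiplying by $\e$ gives the required $c\e$ factor, after using the bound on $\|x_1\|_{L^p}$ already obtained.

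\textbf{Main obstacle: bounding $N_Yx_0$.} This is where I expect the real work to be, because $N_Yx_0$ contains $\partial_tx_0$ and is not a priori small. I split $N_Yx_0=\pi_XN_Yx_0+(1-\pi_X)N_Yx_0$ and treat the two parts separately.

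For the non-harmonic part, I differentiate the identities $G_X^*x_0=0$ and $S_Xx_0=0$ in $t$, exactly as in the proof of Lemma \ref{step4} (the identities there for $G^*\tilde x_0$ and $S\tilde x_0$). Together with the commutation identities of Lemma \ref{operatoridentities}, this shows that $G_X^*N_Yx_0$ and $S_XN_Yx_0$ are zeroth order in $x_0$. Lemma \ref{fibreellipticestimate} then gives $\|(1-\pi_X)N_Yx_0\|_{L^p}\le c\|x_0\|_{L^p}$.

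For the harmonic part, the first component of $\D_\e\xi$ is $N_Yx+G_Xv+S_X^*y$. Since $\pi_X$ annihilates the images of $G_X$ and $S_X^*$, we get
$$\pi_XN_Yx=\pi_X(\D_\e\xi)_1,$$
which is bounded by $c\|\D_\e\xi\|_{0,p,\e}$ using \eqref{projectionLp}. Finally, $\pi_XN_Yx_1$ should be zeroth order in $x_1$: the only derivative in $N_Y$ is $I\partial_t$, and $\pi_XI\partial_tx_1=-I(\partial_t\pi_X)x_1$ because $\pi_X x_1=0$ and $\pi_X$ commutes with $I$. Hence $\|\pi_XN_Yx_1\|_{L^p}\le c\|x_1\|_{L^p}$. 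Combining the pieces through $\pi_XN_Yx_0=\pi_XN_Yx-\pi_XN_Yx_1$ gives the desired bound on $N_Yx_0$.

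The delicate points are verifying these commutator cancellations, which should follow from the identities in Appendix \ref{appendix:a}, and checking that all constants are independent of $\e$.
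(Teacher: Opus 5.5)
Your argument is correct. For \eqref{elliptic1} it is exactly the paper's proof. For \eqref{elliptic2} you handle the one nontrivial term, $\e\|N_Y(x-x_0)\|_{L^p}$, by a different decomposition.

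The paper estimates $N_Y(x-x_0)$ directly. It writes $x-x_0=G_Xv_0+S_X^*y_0$, with a Calder\'on--Zygmund bound on the pieces, and uses \eqref{identity1} and \eqref{identity2} to commute $N_Y$ past $G_X$ and $S_X^*$. It then controls the resulting $t$-derivative terms $G_XL_Y^*y_0$ and $S_X^*(L_Yv_0+M_Yy_0)$ by applying $G_X^*$ and $S_X$. This gives $\|N_Y(x-x_0)\|_{L^p}\le c(\|N_Yx\|_{L^p}+\|x\|_{L^p})$.

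You instead bound the harmonic piece $N_Yx_0$, which is equivalent by the triangle inequality.
\begin{itemize}
\item Your non-harmonic half is the same ingredient that produces the paper's $R_\Xi^*x_0$ term.
\item Your harmonic half replaces the Hodge-decomposition machinery with the identity $\pi_X\partial_tx_1=-(\partial_t\pi_X)x_1$. This uses only that $\pi_X$ is a smooth family of finite-rank, $I$-linear projections.
\end{itemize}
In effect this reproves the content of Lemma \ref{D0vsDepsilon}, namely $\|\pi_XN_Yx_1\|_{L^p}\le c\|x_1\|_{L^p}$, without \eqref{identity2}. So your route is more elementary and leans much less on the appendix cancellations. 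The paper's route, in exchange, also yields control of combinations of the $t$-derivatives of the Hodge components $v_0,y_0$.

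Two small points.
\begin{itemize}
\item The step $\pi_XN_Yx=\pi_X(\D_\e\xi)_1$ is harmless but unnecessary. By \eqref{projectionLp}, $\|\pi_XN_Yx\|_{L^p}\le c\|N_Yx\|_{L^p}$, and \eqref{keyestimate} already controls $\|N_Yx\|_{L^p}$.
\item $G_X^*N_Yx_0$ and $S_XN_Yx_0$ are not zeroth order in $x_0$; they are first order in the fibre directions. What matters is that they contain no $t$-derivatives. So the bound $\|(1-\pi_X)N_Yx_0\|_{L^p}\le c\|x_0\|_{L^p}$ relies on the harmonicity of $x_0$, through $\|x_0\|_{W^{1,p}(\Sigma_t)}\le c\|x_0\|_{L^p(\Sigma_t)}$ from Lemma \ref{fibreellipticestimate}, just as in \eqref{x0inequality}.
\end{itemize}
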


\begin{proof}
    The inequality \eqref{elliptic1} follows directly from Proposition \ref{technicallemma} and Lemma \ref{fibreellipticestimate}. In what follows, we omit subscripts from the operators $N,G,S,L,M,$ and $\pi$. To prove inequality \eqref{elliptic2}, let $\xi=(x,v,y)$ and let $u$ be a solution to the equation
    $x-x_0=\Delta u,$
    where $x_0=\pi(x)$ and $\Delta = GG^* + S^*S$. Let $v_0=G^*u$ and $y_0=Su$, which are uniquely determined by $x$ and are independent of the choice of $u$. Then by the Calderon--Zygmund inequality, there is a constant $c>0$ such that
    \begin{align}
        \|Gv_0\|_{L^p(\Sigma_t)} + \|S^*y_0\|_{L^p(\Sigma_t)} &= \|GG^*u\|_{L^p(\Sigma_t)} + \|S^*Su\|_{L^p(\Sigma_t)}\nonumber \\
        &\leq c\|\Delta u\|_{L^p(\Sigma_t)}\nonumber \\
        &= c\|x-x_0\|_{L^p(\Sigma_t)}.\label{czinequality1}
    \end{align}
    A priori, the constant depends on $X(t)$ and $\mathfrak{p}(t)$, but as described in the previous section, the dependence on $t$ can be removed using the bounds \eqref{uniformbound1}, \eqref{uniformbound2}, and the Arzela-Ascoli theorem. The following identities are obtained from Lemma \ref{operatoridentities}.
    \begin{equation}\label{nidentity}
        N(x-x_0) = S^*Lv_0 - GL^*y_0 - S^*My_0 + Ry_0
    \end{equation}
    \begin{align*}
        G^*GL^*y_0 &= G^*Ry_0 - G^*Nx,\\
        SS^*(Lv_0 + My_0) &= SRy_0 - SN x + R^*x_0.
    \end{align*}
    Using Lemma \ref{fibreellipticestimate} and the explicit form of $R$ given in equation \eqref{identity2}, we have
    \begin{align*}
        \|GL^*y_0\|_{L^p(\Sigma_t)} + \|S^*(Lv_0 + My_0)\|_{L^p(\Sigma_t)} &\leq c\big(\|Nx\|_{L^p(\Sigma_t)} + \|Ry_0\|_{L^p(\Sigma_t)} + \|R^*x_0\|_{L^p(\Sigma_t)} \big)\\
        &\leq c\big(\|Nx\|_{L^p(\Sigma_t)} + \|y_0\|_{W^{1,p}(\Sigma_t)} + \|x_0\|_{W^{1,p}(\Sigma_t)}\big)\\
        &\leq c\big(\|Nx\|_{L^p(\Sigma_t)} + \|x-x_0\|_{L^p(\Sigma_t)} + \|x_0\|_{L^p(\Sigma_t)}\big)\\
        &\leq c\big(\|Nx\|_{L^p(\Sigma_t)} + \|x\|_{L^p(\Sigma_t)}\big).
    \end{align*}
    Now use \eqref{nidentity} to get the bound
    $$\|N(x-x_0)\|_{L^p(\Sigma_t)}\leq c\big(\|Nx\|_{L^p(\Sigma_t)} + \|x\|_{L^p(\Sigma_t)}\big).$$
    Then \eqref{elliptic2} follows from Proposition \ref{technicallemma} and Lemma \ref{fibreellipticestimate}.
\end{proof}

\begin{lemma}\label{D0vsDepsilon}
    There is a constant $c>0$ such that for all $\e>0$ and $\xi\in W^{1,p}_{f}$, we have
    $$\|\pi_X\D_\e\xi - \D_0\pi_X\xi\|_{L^p}\leq c\|\xi-\pi_X\xi\|_{0,p,\e}.$$
\end{lemma}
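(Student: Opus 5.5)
The plan is to compute both sides explicitly using the block decomposition \eqref{blocklinearisation}. The difference should then collapse to a single term $\pi_X N_Y x_1$, where $x_1$ is the non-harmonic part of $x$, and this term can be bounded by a commutator argument like the one in the proof of Lemma \ref{step4}.

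\textbf{Reduction.} Write $\xi=(x,v,y)$, $x_0=\pi_X x$, $x_1=x-x_0$. Here $\pi_X\xi=(x_0,0,0)$ and $\xi-\pi_X\xi=(x_1,v,y)$, and $\pi_X$ acts on the first block component. By \eqref{blocklinearisation}, the first component of $\D_\e\xi$ is
$$N_Yx+G_Xv+S_X^*y.$$
For each $t$, $H^1_{X(t)}=\ker(G_X^*\oplus S_X)$ is $L^2(\Sigma_t)$-orthogonal to $\im G_X+\im S_X^*$. Hence $\pi_X$ kills $G_Xv+S_X^*y$ pointwise in $t$, and so
$$\pi_X\D_\e\xi=\pi_XN_Yx_0+\pi_XN_Yx_1.$$
Since $x_0\in\Gamma(\mathcal{H}_X)$, we have $\D_0\pi_X\xi=\D_0(X)x_0=\pi_XN_Yx_0$ by \eqref{adiabaticlinearisation}. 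The difference is therefore exactly $\pi_XN_Yx_1$, and there is no $\e$-dependence at all.

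\textbf{Zeroth-order part of $N_Y$.} The entries of $N_Y$ other than $\star_t\del_t$ and $i\nabla_t$ are zeroth order, with coefficients built from $\Psi$ and $b$. These are smooth and bounded uniformly in $t\in[0,1]$. Combining this with \eqref{projectionLp}, their contribution is bounded by $c\|x_1\|_{L^p}$.

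\textbf{Derivative part of $N_Y$.} For the $\del_t$ part, I would use the Hodge decomposition $x_1=G_Xv_1+S_X^*y_1$, as in the proof of Lemma \ref{step4}. For each $t$, Lemma \ref{fibreellipticestimate} gives
$$\|v_1\|_{W^{1,p}(\Sigma_t)}+\|y_1\|_{W^{1,p}(\Sigma_t)}\le c\|x_1\|_{L^p(\Sigma_t)}.$$
Next, apply the $t$-derivative part $I\del_t$ of $N_Y$, where $I$ is $\star_t$ on 1-forms and $i$ on sections. Expand
$$\del_t(G_Xv_1+S_X^*y_1)=G_X\dot v_1+S_X^*\dot y_1+(\del_tG_X)v_1+(\del_tS_X^*)y_1.$$
The key step is to show that $I$ intertwines $\im G_X+\im S_X^*$ with itself up to a sign convention, i.e. the identity $IP+PI=0$ from Lemma \ref{step1}. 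Then $\pi_X I(G_X\dot v_1+S_X^*\dot y_1)=0$, up to the zeroth-order correction from $\dot\star_t$, which is controlled by \eqref{uniformbound1}. The same cancellation should also follow from the identities in Lemma \ref{operatoridentities}. The surviving terms $(\del_tG_X)v_1$ and $(\del_tS_X^*)y_1$ are first-order operators in the $\Sigma$ directions with coefficients involving $\dot A,\dot\Phi,\dot J,\dot B$, exactly as in the formula for $\pi_{\e s}(\tilde x_1)$ in Lemma \ref{step4}. They are therefore bounded in $L^p(\Sigma_t)$ by $c(\|v_1\|_{W^{1,p}}+\|y_1\|_{W^{1,p}})\le c\|x_1\|_{L^p(\Sigma_t)}$.

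\textbf{Uniformity and conclusion.} The constants are uniform in $t$ by the argument after Lemma \ref{fibreellipticestimate}, since $[0,1]$ is compact and all data are smooth and periodic. Integrating over $t\in[0,1]$ gives
$$\|\pi_X\D_\e\xi-\D_0\pi_X\xi\|_{L^p}\le c\|x_1\|_{L^p}\le c\|\xi-\pi_X\xi\|_{0,p,\e}.$$
This is in fact stronger than claimed, since $v$ and $y$ do not appear.

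The main obstacle is the commutation step: verifying that $\pi_X I\,G_X=0$ and $\pi_X I\,S_X^*=0$, i.e. that $\im G_X+\im S_X^*$ is $I$-invariant, up to terms involving $\dot\star_t$. I would check this directly from $IP+PI=0$, which says $I$ maps $\ker P$ to itself. Together with the self-adjointness of $I$ with respect to the $L^2$ pairing on $\Sigma_t$ (modulo the sign in $\star_t^*$), this gives invariance of the orthogonal complement.
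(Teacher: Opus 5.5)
Your proof is correct, and your reduction of the difference to $\pi_XN_Yx_1$ with $x_1=x-\pi_Xx$ is exactly the paper's first step. The two arguments differ only in how this term is bounded.

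The paper writes $x_1=G_Xv_0+S_X^*y_0$ and applies \eqref{identity1} and \eqref{identity2}, namely $N_YG_X=-S_X^*L_Y$ and $N_YS_X^*=R_\Xi-G_XL_Y^*-S_X^*M_Y$. Everything except $R_\Xi y_0$ then lies in $\im G_X+\im S_X^*$ and is killed by $\pi_X$. Since $R_\Xi$ only differentiates along $\Sigma$, this gives $\|\pi_XR_\Xi y_0\|_{L^p}\le c\|y_0\|_{W^{1,p}}\le c\|x_1\|_{L^p}$.

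You instead split $N_Y$ into $I\partial_t$ plus a zeroth-order part, and estimate the latter directly on $x_1$. You remove the principal part of $I\partial_tx_1$ using that $\pi_X$ commutes with $I$. Your derivation of this from $IP+PI=0$ works, although $I$ is skew-adjoint rather than self-adjoint; the paper uses the same fact in Lemma \ref{step2}. What remains are the commutators $[\partial_t,G_X]v_1$ and $[\partial_t,S_X^*]y_1$, as in Lemma \ref{step4}.

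Your route does not use the equation $-i\nabla_t\Phi+\delbar_{A,\mathfrak{p}}^*\Psi=0$ on which Lemma \ref{operatoridentities} rests. It needs only smoothness of $X,Y$ and the fibrewise estimates, so it is more elementary and more general. The paper's route buys more precision: it shows that the $\im G_X$-part of $x_1$ contributes nothing at all after projection, it identifies the remainder exactly as $\pi_XR_\Xi y_0$, and it reuses identities needed anyway for Proposition \ref{modifiedellipticestimate}.

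The one delicate point in your version is that $\psi_1(t)\in\Omega^{0,1}_{J_t}$. So $\partial_ty_1$ must be read as $\nabla_ty_1$ (Lemma \ref{diracoperator}), at the cost of a zeroth-order term in $\dot J$; this is presumably the correction you had in mind. You can avoid the issue entirely, since $\pi_Xx_1\equiv 0$ gives $\pi_XI\partial_tx_1=-I(\partial_t\pi_X)x_1$. Here $\partial_t\pi_X$ is uniformly bounded on $L^p$ because $H^1_{X(t)}$ is finite-dimensional and varies smoothly in $t$, so the decomposition of $x_1$ is needed only for nothing at all in the derivative part.

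Finally, the paper's argument also bounds the difference by $c\|x-\pi_Xx\|_{L^p}$ alone, so the strengthening you point out is already implicit there.
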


\begin{proof}
    Decompose $x = x_0 + Gv_0 + S^*y_0$, where $x_0=\pi(x)$. Then we have
    \begin{align*}
        \|\pi\D_\e\xi - \D_0\pi\xi\|_{L^p} &= \|\pi N(Gv_0 + S^*y_0)\|_{L^p}\\
        &= \|\pi Ry_0\|_{L^p}\\
        &\leq c\|y_0\|_{W^{1,p}}\\
        &\leq c\|S^*y_0\|_{L^p}\\
        &\leq c\|\xi - \pi\xi\|_{0,p,\e}.
    \end{align*}
    The first equality follows from the definitions of $\D_\e$ and $\D_0$. The second line follows from Lemma \ref{operatoridentities}. The third line holds because $Ry_0$ only involves derivatives of $y_0$ in the $\Sigma$ direction, and the final two lines come from Lemma \ref{fibreellipticestimate} and inequality \eqref{czinequality1}.
\end{proof}

\begin{lemma}\label{modifiedellipticestimate2}
    Suppose $\Xi=(X,0,Y)$ is a non-degenerate solution to the adiabatic limit equations, so that $\D_0=\D_0(X)$ is injective. Then there exist constants $c>0$ and $\e_0>0$ such that
    $$\|\xi\|_{1,p,\e}\leq c\big(\e\|\D_\e\xi\|_{0,p,\e} + \|\pi_X\D_\e\xi\|_{L^p}\big),$$
    $$\|\xi - \pi_X\xi\|_{1,p,\e}\leq c\e\|\D_\e\xi\|_{0,p,\e},$$
    for all $\xi\in W^{1,p}_{f}$ and all $0<\e\leq\e_0$.
\end{lemma}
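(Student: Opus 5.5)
The plan is to combine Proposition \ref{modifiedellipticestimate} with Lemma \ref{D0vsDepsilon}, using injectivity of $\D_0(X)$ to control the harmonic part $\pi_X\xi$. The operator $\D_0(X)$ acts on $\Gamma(\mathcal{H}_X)$ over the circle. It is self-adjoint, elliptic and first order, essentially $I\partial_t$ plus a zeroth order term. Injectivity therefore gives invertibility, and since $\mathcal{H}_X$ is a finite-rank bundle we get an estimate
$$\|\pi_X\xi\|_{L^p}\leq c\|\D_0\pi_X\xi\|_{L^p}.$$
This holds for all $p\ge 2$: on a finite-rank bundle over $S^1$ the $W^{1,p}$ norm of a section of $\mathcal{H}_X$ is controlled by the usual Sobolev estimate. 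I would phrase it as $\|\zeta\|_{W^{1,p}}\le c(\|\D_0\zeta\|_{L^p}+\|\zeta\|_{L^p})$, followed by a standard compactness/contradiction argument that uses the trivial kernel.

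First key step: estimate $\D_0\pi_X\xi$ by Lemma \ref{D0vsDepsilon},
$$\|\D_0\pi_X\xi\|_{L^p}\le \|\pi_X\D_\e\xi\|_{L^p}+c\|\xi-\pi_X\xi\|_{0,p,\e}.$$
Next, bound $\|\xi-\pi_X\xi\|_{0,p,\e}$ by $\|\xi-\pi_X\xi\|_{1,p,\e}$, which is immediate from the definitions since the $1,p,\e$ norm contains the $0,p,\e$ norm. Then apply \eqref{elliptic2}:
$$\|\xi-\pi_X\xi\|_{1,p,\e}\le c\e\big(\|\D_\e\xi\|_{0,p,\e}+\|\pi_X\xi\|_{L^p}\big).$$
Chaining these gives
$$\|\pi_X\xi\|_{L^p}\le c\|\pi_X\D_\e\xi\|_{L^p}+c\e\|\D_\e\xi\|_{0,p,\e}+c\e\|\pi_X\xi\|_{L^p}.$$
For $\e_0$ small the last term is absorbed into the left side, leaving
$$\|\pi_X\xi\|_{L^p}\le c\big(\|\pi_X\D_\e\xi\|_{L^p}+\e\|\D_\e\xi\|_{0,p,\e}\big).$$
Substituting this into \eqref{elliptic1} gives the first inequality of the lemma. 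Substituting it into \eqref{elliptic2} gives
$$\|\xi-\pi_X\xi\|_{1,p,\e}\le c\e\big(\|\D_\e\xi\|_{0,p,\e}+\|\pi_X\D_\e\xi\|_{L^p}\big).$$

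Second key step: remove the $\|\pi_X\D_\e\xi\|_{L^p}$ term from the second inequality. I would use \eqref{projectionLp}, integrated over $t$, to bound
$$\|\pi_X\D_\e\xi\|_{L^p}\le c\|(\D_\e\xi)_x\|_{L^p}\le c\|\D_\e\xi\|_{0,p,\e},$$
where $(\D_\e\xi)_x$ is the $x$-component of $\D_\e\xi$. This works because the $x$-component enters the $0,p,\e$ norm with weight $1$, and $\pi_X$ only sees the $x$-component. The same bound also shows the first inequality is consistent, since there the weighted term appears alongside.

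The main obstacle is the invertibility estimate for $\D_0(X)$ in $L^p$ with the correct norms. The fibrewise projection $\pi_X$ and the trivialisation $\mathcal{H}_X\simeq s^*\mathcal{H}$ depend on $t$. One must therefore check that the norms $\|\cdot\|_{L^p}$ on $\Gamma(\mathcal{H}_X)$, taken as $L^p$ on $\Sigma_f$, are equivalent to norms of a finite-rank bundle over $S^1$. This follows from smoothness of $X$ and the fact that $H^1_{X(t)}$ is a smooth family of finite-dimensional spaces of smooth sections, on which all $L^p(\Sigma_t)$ norms are uniformly equivalent. Once that is in place, ODE theory on $S^1$ plus injectivity gives the bound.
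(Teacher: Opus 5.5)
Your proposal is correct and follows essentially the same route as the paper. Both arguments use the inverse of $\D_0(X)$ on $\Gamma(\mathcal{H}_X)$, then Lemma \ref{D0vsDepsilon} together with \eqref{elliptic2} and an absorption to bound $\|\pi_X\xi\|_{L^p}$, and both finish the second inequality with \eqref{projectionLp}. You obtain the first inequality by substituting directly into \eqref{elliptic1}, while the paper uses the triangle inequality with the $W^{1,p}$ bound on $\pi_X\xi$ and a second absorption, so your version is slightly more economical.

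One small correction. The bound $\|\cdot\|_{0,p,\e}\le c\|\cdot\|_{1,p,\e}$ is not literally immediate from the definitions, because the $1,p,\e$-norm contains only $\e\|G_Xv\|_{L^p}$ and $\e\|S_X^*y\|_{L^p}$, not $\e\|v\|_{L^p}$ and $\e\|y\|_{L^p}$. It follows instead from Lemma \ref{fibreellipticestimate}, via injectivity of $G_X$ and $S_X^*$.
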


\begin{proof}
    Since $(\mathfrak{p},\eta)\in\mathcal{U}_\varphi^{reg}$, $\D_0:W^{1,p}(\mathcal{H}_X)\ra L^p(\mathcal{H}_X)$ is a surjective index zero Fredholm operator. Then it is also injective and satisfies a bound of the form
    $$\|x_0\|_{1,p}\leq c\|\D_0 x_0\|_p.$$ 
    Applying this to $\pi\xi$ and using Lemma \ref{D0vsDepsilon}, we have
    \begin{align*}
        \|\pi\xi\|_{W^{1,p}} &\leq c\|\D_0\pi\xi\|_{L^p}\\
        &\leq c(\|\pi\D_\e\xi\|_{L^p} + \|\xi-\pi\xi\|_{0,p,\e})\\
        &\leq c(\|\pi\D_\e\xi\|_{L^p} + \e\|\D_\e\xi\|_{0,p,\e} + \e\|\pi\xi\|_{L^p})
    \end{align*}
    Choosing $\e_0>0$ such that $c\e_0<1$, we can absorb the last term into the left hand side to get
    \begin{equation}\label{randominequality}
        \|\pi\xi\|_p\leq c(\|\pi\D_\e\xi\|_{L^p} + \e\|\D_\e\xi\|_{0,p,\e}).
    \end{equation}
    The $W^{1,p}$ norm on $\Gamma(\mathcal{H}_X)$ is equivalent to the norm $\|x\|_{L^p} + \|Nx\|_{L^p}$, so by Lemma \ref{modifiedellipticestimate} we have 
    \begin{align*}
        \|\xi\|_{1,p,\e} &\leq \|\xi-\pi\xi\|_{1,p,\e} + c\|\pi\xi\|_{W^{1,p}}\\
        &\leq c(\e\|\D_\e\xi\|_{0,p,\e} + \e\|\pi\xi\|_{L^p} + \|\pi\D_\e\xi\|_{L^p} + \e\|\D_\e\xi\|_{0,p,\e})\\
        &\leq c(\e\|\D_\e\xi\|_{0,p,\e} + \|\pi\D_\e\xi\|_{L^p} + \e\|\pi\xi\|_{L^p}).
    \end{align*}
    Once again, $\e\|\pi\xi\|_{L^p}$ can be absorbed into the left hand side, proving the first inequality. To prove the second inequality, apply Lemma \ref{modifiedellipticestimate} and inequality \eqref{randominequality} as above, followed by $\|\pi\D_\e\xi\|_{L^p}\leq c\|\D_\e\xi\|_{0,p,\e}$.
\end{proof}

The bound \eqref{projectionLp} and $\ind(\D_\e)=0$ imply the following corollary of Lemma \ref{modifiedellipticestimate2}.

\begin{cor}\label{uniforminvertibility}
    Suppose $(\mathfrak{p},\eta)\in\mathcal{U}_{\varphi}^{reg}$. Then every $\Xi\in\A_0(\mathfrak{s}_{d,\tilde{f}},\mathfrak{p},\eta)$ is non-degenerate and $\D_{\e}(\Xi)$ is uniformly invertible for sufficiently small $\e>0$. That is, there exist constants $c>0$ and $\e_0>0$ such that for all $\e\in(0,\e_0)$ we have
    $$\|\xi\|_{1,p,\e}\leq c\|\D_{\e}(\Xi)\xi\|_{0,p,\e}$$
    for all $\xi\in W^{1,p}_f$. The constant $c$ does not depend on $\Xi$ or $\e$. 
\end{cor}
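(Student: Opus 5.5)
The corollary follows by combining the two estimates of Lemma \ref{modifiedellipticestimate2} with the bound \eqref{projectionLp}, and then using that $\D_\e$ has index zero to upgrade injectivity to invertibility.

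\textbf{Non-degeneracy.} Since $(\mathfrak{p},\eta)\in\mathcal{U}_\varphi^{reg}$, Proposition \ref{transversality} says every fixed point in $\Fix_{\tilde{f}}(\Upsilon_{d,\mathfrak{p},\eta})$ is non-degenerate. Lemma \ref{fixedpointsande=0} and Proposition \ref{kernelidentification} then give $\ker\D_0(X)=0$ for every $\Xi=(X,0,Y)\in\A_0$. So the hypothesis of Lemma \ref{modifiedellipticestimate2} holds at every point of the adiabatic moduli space.

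\textbf{The estimate.} Fix such a $\Xi$. By the first inequality of Lemma \ref{modifiedellipticestimate2},
$$\|\xi\|_{1,p,\e}\leq c\big(\e\|\D_\e\xi\|_{0,p,\e}+\|\pi_X\D_\e\xi\|_{L^p}\big).$$
It remains to bound $\|\pi_X\D_\e\xi\|_{L^p}$ by $c\|\D_\e\xi\|_{0,p,\e}$ with no loss of powers of $\e$.

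\begin{itemize}
\item The projection $\pi_X$ only sees the $x$-component of $\D_\e\xi$, and that component carries weight $1$ in $\|\cdot\|_{0,p,\e}$.
\item Applying \eqref{projectionLp} fibrewise and integrating over $t\in[0,1]$ gives
$$\|\pi_X\D_\e\xi\|_{L^p}\leq c\|(\D_\e\xi)_x\|_{L^p}\leq c\|\D_\e\xi\|_{0,p,\e}.$$
\end{itemize}

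Combining the two displays gives $\|\xi\|_{1,p,\e}\leq c\|\D_\e(\Xi)\xi\|_{0,p,\e}$ for $\e<\e_0$. This is injectivity of $\D_\e(\Xi)$ with a uniform bound.

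\textbf{Invertibility.} $\D_\e(\Xi)$ is elliptic, self-adjoint and Fredholm of index zero as a map $W^{1,p}_f\to L^p$; for $p\ne 2$ index zero follows by elliptic regularity. Injectivity therefore implies surjectivity, and the bound above is exactly a uniform bound on the inverse, with the stated weighted norms.

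\textbf{Independence of $\Xi$ (main obstacle).} The constants from Lemma \ref{modifiedellipticestimate2} are a priori attached to the individual solution $\Xi$. To remove this dependence:
\begin{itemize}
\item Show that $\M_0$ is finite. It is in bijection with $\Fix_{\tilde{f}}(\Upsilon)$ by Lemma \ref{correspondences}. Non-degenerate fixed points are isolated, and $\N_\Sigma$ is compact since $\mathfrak{p}\in\mathcal{P}_\varphi^{reg}$, so there are finitely many.
\item Check that the norms and the estimate are gauge invariant. The weighted norms are built from $G_X,S_X,N_Y,L_Y,M_Y$, which transform equivariantly under $\G_f$, and $\D_\e$ is gauge equivariant.
\item Take $c$ to be the maximum, and $\e_0$ the minimum, over one representative of each of the finitely many gauge classes.
\end{itemize}
Alternatively, one can observe that the constants in Proposition \ref{technicallemma} depend only on the bounds \eqref{uniformbound1} and \eqref{uniformbound2}. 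These can be chosen uniformly for smooth gauge representatives of a compact set of solutions, with only the $\D_0$ inverse bound varying; finiteness handles that last point.
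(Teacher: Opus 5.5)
Your proposal is correct and follows the paper's own one-line argument. The paper combines the first inequality of Lemma \ref{modifiedellipticestimate2} with the fibrewise bound \eqref{projectionLp}, which controls $\|\pi_X\D_\e\xi\|_{L^p}$ by the weight-one $x$-component of $\|\D_\e\xi\|_{0,p,\e}$, and then uses $\ind(\D_\e)=0$ to upgrade injectivity to invertibility. Your explicit argument for the $\Xi$-independence, via finiteness of $\M_0$ and gauge equivariance of the weighted norms, fills in a point the paper leaves implicit here; it only invokes finiteness of $\M_0$ later, in the proof of Theorem \ref{ezerotoesmall}.
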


\subsection{Constructing the perturbation map}

We now use our estimates to perturb a solution of the adiabatic limit equations to a genuine multi-monopole for $\e>0$ sufficiently small. A sequence of perturbations is iteratively constructed via Newton's method. We also prove that the perturbation map is injective, from which Theorem \ref{maintheorem} follows. This argument requires a bound on the quadratic error term, which can only be obtained in terms of the weighted $\infty,\e$-norm. In order to work with this, we need the following lemma.

\begin{lemma}\label{Linfinityestimate}
    For every $p>3$ there exists a constant $c>0$ such that
    $$\|\xi\|_{\infty,\e}\leq c\e^{-1/p}\|\xi\|_{1,p,\e}$$
    for all $\xi\in W^{1,p}_{f}$ and $0<\e\leq 1$. The constant $c$ can be chosen independently of $\e$. 
\end{lemma}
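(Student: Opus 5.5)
The plan is to reduce to a Sobolev embedding on the stretched mapping torus, exactly as in the proof of Proposition \ref{technicallemma}. Introduce the rescaled variables $x'(s)=x(\e s)$, $v'(s)=\e v(\e s)$, $y'(s)=\e y(\e s)$ for $s\in[0,1/\e]$, so that $\xi'=(x',v',y')$ lives on a mapping torus of length $1/\e$ with fibre metric $g_{\e s}$. By construction $\|\xi\|_{\infty,\e}=\|\xi'\|_{L^\infty}$. Substituting $t=\e s$ in the definition of $\|\cdot\|_{1,p,\e}$ gives
$$\|\xi\|_{1,p,\e}^p=\e\int_0^{1/\e}\Big(\|x'\|^p+\|G^*x'\|^p+\|Sx'\|^p+\|Nx'\|^p+\|Gv'\|^p+\|Lv'\|^p+\|S^*y'\|^p+\|L^*y'\|^p+\|My'\|^p\Big)ds,$$
where the $\e$-powers in the weighted norm are absorbed exactly by $\del_t=\e^{-1}\del_s$ and the rescalings of $v,y$ (with $N,L,M$ now built from $Y'$). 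The factor $\e$ in front is the source of the $\e^{-1/p}$.

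Next I would show that the integrand on the right controls the full $W^{1,p}$ norm of $\xi'$ on each unit slab $[k,k+1]\times\Sigma$ (plus a lower order $L^p$ term). In the fibre direction, Lemma \ref{fibreellipticestimate} bounds $\|x'\|_{W^{1,p}(\Sigma)}$ by $\|G^*x'\|+\|Sx'\|+\|x'\|$ (using \eqref{projectionLp} for $\pi x'$), and $\|v'\|_{W^{1,p}(\Sigma)}+\|y'\|_{W^{1,p}(\Sigma)}$ by $\|Gv'+S^*y'\|\le\|Gv'\|+\|S^*y'\|$. In the $s$ direction, $\del_s$ appears in $N$ (acting on $x'$ via $\star\del_s$, $i\nabla_s$), in $L$ (as $-\del_s v'$), in $L^*$ and $M$ (as $\del_s$ and $\nabla_s$ on $y'$), up to zeroth order terms with coefficients bounded by \eqref{uniformbound1}, \eqref{uniformbound2} uniformly in $\e$ (the $Y'=\e Y$ terms are even $O(\e)$). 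These zeroth order terms are bounded by $|\xi'|$, which is in turn bounded by the fibre $W^{1,p}$ norms already controlled. Hence on each slab $\|\xi'\|^p_{W^{1,p}([k,k+1]\times\Sigma)}\le c\int_{k}^{k+1}(\cdots)ds$ with $c$ independent of $k$ and $\e$.

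Finally apply the Morrey/Sobolev embedding $W^{1,p}\hookrightarrow L^\infty$ in dimension three ($p>3$) on each slab $[k,k+1]\times\Sigma$ (or a slightly enlarged slab to avoid boundary issues), with a constant uniform in $k$ and $\e$ because the metrics $ds^2+g_{\e s}$ lie in a compact family by \eqref{uniformbound1}. Then
$$\|\xi'\|_{L^\infty}^p\le \sup_k c\|\xi'\|^p_{W^{1,p}(\text{slab}_k)}\le c\int_0^{1/\e}(\cdots)ds=c\e^{-1}\|\xi\|_{1,p,\e}^p,$$
giving the claim after taking $p$-th roots. The main obstacle I expect is the middle step: verifying carefully that the specific combination of operators in $\|\cdot\|_{1,p,\e}$ really bounds every first derivative of every component (particularly the $\del_s$ derivative of the $\Omega^0$ component $c$ of $y'$ and of $v'$, which only appear through $L$, $L^*$, $M$ mixed with $\Phi,\Psi$ terms), uniformly in $\e$; if this is awkward, I would instead combine the $N$, $L$, $L^*$, $M$ terms with the fibre estimates to recover $\|D_{\e s}\xi'\|_{L^p}$ and invoke the local elliptic estimate for $D$ on slabs as in \eqref{cylinderinequality}.
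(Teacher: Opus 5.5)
Your proposal is correct and follows the paper's proof. You rescale to the mapping torus of length $1/\e$, note that $\e^{-1/p}\|\xi\|_{1,p,\e}$ controls the standard $W^{1,p}$ norm of $\xi'$, and apply the Sobolev embedding slab by slab with $\e$-independent constants. The paper only asserts the norm comparison that you check in your middle step, and the obstacle you flag does not arise: $\partial_s v'$ and $\partial_s c'$ appear directly in $L_{Y'}v'$ and $L_{Y'}^*y'$, up to zeroth-order terms of size $O(\e)$.
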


\begin{proof}
    Let $\xi'$ be the rescaled variable introduced in the proof of Proposition \ref{technicallemma}. Then one can check that $\e^{-1/p}\|\xi\|_{1,p,\e}$ is equivalent to the standard $1,p$-norm on the rescaled mapping torus of length $1/\e$, which we denote by $\|\xi'\|$. For $p>3$ there is the Sobolev inequality $\|\xi'\|_{L^\infty}\leq c\|\xi'\|$. The constant $c$ is independent of $\e$ because this inequality is obtained by gluing together many local inequalities, each of which is independent of $\e$. Finally, we have 
    $$\|\xi\|_{\infty,\e}=\|\xi'\|_{L^\infty} \leq c\e^{-1/p}\|\xi\|_{1,p,\e}.$$
\end{proof}

\begin{defn}
    Let $\mathfrak{p}\in\mathcal{P}_\varphi$ and $\eta\in\mathcal{V}_f^+$. For $\e>0$ define the Seiberg--Witten map
    $$\SW_{\mathfrak{p},\eta,\e}:\mathcal{C}_f\ra W_f,$$
    $$\SW_{\mathfrak{p},\eta,\e}(A,\Phi,V,b,\Psi) = \begin{pmatrix}
        \star_t(\dot{A}-db-i\sigma) - dV - i\Im\langle\Psi,\Phi\rangle\\
        -i\nabla_t\Phi + \delbar_{A,\mathfrak{p}}^*\Psi - V\Phi\\
        0\\
        \e^{-2}(\star_tF_A - \frac{i}{2}|\Phi|^2 + i\tau) - \dot{V} + \frac{i}{2}|\Psi|^2\\
        i\nabla_t\Psi + \e^{-2}\delbar_{A,\mathfrak{p}}\Phi - V\Psi
    \end{pmatrix}.$$
\end{defn}

\begin{theorem}\label{ezerotoesmall}
    Suppose $(\mathfrak{p},\eta)\in\mathcal{U}_\varphi^{reg}$ and $p>3$. Then there is $\e_0>0$ such that for each $\e\in(0,\e_0)$ there is a smooth gauge-equivariant map
    $$\mathcal{T}_\e : \A_0^{1,p}(\mathfrak{s}_{d,\tilde{f}},\mathfrak{p},\eta)\ra \A_{\e}^{1,p}(\mathfrak{s}_{d,\tilde{f}},\mathfrak{p},\eta),$$
    such that $\Xi_\e = \mathcal{T}_\e(\Xi_0)$ satisfies the Coulomb gauge fixing condition
    \begin{equation}\label{gaugefixingcondition}
        \mathbb{G}_{\Xi_0}^{*_\e}(\Xi_\e - \Xi_0) = \e^{-2}G_{X_0}^{*}(X_{\e}-X_0) + L_{Y_0}^*(Y_{\e}-Y_0)=0
    \end{equation}
    for $\Xi_\e=(X_\e,0,Y_\e)$ and $\Xi_0=(X_0,0,Y_0)$. Moreover, there is a quadratic estimate
    \begin{equation}\label{quadraticconvergence}
        \|\Xi_\e - \Xi_0\|_{1,p,\e}\leq c\e^2,
    \end{equation}
    where the $1,p,\e$-norm is determined by $\Xi_0$ and the constant $c$ is independent of $\Xi_0$ and $\e$. 
\end{theorem}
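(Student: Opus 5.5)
We construct $\mathcal{T}_\e$ by Newton iteration in the weighted norms, using the uniform invertibility of Corollary \ref{uniforminvertibility} and the embedding of Lemma \ref{Linfinityestimate}. Fix $\Xi_0=(X_0,0,Y_0)\in\A_0^{1,p}$ and consider the augmented map
$$\mathcal{F}_\e(\xi) := \SW_{\mathfrak{p},\eta,\e}(\Xi_0+\xi) + \mathbb{G}_{\Xi_0}\,\mathbb{G}_{\Xi_0}^{*_\e}\xi,$$
or, more simply, the map $\xi\mapsto(\SW_{\mathfrak{p},\eta,\e}(\Xi_0+\xi),\mathbb{G}^{*_\e}_{\Xi_0}\xi)$ with the gauge component placed in the third ($V$) slot. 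By construction of \eqref{linearisation5x5}, the derivative at $\xi=0$ is exactly $\D_\e(\Xi_0)$. We expand
$$\mathcal{F}_\e(\xi)=\mathcal{F}_\e(0)+\D_\e(\Xi_0)\xi+Q_\e(\xi),$$
and seek a zero via the iteration $\xi_{k+1}=\xi_k-\D_\e(\Xi_0)^{-1}\mathcal{F}_\e(\xi_k)$, starting from $\xi_0=0$. A zero with $V=0$ is then automatic by Lemma \ref{hzero}, since the solution is irreducible because $\Phi$ stays close to $\Phi_0\neq 0$. The gauge condition \eqref{gaugefixingcondition} holds by construction.

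\textbf{Initial error.} Since $\Xi_0$ solves \eqref{eq:epsilonzeroSW1}, the $\e^{-2}$ terms of $\SW_{\mathfrak{p},\eta,\e}(\Xi_0)$ vanish. The first two components also vanish, and since $V=0$ only the $O(1)$ terms remain: $\frac{i}{2}|\Psi_0|^2$ in the fourth slot and $i\nabla_t\Psi_0$ in the fifth. These lie in $y$-slots, which the $0,p,\e$-norm weights by $\e$, so $\|\mathcal{F}_\e(0)\|_{0,p,\e}\le c\e$. Corollary \ref{uniforminvertibility} then gives $\|\xi_1\|_{1,p,\e}\le c\e$.

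\textbf{Quadratic estimate.} This is the delicate step. The quadratic terms come from the products $\langle\Psi,\Phi\rangle$, $|\Phi|^2$, $|\Psi|^2$, $a\wedge a$, $a^{0,1}\phi$, and similar, some carrying $\e^{-2}$. I would check slot by slot that, with the weights $x\sim1$, $y\sim\e$ built into $\|\cdot\|_{\infty,\e}$ and $\|\cdot\|_{0,p,\e}$,
$$\|Q_\e(\xi)-Q_\e(\xi')\|_{0,p,\e}\le c\,\e^{-1}\big(\|\xi\|_{\infty,\e}+\|\xi'\|_{\infty,\e}\big)\|\xi-\xi'\|_{0,p,\e}.$$
For example, the term $\e^{-2}|\phi|^2$ in a $y$-slot contributes $\e\cdot\e^{-2}|\phi|^2$, which is $\e^{-1}\|x\|_\infty\|x\|_p$. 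This is the pattern in Dostoglou--Salamon. Combining with Lemma \ref{Linfinityestimate} and with $\|\cdot\|_{0,p,\e}\le\|\cdot\|_{1,p,\e}$ gives
$$\|Q(\xi)-Q(\xi')\|_{0,p,\e}\le c\,\e^{-1-1/p}(\|\xi\|+\|\xi'\|)_{1,p,\e}\|\xi-\xi'\|_{1,p,\e}.$$

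\textbf{Contraction.} On the ball of radius $C\e$ in the $1,p,\e$-norm, the Lipschitz constant of $Q$ is $O(\e^{-1/p})$, which is not small. This is the main obstacle, and it forces an improvement of the initial error. Rather than $\Xi_0$, I would iterate around a corrected approximate solution $\Xi_0+\e^2\zeta$, where the $y$-component absorbs the $O(1)$ terms. The idea is to solve, fibrewise, $S_{X_0}^*$- and $G_{X_0}$-type equations for a correction $(v,y)$ killing $\frac{i}{2}|\Psi_0|^2$ and $i\nabla_t\Psi_0$ at order $\e^{2}$; this is possible since $G_X+S_X^*$ is injective with uniform estimates (Lemma \ref{fibreellipticestimate}). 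With that correction the error becomes $O(\e^2)$, so the ball has radius $c\e^2$ and the Lipschitz constant is $c\e^{1-1/p}\to0$ for $p>3$. The contraction mapping principle then gives a unique zero $\Xi_\e$ with $\|\Xi_\e-\Xi_0\|_{1,p,\e}\le c\e^2$, which is \eqref{quadraticconvergence}.

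\textbf{Uniformity and equivariance.} The constants are uniform in $\Xi_0$: $\M_0$ is finite by Proposition \ref{transversality}, and the bounds depend only on $C^1$ bounds of $\Xi_0$ over the compact family $\N_f$. Smoothness of $\mathcal{T}_\e$ follows from the implicit function theorem applied to the same fixed-point map. Gauge equivariance follows from the uniqueness of the zero within the ball, together with the equivariance of the gauge condition and of the norms under $\G_f$.
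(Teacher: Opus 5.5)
Your route is valid, but it differs from the paper's at exactly the point you flag as the main obstacle.

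The paper never builds a corrected approximate solution. It uses the sharper estimate of Lemma \ref{modifiedellipticestimate2}, $\|\xi\|_{1,p,\e}\le c(\e\|\D_\e\xi\|_{0,p,\e}+\|\pi_X\D_\e\xi\|_{L^p})$. It combines this with the observation that $\SW_\e(\Xi_0)$ has vanishing $x$-component, so $\pi_X\D_\e\xi_0=0$ for the first Newton step. Hence $\|\xi_0\|_{1,p,\e}\le c\e\|\SW_\e(\Xi_0)\|_{0,p,\e}\le c\e^2$ directly. After that, the cruder bound of Corollary \ref{uniforminvertibility} suffices, because each new residual is $O(\e^{1-1/p})$ times the previous step; this is a chord iteration with $\D_\e(\Xi_0)$ frozen. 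Your obstacle, a Lipschitz constant $O(\e^{-1/p})$ on a ball of radius $C\e$, only arises if you use Corollary \ref{uniforminvertibility} for the first step. Your $\e^2\zeta$ is essentially the non-harmonic leading part of the paper's $\xi_0$, extracted by hand.

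What your version gains is an explicit leading-order term and, from the contraction principle, uniqueness in a ball of radius $c\e^2$. You use that uniqueness to justify gauge equivariance, which the paper only asserts. What the paper's version gains is economy and needing only $W^{1,p}$ regularity of $\Xi_0$. Your residual $\e^2N_{Y_0}\zeta_x$ involves $\nabla_t^2\Psi_0$, so you must first pass to smooth gauge representatives. That is harmless, since $\M_0$ is finite and the construction is equivariant.

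You should state the correction precisely, because as written it sits in the wrong slot. The $O(1)$ residual lies in the $y$-rows, where the dominant operator is $\e^{-2}S_{X_0}$ acting on the $x$-component. A correction of size $\e^2$ in the $(v,y)$ slots only changes those rows by $\e^2(L_{Y_0}v+M_{Y_0}y)$ and cannot cancel it. The correct choice is $\zeta=(S_{X_0}^*w,0,0)$ with $S_{X_0}S_{X_0}^*w=-(\tfrac{i}{2}|\Psi_0|^2,\,i\nabla_t\Psi_0)$ solved fibrewise. This is solvable because $S_{X_0}$ is onto for $\mathfrak{p}\in\mathcal{P}^{reg}_\varphi$.

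With that choice, $G_{X_0}^*\zeta_x=0$ because $S_{X_0}G_{X_0}=0$. So \eqref{gaugefixingcondition} for $\Xi_\e-\Xi_0$ is equivalent to imposing $\mathbb{G}^{*_\e}_{\Xi_0}\xi=0$ on the remaining correction $\xi$. The new residual is then $\e^2N_{Y_0}\zeta_x$ in the $x$-slot plus $O(\e^3)$ in the $0,p,\e$-norm. That is exactly what your contraction on the ball of radius $c\e^2$ requires.
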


\begin{proof}
    Suppose $\Xi_0=(A_0,\Phi_0,0,b_0,\Psi_0)\in\A_0^{1,p}(\mathfrak{s}_{d,\tilde{f}},\mathfrak{p},\eta)$. Throughout, we abbreviate $\D_{\e}(\Xi_0)$ by $\D_\e$ and $\SW_{\mathfrak{p},\eta,\e}$ by $\SW_{\e}$. We will use $c_0,c_1,$ etc. for constants independent of $\e$ and $\Xi_0$. To construct $\Xi_\e$, we need to solve the equation
    \begin{equation}\label{deformedequation}
        0=\SW_{\e}(\Xi_0) + \D_\e\xi + \mathcal{Q}_\e\xi
    \end{equation}
    for $\xi = \Xi_\e-\Xi_0$. This encodes both $\SW_{\e}(\Xi_\e)=0$ and the gauge-fixing condition \eqref{gaugefixingcondition}. The quadratic term $\mathcal{Q}_\e:W_f^{1,p}\ra W_f^{1,p}$ is independent of $\Xi_0$ and is defined by
    \begin{equation}\label{quadraticerror}
        \mathcal{Q}_\e\xi = \SW_{\e}(\Xi_0 + \xi) - \SW_{\e}(\Xi_0) - D_{\Xi_0}\SW_{\e}(\xi) = \begin{pmatrix}
            i\textnormal{Im}\langle\psi,\phi\rangle\\
            - \langle a^{0,1},\psi\rangle-ic\phi\\
            0\\
            \frac{i}{2}(\e^{-2}|\phi|^2 - |\psi|^2)\\
            \e^{-2}a^{0,1}\phi + ic\psi
        \end{pmatrix}
    \end{equation}
    for $\xi=(a,\phi,v,c,\psi)$ (the second equality follows from expanding out all of the terms). We need to assume $p>3$ here so that $W^{1,p}_f$ is an algebra (by the Sobolev multiplication theorem) and $\mathcal{Q}_\e$ is well-defined. Note that $\Xi_0$ is an approximate zero of $\SW_{\e}$ in the sense that
    $$\|\SW_{\e}(\Xi_0)\|_{0,p,\e} = \big\|(0,0,0,\frac{i}{2}|\Psi_0|^2,i\nabla_t\Psi_0)\big\|_{0,p,\e} \leq c_0\e.$$
    The constant $c_0$ can be chosen independent of $\Xi_0$ because $\M_{0}(\mathfrak{s}_{d,\tilde{f}},\mathfrak{p},\eta)$ is a finite set. To begin the iteration, define $\Xi_1 = \Xi_0 + \xi_0$, where $\xi_0\in W_f^{1,p}$ is the unique solution to the equation
    \begin{equation}\label{firstiteration}
        \D_\e\xi_0=-\SW_\e(\Xi_0).
    \end{equation}
    It is unique because $\D_\e$ is an isomorphism for small $\e>0$. Lemmas \ref{Linfinityestimate} and \ref{modifiedellipticestimate2} imply that 
    \begin{equation}\label{Linfinityestimate1}
        \|\xi_0\|_{1,p,\e} + \e^{1/p}\|\xi_0\|_{\infty,\e}\leq c_1\e\|\SW_{\e}(\Xi_0)\|_{0,p,\e}\leq c_0c_1\e^2,
    \end{equation}
    since the first components of $\SW_{\e}(\Xi_0)$ vanish. Since $\D_\e=G_{\Xi_0}^{*_\e}\oplus D_{\Xi_0}\SW_\e$, equation \eqref{firstiteration} is equivalent to $D_{\Xi_0}\SW_{\e}(\xi_0) = -\SW_{\e}(\Xi_0)$ and $G_{\Xi_0}^{*_\e}(\xi_0)=0$. It follows that
    $$\SW_{\e}(\Xi_1) = \SW_{\e}(\Xi_0+\xi_0) - \SW_{\e}(\Xi_0) - D_{\Xi_0}\SW_{\e}(\xi_0) = \mathcal{Q}_\e\xi_0,$$
    so from \eqref{Linfinityestimate1} and our explicit equation \eqref{quadraticerror} for $\mathcal{Q}_\e$ we obtain
    $$\|\SW_{\e}(\Xi_1)\|_{0,p,\e} \leq c_2\e^{-1}\|\xi_0\|_{0,p,\e}\|\xi_0\|_{\infty,\e}\leq c_0c_1c_2\e^{1-1/p}\|\xi_0\|_{0,p,\e}.$$
    To continue the iteration, suppose we have constructed $\Xi_k$ and define $\Xi_{k+1}=\Xi_k + \xi_k$, where $\xi_k$ is the unique solution to the equation
    \begin{equation}\label{inductionstep}
        \D_\e\xi_k=-\SW_\e(\Xi_k).
    \end{equation}
    Following the proof of \cite[Theorem 5.1]{Dostoglou-Salamon1994}, we use induction to show that
    \begin{align*}
        \|\xi_k\|_{1,p,\e} + \e^{1/p}\|\xi_k\|_{\infty,\e} &\leq 2c_1\|\SW_\e(\Xi_k)\|_{0,p,\e},\\
        \|\xi_k\|_{1,p,\e} + \e^{1/p}\|\xi_k\|_{\infty,\e} &\leq 2^{-k}c_0c_1\e^2,\\
        \|\SW_\e(\Xi_{k+1})\|_{0,p,\e} &\leq c_3\e^{1-1/p}\|\xi_k\|_{0,p,\e},
    \end{align*}
    for $0<\e<\e_0$ provided that $\e_0>0$ is sufficiently small. The first inequality follows from Lemma \ref{modifiedellipticestimate2}, although the first components of $\SW_\e(\Xi_k)$ are no longer zero. The second inequality follows from the first and the previous induction steps:
    \begin{align*}
        \|\xi_k\|_{1,p,\e} + \e^{1/p}\|\xi_k\|_{\infty,\e} &\leq 2c_1\|\SW_\e(\Xi_k)\|_{0,p,\e}\\
        &\leq 2c_1c_3\e^{1-1/p}\|\xi_{k-1}\|_{0,p,\e}\\
        &\leq 2^{-1}\|\xi_{k-1}\|_{0,p,\e}\\
        &\leq 2^{-k}\|\xi_0\|_{0,p,\e}\\
        &\leq 2^{-k}c_0c_1\e^2.
    \end{align*}
    Here, we have chosen $\e_0$ such that $2c_1c_3\e_0^{1-1/p}\leq 1/2$. It follows that
    \begin{equation}\label{Linfinityestimate2}
        \|\Xi_k-\Xi_0\|_{\infty,\e}\leq \sum_{j=0}^{k-1}\|\xi_j\|_{\infty,\e}\leq2c_0c_1\e^{2-1/p}.
    \end{equation}
    The third inequality follows from using equation \eqref{inductionstep} to get
    \begin{align*}
        \SW_\e(\Xi_{k+1}) &= \SW_\e(\Xi_k + \xi_k) - \SW_\e(\Xi_k) - D_{\Xi_0}\SW_\e(\xi_k)\\
        &= \SW_\e(\Xi_k + \xi_k) - \SW_\e(\Xi_k) - D_{\Xi_k}\SW_\e(\xi_k) + (D_{\Xi_k}\SW_\e - D_{\Xi_0}\SW_\e)\xi_k\\
        &= \mathcal{Q}_\e\xi_k + (D_{\Xi_k}\SW_\e - D_{\Xi_0}\SW_\e)\xi_k.
    \end{align*}
    Then we have
    \begin{align*}
        \|\SW_\e(\Xi_{k+1})\|_{0,p,\e} &\leq \|\mathcal{Q}_\e\xi_k\|_{0,p,\e} + \|(D_{\Xi_k}\SW_\e - D_{\Xi_0}\SW_\e)\xi_k\|_{0,p,\e}\\
        &\leq c_4\e^{-1}(\|\xi_k\|_{\infty,\e} + \|\Xi_k-\Xi_0\|_{\infty,\e})\|\xi_k\|_{0,p,\e}\\
        &\leq 3c_0c_1c_4\e^{1-1/p}\|\xi_k\|_{0,p,\e}.
    \end{align*}
    The last estimate follows from \eqref{Linfinityestimate2} and the second to last comes from writing out $\mathcal{Q}_\e$ and $\SW_\e$ explicitly (see \eqref{quadraticerror}). The bound $\|\xi_k\|_{1,p,\e}\leq 2^{-k}c_0c_1\e^2$ shows that $\Xi_k$ converges in $\mathcal{C}^{1,p}_f$ to a limit $\Xi_\e$. The map $\mathcal{T}_\e$ is well-defined because
    $$\|\SW_\e(\Xi_{k+1})\|_{0,p,\e}\leq c_3\e^{1-1/p}\|\xi_{k}\|_{0,p,\e}\leq 2^{-k}c_0c_1c_3\e^2$$
    implies $\SW_\e(\Xi_\e)=0$. Moreover, we have
    \begin{equation}
        \|\Xi_k-\Xi_0\|_{1,p,\e}\leq \sum_{j=0}^{k-1}\|\xi_j\|_{1,p,\e}\leq 2c_0c_1\e^2,
    \end{equation}
    so $\Xi_\e$ satisfies the quadratic estimate \eqref{quadraticconvergence}. Equation \eqref{inductionstep} implies that
    $$\e^{-2}G_{X_0}^{*}x_k + L_{Y_0}^*y_k=0$$
    for all $k$, so the gauge fixing condition \eqref{gaugefixingcondition} holds in the limit $k\ra\infty$.
\end{proof}

\begin{remark}
    The map $\mathcal{T}_{\e}$ is $\G_f^{2,p}$-equivariant and we will also denote by $\mathcal{T}_\e$ the induced map on the moduli spaces.
\end{remark}

\begin{prop}\label{injectivity}
    Suppose $(\mathfrak{p},\eta)\in\mathcal{U}_{\varphi}^{reg}$ and $p>3$. Then there is $\e_0>0$ such that $\mathcal{T}_\e:\M_0(\mathfrak{s}_{d,\tilde{f}},\mathfrak{p},\eta)\ra\M_{\e}(\mathfrak{s}_{d,\tilde{f}},\mathfrak{p},\eta)$ is injective for all $\e\in(0,\e_0)$.
\end{prop}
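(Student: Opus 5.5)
We argue by contradiction, using the finiteness of $\M_0(\mathfrak{s}_{d,\tilde{f}},\mathfrak{p},\eta)$ together with the quadratic estimate \eqref{quadraticconvergence}. Because $(\mathfrak{p},\eta)\in\mathcal{U}_\varphi^{reg}$, Proposition \ref{transversality} and Lemma \ref{correspondences} show that $\M_0$ is in bijection with $\Fix_{\tilde{f}}(\Upsilon_{d,\mathfrak{p},\eta})$. Since $\Upsilon$ is a diffeomorphism of the compact manifold $\N_\Sigma(d,\mathfrak{p}_0,\tau_0)$ with non-degenerate fixed points, these fixed points are isolated, so $\M_0$ is finite. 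The bijection is given by evaluation at $t=0$, so distinct classes $[\Xi_0]\neq[\Xi_0']$ in $\M_0$ have distinct images $[X_0(0)]\neq[X_0'(0)]$ in $\N_\Sigma$.

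\textbf{Step 1: a separating distance.} On $\A(\Sigma,L)\times\Gamma(E\otimes L)$ I will use
\[
\mathrm{dist}([X],[X'])=\inf_{u\in\G_L}\|X-u^*X'\|_{L^\infty(\Sigma)}.
\]
The $\G_L$ action is an isometry for the $L^\infty$ distance between configurations. It acts by translation by the same $-u^{-1}du$ on both connections and by a unitary map on both sections. Hence this is a pseudo-metric and satisfies the triangle inequality.

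I then claim that $\mathrm{dist}$ is positive on distinct points of $\mathcal{Z}_\Sigma(d,\mathfrak{p}_0,\tau_0)/\G_L$. Suppose $u_i^*X'\to X$ in $L^\infty$. The bound on $u_i^{-1}du_i$ then gives, after passing to a subsequence, $u_i\to u$ in $C^0$, so $X=u^*X'$ with $u$ a weak (and, by elliptic regularity, smooth) gauge transformation. This is the standard closedness of gauge orbits. Since $\M_0$ is finite, there is $\delta>0$ with $\mathrm{dist}([X_0(0)],[X_0'(0)])\geq\delta$ for all distinct pairs.

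\textbf{Step 2: closeness of $\mathcal{T}_\e$ at $t=0$.} Write $\Xi_\e=\mathcal{T}_\e(\Xi_0)=(X_\e,0,Y_\e)$. By \eqref{quadraticconvergence} and Lemma \ref{Linfinityestimate},
\[
\|X_\e-X_0\|_{L^\infty(\Sigma_f)}\leq\|\Xi_\e-\Xi_0\|_{\infty,\e}\leq c\,\e^{-1/p}\|\Xi_\e-\Xi_0\|_{1,p,\e}\leq c\,\e^{2-1/p}.
\]
Since $p>3$, $W^{1,p}$ embeds in $C^0$ on $\Sigma_f$, so this bound holds pointwise, in particular on the fibre $t=0$. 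The constant $c$ is independent of $\Xi_0$ by Theorem \ref{ezerotoesmall}.

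\textbf{Step 3: contradiction.} Suppose $\mathcal{T}_\e[\Xi_0]=\mathcal{T}_\e[\Xi_0']$. Then there is $g\in\G_f^{2,p}$ with $g^*\Xi_\e=\Xi'_\e$. Restricting to $t=0$ gives $u=g(0)\in\G_L$, which is continuous and in fact of class $W^{2-1/p,p}$; the infimum in $\mathrm{dist}$ may harmlessly be taken over such transformations, or one smooths $u$ with small error. This $u$ satisfies $u^*X_\e(0)=X'_\e(0)$. By Step 2 and the triangle inequality,
\[
\mathrm{dist}([X_0(0)],[X_0'(0)])\leq\|X_0(0)-X_\e(0)\|_{L^\infty}+\|X'_\e(0)-X'_0(0)\|_{L^\infty}\leq 2c\,\e^{2-1/p}.
\]
Choosing $\e_0$ with $2c\,\e_0^{2-1/p}<\delta$ contradicts Step 1, which proves injectivity.

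The main obstacle I expect is Step 1, specifically the positivity of $\mathrm{dist}$ between distinct orbits uniformly over the finite set. This needs the compactness argument for gauge orbits, along with care about the regularity class of $u=g(0)$. If the $L^\infty$ version proves awkward, I would instead use $L^2$ distance on the fibre together with compactness of $\N_\Sigma$, and compare it with the $L^\infty$ bound from Step 2 via $\|\cdot\|_{L^2}\leq c\|\cdot\|_{L^\infty}$.
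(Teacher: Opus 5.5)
Your argument is correct, but it takes a different route from the paper.

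The paper also argues by contradiction, but on the whole mapping torus. It takes sequences $\e_k\to0$ and $g_k\in\G_f^{2,p}$ with $g_k^*\mathcal{T}_{\e_k}(\Xi_0)=\mathcal{T}_{\e_k}(\Xi_0')$. It uses the quadratic estimate \eqref{quadraticconvergence} and Lemma \ref{Linfinityestimate}, after rescaling, to get convergence of both sequences. It then extracts uniform $W^{2,p}$ bounds on $g_k$ from $dg_k=g_k(A_k-A_k')$ and passes to a weak limit $g_0$ with $g_0^*\Xi_0=\Xi_0'$ on $\Sigma_f$.

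You never touch three-dimensional gauge transformations. You restrict to the fibre $t=0$ and let Lemma \ref{correspondences} do the separating: evaluation at $t=0$ is injective on $\M_0$. Together with finiteness of $\M_0$ and a two-dimensional closed-orbit argument, this gives a uniform $\delta>0$ between the points $[X_0(0)]$ in $\N_\Sigma$. Then the unweighted $L^\infty$ bound $\|X_\e-X_0\|_{L^\infty}\le c\,\e^{2-1/p}$ on the $x$-component finishes the proof. Both proofs rely on finiteness of $\M_0$; the paper uses it implicitly to fix the pair $\Xi_0,\Xi_0'$ along the sequence.

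What your version buys:
\begin{itemize}
\item an explicit $\e_0$: any $\e_0$ with $2c\,\e_0^{2-1/p}<\delta$, also below the threshold of Theorem \ref{ezerotoesmall};
\item no need to control the $\e$-weighted components $b,\Psi$;
\item no compactness step for gauge transformations on the (rescaled) three-manifold, which is precisely where the paper's phrase about rescaled sequences converging ``on $\Sigma_f$'' is loose.
\end{itemize}

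What the paper's version buys is independence from the fixed-point and parallel-transport interpretation. It uses only the three-dimensional analytic setup, so it is closer to a template for other adiabatic limits. Your route instead leans on uniqueness of parallel sections, which is encoded in Lemma \ref{correspondences}.

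One point deserves a sentence in your write-up: the finiteness of $\Fix_{\tilde f}(\Upsilon)$. Non-degenerate fixed points are isolated in the full fixed-point set, and $\Fix_{\tilde f}$ is closed because nearby parallel sections are homotopic. With compactness of $\N_\Sigma$, this gives finiteness. The paper simply asserts it.
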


\begin{proof}
    Suppose not. Then there exist $\Xi_0,\Xi_0'\in\A_0^{1,p}(\mathfrak{s}_{d,\tilde{f}},\mathfrak{p},\eta)$ and sequences $\e_k\ra 0$, $g_k\in\GG_f^{2,p}$ such that for $\Xi_k=\mathcal{T}_{\e_k}(\Xi_0)$ and $\Xi_k'=\mathcal{T}_{\e_k}(\Xi_0')$, we have $g_k^*\Xi_k = \Xi_k'$. If $\Xi_k=(A_k,\Phi_k)$ and $\Xi_k'=(A_k',\Phi_k')$ (for connections and spinors on the 3-manifold $\Sigma_f$), we have
    $$dg_k = g_k(A_k-A_k'),\hspace{3mm}g_k\Phi_k = \Phi_k'.$$
    Rather than working in the $1,p,\e$ norm, we can rescale the sequence as in the proof of Lemmas \ref{technicallemma} and \eqref{Linfinityestimate}, which we denote by the same symbols. By combining Lemma \ref{Linfinityestimate} and the quadratic estimate \eqref{quadraticconvergence}, one can see the rescaled sequences still converge (for $2-1/p>0$) in the standard Sobolev norm on $\Sigma_f$. Taking the $L^p$ norm of the above equation, we have
    $$\|dg_k\|_{L^p}\leq \|g_k\|_{L^\infty}\|A_k-A_k'\|_{L_p}.$$
    Then $g_k$ is uniformly bounded in $W^{1,p}$ since $g_k$ is a $U(1)$ gauge transformation and $A_k,A_k'$ converge. To get a uniform $W^{2,p}$ bound, note that
    $$\|d(g_k(A_k-A_k'))\|_{L^p} \leq \|dg_k\wedge(A_k-A_k')\|_{L^p} + \|g_k d(A_k-A_k')\|_{L^p}$$
    Recall the Sobolev embedding $W^{1,p}\subset L^\infty$ for $p>3$. Then there are uniform bounds on $g_k\in L^\infty$, $dg_k\in L^p$, $A_k-A_k'\in L^\infty$, and $d(A_k-A_k')\in L^p$, so the above inequality shows that $g_k$ is uniformly bounded in $W^{2,p}$. Then there is a subsequence (still denoted by $g_k$) converging to $g_0\in\GG_f^{2,p}$ weakly in $W^{2,p}$ and strongly in $L^\infty$. It also follows that $dg_k$ converges strongly in $L^p$ to $dg_0$, so $dg_0=g_0(A_0-A_0')$. Similarly, $g_0\Phi_0=\Phi_0'$, so $g_0^*\Xi_0=\Xi_0'$.
\end{proof}

Theorem \ref{maintheorem} immediately follows from the results of the last two sections.

\begin{proof}[Proof of Theorem \ref{maintheorem}]
    The set $\mathcal{U}_{\varphi}^{reg}$ was constructed in Proposition \ref{transversality}. By the Lemmas \ref{correspondences} and \ref{kernelidentification}, fixed points of $\Upsilon_{d,\mathfrak{p},\eta}$ for $(\mathfrak{p},\eta)\in\mathcal{U}_{\varphi}^{reg}$ correspond to non-degenerate solutions of the adiabatic limit equation. The injective map $\mathcal{T}_\e$ is constructed by Theorem \ref{ezerotoesmall} and Proposition \ref{injectivity}, which completes the proof. 
\end{proof}

%% file: chapters/counting_multi-monopoles.tex
\section{Examples of multi-monopoles on mapping tori}\label{s5}

In this section, we will apply Theorem \ref{maintheorem} to construct examples of multi-monopoles on mapping tori. We begin by reviewing the complex geometry of the multi-vortex moduli spaces and its relationship with the Jacobian torus. This is used to give a geometric description of the subsets $\Fix_{\tilde{f}}(\Upsilon)$, which is necessary for understanding the spin$^c$ structures of multi-monopoles produced by Theorem \ref{maintheorem}. Our first example constructs multi-monopoles when $d$ is large compared to the genus $g$ of the fibres. In this case, the multi-vortex moduli space is a projective bundle over the Jacobian, which helps in computing the monodromy map $\Upsilon$. The second example concerns mapping tori with genus 1 fibres, where the multi-vortex moduli spaces are discrete and the monodromy can be computed in terms of permutations. This, along with our description of spin$^c$ structures, allows us to prove Corollary \ref{mmexistence}.

\subsection{Multi-vortices and complex geometry}

Fix a complex structure $J\in\mathcal{J}(\Sigma,\omega)$ so that $(\Sigma,J)$ is a genus $g$ Riemann surface (which we denote by just $\Sigma$ from now on). Proposition \ref{vortexmodulispaces} gave a description of the moduli space of framed multi-vortices as
$$\N_\Sigma(d,\E) = \{(\L,\alpha) : \L\in J^d_\Sigma, \alpha\in \P H^0(\Sigma,\L\otimes\E)\}.$$
It depends on $d\in\Z$ and a holomorphic bundle $\E\ra\Sigma$ of degree $D$. From now on, we fix $D=0$ for simplicity. There is a holomorphic map
$$\pi:\N_\Sigma(d,\E)\ra J^d_{\Sigma},\hspace{3mm}(\L,\alpha)\mapsto \L,$$
where $J^d_{\Sigma}$ is the Jacobian torus of $\Sigma$. This is a generalisation of the Abel--Jacobi map $\Sym^d(\Sigma)\ra J_\Sigma^d$, which is recovered when $\E$ is a line bundle. We can sometimes describe $\N_\Sigma$ more explicitly for generic choices of $\E$. Doan proved the first result of this type, describing the multi-vortex moduli space when $\E$ is semi-stable and $d$ is large.

\begin{prop}\cite{Doan_2018}\label{projectivisation}
    Let $g\geq 1$ and $d>2g-2$. Suppose $\E\ra \Sigma$ is a semi-stable vector bundle of rank $N$ and degree $0$. Then $\N_\Sigma(d,\E)$ is regular and biholomorphic to the projectivisation of a rank $N(d+1-g)$ vector bundle over $J_\Sigma^d$. The same result holds when $d\geq 2g-2$ and $\E$ is stable. 
\end{prop}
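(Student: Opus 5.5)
Since this proposition is quoted from \cite{Doan_2018}, the plan is to recover it from the third description in Theorem \ref{vortexmodulispaces}: $\N_\Sigma(d,\E)$ is the space of pairs $(\L,\alpha)$ with $\L\in J^d_\Sigma$ and $\alpha$ a nonzero section of $\L\otimes\E$ up to scaling. The whole argument rests on one vanishing statement. I claim that $H^1(\Sigma,\L\otimes\E)=0$ for every $\L\in J^d_\Sigma$. By Serre duality this group equals $H^0(\Sigma,K\otimes\L^*\otimes\E^*)^*$. The bundle $\E^*$ is semi-stable of degree $0$, so $K\otimes\L^*\otimes\E^*$ is semi-stable of slope $2g-2-d$. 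A nonzero section would give a map $\O\to K\otimes\L^*\otimes\E^*$, which semi-stability forbids when the slope is negative, i.e.\ when $d>2g-2$.

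In the boundary case $d=2g-2$ with $\E$ stable, the slope is $0$. A nonzero section then generates a degree-$0$ subsheaf $\O$ of a stable bundle. For $N\geq 2$ this is impossible, so the vanishing still holds; the $N=1$ stable case would need a separate check.

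Given the vanishing, Riemann--Roch shows that $h^0(\L\otimes\E)=N(d+1-g)$ is constant in $\L$. I then take a Poincaré line bundle $\mathcal{P}\to J^d_\Sigma\times\Sigma$ and the projection $p:J^d_\Sigma\times\Sigma\to J^d_\Sigma$. Because $R^1p_*(\mathcal{P}\otimes\E)=0$, Grauert's theorem together with cohomology and base change makes $V=p_*(\mathcal{P}\otimes\E)$ a holomorphic vector bundle of rank $N(d+1-g)$ whose fibre over $\L$ is $H^0(\L\otimes\E)$. Changing the choice of $\mathcal{P}$ twists $V$ by a line bundle, which does not change $\P(V)$. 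The map $(\L,\alpha)\mapsto[\alpha]$ then identifies $\N_\Sigma(d,\E)$ with $\P(V)$ as sets and is compatible with the projection $\pi$ to $J^d_\Sigma$. Non-emptiness of the fibres needs $d+1-g\geq 1$, which follows from $d\geq 2g-2$ and $g\geq 1$.

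The main obstacle is regularity, and with it the upgrade of this bijection to a biholomorphism of complex analytic spaces. Following \cite[Corollary 7.2]{Doan_2018}, the Zariski tangent space and the obstruction space of $\N_\Sigma$ at $(\L,\alpha)$ come from the complex $\O\to\O\oplus(\L\otimes\E)$, $u\mapsto(u,u\alpha)$, with hypercohomology. Its $\mathbb{H}^2$ is a quotient of $H^1(\L\otimes\E)$ and therefore vanishes. So the deformation problem is unobstructed and $\N_\Sigma$ is smooth of dimension $g+N(d+1-g)-1=Nd-(N-1)(g-1)$, which is the expected dimension in Theorem \ref{smoothnesscompactness}. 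Since both spaces are smooth, the holomorphic bijection is a biholomorphism. To verify that the map is holomorphic, I would use the universal property of $\P(V)$: the family over $\P(V)$ obtained by pulling back $\mathcal{P}$ together with the tautological section is a holomorphic family of pairs, so it induces a holomorphic map $\P(V)\to\N_\Sigma$, and this map is inverse to the bijection above.
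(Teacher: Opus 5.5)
Your proposal is correct and follows the paper's proof: you use the same vanishing $H^1(\L\otimes\E)\cong H^0(K\otimes\L^*\otimes\E^*)^*=0$ from semi-stability, then Riemann--Roch, then $V=p_*(\mathcal{P}\otimes\E)$ for a Poincar\'e bundle; your regularity argument and your treatment of the stable $d=2g-2$ case fill in what the paper leaves implicit, since it only writes out $d>2g-2$. Two small corrections: the deformation complex should be the two-term complex $\O\xrightarrow{\cdot\alpha}\L\otimes\E$ (yours, with an identity component, is quasi-isomorphic to $\L\otimes\E$ in degree one and loses the $H^1(\O)$ directions, although the dimension $g+N(d+1-g)-1$ you state is what the correct complex gives), and the $N=1$ stable case is a genuine exception rather than a case to check, since for a line bundle $\E$ one has $\N_\Sigma(2g-2,\E)\cong\Sym^{2g-2}\Sigma$, whose fibre over $\L=K\otimes\E^*$ is $\P^{g-1}$ rather than $\P^{g-2}$.
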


\begin{proof}
    Since $\E$ is semi-stable, any line bundle admitting a map to $\E$ must have non-positive degree. If there was some $\L\in J_\Sigma^d$ with $h^0(\L^{-1}\otimes\E^*\otimes K)\neq 0$, there would be a map $\beta:\L\otimes K^{-1}\ra \E^*$. But $\deg(\L\otimes K^{-1}) = d-2(g-1)>0$ by assumption, contradicting the semistability of $\E^*$. Then $h^0(\L\otimes\E)=N(d+1-g)$ by Riemann--Roch and the spaces $H^0(\L\otimes\E)$ form a rank $N(d+1-g)$ vector bundle over $J_\Sigma^d$. Explicitly, this bundle is $V_\E:={p_1}_*(P\otimes p_2^*\E)$, where $P\ra J_\Sigma^d\times\Sigma$ is a Poincar\'{e} line bundle and $p_i$ for $i=1,2$ are the projections to the first and second factor of $J_\Sigma^d\times\Sigma$. It is clear from the definitions that $\N_\Sigma(d,\E)=\P V_\E$.
\end{proof}

Another such result follows from Thakar's computation of the cohomology of $\N_\Sigma(d,\E)$.

\begin{prop}\cite{thakar2025modulispacemultimonopolesriemann}\label{moduliproperties}
    Let $\E\ra\Sigma$ be a generic holomorphic bundle of rank $N$ and degree $0$ over a genus $g$ Riemann surface $\Sigma$. Then $\N_\Sigma=\N_\Sigma(d,\E)$ is a smooth projective variety. It is non-empty whenever
    $$\dim_\C\N_\Sigma=Nd-(N-1)(g-1)\geq 0,$$
    and it is connected whenever $\dim_\C\N_\Sigma\geq 1$. Moreover,
    \begin{itemize}
        \item If $\dim_\C\N_\Sigma=0$ then $\N_\Sigma$ consists of $N^g$ positively oriented points.
        \item If $\dim_\C\N_\Sigma=1$ then $\N_\Sigma$ is a smooth curve of genus $N^g(g-1)+1$.
    \end{itemize}
\end{prop}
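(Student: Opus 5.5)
The plan is to realise $\N_\Sigma(d,\E)$ as the zero locus of a section of an explicit vector bundle on a projective bundle over the Jacobian. All the listed properties then follow from transversality plus Chern class and Lefschetz-type arguments.

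\textbf{Step 1: the ambient space.} Fix a reduced effective divisor $D\subset\Sigma$ of large degree $m$, chosen so that $H^1(\L(D)\otimes\E)=0$ for every $\L\in J^d_\Sigma$. This is possible since $\E$ has degree $0$ and the condition is open on the compact $J^d_\Sigma$. Using a Poincar\'e bundle $P$ as in the proof of Proposition \ref{projectivisation}, set
$$V={p_1}_*(P(D)\otimes p_2^*\E),\hspace{4mm}W={p_1}_*\big((P(D)\otimes p_2^*\E)|_{J^d_\Sigma\times D}\big).$$
These are vector bundles of ranks $N(d+m+1-g)$ and $Nm$. For each $\L$, the short exact sequence $0\ra\L\otimes\E\ra\L(D)\otimes\E\ra(\L(D)\otimes\E)|_D\ra0$ identifies $H^0(\L\otimes\E)$ with the kernel of the restriction map $r:V_\L\ra W_\L$. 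Using description (3) of Theorem \ref{vortexmodulispaces}, $\N_\Sigma(d,\E)$ is then the zero locus in $\P V$ of the section $r\in H^0(\P V,\,\pi^*W\otimes\O(1))$.

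Counting dimensions gives $g+N(d+m+1-g)-1-Nm=Nd-(N-1)(g-1)$, which is the expected dimension.

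\textbf{Step 2: smoothness and projectivity.} For generic $\E$ (equivalently generic $B$ with $J$ fixed), Theorem \ref{smoothnesscompactness} gives that $\N_\Sigma$ is smooth of the expected dimension. This is the same as saying the section $r$ is transverse to zero, so $\N_\Sigma$ is a smooth projective subvariety of $\P V$.

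\textbf{Step 3: the fundamental class.} Its fundamental class is $c_{Nm}(\pi^*W\otimes\O(1))\cap[\P V]$. I would compute this by pushing forward to $J^d_\Sigma$ via Segre classes. The Chern classes of $V$ and $W$ come from Grothendieck--Riemann--Roch, as in the classical computation for $\Sym^d\Sigma$:
\begin{itemize}
\item $\textnormal{ch}(V)=N(d+m+1-g)-N\theta$, using that $\E$ is topologically trivial;
\item $W$ is topologically trivial, since it is a sum of restrictions of $P$ to the points of $D$, each of which is topologically trivial.
\end{itemize}
Hence $c(V)=e^{-N\theta}$ and $c(W)=1$.

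\textbf{Step 4: the dimension $0$ and dimension $1$ cases.} When $\dim\N_\Sigma=0$ the degree is
$$\int_{J}s_g(V)=\int_J\frac{(N\theta)^g}{g!}=N^g.$$
The points are positively oriented because the zero locus of a transverse holomorphic section is complex, hence canonically oriented. When $\dim\N_\Sigma=1$, adjunction gives
$$K_{\N}=\big(K_{\P V}+\det(\pi^*W\otimes\O(1))\big)\big|_{\N}.$$
I would then compute $2g(\N)-2$ by the same Segre class integrals, with target value $2N^g(g-1)$. Non-emptiness whenever $\dim\geq0$ follows once the class is shown to be nonzero, which the positive top Segre class computations indicate.

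\textbf{Step 5: connectedness (main obstacle).} The intended tool is a Fulton--Lazarsfeld type theorem: the zero locus of a section of an ample bundle of rank $e$ on a variety of dimension $n>e$ is connected. The difficulty is that $\pi^*W\otimes\O(1)$ is not obviously ample, since $\O(1)$ is only relatively ample and $W$ is only topologically trivial.

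My first attempt would be to twist. One option is to replace $\P V$ by $\P(V\otimes\pi^*\Theta^{-k})$, which changes $\O(1)$ by $\pi^*\Theta^{k}$ and makes the bundle ample. The other is to choose $D$ so that $W$ becomes a sum of line bundles algebraically equivalent to $0$, which are nef, and then use the nef-and-big version of the connectedness theorem.

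Making one of these routes work while keeping transversality for generic $\E$ is where I expect the real work. If neither works, the fallback is to show that $H^0(\O_{\N})=\C$ directly, using a Koszul resolution together with Kodaira-type vanishing on $\P V$.
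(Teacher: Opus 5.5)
The paper gives no proof of this proposition; it only quotes it from \cite{thakar2025modulispacemultimonopolesriemann}. So your argument has to stand on its own. Steps 1--4 are sound, Step 5 has a genuine gap, and the genus formula inherits that gap.

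\textbf{Steps 1--4.} These hold up.
\begin{itemize}
\item At a zero $(\L,[\alpha])$, the cokernel of $dr$ is the cokernel of $H^1(\O)\xrightarrow{\cdot\alpha}H^1(\L\otimes\E)$. This is the obstruction space, so transversality of $r$ is exactly regularity.
\item With $h=c_1(\O_{\P V}(1))$, you get $\int_{\P V}h^{\dim\P V}=\int_{J^d_\Sigma}s_g(V)=N^g\neq0$. This gives the count $N^g$ and also non-emptiness whenever $\dim_\C\N_\Sigma\geq0$.
\item Adjunction does give $\deg K_{\N}=(g-2)N^g+gN^g=2N^g(g-1)$.
\item You must normalise $P$ so that $P|_{J^d_\Sigma\times\{x_0\}}$ is trivial. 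Otherwise $c(W)=1$ and $c(V)=e^{-N\theta}$ are false, although the final numbers are unaffected.
\item Adjunction only computes $\sum_i(2g_i-2)$ over the components of $\N$. So the genus formula in the $\dim_\C\N_\Sigma=1$ case also depends on connectedness.
\end{itemize}

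\textbf{The gap in Step 5.} Your first fix is vacuous. The bundle cutting out $\N$ is $F=\pi^*W\otimes\O_{\P V}(1)=\mathcal{H}om(\O_{\P V}(-1),\pi^*W)$. If you replace $V$ by $V\otimes M$, then $W$ must become $W\otimes M$ for $r$ to remain a section. That is just a renormalisation of $P$, and $F$ does not change. If you twist $\O(1)$ alone, $r$ is no longer a section. Multiplying $r$ by a theta function to fix this adds $\pi^{-1}(\Theta)$ to the zero locus.

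Your second option is automatic. With normalised $P$, $W\cong\bigoplus_{x\in D}(P|_{J^d_\Sigma\times\{x\}})^{\oplus N}$ is already a sum of line bundles in $\mathrm{Pic}^0(J^d_\Sigma)$, so it does not address the real issue. The Koszul fallback needs the same positivity.

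\textbf{The missing lemma.} What you need is that $\O_{\P V}(1)$ is ample, equivalently that $V^*$ is ample (a generalised ``dual Picard bundle is ample''). Given this, $F$ is a direct sum of line bundles numerically equivalent to $\O_{\P V}(1)$, hence ample. Fulton--Lazarsfeld then applies directly, because $\dim\P V-\textnormal{rk}F=\dim_\C\N_\Sigma\geq1$.

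One way to prove the lemma:
\begin{itemize}
\item For split $\E=\bigoplus_iL_i$, $V$ is a direct sum of translates of the classical Picard bundle.
\item The projectivisation of the classical Picard bundle is $\Sym^{d+m}\Sigma$, with $\O(1)=\O(x_0+\Sym^{d+m-1}\Sigma)$.
\item This line bundle pulls back to the ample bundle $\bigotimes_i\mathrm{pr}_i^*\O(x_0)$ under the finite map $\Sigma^{d+m}\ra\Sym^{d+m}\Sigma$, so it is ample.
\item Openness of ampleness in families then gives the lemma for generic $\E$. This generic set can be intersected with the generic set you use for regularity.
\end{itemize}
Without some input of this kind, neither connectedness nor the genus statement is proved.
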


\subsection{\texorpdfstring{Determining spin$^c$ structures}{Determining spinc structures}}

Throughout this section, fix $d\in\Z$, $(\mathfrak{p},\eta)\in\mathcal{U}_\varphi^{reg}$, and write $\Upsilon=\Upsilon_{d,\mathfrak{p},\eta}$. Even if we can determine fixed points of $\Upsilon$, understanding which spin$^c$ structure they contribute to requires finding lifts in the principal $\G_L$-bundle $\mathcal{Z}_{\tilde{f}}\ra\N_f$. This problem can be rephrased in terms of the Chern class of a universal line bundle. This bundle will be a family version of the following construction: begin with the trivial $\G_L$-equivariant bundle $\mathcal{Z}_\Sigma\times L\ra \mathcal{Z}_\Sigma\times\Sigma$, which descends to the universal Hermitian line bundle
$$\mathscr{U}_\Sigma = (\mathcal{Z}_\Sigma\times L)/\G_L \ra \N_\Sigma\times\Sigma.$$
It is equipped with a connection which restricts to $A$ on $\mathscr{U}_\Sigma|_{\{[A,\Phi]\}\times\Sigma}\simeq L$. This gives $\mathscr{U}_\Sigma$ the structure of a holomorphic line bundle. We now define the family version of this.

\begin{defn}
    Let $\mathfrak{p}\in\mathcal{P}^{reg}_\varphi$ and $\eta=(\sigma,\tau)\in\mathcal{V}_f^+$. Choose a unitary lift $\tilde{f}$ of $f$ and consider the line bundle
    $$\mathcal{Z}_{\tilde{f}}(d,\mathfrak{p},\tau) \times_{S^1}L_{\tilde{f}}\ra\mathcal{Z}_{\tilde{f}}(d,\mathfrak{p},\tau)\times_{S^1}\Sigma_f.$$
    This descends to
    $$\mathscr{U}_f(d,\mathfrak{p},\tau) := (\mathcal{Z}_{\tilde{f}}(d,\mathfrak{p},\tau) \times_{S^1}L_{\tilde{f}})/\G_L \ra \N_f(d,\mathfrak{p},\tau)\times_{S^1}\Sigma_f,$$
    which we call the universal line bundle.
\end{defn}

Note that $\mathscr{U}_f=\mathscr{U}_f(d,\mathfrak{p},\tau)$ is independent of the lift $\tilde{f}$. This is because for any other lift $g^*\tilde{f}$ for $g\in\G_L$, we have
$$[1,z,v]\sim[0,F_{g^*\tilde{f}}(z),g^*\tilde{f}(v)]=[0,g^*F_{\tilde{f}}(z),g^*\tilde{f}(v)]=[0,F_{\tilde{f}}(z),\tilde{f}(v)].$$

\begin{prop}\label{spincprop}
    Let $\mathfrak{p}\in\mathcal{P}^{reg}_{\varphi}$ and $\eta=(\sigma,\tau)\in\mathcal{V}_f^+$, which are omitted from the notation as before. Suppose $\gamma\in\Gamma(\N_f)$ and consider the induced section $\gamma\times_{S^1}1\in\Gamma(\N_f\times_{S^1}\Sigma_f)$. Define the line bundle 
    $$\mathscr{U}_\gamma = (\gamma\times_{S^1}1)^*\mathscr{U}_f\ra \Sigma_f.$$
    Then if $\tilde{f}$ is a unitary lift of $f$, the following are equivalent:
    \begin{enumerate}
        \item $\gamma$ lifts to a section of $\mathcal{Z}_{\tilde{f}}\ra S^1$.
        \item There is an isomorphism $\mathscr{U}_\gamma\simeq L_{\tilde{f}}$.
        \item $c_1(\mathscr{U}_\gamma)\cdot\mathfrak{s}_{can} = \mathfrak{s}_{d,\tilde{f}}$.
    \end{enumerate}
\end{prop}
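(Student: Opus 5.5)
I would prove (1)$\Rightarrow$(2)$\Rightarrow$(3)$\Rightarrow$(1). The key idea is to read a lift of $\gamma$ as a trivialisation of the principal $\G_L$-bundle $\gamma^*\mathcal{Z}_{\tilde{f}}\ra S^1$. The universal line bundle $\mathscr{U}_\gamma$ is then the line bundle associated to this principal bundle, with fibre $L$.

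For (1)$\Rightarrow$(2), suppose $X=(A,\Phi)$ is a section of $\mathcal{Z}_{\tilde{f}}$ lying over $\gamma$. Concretely, $X(t)\in\mathcal{Z}_\Sigma(d,\mathfrak{p}_t,\tau_t)$ and $X(t+1)=F_{\tilde{f}}(X(t))$. Define a map
$$L_{\tilde{f}}\ra\mathscr{U}_\gamma,\qquad [t,l]\mapsto[X(t),t,l].$$
This is well defined because the identification $(1,z,v)\sim(0,F_{\tilde{f}}(z),\tilde{f}(v))$ in the definition of $\mathscr{U}_f$ matches the identification $(t+1,l)\sim(t,\tilde{f}(l))$ defining $L_{\tilde{f}}$. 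It is fibrewise a unitary isomorphism, since it is the identity on $L$ in each fibre.

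For (2)$\Rightarrow$(1), I reverse this. Over $[0,1]$ the bundle $\gamma^*\mathcal{Z}_{\tilde{f}}$ is trivialisable, because the interval is contractible and $\G_L$ acts freely on $\mathcal{Z}_\Sigma$. So choose any lift $X(t)$ on $[0,1]$. It satisfies $X(1)=g\cdot F_{\tilde{f}}(X(0))$ for some $g\in\G_L$, which means $\mathscr{U}_\gamma\simeq L_{g\tilde{f}}$. An isomorphism $L_{g\tilde{f}}\simeq L_{\tilde{f}}$ is, by the proof of Proposition \ref{spincaction}(3), a path $h_t\in\G_L$ with $h_1=(h_0\circ f)\,u$ relating the two lifts. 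Replacing $X(t)$ by $h_t\cdot X(t)$ (with the appropriate inverse convention) then produces a lift satisfying the periodicity condition for $\tilde{f}$ exactly. The main obstacle is bookkeeping: matching the gauge conventions so that the correction $h_t$ exactly cancels $g$. I would handle this by reusing the computation in the proof of Proposition \ref{spincaction} verbatim.

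For (2)$\Leftrightarrow$(3), recall that $\mathfrak{s}_{d,\tilde{f}}$ is $\mathfrak{s}_{can}$ twisted by $L_{\tilde{f}}$, i.e. $\mathfrak{s}_{d,\tilde{f}}=c_1(L_{\tilde{f}})\cdot\mathfrak{s}_{can}$. Line bundles on $\Sigma_f$ are classified up to isomorphism by $c_1$, and the action of $H^2(\Sigma_f,\Z)$ on $\Spin^c(\Sigma_f)$ is free and transitive. Hence $c_1(\mathscr{U}_\gamma)\cdot\mathfrak{s}_{can}=\mathfrak{s}_{d,\tilde{f}}$ holds if and only if $c_1(\mathscr{U}_\gamma)=c_1(L_{\tilde{f}})$, if and only if $\mathscr{U}_\gamma\simeq L_{\tilde{f}}$.
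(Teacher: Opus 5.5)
Your proposal is correct. For (1)$\Rightarrow$(2) and (2)$\Leftrightarrow$(3) it is exactly the paper's argument. For (2)$\Rightarrow$(1) you take a different route.

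\textbf{The paper's route.} The paper reads the lift directly off the given isomorphism $\theta:L_{\tilde f}\to\mathscr{U}_\gamma$. On each fibre, $\theta_t:L\to(\O_t\times L)/\G_L$ must have the form $v\mapsto[z_t,v]$ for a unique $z_t\in\O_t$, where uniqueness comes from the free $\G_L$-action. Periodicity of $t\mapsto z_t$ is then just the statement that $\theta$ is well defined on $L_{\tilde f}$.

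\textbf{Your route.} You pick an arbitrary lift over $[0,1]$ and record its periodicity defect $g\in\G_L$. Via the forward direction, together with the independence of $\mathscr{U}_f$ from the choice of lift, this gives $\mathscr{U}_\gamma\simeq L_{\tilde f'}$ for a twisted lift $\tilde f'$. You then invoke the description of isomorphisms $L_{\tilde f}\simeq L_{\tilde f'}$ as paths $h_t\in\G_L$ from the proof of Proposition \ref{spincaction}, and gauge-correct by $h_t^{-1}$. Checking conventions, the boundary relation satisfied by $h_t$ is exactly the one needed for $h_t^{-1}\cdot X(t)$ to satisfy the $\tilde f$-periodicity, so the argument closes.

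\textbf{What each buys.} The paper's version is shorter and free of convention bookkeeping, and its two constructions are visibly inverse to each other. In effect it shows that unitary isomorphisms $L_{\tilde f}\to\mathscr{U}_\gamma$ correspond bijectively to lifts. Yours depends on the converse half of the proof of Proposition \ref{spincaction}(3), namely that every isomorphism arises from a path in $\G_L$. This needs a $\C^*$-valued path to be normalised to a unitary one, a point the paper also passes over when it asserts $u_t\in\G_L$. In return, your version exhibits the obstruction to lifting explicitly as the class of $g$ in $\coker(1-f^*)$. That is precisely the invariant used later in Proposition \ref{sectiontochernclass} and Corollary \ref{fixedpointspinc}.
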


\begin{proof}
    The equivalence of points 2 and 3 follows from $c_1(L_{\tilde{f}})\cdot\mathfrak{s}_{can} = \mathfrak{s}_{d,\tilde{f}}$ (see Proposition \ref{spincaction}) and that the action of $H^2(\Sigma_f,\Z)$ on $\Spin^c(\Sigma_f)$ is free and transitive. To prove that 1 is equivalent to 2, suppose that $\widetilde{\gamma}\in\Gamma(S^1,\mathcal{Z}_{\tilde{f}})$ is a lift of $\gamma$. This defines a map $L_{\tilde{f}}\ra \mathscr{U}_\gamma$ by
    $$[t,v]\mapsto [t,\widetilde{\gamma}(t),v]$$
    for $v\in L$. This is well-defined because $\widetilde{\gamma}(0) = F_{\tilde{f}}(\widetilde{\gamma}(1))$ and is a unitary isomorphism of line bundles. Conversely, suppose $\theta:L_{\tilde{f}}\ra\mathscr{U}_{\gamma}$ is an isomorphism. For $t\in S^1$, let $\O_{t}$ be the preimage of $\gamma(t)$ under the quotient map $\mathcal{Z}_\Sigma(d,\mathfrak{p}_t,\tau_t)\ra \N_\Sigma(d,\mathfrak{p}_t,\tau_t)$. Restricting $\theta$ to each fibre $\Sigma_t$ of the mapping torus, we obtain isomorphisms $\theta_t:L\ra (\O_{t}\times L)/\G_L$. Fixing $z\in\O_t$, $\theta_t$ is necessarily of the form
    $$[t,v]\mapsto [t,z,u_t(x)v] = [t,u_t^*z,v]$$
    for $v\in L_x$ and some $u_t\in\G_L$. The element $z_t=u_t^*z$ is independent of the choice of $z$, so we can define the smooth path $\widetilde{\gamma}(t) = z_t$. Since $\theta$ is a well-defined isomorphism, we have
    $$[0,z_0,\tilde{f}(v)]=\theta_0([0,\tilde{f}(v)])=\theta_1([1,v]) = [1,z_1,v] = [0,F_{\tilde{f}}(z_1),\tilde{f}(v)].$$
    Then $z_0=F_{\tilde{f}}(z_1)$ and $\widetilde{\gamma}$ is a well-defined lift. 
\end{proof}

It will be convenient to view the mapping torus $\Sigma_f$ together with a family of complex structures $J\in\mathcal{J}_f(\Sigma,\omega)$ as a family of closed Riemann surfaces $\underline{\Sigma}\ra S^1$ with fibre $\Sigma_t=(\Sigma,J_t)$ over $t\in S^1$. The diffeomorphism $f$ becomes a biholomorphism $f:\Sigma_1\ra\Sigma_0$. A family of unitary connections $B\in\A_\varphi(E)$ is equivalent to a family of holomorphic bundles $\underline{\E}\ra\underline{\Sigma}$. This consists of a holomorphic bundle $\E_t\ra\Sigma_t$ for each $t\in S^1$, glued by a holomorphic isomorphism $\varphi:\E_1\ra f^*\E_0$. In this way, a path of parameters $\mathfrak{p}\in\mathcal{P}_\varphi$\footnote{Here, we are abusing notation and writing $\varphi$ for both the holomorphic isomorphism $\E_1\ra f^*\E_0$ and its composition with the pullback map $f^*\E_0\ra\E_0$.} is equivalent to the data $\underline{\E}\ra\underline{\Sigma}$. We call $\underline{\E}\ra\underline{\Sigma}$ a regular family if $\mathfrak{p}\in\mathcal{P}^{reg}_\varphi$. For such a regular family, define the biholomorphism
$$F_\varphi:\N_{\Sigma_0}(d,\E_0)\ra\N_{\Sigma_1}(d,\E_1),\hspace{4mm}[\L,\alpha] \mapsto [f^*\L,(1\otimes\varphi^{-1})f^*\alpha].$$
Emulating the construction of Definition \ref{familymoduli} in this setting, we obtain the family of projective varieties
$$\N_{\underline{\Sigma}}(d,\underline{\E})=\frac{\{(t,v) : v\in\N_{\Sigma_t}(d,\E_t)\}}{(1,F_{\varphi}(v))\sim(0,v)}\ra S^1.$$

The following lemma is a relative version of the see-saw principle over a smooth base.

\begin{lemma}\label{seesaw}
    Let $S$ be a smooth manifold and suppose $X,Y\ra S$ are families of compact complex manifolds such that $Y$ has connected fibres. Let $p:X\times_S Y\ra X$ be the projection and let $Q\ra X\times_S Y$ be a smooth line bundle which is holomorphic along the fibres of $p$. Then if $Q|_{\{x\}\times Y_s}\simeq \O_{Y_s}$ for every $s\in S$ and $x\in X_s$, there is a family of holomorphic line bundles $R\ra X$ such that $Q\simeq p^*R$. 
\end{lemma}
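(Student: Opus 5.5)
The plan is to run the usual proof of the see-saw principle fibrewise over $S$, defining $R$ as a direct image and checking that it has the required regularity in the $S$-direction. Set
$$R := p_*Q, \qquad R_x = H^0(Y_s, Q|_{\{x\}\times Y_s}) \quad \text{for } x \in X_s.$$

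First I would check that $R$ is a line bundle. By hypothesis $Q|_{\{x\}\times Y_s} \simeq \O_{Y_s}$. Since $Y_s$ is compact and connected, holomorphic functions on it are constant, so $H^0(Y_s,\O_{Y_s}) = \C$. Hence every fibre $R_x$ is one-dimensional, and each nonzero element of $R_x$ is a nowhere-vanishing section trivialising $Q|_{\{x\}\times Y_s}$.

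Next I would prove local triviality and smoothness of $R$, by Grauert's theorem applied family-wise. The cleanest route avoids cohomology-and-base-change in the mixed smooth/holomorphic category. Near a point $(x_0,y_0)$, choose a local smooth section $\sigma$ of $Q$ that is nonvanishing along $\{x\}\times\{y_0\}$. Given $x$ near $x_0$, there is a unique holomorphic section $e_x$ of $Q|_{\{x\}\times Y_s}$ normalised by $e_x(y_0) = \sigma(x,y_0)$; here $y_0$ is understood as varying smoothly with $s$ through a local smooth section of $Y\to S$. The family $x \mapsto e_x$ gives a local frame of $R$.

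The main obstacle is showing that $e_x$ depends smoothly on $x$, and holomorphically along the complex directions of $X_s$. My approach is to treat $\delbar$ along the fibres $Y_s$ as a smooth family of elliptic operators $\delbar_x$ acting on sections of $Q|_{\{x\}\times Y_s}$, after locally trivialising the family $Y \to S$ smoothly. Each $\ker\delbar_x$ has constant dimension $1$, so the kernels form a smooth vector bundle over $X$: the orthogonal projection onto the kernel is given by a contour integral of the resolvent of $\delbar_x^*\delbar_x$, which depends smoothly on $x$. The normalised section $e_x$ is then obtained from this projection, so it is smooth in $x$. For holomorphicity along $X_s$, I would use that $Q$ restricted to $X_s\times Y_s$ is holomorphic. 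Here I take ``holomorphic along the fibres of $p$'' to mean holomorphic on each $X_s\times Y_s$, which is what makes "$R$ is a family of holomorphic line bundles" meaningful. With that reading, Grauert's theorem on $X_s\times Y_s \to X_s$ gives that $p_*Q$ is holomorphic on each $X_s$.

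Finally I would verify the isomorphism. The evaluation map
$$p^*R \to Q, \qquad (x,y,e) \mapsto e(y),$$
is a smooth bundle map, holomorphic along the fibres. It is fibrewise nonzero because each $e_x$ trivialises $Q|_{\{x\}\times Y_s}$. Therefore it is an isomorphism $Q \simeq p^*R$.
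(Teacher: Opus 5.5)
Your proposal is correct and follows the same route as the paper. You set $R=p_*Q$, use compactness and connectedness of $Y_s$ to get one-dimensional fibres, and observe that the evaluation map $p^*R\to Q$ is a fibrewise isomorphism. The paper simply asserts that $R$ is a smooth line bundle, holomorphic on each $X_s$; your argument supplies the justification it omits, namely the constant-rank kernel argument for the family $\delbar_x$ and the appeal to Grauert (with $Q$ read as holomorphic on each $X_s\times Y_s$, which the paper also tacitly assumes).
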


\begin{proof}
    Define $R:=p_*Q$. Since $Y_s$ is connected, we have 
    $$\dim H^0(\{x\}\times Y_s, Q|_{\{x\}\times Y_s})=\dim H^0(\{x\}\times Y_s,\O_{Y_s})= 1$$
    for all $x\in X_s$. Then $R$ is a smooth line bundle over $X$ which is holomorphic over each fibre $X_s$. Now consider the evaluation map $p^*R\ra Q$. On each fibre $p^{-1}(x)=\{x\}\times Y_s$ it is given by the tautological isomorphism
    $$\O_{Y_s}\otimes_{\C} H^0(Y_s,\O_{Y_s})\ra \O_{Y_s},$$
    so we have $Q\simeq p^*R$.
\end{proof}

For $\tau$ with $d-\bar{\tau}<0$, a fibre-wise application of the Hitchin--Kobayashi correspondence (Theorem \ref{vortexmodulispaces}) determines an isomorphism of smooth fibre bundles
$$\kappa:\N_f(d,\mathfrak{p},\tau)\ra \N_{\underline{\Sigma}}(d,\underline{\E}).$$
In what follows, a family of degree $d$ Poincar\'{e} line bundles is a family of holomorphic line bundles $P\ra J_{\underline{\Sigma}}^d\times_{S^1}\underline{\Sigma}$ such that $P|_{\{\L\}\times \Sigma_t}\simeq\L$ for all $t\in S^1$. 


\begin{prop}\label{chernclasscorollary}
    Suppose $\underline{\E}\ra\underline{\Sigma}$ is a regular family of holomorphic bundles which determines $\mathfrak{p}\in\mathcal{P}_\varphi^{reg}$, and suppose $\tau\in\Omega^0_f(\Sigma)$ has constant mean value $\bar{\tau}$ satisfying $d-\bar{\tau}<0$. Then for any family of degree $d$ Poincar\'{e} line bundles $P\ra J_{\underline{\Sigma}}^d\times_{S^1}\underline{\Sigma}$ and any smooth section $\overline{\gamma}\in\Gamma(\N_f(d,\mathfrak{p},\tau))$, we have
    $$c_1(\mathscr{U}_{\overline{\gamma}}) = c_1((\gamma\times_{S^1}1)^*P)\in H^2(\underline{\Sigma},\Z),$$
    where $\gamma:=\pi\circ\kappa\circ\overline{\gamma}\in\Gamma(J_{\underline{\Sigma}}^d)$. 
\end{prop}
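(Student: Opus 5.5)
The plan is to compare two line bundles over the family $\N_f\times_{S^1}\underline{\Sigma}$: the universal bundle $\mathscr{U}_f$ (transported via $\kappa$), and the pullback $(\pi\circ\kappa\times_{S^1}1)^*P$ of the Poincar\'{e} family. I will show they differ by the pullback of a line bundle from the first factor, using Lemma \ref{seesaw}. Restricting along the section $\overline{\gamma}\times_{S^1}1$ then changes $c_1$ only by a class pulled back from $S^1$, and such a class vanishes in $H^2(S^1,\Z)=0$.

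\textbf{Step 1: holomorphic structure on the fibres.} By construction, $\mathscr{U}_f$ carries a connection whose restriction to $\{[A,\Phi]\}\times\Sigma_t$ is $A$. Its $(0,1)$-part along each $\Sigma_t$ gives a holomorphic structure, and under the Hitchin--Kobayashi identification (Theorem \ref{vortexmodulispaces}) this restriction is isomorphic to $\L_A$, where $(\L_A,\alpha)=\kappa[A,\Phi]$. The fibre-wise construction over $\N_\Sigma\times\Sigma$ should make $\mathscr{U}_f$ holomorphic along the fibres of the projection $p:\N_f\times_{S^1}\underline{\Sigma}\ra\N_f$.

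\textbf{Step 2: apply the see-saw lemma.} Set
$$Q:=\mathscr{U}_f\otimes\big((\pi\circ\kappa)\times_{S^1}1\big)^*P^{-1}\ra\N_f\times_{S^1}\underline{\Sigma}.$$
By the defining property of $P$, $P|_{\{\L\}\times\Sigma_t}\simeq\L$. Together with Step 1, this gives $Q|_{\{x\}\times\Sigma_t}\simeq\O_{\Sigma_t}$ for every $t$ and every $x\in\N_\Sigma(d,\mathfrak{p}_t,\tau_t)$. The Riemann surfaces $\Sigma_t$ are connected and compact. We take $S=S^1$, $X=\N_f$ and $Y=\underline{\Sigma}$; the fibres of $X$ are compact complex manifolds because $\mathfrak{p}$ is regular (Theorem \ref{smoothnesscompactness}). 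Lemma \ref{seesaw} then gives a line bundle $R\ra\N_f$ with $Q\simeq p^*R$, and hence
$$\mathscr{U}_f\simeq\big((\pi\circ\kappa)\times_{S^1}1\big)^*P\otimes p^*R.$$

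\textbf{Step 3: pull back along the section.} Pulling back by $\overline{\gamma}\times_{S^1}1$ gives
$$c_1(\mathscr{U}_{\overline{\gamma}})=c_1\big((\gamma\times_{S^1}1)^*P\big)+c_1(q^*\overline{\gamma}^*R).$$
Here $q:\underline{\Sigma}\ra S^1$ is the projection, and we use $p\circ(\overline{\gamma}\times_{S^1}1)=\overline{\gamma}\circ q$. The bundle $\overline{\gamma}^*R$ is a line bundle on $S^1$, so it is trivial, and the extra term vanishes.

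The main obstacle is Step 1. The issue is to check that $\mathscr{U}_f$ really is holomorphic along the fibres of $p$ in a way that varies smoothly in $t$, and that its restriction to $\{x\}\times\Sigma_t$ matches the line bundle $\L$ read off through $\kappa$ compatibly with the gluing by $F_{\tilde f}$ and $F_\varphi$ at $t=1\sim0$. First I would check this on $\mathcal{Z}_{\tilde f}\times_{S^1}L_{\tilde f}$ before taking the $\G_L$-quotient; there the statement is tautological because $\delbar_A$ defines $\L_A$. Second, I would check that the gluing maps $F_{\tilde f}$ (acting by $\tilde f^*$ on $A$) and $F_\varphi$ (acting by $f^*$ on $\L$) correspond under $\kappa$. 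A further point is that Lemma \ref{seesaw} as stated asks for $Q$ to be holomorphic along the fibres of $p$. In our setting that reduces to holomorphicity along each $\Sigma_t$, which is exactly what Step 1 provides.
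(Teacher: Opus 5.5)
Your proposal is correct and follows the paper's proof essentially step for step. The paper builds the same bundle $Q$, but on $\N_{\underline{\Sigma}}(d,\underline{\E})\times_{S^1}\underline{\Sigma}$ by pulling $\mathscr{U}_f$ back via $\kappa^{-1}$ rather than pulling $P$ back to $\N_f\times_{S^1}\underline{\Sigma}$ as you do, which is an immaterial difference; it then applies Lemma \ref{seesaw} to get $Q\simeq p^*R$ and kills the extra term because it is pulled back from $S^1$ and $H^2(S^1,\Z)=0$. The paper simply asserts the fibrewise identification $\mathscr{U}_f|_{\{x\}\times\Sigma_t}\simeq\L$ from the construction of the universal bundle, so your proposed checks in Step 1 are more careful than the paper's argument.
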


\begin{proof}
    Consider the line bundle
    $$Q=(\kappa^{-1}\times_{S^1}1)^*\mathscr{U}_{f}\otimes\big((\pi\times_{S^1}1)^*P\big)^{*}\ra \N_{\underline{\Sigma}}(d,\underline{\E})\times_{S^1}\underline{\Sigma}.$$
    From our description of the universal bundle over $\N_\Sigma\times\Sigma$ at the beginning of this section, we find that
    $$(\kappa^{-1}\times_{S^1}1)^*\mathscr{U}_{f}|_{\{(\L,\alpha)\}\times\Sigma_t}\simeq \L,$$
    so $Q$ is holomorphically trivial along the fibres of the projection
    $$p:\N_{\underline{\Sigma}}(d,\underline{\E})\times_{S^1}\underline{\Sigma}\ra \N_{\underline{\Sigma}}(d,\underline{\E}).$$
    Then Lemma \ref{seesaw} applies and there is a family of holomorphic line bundles $R\ra\N_{\underline{\Sigma}}(d,\underline{\E})$ such that $Q\simeq p^*R$. It suffices to show that the Chern class of $(\kappa\circ\overline{\gamma}\times_{S^1}1)^*p^*R$ vanishes. This holds because $H^2(S^1,\Z)=0$ and the following diagram commutes:
    \[\begin{tikzcd}
	    {\underline{\Sigma}} & {\N_{\underline{\Sigma}}\times_{S^1}\underline{\Sigma}} \\
	    {S^1} & {\N_{\underline{\Sigma}}}
	    \arrow["{\overline{\gamma}\times_{S^1}1}", from=1-1, to=1-2]
	    \arrow[from=1-1, to=2-1]
	    \arrow["p", from=1-2, to=2-2]
	    \arrow["{\overline{\gamma}}"', from=2-1, to=2-2]
    \end{tikzcd}\]
\end{proof}

\begin{prop}\label{sectiontochernclass}
    Let $\pi_0\Gamma(J_{\underline{\Sigma}}^d)$ be the set of homotopy classes of sections of $J_{\underline{\Sigma}}^d$. For a family of degree $d$ Poincar\'{e} line bundles $P\ra J_{\underline{\Sigma}}^d\times_{S^1}\underline{\Sigma}$ and a section $\gamma\in\Gamma(J_{\underline{\Sigma}}^d)$, define $P_\gamma = (\gamma\times_{S^1}1)^*P\ra \underline{\Sigma}$. Then the map of sets
    \begin{equation}\label{homotopyinjection}
        \pi_0\Gamma(J_{\underline{\Sigma}}^d)\ra H^2(\underline{\Sigma},\Z),\hspace{3mm}[\gamma]\mapsto c_1(P_\gamma)
    \end{equation}
    is well-defined and injective. Moreover, it is surjective onto the preimage of $d$ under the restriction map $H^2(\underline{\Sigma},\Z)\ra H^2(\Sigma,\Z)\simeq \Z$. 
\end{prop}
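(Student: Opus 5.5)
The plan is to compare both sides of \eqref{homotopyinjection} as torsors over the same group, namely $\coker(1-f^*)$ acting on $H^1(\Sigma,\Z)$, and to show that the map is equivariant.

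\textbf{Well-definedness and image.} Well-definedness is formal. If $\gamma_s$ is a homotopy of sections, then $(\gamma_s\times_{S^1}1)^*P$ is a smooth family of line bundles over $\underline{\Sigma}$, so $c_1(P_{\gamma})$ depends only on $[\gamma]$. By the defining property of a family of Poincar\'e bundles, $P_\gamma|_{\Sigma_t}\simeq\gamma(t)$ has degree $d$. Hence $c_1(P_\gamma)$ restricts to $d$ in $H^2(\Sigma,\Z)$, and the image lies in the stated preimage.

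\textbf{The two torsors.} The bundle $J^d_{\underline{\Sigma}}\ra S^1$ has fibre $J^d_{\Sigma_0}$, which is a $K(\pi,1)$ torus. Its monodromy is the map $\L\mapsto f^*\L$, and on $\pi_1\simeq H^1(\Sigma,\Z)$ it acts by $f^*$. A section exists: choose any path in the connected fibre from a point to its monodromy image. Two sections $\gamma,\gamma'$ differ, up to homotopy, by a class $\delta(\gamma,\gamma')\in H^1(S^1;\underline{H^1(\Sigma,\Z)})=\coker(1-f^*)$. This class is computed by forming the path $t\mapsto\gamma'(t)\otimes\gamma(t)^{-1}\in J^0_{\Sigma_t}\simeq H^1(\Sigma,\R)/H^1(\Sigma,\Z)$, lifting it to $H^1(\Sigma,\R)$ starting at $0$, and reading off the integral endpoint defect after applying $f^*$. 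Standard obstruction theory for sections over a circle shows that $\pi_0\Gamma(J^d_{\underline{\Sigma}})$ is a torsor for $\coker(1-f^*)$ via $\delta$. On the other side, the Wang sequence of $\Sigma_f\ra S^1$ gives
$$0\ra\coker\big(1-f^*|_{H^1(\Sigma,\Z)}\big)\xrightarrow{\ \partial\ } H^2(\underline{\Sigma},\Z)\ra\ker\big(1-f^*|_{H^2(\Sigma,\Z)}\big)=\Z\ra0 .$$
Therefore the preimage of $d$ is also a $\coker(1-f^*)$-torsor.

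\textbf{Equivariance.} It then suffices to prove that
$$c_1(P_{\gamma'})-c_1(P_\gamma)=c_1\big(P_{\gamma'}\otimes P_\gamma^{-1}\big)=\partial\,\delta(\gamma,\gamma').$$
A map of torsors that is equivariant along a group isomorphism is automatically bijective, which gives both injectivity and surjectivity. To prove the identity, note that $P_{\gamma'}\otimes P_\gamma^{-1}$ is fibrewise of degree $0$. Over $[0,1]\times\Sigma$ it can therefore be trivialised as a family of flat unitary line bundles, using the lift of $\gamma'\otimes\gamma^{-1}$ to $H^1(\Sigma,\R)$ to write down explicit flat connections. The bundle over $\Sigma_f$ is then recovered by a clutching map $u\in\G_L$ at $t=1$, and $[u]\in H^1(\Sigma,\Z)$ is exactly the endpoint defect defining $\delta$. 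This is the same mechanism as in the proof of Proposition \ref{spincaction}(3), where line bundles $L_{\tilde f}$ and $L_{u\tilde f}$ differ by the class $[u]$ modulo $(1-f^*)$. Under the Wang sequence, such a clutching class maps to $\partial[u]$.

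\textbf{Main obstacle.} The step I expect to be the main obstacle is this last identification: carefully matching the clutching construction with the connecting map $\partial$ of the Wang sequence, including sign and orientation conventions, and checking that the choice of lift to $H^1(\Sigma,\R)$ only changes $[u]$ by an element of $\im(1-f^*)$.
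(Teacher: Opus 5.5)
Your proposal is correct and is essentially the paper's argument. Your difference class $\delta(\gamma,\gamma')$ is the paper's endpoint-defect map $\ell$ to $\coker(1-f^*)$, and your clutching identity $c_1(P_{\gamma'}\otimes P_\gamma^{-1})=\partial\delta(\gamma,\gamma')$ is the paper's commuting triangle $P_\gamma\simeq L_\alpha$ under the Wang sequence; your torsor formulation just makes explicit the paper's reduction to $d=0$. One small fix: the lift of $t\mapsto\gamma'(t)\otimes\gamma(t)^{-1}$ should start at a lift of $\gamma'(0)\otimes\gamma(0)^{-1}$, not at $0$, since the defect $\tilde c(1)-f^*\tilde c(0)$ is well defined modulo $\im(1-f^*)$ for any starting lift.
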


\begin{proof}
    The argument of Corollary \ref{chernclasscorollary} shows that the map \eqref{homotopyinjection} is independent of the choice of $P$. Moreover, it is well-defined because $P_\gamma\simeq P_{\gamma'}$ whenever $[\gamma]=[\gamma']$ \cite[Theorem 1.6]{hatcherVBKT}. To prove injectivity, it suffices to consider the $d=0$ case, where the map \eqref{homotopyinjection} is a group homomorphism (addition of homotopy classes is interpreted fibre-wise). Choosing a model fibre $\Sigma$ of $\underline{\Sigma}$, there is a monodromy map $f:\Sigma\ra \Sigma$ such that $\underline{\Sigma}$ is identified with the mapping torus $\Sigma_f$. The family of Jacobian tori $J_{\underline{\Sigma}}^0$ can then be identified with the mapping torus $J_f$ of
    \begin{equation}\label{jacobianmonodromy}
        (f^{-1})^*:\frac{H^1(\Sigma,\R)}{H^1(\Sigma,\Z)}\ra\frac{H^1(\Sigma,\R)}{H^1(\Sigma,\Z)}.
    \end{equation}
    Any choice of Poincar\'{e} bundle is isomorphic to
    \begin{equation}\label{topologicalpoincare}
        P\simeq (\R\times H^1(\Sigma,\R)\times\Sigma\times\C)/\sim
    \end{equation}
    as a smooth vector bundle, where
    $$(t+1,\alpha,x,z)\sim(t,(f^{-1})^*\alpha,f(x),z),\hspace{3mm}(t,\alpha,x,z)\sim (t,\alpha + m, x,m(x)z),$$
    for all $m\in H^1(\Sigma,\Z)\simeq [\Sigma,U(1)]$. Now consider the following diagram:
    \[\begin{tikzcd}
	    & {\pi_0\Gamma(J_f)} &&& \\
	    0 & {\coker(1-f^*)} & {H^2(\Sigma_f,\Z)} & {H^2(\Sigma,\Z)} & 0
	    \arrow["\ell"', from=1-2, to=2-2]
	    \arrow["{[\gamma]\mapsto c_1(P_{\gamma})}", from=1-2, to=2-3]
	    \arrow[from=2-1, to=2-2]
	    \arrow["\delta", from=2-2, to=2-3]
	    \arrow[from=2-3, to=2-4]
	    \arrow[from=2-4, to=2-5]
    \end{tikzcd}\]
    The short exact sequence on the bottom row comes from the Wang long exact sequence for $\Sigma_f$, with boundary map $\delta$. To define the homomorphism $\ell$, lift a section $\gamma$ to a path $\widetilde{\gamma}:[0,1]\ra H^1(\Sigma,\R)$ satisfying 
    $$\widetilde{\gamma}(1) \equiv f^*\widetilde{\gamma}(0) \mod H^1(\Sigma,\Z),$$
    and take the equivalence class of $\widetilde{\gamma}(1) - f^*\widetilde{\gamma}(0)$ in $\coker(1-f^*)=H^1(\Sigma,\Z)/\im(1-f^*)$. It is easy to check that this is well-defined. Since $\delta$ is injective, it suffices to prove that $\ell$ is an isomorphism and the triangle commutes. To show that $\ell$ is surjective, suppose $\alpha\in H^1(\Sigma,\Z)$ and define $\widetilde{\gamma}(t)=t\alpha$. Then $\widetilde{\gamma}(1)-f^*\widetilde{\gamma}(0)=\alpha$, so $\widetilde{\gamma}$ descends to a section $\gamma\in\Gamma(J_f)$ satisfying $\ell([\gamma]) = [\alpha]$ by construction. To prove injectivity, suppose $\ell([\gamma])=0$. Then for any lift $\widetilde{\gamma}$, there is $\alpha\in H^1(\Sigma,\Z)$ such that
    $$\widetilde{\gamma}(1) - f^*\widetilde{\gamma}(0) = \alpha - f^*\alpha,$$
    and $\widetilde{\gamma}(t)-\alpha$ is another lift of $\gamma$. So without loss of generality, assume that $\widetilde{\gamma}(1) = f^*\widetilde{\gamma}(0)$. For such a lift, define the homotopy $h_s(t)=s\widetilde{\gamma}(t)$, which satisfies
    $$h_0(t)=0,\hspace{3mm}h_1(t)=\widetilde{\gamma}(t),\hspace{3mm}h_s(1)=f^*h_s(0),$$
    for all $s,t$. Then $h_t$ descends to a homotopy between $\gamma$ and the zero section of $J_f$. To show that the triangle commutes, interpret the map $\delta$ in the following way. Given $[\alpha]\in\coker(1-f^*)$, choose a representative $\alpha\in H^1(\Sigma,\Z)$. Interpreting this as a map $\alpha:\Sigma\ra U(1)$ via the isomorphism $H^1(\Sigma,\Z)\simeq [\Sigma,U(1)]$, define the line bundle
    $$L_{\alpha} = (\R\times\Sigma\times\C)/(t+1,x,z)\sim(t,f(x),\alpha(x)z)$$
    over $\Sigma_f$. Then $\delta([\alpha])=c_1(L_\alpha)$, which can be shown to be independent of the choices we made. To relate this to the map $[\gamma]\mapsto c_1(P_{\gamma})$, observe that \eqref{topologicalpoincare} implies
    $$P_{\gamma}=(\gamma\times_{S^1}1)^*P\simeq L_{\alpha},$$
    where $\alpha = \widetilde{\gamma}(1)-f^*\widetilde{\gamma}(0)$ for a lift $\widetilde{\gamma}:[0,1]\ra H^1(\Sigma,\R)$.
\end{proof}

By combining Propositions \ref{spincprop}, \ref{chernclasscorollary} and \ref{sectiontochernclass}, we get the following topological criterion to test if two fixed points of $\Upsilon$ are in the same subset $\Fix_{\tilde{f}}(\Upsilon)$.

\begin{cor}\label{fixedpointspinc}
    Two fixed points of $\Upsilon$ are in the same subset $\Fix_{\tilde{f}}(\Upsilon)$ if and only if they determine homotopic sections of $J_{\underline{\Sigma}}^d$.
\end{cor}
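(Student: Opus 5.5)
The plan is to combine Propositions \ref{spincprop}, \ref{chernclasscorollary} and \ref{sectiontochernclass}. Let $x,x'\in\Fix(\Upsilon)$ be fixed points, and let $\overline{\gamma},\overline{\gamma}'\in\Gamma(\N_f(d,\mathfrak{p},\tau))$ be the corresponding parallel sections. Set $\gamma=\pi\circ\kappa\circ\overline{\gamma}$ and $\gamma'=\pi\circ\kappa\circ\overline{\gamma}'$, the induced sections of $J^d_{\underline{\Sigma}}$. The goal is to show that $x$ and $x'$ lie in a common subset $\Fix_{\tilde{f}}(\Upsilon)$ if and only if $c_1(\mathscr{U}_{\overline{\gamma}})=c_1(\mathscr{U}_{\overline{\gamma}'})$, and that this equality is in turn equivalent to $[\gamma]=[\gamma']$ in $\pi_0\Gamma(J^d_{\underline{\Sigma}})$.

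For the first equivalence, Definition \ref{fixedpointsubset} together with Proposition \ref{spincprop} (the equivalence $(1)\Leftrightarrow(3)$) shows that $x\in\Fix_{\tilde{f}}(\Upsilon)$ exactly when $c_1(\mathscr{U}_{\overline{\gamma}})\cdot\mathfrak{s}_{can}=\mathfrak{s}_{d,\tilde{f}}$. Suppose both $x$ and $x'$ lie in $\Fix_{\tilde{f}}(\Upsilon)$. Then
\[
c_1(\mathscr{U}_{\overline{\gamma}})\cdot\mathfrak{s}_{can}=c_1(\mathscr{U}_{\overline{\gamma}'})\cdot\mathfrak{s}_{can},
\]
and since $H^2(\Sigma_f,\Z)$ acts freely on $\Spin^c(\Sigma_f)$, the two Chern classes are equal. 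Conversely, suppose the Chern classes agree. By Proposition \ref{spincaction}(2), the spin$^c$ structure $c_1(\mathscr{U}_{\overline{\gamma}})\cdot\mathfrak{s}_{can}$ equals $\mathfrak{s}_{d,\tilde{f}}$ for some lift $\tilde{f}$, and then both fixed points lie in $\Fix_{\tilde{f}}(\Upsilon)$. The fibre degree of this spin$^c$ structure is automatically $d$, because $\mathscr{U}_{\overline{\gamma}}$ restricts to a degree $d$ bundle on each fibre. It also follows that the subsets $\Fix_{\tilde{f}}(\Upsilon)$ are determined by the isomorphism class of $\mathfrak{s}_{d,\tilde{f}}$, consistent with the remark after Lemma \ref{correspondences}, so ``the same subset'' is well defined.

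For the second equivalence, fix any family of degree $d$ Poincar\'e line bundles $P$. Proposition \ref{chernclasscorollary} gives $c_1(\mathscr{U}_{\overline{\gamma}})=c_1(P_\gamma)$ and $c_1(\mathscr{U}_{\overline{\gamma}'})=c_1(P_{\gamma'})$. Proposition \ref{sectiontochernclass} says that $[\gamma]\mapsto c_1(P_\gamma)$ is well defined and injective, so $c_1(P_\gamma)=c_1(P_{\gamma'})$ holds if and only if $\gamma$ and $\gamma'$ are homotopic. Chaining the two equivalences proves the corollary.

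The one point needing care is the existence of a family Poincar\'e bundle over $J^d_{\underline{\Sigma}}\times_{S^1}\underline{\Sigma}$, which Propositions \ref{chernclasscorollary} and \ref{sectiontochernclass} take as given. I would construct it topologically via the quotient \eqref{topologicalpoincare}, twisted by a fixed degree $d$ line bundle $L_{\tilde{f}_0}$ to pass from degree $0$ to degree $d$. I would then use the independence of the choice of $P$ noted in the proof of Proposition \ref{sectiontochernclass}. Everything else is a direct chaining of the cited results.
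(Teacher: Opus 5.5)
Your proposal is correct and is essentially the paper's argument: the paper obtains the corollary by chaining Propositions \ref{spincprop}, \ref{chernclasscorollary} and \ref{sectiontochernclass}, which is what you do (free action of $H^2(\Sigma_f,\Z)$ on $\Spin^c(\Sigma_f)$, then injectivity of $[\gamma]\mapsto c_1(P_\gamma)$), and you fill in the intermediate steps correctly. One caution on your side remark about existence: the smooth model \eqref{topologicalpoincare} alone does not qualify as a family of Poincar\'e bundles in the sense of Proposition \ref{chernclasscorollary}, because its see-saw argument needs $P$ to be holomorphic along the fibres with $P|_{\{\L\}\times\Sigma_t}\simeq\L$ holomorphically, so you would also have to equip it with fibrewise holomorphic structures (e.g.\ from $J_t$-harmonic representatives, or by normalising along a section of $\underline{\Sigma}\ra S^1$).
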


\begin{remark}
    At this point, it is important to emphasise that only the homotopy class of $\Upsilon$ matters for counting fixed points in the components $\Fix_{\tilde{f}}(\Upsilon)$. So even though the adiabatic multi-monopole equations are related to a particular choice of monodromy map, the computation of its fixed points and their spin$^c$ structures is a purely topological problem.
\end{remark}

\subsection{Multi-monopoles for spin\texorpdfstring{$^c$}{c} structures of large degree}

Having understood how spin$^c$ structures are determined by fixed points, we can apply Theorem \ref{maintheorem} to construct some examples of multi-monopoles on mapping tori. The first case to consider is when $d$ is large compared to the genus $g$ of $\Sigma$. Proposition \ref{projectivisation} shows that in this case, $\N_\Sigma$ is compact for all semi-stable $\E$. This implies that all fibre-wise semi-stable families $\underline{\E}\ra\underline{\Sigma}$ are regular. The following proposition counts the fixed points of the monodromy map $\Upsilon$ for such families. 

\begin{prop}\label{projectivebundlemm}
    Suppose $g\geq 1$ and $d>2g-2$. Let $\underline{\E}\ra\underline{\Sigma}$ be a family of semi-stable bundles over a family of genus $g$ Riemann surfaces $\underline{\Sigma}$. Then for any spin$^c$ structure $\mathfrak{s}_{d,\tilde{f}}$ on the underlying mapping torus $\Sigma_f$ of $\underline{\Sigma}$, the signed count of fixed points in $\Fix_{\tilde{f}}(\Upsilon)$ is equal to
    \begin{equation}\label{fixptcount}
        \textnormal{sign}(\det(1-f^*))N(d+1-g),
    \end{equation}
    with the convention that $\textnormal{sign}(0)=0$. 
\end{prop}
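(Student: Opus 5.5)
The plan is to compute the signed count with a Lefschetz-type argument on the fibre bundle $\N_{\underline{\Sigma}}(d,\underline{\E})\ra S^1$. This uses two facts: the fixed-point count in each class $\Fix_{\tilde{f}}(\Upsilon)$ depends only on the homotopy class of $\Upsilon$ (the Remark following Corollary \ref{fixedpointspinc}), and Corollary \ref{fixedpointspinc} identifies these classes with homotopy classes of sections of $J^d_{\underline{\Sigma}}$.

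\textbf{Step 1: set up the bundle structure.} By Proposition \ref{projectivisation}, each fibre is $\N_{\Sigma_t}(d,\E_t)=\P V_{\E_t}$, a $\P^{M-1}$-bundle over $J^d_{\Sigma_t}$ with $M=N(d+1-g)$. The family of projections $\pi$ gives a bundle map $\N_{\underline{\Sigma}}\ra J^d_{\underline{\Sigma}}$ over $S^1$, and $J^d_{\underline{\Sigma}}$ is identified with the mapping torus $J_f$ of $(f^{-1})^*$ as in the proof of Proposition \ref{sectiontochernclass}.

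\textbf{Step 2: replace $\Upsilon$ by a fibred map.} Using the (possibly non-holomorphic) projection $\pi$, I would homotope $\Upsilon$ to a map $\Upsilon'$ with the following properties.
\begin{itemize}
\item $\Upsilon'$ covers an affine map $\bar\Upsilon(x)=(f^{-1})^*x+c$ of the torus $J^d_{\Sigma_0}\simeq H^1(\Sigma,\R)/H^1(\Sigma,\Z)$.
\item $\Upsilon'$ is fibrewise projective-linear, hence fibrewise homotopic to the identity of $\P^{M-1}$, since $PGL_M(\C)$ is connected.
\end{itemize}
The homotopy is obtained by connecting the family $\underline{\E}$ within semi-stable families. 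The homotopy preserves the classes of Definition \ref{fixedpointsubset} because it moves the associated sections of $J^d_{\underline{\Sigma}}$ only by homotopies. This keeps track of Propositions \ref{spincprop}, \ref{chernclasscorollary} and \ref{sectiontochernclass}.

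\textbf{Step 3: case $\det(1-f^*)\neq0$.} The affine torus map $\bar\Upsilon$ has exactly $|\det(1-f^*)|=|\coker(1-f^*)|$ fixed points. Each has index $\textnormal{sign}\det(1-f^*)$, since the index is the sign of $\det(1-d\bar\Upsilon)=\det(1-(f^{-1})^*)$, which has the same sign as $\det(1-f^*)$. Through the isomorphism $\ell$ in the proof of Proposition \ref{sectiontochernclass}, distinct torus fixed points give non-homotopic parallel sections. Since $\coker(1-f^*)$ indexes the spin$^c$ structures of fibre degree $d$, each class $\Fix_{\tilde{f}}$ lies over exactly one torus fixed point $x$. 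Over $x$, $\Upsilon'$ restricts to a self-map of $\P^{M-1}$ homotopic to the identity, whose Lefschetz number is $\chi(\P^{M-1})=M$. Multiplicativity of the fixed-point index for fibre-preserving maps (base index times fibre Lefschetz number) then gives the total index $\textnormal{sign}(\det(1-f^*))\,M$ for that class.

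\textbf{Step 4: case $\det(1-f^*)=0$.} Choose the translation $c$ so that $c$ is not in the image of $1-(f^{-1})^*$ modulo the lattice, which is possible because that image is a proper subtorus. Then $\bar\Upsilon$ has no fixed points, so no class contains fixed points and the count is $0$.

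\textbf{Step 5: compare with the signed count.} For non-degenerate $\Upsilon$, the signed count equals the sum of local indices $\textnormal{sign}\det(1-d\Upsilon)$, which is a homotopy invariant within each class. The sign conventions must be matched to the orientation of $\N_\Sigma$, which is complex.

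The main obstacle is Step 2. One must construct the homotopy to a fibred model in a way that preserves the partition into classes, and justify the index localisation per class. I would first try to deform $\underline{\E}$ to a family pulled back along a path to a fixed semi-stable bundle twisted by degree-zero line bundles. This should make $\Upsilon$ literally equal to $F$ composed with a projective-linear map on $\P V_\E$, so that the reduction to the torus becomes explicit.
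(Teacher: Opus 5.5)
Your proposal is correct and follows the paper's overall plan. You replace $\Upsilon$ by a monodromy that covers the Jacobian map $M=(f^{-1})^*$ (up to translation) and is projective-linear on fibres. You then match fixed points of the torus map with $\coker(1-f^*)$ via $\ell$ and Corollary \ref{fixedpointspinc}. The differences are in the local count, the degenerate case, and Step 2.

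\textbf{Local count.} Over a torus fixed point $x$, the paper homotopes $\phi$ so that $\phi_x$ has $N(d+1-g)$ distinct eigenvalues. This produces that many non-degenerate fixed points, each of index $\textnormal{sign}\det(1-f^*)$ by the upper-triangular derivative and complex-linearity. You instead apply the product formula for fibre-preserving maps, with fibre Lefschetz number $\chi(\P^{N(d+1-g)-1})$. Your route avoids the perturbation but cites a less elementary theorem. The paper's route also exhibits the fixed points explicitly, though the signed count alone suffices for Corollary \ref{projcor}.

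\textbf{Degenerate case.} For $\det(1-f^*)=0$, you choose $c$ off the image of $1-(f^{-1})^*$ in the torus, which is a proper closed rational subtorus. This makes the model fixed-point free and gives $0$ in every class at once. This is an improvement on the paper, which only asserts that ``the index vanishes for all fixed points''. That assertion is a formal reading of \eqref{fixptindex} rather than an argument, since the linear model then has non-isolated fixed points.

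\textbf{Step 2.} The obstacle you flag needs no deformation of $\underline{\E}$. The bundle $\N_{\underline{\Sigma}}(d,\underline{\E})\ra S^1$ is itself the projectivisation of the family $V_{\E_t}\simeq V$ over $J^d_{\underline{\Sigma}}$, by Proposition \ref{projectivisation} applied fibrewise. Monodromies of different connections on one fibre bundle over $S^1$ are isotopic. So you may take a connection whose transport is projective-linear and covers $x\mapsto Mx+c$. Since only the connection changes, the bundle $\N_f$ and its sections do not. The partition into the classes $\Fix_{\tilde{f}}$ is therefore carried along automatically. This is exactly what the paper's sentence ``we may choose the monodromy map $\Upsilon$ to be the projectivisation of a bundle map $\phi$'' is using.
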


\begin{proof}
    Associated to $\Sigma_f$ and the family of complex structures $J\in\mathcal{J}_f(\Sigma,\omega)$ is a family of Jacobian tori, which can be viewed as the mapping torus $J_f:=H^1(\Sigma,S^1)_M$ of 
    $$M:=(f^{-1})^*:H^1(\Sigma,S^1)\ra H^1(\Sigma,S^1),$$
    as in the proof of Proposition \ref{sectiontochernclass}. All of the bundles $V_{\E_t}\ra J_{\Sigma_t}^d$ (constructed in the proof of Proposition \ref{projectivisation}) are isomorphic to a fixed vector bundle $V\ra H^1(\Sigma,S^1)$, so fix an isomorphism of $V_{\E_0}\simeq V$. We may choose the monodromy map $\Upsilon$ of $\N_{\underline{\Sigma}}(d,\underline{\E})$ to be the projectivisation of a bundle map $\phi:V\ra V$ covering $M$. If $f$ has non-degenerate fixed points, then so does $M$. Any fixed points of $\Upsilon=\P\phi$ lie in $\P V_x$ for some $x\in\Fix(M)$ and correspond to eigenvectors of $\phi_x$. Since $\GL(N(d+1-g),\C)$ is connected, $\phi$ is homotopic to some $\phi'$ such that for all $x\in\Fix(M)$, $\phi'_x$ has $N(d+1-g)$ distinct eigenvalues. In this way we obtain $\Upsilon'=\P\phi':\P V\ra\P V$ which is homotopic to $\Upsilon$ and has non-degenerate fixed points.

    Let $\tilde{x}\in\P V_x$ be one of the non-degenerate fixed points of $\Upsilon'$. In a local trivialisation of $\P V\ra H^1(\Sigma,S^1)$ near $x\in\Fix(M)$, the derivative
    $$d_{\tilde{x}}\Upsilon' = \begin{pmatrix}
        1-d_xM & *\\
        0 & 1-d_{\tilde{x}}(\P\phi'_x)
    \end{pmatrix}$$
    is upper triangular. As a fixed point of $\P\phi'_x$, $\tilde{x}$ has a positive index because $\phi'$ is $\C$-linear. Then its index as a fixed point of $\Upsilon'$ is
    \begin{equation}\label{fixptindex}
        \ind_{\Upsilon'}(\tilde{x}) = \textnormal{sign}(\det(1-d_xM)) = \textnormal{sign}(\det(1-f^*)).
    \end{equation}
    The homotopy long exact sequences for the projective bundles show that
    $$\pi_1(\P V)\simeq H^1(\Sigma,\Z),\hspace{5mm}\pi_1(\P V_{\Upsilon'})\simeq \pi_1(J_f).$$
    This shows that the short exact sequences of the fibrations $\P V_{\Upsilon'}\ra S^1$ and $J_f\ra S^1$ are equivalent. Then $\pi_0\Gamma(\P V_{\Upsilon'})\simeq \pi_0\Gamma(J_f)$, which can be further identified with $\coker(1-f^*)$ as in the proof of Proposition \ref{sectiontochernclass}. This means that the homotopy class determined by any fixed point in the fibre over $x\in\Fix(M)$ comes from that of $x$ itself. If $\det(1-f^*)=0$, the index vanishes for all fixed points. On the other hand, if $1-f^*$ is invertible on $H^1(\Sigma,\R)$, we have
    $$\Fix(M)=\frac{(1-f^*)^{-1}H^1(\Sigma,\Z)}{H^1(\Sigma,\Z)}.$$
    The map assigning a fixed point to its associated homotopy class is given by
    $$\Fix(M)\xrightarrow{1-f^*} \coker(1-f^*)\simeq \pi_0\Gamma(J_f).$$
    This is a bijection, so every fixed point of $M$ determines a unique homotopy class and every homotopy class is realised by some fixed point. There are $N(d+1-g)$ fixed points of $\Upsilon'$ for each homotopy class, each with the same index \eqref{fixptindex}, so the total count is given by \eqref{fixptcount}. 
\end{proof}

Theorem \ref{maintheorem} implies the following corollary.

\begin{cor}\label{projcor}
    Suppose $g\geq 1$, $d>2g-2$, and that $1-f^*$ is invertible. Then for any spin$^c$ structure $\mathfrak{s}_{d,\tilde{f}}$, there is a chamber for which there are at least $N(d+1-g)$ multi-monopoles on $\Sigma_f$.
\end{cor}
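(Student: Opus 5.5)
The plan is to combine Proposition \ref{projectivebundlemm} with Theorem \ref{maintheorem}, taking care that the small perturbation required by Theorem \ref{maintheorem} preserves both regularity and the fixed point count. First choose a family $\underline{\E}\ra\underline{\Sigma}$ of semi-stable (for instance stable) rank $N$ degree $0$ bundles over the family of Riemann surfaces determined by some $J\in\mathcal{J}_f(\Sigma,\omega)$. Such a family exists: take a stable bundle $\E_0$ on $\Sigma_0$ and join $f^*\E_0$ to $\E_0$ through stable bundles, which is possible because the stable locus in the moduli of bundles is connected. By Proposition \ref{projectivisation}, every $\N_{\Sigma_t}(d,\E_t)$ is then regular and compact, so the corresponding path $\mathfrak{p}$ lies in $\mathcal{P}_\varphi^{reg}$. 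Fix $\eta\in\mathcal{V}_f^+$. Since $\mathcal{P}_\varphi^{reg}$ is open (compactness and regularity are open conditions along a compact family), $\mathfrak{p}$ lies in a connected open set of parameters whose induced 3-dimensional data lies in a single chamber $C$. This set is contained in $\mathcal{P}^{cpt}_{\Sigma_f}$ for small $\e$, by the adiabatic compactness implicit in the setup.

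Next, use density of $\mathcal{U}_\varphi^{reg}$ (Proposition \ref{transversality}) to pick $(\mathfrak{p}',\eta')\in\mathcal{U}_\varphi^{reg}$ close to $(\mathfrak{p},\eta)$. The family $\N_f(d,\mathfrak{p}',\tau')$ is isotopic to $\N_f(d,\mathfrak{p},\tau)$, so $\Upsilon'=\Upsilon_{d,\mathfrak{p}',\eta'}$ is homotopic to the monodromy of the original family. By the remark following Corollary \ref{fixedpointspinc}, the signed count of fixed points in each component $\Fix_{\tilde{f}}$ is a homotopy invariant. Proposition \ref{projectivebundlemm} therefore gives signed count $\textnormal{sign}(\det(1-f^*))N(d+1-g)=\pm N(d+1-g)$, which is nonzero because $1-f^*$ is invertible. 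The signed count of the finite set of non-degenerate fixed points bounds its cardinality from below, so $|\Fix_{\tilde{f}}(\Upsilon')|\geq N(d+1-g)$.

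Finally, apply Theorem \ref{maintheorem} to $(\mathfrak{p}',\eta')$. For $\e$ small, $\M_{\Sigma_f}(\mathfrak{s}_{d,\tilde{f}},g_\e,B'_\varphi,\eta')$ is regular and receives an injection from $\Fix_{\tilde{f}}(\Upsilon')$, which yields at least $N(d+1-g)$ multi-monopoles. The chamber is the one containing $(g_\e,B'_\varphi)$.

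The main obstacle is justifying that these parameters lie in the compact locus $\mathcal{P}^{cpt}_{\Sigma_f}$, so that the chamber is well-defined. Theorem \ref{maintheorem} only asserts regularity, not compactness. I would first try to argue this by an adiabatic Fueter-section analysis showing that no Fueter sections exist for small $\e$ when the fibrewise moduli spaces are compact. Failing that, I would interpret ``chamber'' loosely, as the component of regular parameters containing the family.
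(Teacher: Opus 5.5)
Your proposal is correct and is essentially the paper's argument. The paper's proof simply combines the nonzero signed count $\textnormal{sign}(\det(1-f^*))N(d+1-g)$ from Proposition \ref{projectivebundlemm}, which forces at least $N(d+1-g)$ non-degenerate fixed points in each $\Fix_{\tilde{f}}$, with the injection of Theorem \ref{maintheorem}. Your explicit perturbation into $\mathcal{U}_\varphi^{reg}$ via homotopy invariance fills in what the paper leaves implicit. The compactness of $\M_{\Sigma_f}$ that you flag as needed for ``chamber'' to be meaningful is likewise not proved in the paper.

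One small slip: for $g=1$ there are no stable bundles of rank $N\geq 2$ and degree $0$ on an elliptic curve. So the existence of the periodic family must be argued with semi-stable bundles, whose locus is still connected, rather than by connecting through the stable locus.
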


\subsection{Multi-monopoles on genus 1 mapping tori}

In this section, we use Theorem \ref{maintheorem} to produce multi-monopoles in various chambers on mapping tori with genus 1 fibres (families of elliptic curves). We will restrict to families of semi-stable bundles, although unstable bundles can also be studied (see \cite[Section 8.4]{Doan_2018}). When $d<0$, the multi-vortex moduli space for an elliptic curve is empty, so there are no fixed points. Proposition \ref{projectivebundlemm} treats the $d>0$ case, so it remains to describe what happens for $d=0$. 

\begin{ex}
    Before making our general construction, we motivate it by producing $2$-monopoles on the 3-torus $S^1\times\Sigma$ from families of $2$-vortices on a fixed elliptic curve $\Sigma$. Let $\E\ra\Sigma$ be a semi-stable $\SL(2,\C)$-bundle, which is of the form $\E=A\oplus A^{-1}$ for a line bundle $A\in J_\Sigma^0$. Since $K_\Sigma=\O$, the moduli space $\M_\Sigma(0,\E)$ of 2-vortices consists of triples $(\L,\alpha,\beta)$ satisfying $\beta\alpha=0$ for
    $$\alpha\in \P(H^0(\L\otimes A)\oplus H^0(\L\otimes A^{-1})),$$
    $$\beta\in H^0(\L^{-1}\otimes A^{-1})\oplus H^0(\L^{-1}\otimes A).$$
    Suppose $A^2\neq \O$, which is the generic case. Then the only choices for $\alpha$ are $[1:0]$ with $\L=A^{-1}$ and $[0:1]$ with $\L=A$. In both of these cases we have $\beta=0$, so the moduli space is compact and consists of two points:
    $$\M_\Sigma(0,A\oplus A^{-1}) = \N_\Sigma(0,A\oplus A^{-1}) = \{(A,[0:1]),(A^{-1},[1:0])\}.$$
    When $A^2=\O$ (there are four such line bundles), we must have $\L=A$ and $\alpha\in\P^1$. Since $\beta\in\C^2$ satisfies $\beta\alpha=0$, we have
    $$\M_\Sigma(0,A\oplus A^{-1}) \simeq T^*\P^1.$$
    This is the simplest example of the non-compactness phenomenon for multi-vortex moduli spaces. Now consider the space of all such bundles $\E=A\oplus A^{-1}$ up to isomorphism, which can be identified with the quotient of $J_\Sigma^0$ by the involution $A\mapsto A^{-1}$. This space (which we denote by $\mathcal{R}_\Sigma$) is the $\SU(2)$ representation variety of the torus, also known as the ``pillowcase''. It is homeomorphic to the 2-sphere and has four orbifold points at those bundles $\E=A\oplus A$ with $A^2=\O$. These are the branch points of the double cover $J_\Sigma^0\ra\mathcal{R}_\Sigma$, which has non-trivial monodromy around each one. Denote the complement of the branch set by $\mathcal{R}_\Sigma^*$.

    Now consider the 3-torus $S^1\times\Sigma$, equipped with the product metric induced by the complex structure $J$. A constant family $\E\in\mathcal{R}_\Sigma^*$ corresponds to an $\SU(2)$-connection $B$ on $S^1\times\Sigma$, for which the total count of 2-monopoles is equal to 2 by Theorem \ref{S1invariantstuff}. By considering non-constant loops in $\mathcal{R}_\Sigma^*$, we can produce 2-monopoles in other chambers. Such a loop $\beta(t)=[\E_t]=[A_t\oplus A_t^{-1}]$ induces a monodromy automorphism of the fibre
    $$\{A_0,A_0^{-1}\}\simeq \N_\Sigma(0,\E_0)$$
    over $\beta_0$. Away from the branch set the fibres of $J^0\ra \mathcal{R}_\Sigma$ are discrete, so this monodromy map coincides with the map $\Upsilon$ introduced in Definition \ref{monodromymap}. There are two cases to distinguish:
    \begin{enumerate}
        \item If the winding number of $\beta$ around the branch locus is odd, $\Upsilon$ swaps the two points in $\M_\Sigma(0,\E_0)$ and has no fixed points.
        \item If the winding number of $\beta$ around the branch locus is even, $\Upsilon$ is trivial and fixes both points of $\M_\Sigma(0,\E_0)$.
    \end{enumerate}
    In the first case, there are no fixed points so we cannot conclude that there exist any 2-monopoles in such chambers. In the second case, Theorem \ref{maintheorem} constructs two 2-monopoles on $S^1\times\Sigma$. To complete the picture, we need to describe their spin$^c$ structures. Note that 
    \begin{equation}\label{productspinc}
        H^2(S^1\times\Sigma,\Z)\simeq \Z\oplus H^1(\Sigma,\Z)
    \end{equation}
    is canonically identified with $\Spin^c(S^1\times\Sigma)$, where the first factor is the degree $d$ (which is zero in our case). To determine the other factor, note that the two fixed points correspond to the sections $A_t,A_t^{-1}$ of the trivial fibration over $S^1$ with fibre $J_{\Sigma}^0$. Such sections can be viewed as loops in $J_{\Sigma}^0$, which determine classes $A_+,A_-\in H_1(J_{\Sigma}^0)$. Taking the slant product with the first Chern class of the Poincar\'{e} bundle $P\ra J_\Sigma^0\times\Sigma$ produces classes in $H^1(\Sigma,\Z)$, which we still denote by $A_\pm$. Then Propositions \ref{sectiontochernclass} and \ref{spincprop} imply that the spin$^c$ structures of the 2-monopoles are $(0,A_+)$ and $(0,A_-)$ in the decomposition \eqref{productspinc}. Since the loops are inverses they satisfy $A_+ + A_- = 0$, which is an artefact of working with $\SL(2,C)$-bundles. In future work, we will improve Theorem \ref{maintheorem} to a bijective correspondence. That will complete this example by showing that these are the only 2-monopoles in such chambers, and that there are no 2-monopoles in chambers determined by paths of odd winding number.
\end{ex}

The remainder of this section is concerned with generalising this basic example to construct $N$-monopoles on genus $1$ mapping tori for spin$^c$ structures with $d=0$. We will restrict to degree $0$ semi-stable bundles, which are just direct sums of degree $0$ line bundles \cite{TU19931}. With some minor modifications, the same construction also works for semi-stable $\SL(N,\C)$-bundles.

\begin{defn}
    Let $\Sigma$ be an elliptic curve. For $N\geq 2$, define the following spaces:
    \begin{align*}
        \mathcal{R}_\Sigma &= \{A_1\oplus\cdots\oplus A_N: A_i\in J_\Sigma^0\}/\sim,\\
        \mathcal{R}_{\Sigma}^* &= \{A_1\oplus\cdots\oplus A_N: A_i\in J_\Sigma^0,\hspace{1mm}A_i\neq A_j\textnormal{ for }i\neq j\}/\sim,\\
        \Delta_\Sigma &= \mathcal{R}_\Sigma\setminus \mathcal{R}_{\Sigma}^*,\\
        \widetilde{\mathcal{R}}_{\Sigma} &= \{(\E,A_i) : \E=A_1\oplus\cdots\oplus A_N\in\mathcal{R}_\Sigma,\hspace{1mm} 1\leq i\leq N\},\\
        \widetilde{\mathcal{R}}_{\Sigma}^* &= \{(\E,A_i) : \E=A_1\oplus\cdots\oplus A_N\in\mathcal{R}_{\Sigma}^*,\hspace{1mm} 1\leq i\leq N\},\\
        \widetilde{\Delta}_\Sigma &= \widetilde{\mathcal{R}}_{\Sigma}\setminus\widetilde{\mathcal{R}}_{\Sigma}^*.
    \end{align*}
    The equivalence relation $\sim$ identifies isomorphic holomorphic bundles. If $N$ needs to be specified, we will use the notation $\mathcal{R}_\Sigma(N)$, $\mathcal{R}_\Sigma^*(N)$, and so on. 
\end{defn}

The space $\mathcal{R}_\Sigma$ is nothing but $\Sym^N(J^0_\Sigma)$, viewed as the moduli space of semi-stable holomorphic bundles as described in \cite[Theorem 1]{TU19931}. The natural map $\widetilde{\mathcal{R}}_\Sigma\ra\mathcal{R}_\Sigma$ is an $N$-sheeted covering map, branching along the big diagonal $\Delta_\Sigma$. The monodromy of this branched cover is well-known.

\begin{lemma}
    The fundamental group of $\mathcal{R}_{\Sigma}^*$ is isomorphic to the $N$-strand braid group of $J_\Sigma^0$. Moreover, the monodromy of the covering space $\widetilde{\mathcal{R}}_\Sigma^*\ra\mathcal{R}_\Sigma^*$ is the composition of this isomorphism with the symmetric representation of the braid group. This composition will be denoted by $\rho:\pi_1(\mathcal{R}_\Sigma^*)\ra S_N$, which implicitly depends on an ordering of the fibre over the base-point. 
\end{lemma}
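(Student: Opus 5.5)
We identify $\mathcal{R}_\Sigma^*$ with an unordered configuration space and then read off both claims from standard covering space theory.

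\emph{Step 1: configuration space.} By \cite[Theorem 1]{TU19931}, a degree $0$ semi-stable bundle on an elliptic curve is determined up to isomorphism (more precisely, up to S-equivalence, which is isomorphism for split bundles) by the unordered multiset $\{A_1,\dots,A_N\}\subset J_\Sigma^0$. Hence $\mathcal{R}_\Sigma\simeq\Sym^N(J^0_\Sigma)$. Under this identification $\mathcal{R}_\Sigma^*$ is the set of multisets with distinct entries, which is the unordered configuration space
$$\mathrm{UConf}_N(J_\Sigma^0)=\big((J_\Sigma^0)^N\setminus\{A_i=A_j\textnormal{ for some }i\neq j\}\big)/S_N.$$
The action of $S_N$ on the ordered configuration space $\mathrm{Conf}_N(J_\Sigma^0)$ is free, so $\mathrm{Conf}_N\ra\mathrm{UConf}_N$ is a regular $S_N$-covering. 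By definition, the $N$-strand braid group of the surface $J_\Sigma^0$ is $\pi_1(\mathrm{UConf}_N(J_\Sigma^0))$, and this gives the first claim. The isomorphism depends only on a choice of base-point.

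\emph{Step 2: identify the $N$-sheeted cover.} We check that $\widetilde{\mathcal{R}}_\Sigma^*$ is the associated bundle $\mathrm{Conf}_N\times_{S_N}\{1,\dots,N\}$, with $S_N$ acting by its standard permutation action. Consider the map
$$[(A_1,\dots,A_N),i]\mapsto(\oplus A_j,A_i).$$
It is well-defined because relabelling the $A_j$ and $i$ simultaneously does not change $(\oplus_j A_j,A_i)$. It is a bijection because the entries $A_j$ are distinct, so a point of the fibre of $\widetilde{\mathcal{R}}_\Sigma^*\ra\mathcal{R}_\Sigma^*$ over $\E$ is the same as a choice of one of the $N$ distinct line bundles $A_i$. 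It is a local homeomorphism because locally the $A_j$ can be chosen continuously. Therefore the covering $\widetilde{\mathcal{R}}_\Sigma^*\ra\mathcal{R}_\Sigma^*$ is the one associated to the $S_N$-principal cover of Step 1 via the permutation representation on $\{1,\dots,N\}$.

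\emph{Step 3: monodromy.} For a covering associated to a principal $G$-cover through a $G$-set $F$, the monodromy of a loop $\gamma$ is the action on $F$ of the holonomy element $h(\gamma)\in G$. Here $h$ is the homomorphism $\pi_1(\mathrm{UConf}_N)\ra S_N$ obtained by lifting $\gamma$ to $\mathrm{Conf}_N$ and comparing endpoints. This is exactly the map sending a braid to its underlying permutation, i.e.\ the symmetric representation. Concretely, if we lift a loop $\E_t=\oplus A_j(t)$ by continuously following the summands, so that $A_j(1)=A_{\sigma(j)}(0)$, then the point $(\E_0,A_j(0))$ is transported to $(\E_0,A_{\sigma(j)}(0))$. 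So $\rho=$ (symmetric representation)$\circ$(isomorphism). The composition depends on the ordering of the fibre over the base-point, which fixes the identification of that fibre with $\{1,\dots,N\}$. Changing the ordering conjugates $\rho$ by an element of $S_N$.

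The only step needing care is Step 1: the topology on $\mathcal{R}_\Sigma$ must agree with that of $\Sym^N(J_\Sigma^0)$ so that $\mathcal{R}^*_\Sigma$ really is the configuration space. We take this from \cite{TU19931}, where $\mathcal{R}_\Sigma$ is defined as the moduli space. Away from $\Delta_\Sigma$ everything is a genuine unbranched covering, so no orbifold issues arise.
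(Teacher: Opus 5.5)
Your argument is correct and is the standard reasoning behind this lemma. The paper gives no proof at all: it only notes that $\mathcal{R}_\Sigma$ is $\Sym^N(J_\Sigma^0)$ by \cite{TU19931} and calls the monodromy of the branched cover well known, so your Steps 1--3 simply make that folklore explicit. Those steps identify $\mathcal{R}_\Sigma^*$ with $\mathrm{UConf}_N(J_\Sigma^0)$, realise $\widetilde{\mathcal{R}}_\Sigma^*$ as the bundle associated to the $S_N$-cover $\mathrm{Conf}_N\ra\mathrm{UConf}_N$ through the permutation action, and read off the monodromy as the permutation underlying the braid.
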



Analogues of these spaces can also be defined for families of elliptic curves over $S^1$.

\begin{defn}
    Let $\underline{\Sigma}\ra S^1$ be a family of elliptic curves over $S^1$ with monodromy map $f:\Sigma_1\ra\Sigma_0$, and write $\Sigma_t$ for the fibre over $t\in S^1$. For $N\geq 2$, define the following spaces:
    \begin{equation*}
    \begin{aligned}[t]
        \mathcal{R}_{\underline{\Sigma}} &= \{(t,\E) : \E\in\mathcal{R}_{\Sigma_t}\}/\sim,\\
        \mathcal{R}_{\underline{\Sigma}}^* &= \{(t,\E) : \E\in\mathcal{R}_{\Sigma_t}^*\}/\sim,\\
        \Delta_{\underline{\Sigma}} &= \{(t,\E) : \E\in\Delta_{\Sigma_t}\}/\sim,
    \end{aligned}
    \qquad
    \begin{aligned}[t]
        \widetilde{\mathcal{R}}_{\underline{\Sigma}}&=\{ (t,\E,A) : (\E,A)\in\widetilde{\mathcal{R}}_{\Sigma_t}\}/\sim,\\
        \widetilde{\mathcal{R}}_{\underline{\Sigma}}^*&=\{(t,\E,A) : (\E,A)\in\widetilde{\mathcal{R}}_{\Sigma_t}^*\}/\sim,\\
        \widetilde{\Delta}_{\underline{\Sigma}} &= \{(t,\E,A) : (\E,A)\in\widetilde{\Delta}_{\Sigma_t}\}/\sim.
    \end{aligned}
    \end{equation*}
    The equivalence relations $\sim$ are given by $(1,f^*\E)\sim(0,\E)$ and $(1,f^*\E,f^*A)\sim(0,\E,A)$.
\end{defn}

As with a fixed curve, there is a relative covering map $\widetilde{\mathcal{R}}_{\underline{\Sigma}}\ra \mathcal{R}_{\underline{\Sigma}}$ with branch locus $\Delta_{\underline{\Sigma}}$. We will also denote its monodromy map by $\rho$. To use these spaces to count multi-monopoles, we need to understand the $N$-vortex moduli spaces for bundles in $\mathcal{R}_\Sigma$. 

\begin{prop}\label{directsumvortexmoduli}
    Let $A_{1},A_{2},\dots,A_m$ be distinct degree 0 line bundles on an elliptic curve $\Sigma$ and let $n_1,n_2,\dots,n_m$ be positive integers summing to $N$. Suppose $\E=\bigoplus_{k=1}^m A_k^{\oplus n_k}$. Then we have
    $$\N_\Sigma(0,\E)\simeq \bigsqcup_{k=1}^m \P^{n_k-1},\hspace{5mm} \M_\Sigma(0,\E)\simeq \bigsqcup_{k=1}^m T^*\P^{n_k-1}.$$
    In particular, if $\E\in\mathcal{R}_{\Sigma}^*$ (so $n_i=1$ and the summands are distinct), then $\M_\Sigma(0,\E)$ is compact and consists of $N$ regular points.
\end{prop}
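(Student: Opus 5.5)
We work throughout with the third (holomorphic) description of the moduli spaces in Theorem \ref{vortexmodulispaces}. Since $\Sigma$ is an elliptic curve, $K\simeq\O$ and $g=1$. Hence $\N_\Sigma(0,\E)$ consists of pairs $(\L,\alpha)$ with $\L\in J^0_\Sigma$ and $\alpha\in\P H^0(\L\otimes\E)$. The space $\M_\Sigma(0,\E)$ consists of triples $(\L,\alpha,\beta)$ with $\beta\in H^0(\L^{-1}\otimes\E^*)$ and $\beta\alpha=0$. The plan is to compute these spaces summand by summand, exactly as in the $\SL(2,\C)$ example above.

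\textbf{Step 1: the framed moduli space.} Since $\E=\bigoplus_k A_k^{\oplus n_k}$, we have
$$H^0(\L\otimes\E)=\bigoplus_k H^0(\L\otimes A_k)^{\oplus n_k}.$$
A degree zero line bundle on $\Sigma$ has a nonzero section if and only if it is trivial, and in that case $h^0=1$. So $H^0(\L\otimes\E)\neq 0$ exactly when $\L=A_k^{-1}$ for some $k$. Because the $A_k$ are distinct, this $k$ is unique, and then $H^0(\L\otimes\E)=\C^{n_k}$. Isomorphisms $\L\ra\L$ are scalars, which act on $\alpha$ by rescaling, so the fibre over $A_k^{-1}$ is $\P^{n_k-1}$. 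This gives $\N_\Sigma(0,\E)\simeq\bigsqcup_k\P^{n_k-1}$, with each component sitting over a distinct point of $J^0_\Sigma$.

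\textbf{Step 2: adding $\beta$.} Over $\L=A_k^{-1}$ we have $H^0(\L^{-1}\otimes\E^*)=\bigoplus_j H^0(A_k\otimes A_j^{-1})^{\oplus n_j}=\C^{n_k}$. This space is naturally dual to $H^0(\L\otimes\E)=\C^{n_k}$, and the product $\beta\alpha\in H^0(\O)=\C$ is the dual pairing. The condition $\beta\alpha=0$ therefore says $\beta\in\alpha^\perp$, which is the cotangent fibre $T^*_{[\alpha]}\P^{n_k-1}$. The residual $\C^*$ acts on $\beta$ with the inverse weight, so the pair $(\alpha,\beta)$ up to scaling is a well-defined point of $T^*\P^{n_k-1}$. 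Hence $\M_\Sigma(0,\E)\simeq\bigsqcup_k T^*\P^{n_k-1}$, where each identification is holomorphic and, as a set, exactly the standard construction of the cotangent bundle.

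\textbf{Step 3: the case $\E\in\mathcal{R}_\Sigma^*$.} Here every $n_k=1$, so each component is a single point $(A_k^{-1},\alpha)$ with $\beta=0$, giving $N$ points in total and a compact moduli space. It remains to check regularity. By Theorem \ref{smoothnesscompactness}, and the remark following it that regularity of $\N_\Sigma$ means surjectivity of the linearisation of $\delbar_{A,\mathfrak{p}}\Phi=0$ (equivalently smoothness of the expected dimension), it suffices to compute the deformation complex at such a point. The expected dimension is $Nd-(N-1)(g-1)=0$. The Zariski tangent space to $\N_\Sigma$ at $(\L,\alpha)$ is the cohomology of
$$H^0(\O)\ra H^0(\L\otimes\E)\oplus H^1(\O),$$
and the obstruction space is the cokernel into $H^1(\L\otimes\E)$. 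Here $H^0(\L\otimes\E)=\C$, and $H^1(\L\otimes\E)=\bigoplus_j H^1(A_k^{-1}A_j)$, which by Serre duality equals $H^0(A_kA_j^{-1})^*$ and so is $\C$, coming from the $j=k$ summand.

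The map $H^1(\O)\ra H^1(\O)$, $\dot{\L}\mapsto\dot{\L}\alpha$, is multiplication by the nowhere-vanishing section $\alpha$, hence an isomorphism. Therefore the obstruction vanishes and the tangent space is $0$, so the point is regular. Because $\beta$ is forced to be $0$, $\M_\Sigma$ agrees with $\N_\Sigma$ there as well.

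\textbf{Main obstacle.} The only delicate step is this regularity check, namely identifying the linearisation correctly in holomorphic terms, which relies on Doan's Corollary 7.2 cited above. Everything else is a direct Riemann--Roch computation on the elliptic curve.
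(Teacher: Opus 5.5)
Your proof is correct and is essentially the paper's argument: the paper treats $m=1$ first (forcing $\L\simeq A^{-1}$, so $\N_\Sigma\simeq\P^{n-1}$, and identifying $\beta$ with a cotangent vector at $\alpha$ gives $T^*\P^{n-1}$), then observes that in general $\L$ must be one of the $A_k^{-1}$, so both moduli spaces split over $k$, which is exactly your Steps 1--2. Your Step 3 goes beyond the paper, which asserts regularity of the $N$ points without checking it. Your hypercohomology computation for $\O\xrightarrow{\alpha}\L\otimes\E$ supplies that check, and the same linear algebra makes your set-level remark about $\M_\Sigma$ infinitesimal: the linearised constraint $\dot\beta\alpha=0$, with $\dot\beta\in H^0(\L^{-1}\otimes\E^*)\cong\C$, forces $\dot\beta=0$.
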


\begin{proof}
    Suppose first that $m=1$, so $\E=A^{\oplus n}$ for a line bundle $A$. If $[\L,\alpha]\in\N_{\Sigma}(0,\E)$ we require $\alpha\neq 0$, which is only possible when $\L\simeq A^{-1}$. In this case 
    $$\alpha\in \P H^0(A^{-1}\otimes A^{\oplus n})=\P H^0(\O)^{\oplus n}\simeq\P^{n-1},$$
    so $\N_\Sigma(0,\E)\simeq\P^{n-1}$. If we now allow $\beta$ to be non-zero, it also varies in the space $H^0(\O)^{\oplus n}$, subject to the condition $\beta\alpha=0$. We can then identify $\beta$ as a cotangent vector to $\P^{n-1}$ at $\alpha$, so $\M_\Sigma(0,\E)\simeq T^*\P^{n-1}$. 
    For the general case, let $\E=\bigoplus_{k=1}^m A_k^{\oplus n_k}$ for distinct degree 0 line bundles $A_k$. For $\alpha$ to be non-zero, the only possible choices for $\L$ are $A_k^{-1}$. Then $\N_\Sigma(0,\E)$ and $\M_\Sigma(0,\E)$ split as disjoint unions of $\N_\Sigma(0,A_k^{\oplus n_k})$ and $\M_\Sigma(0,A_k^{\oplus n_k})$, respectively.
\end{proof}


The key observation is that these moduli spaces can be assembled into an $N$-sheeted cover of $\mathcal{R}_{\Sigma}^*$, so that the fibre over $\E$ is $\N_\Sigma(0,\E)$. This is isomorphic to the covering space $\widetilde{\mathcal{R}}_{\Sigma}^*\ra \mathcal{R}_{\Sigma}^*$ by identifying $(A_i^{-1},\alpha_i)\in\N_\Sigma(0,\E)$ with $(\E,A_i)\in\widetilde{\mathcal{R}}^*_\Sigma$. The same idea also works for families of elliptic curves.

\begin{prop}\label{coveringisomorphism}
    Let $\underline{\Sigma}\ra S^1$ be a family of elliptic curves. Choose a section $\beta:S^1\ra\mathcal{R}_{\underline{\Sigma}}^*$ and any family of holomorphic bundles $\underline{\E}\ra\underline{\Sigma}$ such that $\beta(t)=[\E_t]$. 
    Then $\underline{\E}$ is regular and $\N_{\underline{\Sigma}}(0,\underline{\E})\ra S^1$ is isomorphic to $\beta^*\widetilde{\mathcal{R}}_{\underline{\Sigma}}^*\ra S^1$. Moreover, the monodromy map $\Upsilon$ can be identified with $\rho(\beta)\in S_N$.
\end{prop}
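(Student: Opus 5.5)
The plan is to work fibrewise and then assemble. Fix $t\in S^1$ and write $\E_t\simeq A_1(t)\oplus\cdots\oplus A_N(t)$ with the $A_i(t)\in J^0_{\Sigma_t}$ pairwise distinct, which is possible because $\beta(t)\in\mathcal{R}^*_{\Sigma_t}$.

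\textbf{Regularity.} By Proposition \ref{directsumvortexmoduli} (the case $n_k=1$), $\M_{\Sigma_t}(0,\E_t)=\N_{\Sigma_t}(0,\E_t)$ consists of $N$ points $(A_i(t)^{-1},\alpha_i)$. Here $\alpha_i$ spans $H^0(A_i^{-1}\otimes\E_t)\simeq H^0(\O)$, and $\beta=0$ is forced. To see regularity, we show that the deformation complex \eqref{framedvortexcomplex} has vanishing $H^2$ at each point. Using the holomorphic description, the obstruction space is $H^1(\Sigma_t,\L\otimes\E_t)/\langle\text{image of }H^1(\O)\cdot\alpha\rangle$. For $\L=A_i^{-1}$ we have
$$H^1(A_i^{-1}\otimes\E_t)=\bigoplus_j H^1(A_i^{-1}A_j).$$
For $j\neq i$ the summand vanishes, since $A_i^{-1}A_j$ is a nontrivial degree-zero bundle on an elliptic curve. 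For $j=i$ the summand is $H^1(\O)$, and the map $H^1(\O)\to H^1(\O)$ given by multiplication by the nonzero constant section $\alpha_i$ is an isomorphism. Equivalently, the $N$ points are reduced and have dimension $Nd-(N-1)(g-1)=0$.

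This shows each $\N_{\Sigma_t}$ is a regular, compact set of $N$ points, so $(J_t,B_t)\in\mathcal{P}^{reg}_\Sigma$ for all $t$ and $\underline{\E}$ is a regular family. Compactness of $\M$ also follows from Proposition \ref{directsumvortexmoduli}, since the cotangent fibres $T^*\P^0$ are points.

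\textbf{Isomorphism of coverings.} Define a map $\N_{\underline{\Sigma}}(0,\underline{\E})\to\beta^*\widetilde{\mathcal{R}}^*_{\underline{\Sigma}}$ fibrewise by $(\L,\alpha)\mapsto(\E_t,\L^{-1})$. We need to check three things.
\begin{itemize}
\item \emph{Continuity in $t$.} Both sides are $N$-sheeted coverings of $S^1$. The map is locally given by following the holomorphic line subbundles $A_i(t)$ of $\E_t$, which vary continuously, for example via the continuous variation of $\L$ in the Jacobian family.
\item \emph{Fibrewise bijectivity.} This follows from the description above.
\item \emph{Compatibility with gluing.} Under $F_\varphi$, the point $[\L,\alpha]$ goes to $[f^*\L,(1\otimes\varphi^{-1})f^*\alpha]$, so $\L^{-1}\mapsto f^*\L^{-1}$. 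This matches the relation $(1,f^*\E,f^*A)\sim(0,\E,A)$ defining $\widetilde{\mathcal{R}}_{\underline{\Sigma}}$.
\end{itemize}
Hence the map descends to an isomorphism of coverings over $S^1$.

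\textbf{Monodromy.} Since the fibres are discrete, the flat symplectic connection of Theorem \ref{paralleltransporttheorem} on $\N_f$ is the unique flat connection on a finite covering. Its parallel transport $P_t$ is therefore continuation along the covering, because $P_t$ is continuous in $t$ and takes values in a discrete fibre. Thus $\Upsilon=P_1^{-1}\circ F$ is the covering monodromy of $\N_{\underline{\Sigma}}$, pulled back through the isomorphism $\kappa$. By the isomorphism just constructed, this equals the monodromy of $\beta^*\widetilde{\mathcal{R}}^*_{\underline{\Sigma}}$ around $S^1$, which is $\rho(\beta)$ once the fibre over the base point is ordered.

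\textbf{Main obstacle.} The main obstacle is the regularity step: identifying the cokernel of the linearised operator $S_{A,\Phi}$ with the holomorphic $H^1$ quotient above. I would invoke \cite[Corollary 7.2]{Doan_2018} for this identification rather than compute it directly.
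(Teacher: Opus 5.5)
Your proposal is correct and follows essentially the same route as the paper. Both arguments get regularity from the fibrewise description in Proposition \ref{directsumvortexmoduli}, use the fibrewise map $(A_i(t)^{-1},\alpha)\mapsto(\E_t,A_i(t))$ (compatible with the gluing because $F_\varphi$ acts by pullback under $f$), and observe that with discrete fibres the parallel transport $P_t$ must be path-lifting, so $\Upsilon$ is the covering monodromy $\rho(\beta)$. The only real difference is that you check regularity explicitly, by showing $H^1(\O)\xrightarrow{\cdot\alpha_i}H^1(A_i^{-1}\otimes\E_t)$ is an isomorphism so the obstruction space vanishes. The paper instead just cites Proposition \ref{directsumvortexmoduli}, whose proof identifies the $N$ points but never checks their deformation theory.
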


\begin{proof}
    Any family $\underline{\E}\ra\underline{\Sigma}$ determined by $\beta:S^1\ra\mathcal{R}_{\underline{\Sigma}}^*$ is regular by Proposition \ref{directsumvortexmoduli}. Associated with $\underline{\E}$ is a holomorphic isomorphism $\varphi:\E_1\ra f^*\E_0$. If $\E_t=A_1(t)\oplus\cdots\oplus A_N(t)$, $\varphi$ is given by a matrix
    $$\varphi = \begin{pmatrix}
        \varphi_{11} & \cdots & \varphi_{N1}\\
        \vdots & \ddots & \vdots\\
        \varphi_{1N} & \cdots & \varphi_{NN}
    \end{pmatrix}.$$
    The map $\varphi_{ij}:A_i(1)\ra f^*A_j(0)$ is zero unless $j=\rho(i)$, where $\rho=\rho(\beta)\in S_N$. Define
    $$\mathcal{F}:\N_{\underline{\Sigma}}(0,\underline{\E})\ra \beta^*\widetilde{\mathcal{R}}_{\underline{\Sigma}}^*,\hspace{4mm}[t,(A_i(t)^{-1},\alpha)]\mapsto [t,(\E_t,A_i(t))].$$
    Since $\alpha$ is uniquely determined up to scaling, we will drop it from the notation. Then $F_\varphi$ simply acts as pullback by $f$ and $\mathcal{F}$ is a well-defined isomorphism. Since the fibres are discrete, parallel transport $P_t$ in $\N_{\underline{\Sigma}}(0,\underline{\E})$ must be given by $[0,A_i(0)^{-1}]\mapsto [t,A_i(t)^{-1}]$. If $j=\rho(i)$,
    $$\Upsilon:[0,A_i(0)^{-1}]\overset{P_1}{\mapsto}[1,A_i(1)^{-1}] = [1,f^*A_j(0)^{-1}]\overset{F_\varphi^{-1}}{\mapsto}[0,A_j(0)^{-1}].$$
    Passing through $\mathcal{F}$ identifies this with $\rho$. 
\end{proof}
    

The next corollary summarises our description of the chamber structure and fixed points for families of elliptic curves. It follows directly from Proposition \ref{coveringisomorphism}, Corollary \ref{fixedpointspinc}, and Theorem \ref{maintheorem}. We then use it to prove Theorem \ref{mmexistence}.

\begin{cor}\label{genus1multimonopoles}
    Let $\underline{\Sigma}\ra S^1$ be a family of elliptic curves and let $\beta:S^1\ra\mathcal{R}_{\underline{\Sigma}}^*$ be a section. Then each fixed point of $\rho(\beta)$ determines a unique multi-monopole on the underlying 3-manifold of $\underline{\Sigma}$. The chamber is determined by the homotopy class $[\beta]\in\pi_0\Gamma(\mathcal{R}_{\underline{\Sigma}}^*)$ and the family of complex structures defining $\underline{\Sigma}$. Moreover, by identifying $\mathcal{R}_{\underline{\Sigma}}$ with the family of (fibre-wise) symmetric products of $J_{\underline{\Sigma}}^{0}$, the section $\beta$ can be viewed as a link in $J_{\underline{\Sigma}}^{0}$. Each fixed point of $\rho(\beta)$ determines a component of this link, which is the image of a section $\gamma:S^1\ra J_{\underline{\Sigma}}^0$. The spin$^c$ structure of the multi-monopole is determined by the homotopy class $[\gamma]\in\pi_0\Gamma(J_{\underline{\Sigma}}^0)$ and the condition $d=0$.
\end{cor}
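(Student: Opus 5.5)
The plan is to assemble Corollary \ref{genus1multimonopoles} from three ingredients already proved: Proposition \ref{coveringisomorphism}, Corollary \ref{fixedpointspinc}, and Theorem \ref{maintheorem}.

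\emph{Step 1: identify the monodromy.} Given the section $\beta$, I would choose a family $\underline{\E}\ra\underline{\Sigma}$ with $[\E_t]=\beta(t)$. By Proposition \ref{coveringisomorphism}, this family is regular, each fibre $\N_{\Sigma_t}(0,\E_t)$ is a set of $N$ points, and $\Upsilon$ is identified with $\rho(\beta)\in S_N$. So $\Fix(\Upsilon)$ is exactly the set of fixed points of the permutation. Because the fibres are zero-dimensional, each fixed point is automatically non-degenerate: $d\Upsilon-1$ acts on the zero vector space.

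\emph{Step 2: pass to multi-monopoles.} Theorem \ref{maintheorem} requires $(\mathfrak{p},\eta)\in\U^{reg}_\varphi$. This set is only dense, so I would perturb $(\mathfrak{p},\eta)$ slightly. Since $\mathcal{R}^*_{\underline{\Sigma}}$ is open and the fibres stay discrete, a small perturbation keeps $\beta$ in the same homotopy class. Hence it keeps the same permutation $\rho(\beta)$, and by the remark after Proposition \ref{transversality}, the same chamber. Theorem \ref{maintheorem} then injects each fixed point into $\M_{\Sigma_f}(\mathfrak{s}_{0,\tilde f},g_\e,B_\varphi,\eta)$ for small $\e$. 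The chamber depends only on the homotopy class of the path of parameters. That class is determined by $[\beta]$ together with the family of complex structures, because two homotopic sections of $\mathcal{R}^*_{\underline{\Sigma}}$ give homotopic families of regular, hence compact, parameters, which never meet $\mathcal{W}$.

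\emph{Step 3: locate the spin$^c$ structure.} Identify $\mathcal{R}_{\underline{\Sigma}}$ fibrewise with $\Sym^N J^0_{\Sigma_t}$. Then $\beta$ traces $N$ distinct points in each fibre, giving a closed $1$-manifold in $J^0_{\underline{\Sigma}}$. Its components correspond to cycles of $\rho(\beta)$, and fixed points are exactly the cycles of length one, i.e. sections $\gamma$. Under the isomorphism of Proposition \ref{coveringisomorphism}, a fixed point $[A_i^{-1},\alpha_i]$ projects via $\pi$ to the Jacobian section $t\mapsto A_i(t)^{-1}$. This is the inverse of the link component $\gamma$, and inversion is a fixed fibrewise homeomorphism of $J^0_{\underline{\Sigma}}$. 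Corollary \ref{fixedpointspinc}, combined with Propositions \ref{chernclasscorollary} and \ref{sectiontochernclass}, then shows the spin$^c$ structure is determined by the homotopy class of this Jacobian section, equivalently by $[\gamma]$, together with $d=0$.

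The main obstacle is making Step 2 rigorous. One must show the perturbation into $\U^{reg}_\varphi$ can be chosen small enough to preserve both the discrete data $\rho(\beta)$ and the chamber. I would handle this using openness of the regular locus $\mathcal{R}^*$ together with the density statement of Proposition \ref{transversality}. A secondary point is the sign bookkeeping between $A_i$ and $A_i^{-1}$ in Step 3, which only relabels homotopy classes.
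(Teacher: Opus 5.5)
Your proposal is correct and is essentially the paper's argument: the paper proves this corollary in one line by combining Proposition \ref{coveringisomorphism}, Corollary \ref{fixedpointspinc} and Theorem \ref{maintheorem}, which is exactly your Steps 1--3. Your treatment of non-degeneracy and of the $A_i$ versus $A_i^{-1}$ bookkeeping is more explicit than the paper's, and two small remarks apply: the perturbation into $\U^{reg}_\varphi$ in Step 2 can be avoided, because with zero-dimensional fibres $\mathcal{H}_X=0$ and non-degeneracy is all the estimates behind Theorem \ref{maintheorem} use; and the claim that the chamber depends only on $[\beta]$ tacitly assumes, in your sketch as in the paper, that fibrewise compactness of $\M_\Sigma$ keeps the 3-dimensional parameters $(g_\e,B_\varphi)$ away from the wall, which neither argument proves.
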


\begin{proof}[Proof of Theorem \ref{mmexistence}.]
    Let $\underline{\Sigma}\ra S^1$ be the family of elliptic curves determined by $J$. By Corollary \ref{genus1multimonopoles}, it suffices to find a section $\beta$ of $\mathcal{R}_{\underline{\Sigma}}^*(N)\ra S^1$ such that:
    \begin{itemize}
        \item $\rho(\beta)$ has (at least) $k:=\sum_{i=1}^m k_i$ fixed points with corresponding sections $\gamma_{j_i}\in\Gamma(J_{\underline{\Sigma}}^0)$ for $1\leq j_i\leq k_i$.
        \item For each $i$, the sections $\gamma_{j_i}$ have a prescribed homotopy class $h_i\in\pi_0\Gamma(J_{\underline{\Sigma}}^0)$.
    \end{itemize}
    To construct $\beta$, choose $k_i$ representatives $\gamma_{j_i}\in\Gamma(J_{\underline{\Sigma}}^0)$ of each homotopy class $h_i$. These can be perturbed so that their images do not intersect, which determines a section $\beta_0$ of $\mathcal{R}_{\underline{\Sigma}}^*(k)\ra S^1$ such that $\rho(\beta_0)$ has $k$ fixed points. Note that $\beta_0$ is certainly not unique, since any of these sections can be homotoped to link any other. Viewing $\beta_0$ as a $k$-strand braid in $J_{\underline{\Sigma}}^0$, add $N-k$ strands to complete it to an $N$-strand braid (there are also many ways to do this). This determines a section $\beta$ of $\mathcal{R}_{\underline{\Sigma}}^*(N)\ra S^1$ with the required properties. 
\end{proof}

%% file: chapters/appendix.tex
\appendix

\section{Operator identities}
\label{appendix:a}

In this appendix, we state and prove several identities between the operators $N,G,S,L,M$ appearing in the linearisation \eqref{blocklinearisation}. Throughout, fix a spin$^c$ structure $\mathfrak{s}_{d,\tilde{f}}$ on $\Sigma_f$. For a lift $\varphi:E\ra E$ of $f$, the family of parameters $\mathfrak{p}\in\mathcal{P}_\varphi^{reg}$ will be given by $\mathfrak{p}_t=(J_t,B_t)$. The perturbation $\eta$ will be determined by $\sigma,\tau$ as in Definition \ref{perturbation}. 

\begin{lemma}\label{operatoridentities}
    Suppose $\Xi=(X,0,Y)=(A,\Phi,0,b,\Psi)$ belongs to either $\A_\e(\mathfrak{s}_{d,\tilde{f}},\mathfrak{p},\eta)$ for $\e>0$ or $\A_0(\mathfrak{s}_{d,\tilde{f}},\mathfrak{p},\eta)$. Then the following identities hold:
    \begin{equation}\label{identity0}
        L_{Y}^*M_{Y} = i\textnormal{Re}\langle\nabla_t\Psi,\cdot\rangle,
    \end{equation}
    \begin{equation}\label{identity1}
        N_{Y}G_{X} + S_{X}^*L_{Y} = 0,
    \end{equation}
    \begin{equation}\label{identity2}
        N_{Y}S_{X}^* + G_{X}L_Y^* + S_{X}^*M_{Y}=R_{\Xi}:=\begin{pmatrix}
            -\star\dot{\star}d & -2i\textnormal{Im}\langle\cdot,\nabla_{A,p}\Psi\rangle\\
            -2\nabla_t\Phi & -i[\nabla_t,\delbar_{A,p}^*]
        \end{pmatrix}.
    \end{equation}
\end{lemma}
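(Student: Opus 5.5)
These are three operator identities between block components of the linearisation \eqref{blocklinearisation}. I will prove them by direct computation: apply both sides to a test element and expand using the explicit matrices for $G_X,S_X,N_Y,L_Y,M_Y$. Only a few facts about $\Xi$ will be used:
\begin{itemize}
\item $V=0$, which holds in both the $\e>0$ case (Lemma \ref{hzero}) and the $\e=0$ case (Lemma \ref{hequalszero});
\item the third equation of \eqref{eq:epsilonSW}, respectively \eqref{eq:epsilonzeroSW1}, which relates $\star(\dot A-db-i\sigma)$ to $i\Im\langle\Psi,\Phi\rangle$;
\item the first equation, $-i\nabla_t\Phi+\delbar^*_{A,\mathfrak{p}}\Psi=0$, which is common to both systems.
\end{itemize}
The identities should therefore hold verbatim for $\e\geq 0$. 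The $\e^{-2}$ equations (the vortex equations) should not be needed, and I will check that they never enter.

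\textbf{Identity \eqref{identity0}.} Apply $M_Y$ to $(c,\psi)$, giving $(i\Re\langle\Psi,\psi\rangle,\,-ic\Psi-i\nabla_t\psi)$. Then apply $L_Y^*=(\del_t,\ \Im$-type pairing with $\Psi)$, written out as the adjoint of $L_Y$ with respect to \eqref{3dinnerproduct}. The $c$-terms cancel because $\langle\Psi,ic\Psi\rangle$ is imaginary paired against real. The $\psi$-terms reduce to $\del_t\Re\langle\Psi,\psi\rangle-\Re\langle\Psi,\nabla_t\psi\rangle$. By metric compatibility of $\nabla_t$ (including the $\frac12\Psi\circ J\dot J$ term, which is skew), this is $\Re\langle\nabla_t\Psi,\psi\rangle$. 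The main point to check here is that the $J\dot J$ correction is skew-Hermitian for the $t$-dependent inner product.

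\textbf{Identity \eqref{identity1}.} Take $v\in\Omega^0(i\R)$. Then $G_Xv=(-dv,v\Phi)$, and $N_YG_Xv=(-\star\del_t dv-i\Im\langle\Psi,v\Phi\rangle,\ \langle\Psi,(dv)^{0,1}\rangle+i\nabla_t(v\Phi))$. On the other side, $L_Yv=(-\dot v,v\Psi)$, and $S_X^*L_Yv$ is computed from $S_X^*$, the adjoint of $S_X$: it takes $(c,\psi)$ to $(-\star dc-\langle\cdot\rangle\text{-term},\ \ldots)$. Expanding, the derivative terms cancel once $\del_t$ commutes with $d$. The terms involving $\dot\star$ must be tracked, since $\star_t$ depends on $t$; they are absorbed because $\dot{\star}$ acts trivially on exact-to-coexact pairings in degree $1\to1$. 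The zeroth order terms leave $iv(\nabla_t\Phi)+v\delbar^*\Psi$ type expressions, which vanish by the first monopole equation, and $\Im\langle\Psi,v\Phi\rangle$ terms, which cancel in pairs.

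\textbf{Identity \eqref{identity2}.} This is the main obstacle: it is the longest expansion, and the remainder $R_\Xi$ must come out exactly. I will apply each of the three terms to $(c,\psi)\in\Omega^0(i\R)\oplus\Omega^{0,1}$ and sort the result into four groups:
\begin{itemize}
\item second-order terms, $\star\del_t\star dc$ against $-d\dot c$ and $\nabla_t\delbar^*\psi$ against $\delbar^*\nabla_t\psi$. These leave $-\star\dot\star dc$ and the commutator $-i[\nabla_t,\delbar^*]$.
\item first-order terms mixing $\Psi$ with derivatives of $c$ or $\psi$. Using the Leibniz rule, these combine into $-2i\Im\langle\psi,\nabla_{A}\Psi\rangle$ plus terms like $\langle\Psi,(dc)^{0,1}\rangle$, which cancel against pieces of $GL^*$.
\item terms with $\nabla_t\Phi$ and $c$. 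These produce $-2c\nabla_t\Phi$.
\item quadratic terms in $\Psi,\Phi$. These should cancel after using $\star(\dot A-db-i\sigma)=i\Im\langle\Psi,\Phi\rangle$, which is the "remarkable cancellation" the paper mentions.
\end{itemize}
The delicate points are sign conventions (antilinearity in the second slot, the $\sqrt2$ rescalings already removed) and the $t$-derivative of the Hodge star and of $J$ inside $\delbar^*$. I would verify these in a local holomorphic coordinate on $\Sigma$ at fixed $t$.
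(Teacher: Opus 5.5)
Your route is the paper's: expand $L_Y^*M_Y$, $N_YG_X+S_X^*L_Y$ and $N_YS_X^*+G_XL_Y^*+S_X^*M_Y$ on test elements, using only equations that survive at $\e=0$. Your sorting of the second- and first-order terms in \eqref{identity2} broadly matches the paper: $-\star\dot\star d$, the commutator, $-2c\nabla_t\Phi$, and the $\Im\langle\psi,\nabla_{A,\mathfrak{p}}\Psi\rangle$ term.

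\textbf{The quadratic terms.} The step that fails is your treatment of the terms quadratic in $(\Phi,\Psi)$. The equation $\star(\dot A-db-i\sigma)=i\Im\langle\Psi,\Phi\rangle$ cannot cancel them. After expansion, $\dot A-db$ occurs only inside $[\nabla_t,\delbar^*_{A,\mathfrak{p}}]$, which $R_\Xi$ keeps unexpanded, so there is nothing to substitute into. The paper's computation only ever uses $i\nabla_t\Phi=\delbar^*_{A,\mathfrak{p}}\Psi$:
\begin{itemize}
\item once in \eqref{identity1};
\item twice in \eqref{identity2}, to turn $i\Im\langle\psi,i\nabla_t\Phi\rangle$ into $i\Im\langle\psi,\delbar^*_{A,\mathfrak{p}}\Psi\rangle$, and $-c\nabla_t\Phi+ic\delbar^*_{A,\mathfrak{p}}\Psi$ into $-2c\nabla_t\Phi$.
\end{itemize}
So the quadratic terms have to cancel pointwise, by algebra alone.
\begin{itemize}
\item In the $c\mapsto a$ slot they do: $-i\Im\langle\Psi,ic\Phi\rangle$ from $N_YS_X^*$ cancels $i\Im\langle ic\Psi,\Phi\rangle$ from $S_X^*M_Y$, because $ic$ is real.
\item In the $\psi\mapsto\phi$ slot there are three contributions. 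One is the $-\langle\Psi,(\cdot)^{0,1}\rangle$ entry of $N_Y$ composed with the $-i\Im\langle\cdot,\Phi\rangle$ entry of $S_X^*$. The others are $-i\Im\langle\Psi,\psi\rangle\Phi$ from $G_XL_Y^*$ and $-\Re\langle\Psi,\psi\rangle\Phi$ from $S_X^*M_Y$. Together they leave $\langle\Psi,\langle\psi,\Phi\rangle\rangle-\langle\Psi,\psi\rangle\Phi$.
\end{itemize}

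\textbf{This residual does not vanish for $N\ge2$.} The paper drops it without comment. It does vanish when $E\otimes L$ is a line bundle. For $N\ge2$, write $\Psi=\Psi_0\,d\bar z$ locally. The first term is then a scalar multiple of $\Psi_0$ and the second a scalar multiple of $\Phi$, in the fibre of $E\otimes L$. Choose $\psi=\psi_0\,d\bar z$ with $\langle\psi_0,\Phi\rangle=0$ and $\langle\Psi_0,\psi_0\rangle\neq0$. This shows the residual is nonzero wherever $\Psi_0$ and $\Phi$ are not parallel. Neither system of equations forces parallelism; for instance, the source term $\dot B^{0,1}\Phi$ in \eqref{paralleltransportpsi} is in general not parallel to $\Phi$.

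As a cross-check, take $\Psi$ to be the full three-dimensional spinor and consider the algebraic term $\phi\mapsto 2\gamma(\gamma^{-1}\mu(\Psi,\phi))\Psi-i\Im\langle\Psi,\phi\rangle\Psi$ in the square of \eqref{3dlinearisation}. The residual is its off-diagonal part with respect to $S_{d,\tilde f}=L_{\tilde f}\oplus(L_{\tilde f}\otimes K_f^*)$. This term reduces to $|\Psi|^2\phi$, with no off-diagonal part, when $N=1$, but not for $N\ge2$.

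So for $N\ge2$ the identity holds only after adding this zeroth-order operator to the lower-right entry of $R_\Xi$. That correction is harmless for the later uses of the lemma, which need only $\|R_\Xi y\|_{L^p}\le c\|y\|_{W^{1,p}}$ and its analogue for $R_\Xi^*$. But no appeal to the curvature equation will remove it, so the identity cannot be verified verbatim as you propose.

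\textbf{A smaller point.} In \eqref{identity1} no $\dot\star$ terms arise. The first entry of $N_Y$ is $\star_t\del_t$, so $-\star_td\dot v$ cancels $\star_td\dot v$ exactly. Your reason for discarding such terms is also false: up to sign, $\dot\star_t$ is $\alpha\mapsto\alpha\circ\dot J_t$, which does not kill exact forms.
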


\begin{proof}
    Since $(A,\Phi,0,b,\Psi)$ is allowed to solve either the multi-monopole equations \eqref{eq:epsilonSW} for $\e>0$ or the adiabatic limit equations \eqref{eq:epsilonzeroSW1}, we only make use of the $\e$-independent equations of \eqref{eq:epsilonSW} which survive in the adiabatic limit:
    $$-i\nabla_t\Phi + \delbar_{A,\mathfrak{p}}^*\Psi=0,\hspace{4mm}\star(\dot{A}-db-i\sigma) - i\Im\langle\Psi,\Phi\rangle=0.$$
    Throughout the proof we omit subscripts from $N,G,S,L,$ and $M$. The first identity \eqref{identity0} is the simplest:
    \begin{align*}
        L^*M\begin{pmatrix}
            c\\
            \psi
        \end{pmatrix} &= \begin{pmatrix}
            \del_t & -i\Im\langle\Psi,\cdot\rangle
        \end{pmatrix}\begin{pmatrix}
            i\Re\langle\Psi,\psi\rangle\\
            -ic\Psi-i\nabla_t\psi
        \end{pmatrix}\\
        &= i\Re\langle\nabla_t\Psi,\psi\rangle + i\Re\langle\Psi,\nabla_t\psi\rangle + i\Im\langle\Psi,i\nabla_t\psi\rangle\\
        &=i\Re\langle\nabla_t\Psi,\psi\rangle.
    \end{align*}
    To prove the second identity \eqref{identity1}, expand the compositions $NG$ and $S^*L$:
    \begin{align*}
        NG(v) = \begin{pmatrix}
            \star\del_t & -i\Im\langle\Psi,\cdot\rangle\\
            -\langle\Psi,(\cdot)^{0,1}\rangle & i\nabla_t
        \end{pmatrix}\begin{pmatrix}
            -dv\\
            v\Phi
        \end{pmatrix} = \begin{pmatrix}
            -\star d\dot{v} - i\Im\langle\Psi,v\Phi\rangle\\
            \langle\Psi,\delbar v\rangle + i\dot{v}\Phi + iv\nabla_t\Phi
        \end{pmatrix}
    \end{align*}
    \begin{align*}
        S^*L(v) = \begin{pmatrix}
            -\star d & -i\Im\langle\cdot,\Phi\rangle\\
            i(\cdot)\Phi & -\delbar_{A,\mathfrak{p}}^*
        \end{pmatrix}\begin{pmatrix}
            -\dot{v}\\
            v\Psi
        \end{pmatrix} = \begin{pmatrix}
            \star d\dot{v} - i\Im\langle v\Psi,\Phi\rangle\\
            -i\dot{v}\Phi - v\delbar_{A,\mathfrak{p}}^*\Psi - \langle\Psi,\delbar v\rangle
        \end{pmatrix}
    \end{align*}
    Above, we applied $\delbar_{A,\mathfrak{p}}^*(v\Psi) = v\delbar_{A,\mathfrak{p}}^*\Psi + \langle\Psi,\delbar v\rangle$. Adding the terms together we get
    $$(NG+S^*L)(v) = \begin{pmatrix}
        0\\
        -v(i\nabla_t\Phi - \delbar_{A,p}^*\Psi)
    \end{pmatrix},$$
    which vanishes when $(A,\Phi,0,b,\Psi)$ solves equation \eqref{eq:epsilonSW}. Similarly, we expand the compositions $NS^*$, $GL^*$ and $S^*M$ appearing in the third identity \eqref{identity2}:
    \begin{align*}
        NS^*\begin{pmatrix}
            c\\
            \psi
        \end{pmatrix} &= \begin{pmatrix}
            \star\del_t & -i\Im\langle\Psi,\cdot\rangle\\
            -\langle\Psi,(\cdot)^{0,1}\rangle & i\nabla_t
        \end{pmatrix}\begin{pmatrix}
            -\star dc -i\Im\langle\psi,\Phi\rangle\\
            ic\Phi -\delbar_{A,\mathfrak{p}}^*\psi
        \end{pmatrix}\\
        &= \begin{pmatrix}
            -\star\dot{\star}dc + d\dot{c} - i\Im\langle i\nabla_t\psi,\Phi\rangle + i\Im\langle \psi,i\nabla_t\Phi\rangle -i\Im\langle\Psi,ic\Phi\rangle + i\Im\langle\Psi,\delbar_{A,\mathfrak{p}}^*\psi\rangle\\
            -i\langle\Psi,\delbar c\rangle + \langle\Psi,\langle\psi,\Phi\rangle\rangle - \dot{c}\Phi - c\nabla_t\Phi - i\nabla_t\delbar_{A,\mathfrak{p}}^*\psi
        \end{pmatrix}
    \end{align*}
    \begin{align*}
        GL^*\begin{pmatrix}
            c\\
            \psi
        \end{pmatrix} &= \begin{pmatrix}
            -d\\
            (\cdot)\Phi
        \end{pmatrix}\begin{pmatrix}
            \dot{c} & -i\Im\langle\Psi,\psi\rangle
        \end{pmatrix}\\
        &= \begin{pmatrix}
            -d\dot{c} + id\Im\langle\Psi,\psi\rangle\\
            \dot{c}\Phi - i\Im\langle\Psi,\psi\rangle\Phi
        \end{pmatrix}
    \end{align*}
    \begin{align*}
        S^*M\begin{pmatrix}
            c\\
            \psi
        \end{pmatrix} &= \begin{pmatrix}
            -\star d & -i\Im\langle\cdot,\Phi\rangle\\
            i(\cdot)\Phi & -\delbar_{A,\mathfrak{p}}^*
        \end{pmatrix}\begin{pmatrix}
            i\textnormal{Re}\langle\Psi,\psi\rangle\\
            -ic\Psi -i\nabla_t\psi
        \end{pmatrix}\\
        &= \begin{pmatrix}
            -i\star d\Re\langle\Psi,\psi\rangle +i\Im\langle ic\Psi,\Phi\rangle + i\Im\langle i\nabla_t\psi,\Phi\rangle\\
            -\Re\langle\Psi,\psi\rangle\Phi + ic\delbar_{A,\mathfrak{p}}^*\Psi + i\langle\Psi,\delbar c\rangle + i\delbar_{A,\mathfrak{p}}^*\nabla_t\psi
        \end{pmatrix}
    \end{align*}
    Before adding everything together, note that
    \begin{align*}
        -i\star d\Re\langle\Psi,\psi\rangle + id\Im\langle\Psi,\psi\rangle &= -2i\Im(\del\langle\psi,\Psi\rangle)\\
        &= -2i\Im\langle\del_{A,p}\psi,\Psi\rangle - 2i\Im\langle\psi,\delbar_{A,\mathfrak{p}}\Psi\rangle.
    \end{align*}
    Here, the last equality can be deduced from a computation in local holomorphic coordinates. A similar local computation using $\delbar^*=-\star\del\star$ and $\del^*=-\star\delbar\star$ shows that 
    $$\langle\del_{A,\mathfrak{p}}\psi,\Psi\rangle = -\frac{1}{2}\langle\delbar_{A,\mathfrak{p}}^*\psi,\Psi\rangle,\hspace{4mm}\langle\psi,\delbar_{A,\mathfrak{p}}\Psi\rangle = -\frac{1}{2}\langle\psi,\del_{A,\mathfrak{p}}^*\Psi\rangle.$$
    Here, the factor $1/2$ comes from the fact that $|d\overline{z}|^2=1$ in the rescaled metric (see \eqref{rescalings}). The first of these yields
    $$-i\star d\Re\langle\Psi,\psi\rangle + id\Im\langle\Psi,\psi\rangle = -i\Im\langle\Psi,\delbar_{A,\mathfrak{p}}^*\psi\rangle - 2i\Im\langle\psi,\delbar_{A,\mathfrak{p}}\Psi\rangle,$$
    and the second is used below to get our final result. 
    \begin{align*}
        (NS^*+GL^*+S^*M)\begin{pmatrix}
            c\\
            \psi
        \end{pmatrix} &= \begin{pmatrix}
            -\star\dot{\star}dc + i\Im\langle\psi,i\nabla_t\Phi\rangle - i \star d\Re\langle\Psi,\psi\rangle + id\Im\langle\Psi,\psi\rangle + i\Im\langle\Psi,\delbar_{A,\mathfrak{p}}^*\psi\rangle\\
            -c\nabla_t\Phi + ic\delbar_{A,\mathfrak{p}}^*\Psi + i\delbar_{A,\mathfrak{p}}^*\nabla_t\psi - i\nabla_t\delbar_{A,\mathfrak{p}}^*\psi
        \end{pmatrix}\\
        &= \begin{pmatrix}
            -\star\dot{\star}dc + i\Im\langle\psi,\delbar_{A,\mathfrak{p}}^*\Psi\rangle - 2i\Im\langle\psi,\delbar_{A,\mathfrak{p}}\Psi\rangle \\
            -2c\nabla_t\Phi - i[\nabla_t,\delbar_{A,\mathfrak{p}}^*]\psi
        \end{pmatrix}\\
        &= \begin{pmatrix}
            -\star\dot{\star}dc - 2i\Im\langle\psi,\nabla_{A,\mathfrak{p}}\Psi\rangle  \\
            -2c\nabla_t\Phi - i[\nabla_t,\delbar_{A,\mathfrak{p}}^*]\psi
        \end{pmatrix}
    \end{align*}
\end{proof}